\newtheorem{mthm}{Theorem}
\newtheorem{thm}{Theorem}[section]
\newtheorem{lem}{Lemma}[section]
\newtheorem{hyp}{Hypothesis}
\newtheorem{cor}[lem]{Corollary}
\newtheorem{prop}[thm]{Proposition}
\newtheorem{rmk}{Remark}[section]
\theoremstyle{remark}
\newcommand{\argmin}{\operatorname{arg min}}
\newcommand{\Z}{\mathbb{Z}}
\newcommand{\Rm}{\mathbb{R}}
\newcommand{\R}{\mathbb{R}}
\newcommand{\NN}{\mathbb{N}}
\newcommand{\mC}{\ensuremath{\mathcal{C}}}
\newcommand{\mR}{\ensuremath{\mathcal{R}}}
\newcommand{\mU}{\ensuremath{\mathcal{U}}}
\newcommand{\mS}{\ensuremath{\mathcal{S}}}
\newcommand{\mF}{\ensuremath{\mathcal{F}}}
\newcommand{\mP}{\ensuremath{\mathcal{P}}}
\newcommand{\mD}{\ensuremath{\mathcal{D}}}
\newcommand{\mH}{\ensuremath{\mathcal{H}}}
\newcommand{\mV}{\ensuremath{\mathcal{V}}}
\newcommand{\Qm}{\ensuremath{\mathbb{Q}}}
\newcommand{\Nm}{\ensuremath{\mathbb{N}}}
\newcommand{\Zm}{\ensuremath{\mathbb{Z}}}
\newcommand{\mK}{\ensuremath{\mathcal{K}}}
\newcommand{\mB}{\ensuremath{\mathcal{B}}}
\newcommand{\mA}{\ensuremath{\mathcal{A}}}
\newcommand{\mI}{\ensuremath{\mathcal{I}}}
\newcommand{\mG}{\ensuremath{\mathcal{G}}}
\newcommand{\mQ}{\ensuremath{\mathcal{Q}}}
\newcommand{\mN}{\ensuremath{\mathcal{N}}}
\newcommand{\Tm}{\ensuremath{\mathbb{T}}}
\newcommand{\T}{\ensuremath{\mathbb{T}}}
\newcommand{\eps}{\ensuremath{\epsilon}}
\newcommand{\tmA}{\tilde{\mathcal{A}}}
\newcommand{\tmN}{\tilde{\mathcal{N}}}
\newcommand{\tmS}{\tilde{\mathcal{S}}}
\newcommand{\tmH}{\tilde{\mathcal{H}}}
\newcommand{\tmE}{\tilde{\mathcal{E}}}
\newcommand{\mE}{\mathcal{E}}
\newcommand{\cmE}{\check{\mathcal{E}}}
\newcommand{\dist}{\mathrm{dist}}
\def\lto{\longrightarrow}
\def\to{\longrightarrow}
\def\lmto{\longmapsto}
\def\leq{\leqslant}
\def\geq{\geqslant}
\def\le{\leqslant}
\def\ge{\geqslant}
\def\bdef{\begin{definition}}
\def\endef{\end{definition}}
\def\bthm{\begin{mthm}}
\def\ethm{\end{mthm}}
\def\bthm{\begin{thm}}
\def\ethm{\end{thm}}
\def\blm{\begin{lemma}}
\def\elm{\end{lemma}}
\def\brm{\begin{rmk}}
\def\erm{\end{rmk}}
\def\bprop{\begin{proposition}}
\def\eprop{\end{proposition}}
\def\bcor{\begin{cor}}
\def\ecor{\end{cor}}
\def\om{\omega}
\def\epsilon{\varepsilon}
\def\A{\mathcal A}
\def\R{\mathbb R}
\def\T{\mathbb T}
\def\Z{\mathbb Z}
\def\cC{\mathcal C}
\def\B{\mathcal B}
\def\Gm{\Gamma}
\def\Lb{\Lambda}
\def\th{\theta}
\def\dt{\delta}
\def\lb{\lambda}
\newtheorem{definition}{Definition}
\author{P. Bernard\footnote{Universit{\'e} Paris-Dauphine (\texttt{patrick.bernard\@ ceremade.dauphine.fr})},
 V. Kaloshin\footnote{University of Maryland at College Park (\texttt{vadim.kaloshin\@ gmail.com})},
 K. Zhang\footnote{University of Toronto (\texttt{kzhang\@ math.utoronto.edu})}}
\title{Arnold diffusion in arbitrary degrees of freedom \\
and  normally hyperbolic invariant cylinders}
\begin{document}
\maketitle

\begin{abstract}
We prove a form of Arnold diffusion in the a priori stable case. 
Let 
$$H_0(p)+\epsilon H_1(\th,p,t),\quad \th\in \T^n,\ p\in B^n,\ t\in \T=\R/\T
$$ 
be a nearly integrable system of arbitrary degrees of freedom 
$n\ge 2$ with a strictly convex $H_0$. We show  that for 
a ``generic'' $\epsilon H_1$, 
there exists an orbit $(\theta,p)(t)$ satisfying
\[
\|p(t)-p(0) \| >l(H_1)>0,
\]
where $l(H_1)$ is independent of $\epsilon$. 
The diffusion orbit travels along  a co-dimension one resonance, 
and the only obstruction to our construction 
is a finite set of additional resonances. 

For the proof we use a combination geometric and variational methods, 
and manage to adapt tools which have recently been developed in 
the a priori unstable case.
\end{abstract}

\markboth{P. Bernard, V. Kaloshin, K. Zhang}{Arnold diffusion along crumpled 
normally hyperbolic invariant cylinders}

\section{Introduction}
On the phase space $\Tm^n\times B^n$,
we consider the Hamiltonian system generated by the $C^r$
time-periodic  Hamiltonian
$$
H_\epsilon(\th,p,t)=H_0(p)+\epsilon H_1(\th,p,t), 
\quad (\theta,p,t)\in \Tm^n\times B^n\times \Tm,
$$
where $\Tm=\Rm/\Zm,\ B^n$ is the unit ball in $\Rm^n$ 
around the origin, and $\epsilon\geq 0$ is a small parameter.
The equations 
$$
\dot \theta=\partial_p H_0+\epsilon \partial_p H_1\quad , \quad 
\dot p=-\epsilon \partial_\th H
$$
imply that the momenta $p$ are constant in the case $\epsilon=0$.
A question of general interest in Hamiltonian dynamics is 
to understand the evolution of these momenta when 
$\epsilon>0$ is small (see e.g. \cite{Ar1,Ar2,AKN}). 
In the present paper, we assume that $H_0$ is convex, 
and ,  more precisely,

\begin{equation}\label{Dd}
(1/D)\ I\leq \partial^2_pH_0 \leq D\ I,
\end{equation}
and prove that a certain form of Arnold's diffusion occur for many perturbations.
We assume that $r\geq 4$ and denote by $\mS^r$ the unit sphere in
$C^r(\Tm^n\times B^n\times \Tm)$.

\begin{mthm}\label{main}
There exist  two continuous functions
$\ell$ and $\epsilon_0$ on $\mS^r$, which are positive on an open and 
dense set \ $\mU\subset \mS^r$, and an open and dense subset
$\mV_1$ of
$$
\mV:=\{H_0+\epsilon H_1:
H_1\in {\mathcal U} ,\ \ 0<\epsilon<\epsilon_0(H_1)\}
$$
such that the following property holds for each Hamiltonian 
$H\in \mV_1$:

There exists an orbit $(\theta(t),p(t))$ of $H_\eps$
and a time $T\in \Nm$ such that
\[
\|p(T)-p(0)\|>\ell(H_1).
\]
\end{mthm}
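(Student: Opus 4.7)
The plan is to construct the diffusing orbit along a single co-dimension one resonance of $H_0$. First I would fix a smooth curve $\Gm \subset B^n$ such that along $\Gm$ the unperturbed frequency $\partial_p H_0$ satisfies a fixed primitive integer relation $k\cdot\partial_p H_0 + k_0 = 0$, and such that the Euclidean length of $\Gm$ exceeds some $\ell(H_1)>0$ determined by $H_1$. Using convexity \eqref{Dd}, one can choose $\Gm$ to have only finitely many transverse crossings with any other resonance of bounded order. The open-dense set $\mU$ would be the set of $H_1$ for which such a curve exists with good properties and the relevant Fourier coefficient of $H_1$ is nonzero along $\Gm$.

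Next I would perform a single-resonance averaging normal form in a tubular neighbourhood of $\Gm$, away from the finite set of strong double resonances. After a symplectic transformation and a rescaling of the resonant angle this puts $H_\epsilon$ in a slow--fast form that is, to leading order, a pendulum in the resonant angle coupled to an $(n-1)$-degree-of-freedom slow Hamiltonian. The hyperbolic fixed points of the pendulum, parameterised by the slow variables, produce a normally hyperbolic invariant cylinder $\mC$ over each regular arc of $\Gm$. Because hyperbolicity degenerates at strong double resonances, the global cylinder along $\Gm$ is not smooth but only piecewise smooth — the ``crumpled'' cylinder of the title — and a separate local model must be analysed at each double resonance to match the incoming and outgoing cylinders; this analysis is precisely where one imposes the second open-dense condition cutting out $\mV_1\subset \mV$.

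The diffusion itself would be produced by variational methods. Adapting the forcing-relation / Ma\~n\'e potential framework developed for the a priori unstable case, I would work with the Tonelli dual of $H_\epsilon$ and, along the cylinder, identify a one-parameter family of cohomology classes $c(s)$, $s\in[0,1]$, whose Aubry sets lie on the connected components of $\mC$. The key step is to verify, in the single-resonance regime, that consecutive classes along this family are forced to one another (equivalently, that the Ma\~n\'e potential admits an appropriate chain of connecting pseudo-orbits), and then to combine these local forcings across the double resonances using the local model of Step 2. A standard shadowing argument then upgrades the chain of Aubry connections to a genuine orbit $(\theta(t),p(t))$ of $H_\epsilon$ whose momentum traverses $\Gm$, giving $\|p(T)-p(0)\|>\ell(H_1)$.

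The main obstacle, and the hardest part of the program, is the analysis at the double resonances. There the normal form degenerates, the cylinder $\mC$ ceases to exist, and one must replace it by a different family of NHICs coming from the homoclinic and heteroclinic orbits of the associated two-degree-of-freedom mechanical system on $\T^2$. Proving that these replacement cylinders exist, are normally hyperbolic with large enough rates, and can be variationally joined to the single-resonance cylinders on either side — all under an open-dense condition on $H_1$ — is what requires the bulk of the technical work and determines the constant $\epsilon_0(H_1)$.
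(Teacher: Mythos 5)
Your proposal overshoots the statement in a way that creates a genuine gap. You propose to traverse the \emph{entire} resonance curve $\Gamma$, including the strong double resonances where the cylinder degenerates, and to replace the single-resonance cylinder near each double resonance by NHICs coming from a two-degree-of-freedom mechanical system. That is precisely the hard problem that the present paper \emph{does not} solve: the paper's own introduction is explicit that the double-resonance problem is left open here (it was resolved later, for $n=2$, in a separate work). Your sketch acknowledges that the double-resonance analysis is ``the hardest part of the program'' and ``requires the bulk of the technical work'' but does not carry it out, so as written there is no proof of the theorem.

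The crucial observation you are missing is that Theorem \ref{main} does not require crossing any double resonance at all. The constant $\ell(H_1)$ is allowed to depend on $H_1$, so it suffices to diffuse along a compact subarc $\Gamma_1\subset\Gamma$ that stays \emph{away} from the problematic double resonances. The paper's normal form (Theorem \ref{normal-form}) identifies, for each $\delta>0$, a locally finite set $\mP_\delta$ of ``punctures'' — the only additional resonances whose contribution to the remainder $R$ is not $C^2$-small — and this set is independent of $\epsilon$. One then takes $\Gamma_1$ disjoint from $\mP_\delta$ and sets $\ell(H_1)$ essentially equal to half the length of $\Gamma_1$, which is positive and $\epsilon$-independent. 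This is what makes the argument tractable: the NHIC of Theorem \ref{intro-nhic-mult} is constructed over the whole of $\Gamma_1$ with no matching problem at double resonances, and the forcing-class / variational argument only needs to run along this single cylinder. Your single-resonance steps (averaging, hyperbolic pendulum normal form, NHIC, Aubry--Mather / forcing machinery) are in the right spirit and broadly match Sections 2--5 of the paper, but the global matching across double resonances should simply be dropped.

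Two further inaccuracies in the setup. First, the paper's open dense set $\mU$ is not ``the relevant Fourier coefficient of $H_1$ is nonzero along $\Gamma$''; it is the condition (Hypothesis \ref{HZ}) that the resonant average $Z(\theta^s,p)=\iint H_1\,d\theta^f\,dt$ has, on a subarc, a unique nondegenerate global maximum in $\theta^s$ with a quantitative gap $\lambda$ (or, in the refined version, at most a bifurcation between two such maxima). This quantitative non-degeneracy is what feeds into the size $\delta$ in the normal form, hence into the spacing between punctures and ultimately into $\ell$. Second, the further thinning $\mV_1\subset\mV$ is not imposed at double resonances; it is a genericity condition on the remainder $R$ of the normal form ensuring, for each $c\in\Gamma_1$, either that no invariant circle is present in $\tilde\mI(u,c)$ (Mather mechanism) or that the set of heteroclinics in the two-fold cover is totally disconnected (Arnold mechanism), see Theorem \ref{intro-gen}.
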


The key point in this statement is that $\ell(H_1)$ does not depend on 
$\epsilon\in ]0, \epsilon_0(H_1)[$. In section~\ref{sec:diff-path}, 
we give a more detailed description of the diffusion path.  
Moreover, an improved version of the main theorem provides 
an explicit lower bound on $l(H_1)$ (see 
Theorem \ref{normal-form} 
and Remark \ref{length}).

The present work is in large part inspired by the work of Mather 
\cite{Ma3,Ma4,Ma5}. In \cite{Ma3}, Mather announced a much 
stronger version of Arnold diffusion for $n=2$. Our set 
$\mathcal{V}$ is what Mather called a {\it cusp residual} set.
As in Mather's work the instability phenomenon thus holds in 
an open dense subset of a cusp residual set. Our result 
is, however, quite different. We obtain a much more 
restricted form of instability, which holds for any $n\geq 2$. 
The restricted character of the diffusion comes from the fact that 
we do not really solve the problem of double resonance (but only 
finitely many, independent from $\epsilon$, double resonances 
are really problematic). The proof of Mather's result is partially 
written (see \cite{Ma4}), and he has given lectures about some 
parts of the proof \cite{Ma5}.
\footnote{ After a preliminary version of this paper was completed 
for $n=2$ the problem of double resonance was solved and existence 
of a strong form of Arnold diffusion is given in \cite{KZ2}. }

The study of Arnold diffusion was initiated by the seminal paper
of Arnold, \cite{Ar1}, where he  describes a diffusion 
phenomenon on a specific example involving two independent 
perturbations. A lot of work has then been devoted to describe 
more general situations where similar constructions could be 
achieved. A unifying aspect of all these situations  is the 
presence of a normally hyperbolic cylinder, as was understood 
in \cite{Mo} and \cite{DLS}, see also \cite{DGLS,DH,T1,T2,CY1,CY2,Be1}.
These general classes of situations have been referred to as 
\textit{ a priori unstable}.

The Hamiltonian $H_{\epsilon}$ studied here is, on the contrary, 
called {\it a priori stable}, because no hyperbolic structure is 
present in the unperturbed system $H_0$. Our method will, 
however, rely on the existence of a normally hyperbolic
invariant cylinder. The novelty here thus consists in proving that
a priori unstable methods do apply in the a priori stable case.
Application of normal forms to construct normally 
$3$-dimensional hyperbolic invariant cylinders in a priori stable 
situation in $3$ degrees of freedom had already been discussed 
in \cite{KZZ} and in \cite{Mar}. The existence of normally 
hyperbolic cylinders with a length independent from $\epsilon$ 
in the a priori stable case, in arbitrary dimension, have been proved in \cite{Be3}, see also \cite{Be5}. In the present paper, we obtain  
an explicit lower bound on the  length of such a cylinder. 
The quantity  $l(H_1)$ in the statement of Theorem  \ref{main} 
is closely related to this lower bound (see also Remark \ref{length}). 
Let us mention some additional  works of interest  around the problem 
of Arnold's diffusion \cite{Be4,Be6,BB,BBB,Bs1,Bs2,Bo,BK,CL1,CL2,Cr,GR1,GR2,
KS,KL1,KL2,KLS,LM,MS,Zha,Zhe,X}  and many others.

\subsection{Reduction to normal form}
\label{sec:diff-path}

As is usual in the theory of instability, we  build our unstable orbits
around a resonance.
A frequency $\omega \in \Rm^n$ is said {\it resonant}
if there exists $k\in \Zm^{n+1}$, $k\neq 0$,  such that
$
k\cdot (\omega,1) =0.
$
The set of such integral vectors $k$ forms a submodule 
$\Lb$ of $\Zm^{n+1}$, 
and the dimension of this module (which is 
also the dimension of the vector subspace of $\Rm^{n+1}$ 
it generates) is called the order, or the dimension of 
the resonant frequency $\omega$.

In order to apply our proof, we have to consider a resonance 
of order $n-1$ or, equivalently, of codimension $1$. 
For definiteness and simplicity, we choose once and for all to work with the resonance
$$\omega^s=0,$$
 where
$$\omega=(\omega^s,\omega^f)\in \Rm^{n-1}\times \Rm.
$$
Similarly, we use the notations
$$
\theta=(\theta^s,\theta^f)\in \Tm^{n-1}\times \Tm,\quad
p=(p^s,p^f)\in \Rm^{n-1}\times \Rm,
$$
which are the slow and fast variables associated to our resonance (see Section \ref{sec:normal-form} for 
definitions).
More precisely, we will be working around the manifold
defined by the equation 
$$
\partial_{p^s} H_0(p)=0
$$
in the phase space. In view of (\ref{Dd}), this equation defines a $C^{r-1}$
curve $\Gamma$ in $\Rm^n$, which can also be described parametrically as 
the graph of a $C^{r-1}$ function $p_*^s(p^f):\Rm^{n-1}\lto \Rm.$
We will also use the notation $p_*(p^f):= (p_*^s(p^f),p^f))$.


%

We define the averaged perturbation corresponding to the resonance $\Gamma$,
\[
Z(\theta^s, p):= \iint H_1(\th^s,p^s,\th^f,p^f,t)\, d\th^f\, dt.
\]
If the perturbation $H_1(\theta,p,t)$ is expanded as
$$
H_1(\theta,p,t)=H_1(\theta^s,\theta^f,p,t)=
\sum_{k^s\in \Zm^{n-1},k^f\in \Zm,l\in \Zm} h_{[k^s,k^f,l]}(p)
e^{2i\pi(k^s\cdot \theta^s+k^f\cdot \theta^f+l\cdot t)},
$$
 then
$$
Z(\theta^s,p)=
\sum_{k^s} h_{[k^s,0,0]}(p)
e^{2i\pi(k^s\cdot \theta^s)}.
$$
Our first generic assumption, which defines the set $\mU\subset \mS^r$
in Theorem \ref{main} is on the shape of $Z$. We assume that 
there exists a subarc $\Gamma_1\subset \Gamma$ such that :
\begin{hyp}\label{HZ}
There exists  a real number $\lambda\in ]0,1/2[$ such that, for each $p\in \Gamma_1$,
there exists $\theta^s_*(p)\in \Tm^{n-1}$ such that  the inequality
\begin{equation}\tag{$HZ\lambda$}\label{HZl}
Z(\theta^s,p)\leq Z(\theta^s_*(p),p)-\lambda d^2(\theta^s,\theta^s_*(p))
\end{equation}
holds for each $\theta^s $.
\end{hyp}

\subsection{Single maximum}
This condition implies that for each $p\in \Gamma_1$
the averaged perturbation $Z(\th,p)$ has a unique 
non-degenerate maximum at $\th^s_*(p)$. In Section \ref{sec:bifurcation} 
we relax this condition and allow 
bifurcations from one global maxima to a different one.
The set of functions $Z\in C^r(\Tm^{n-1}\times B^n)$
satisfying Hypothesis \ref{HZ} on some arc 
$\Gamma_1\subset \Gamma$ is open and dense for each 
$r\geq 2$. 
As a consequence, the set  $ \mU$ of functions 
$H_1\in \mS^r$ (the unit sphere in 
$C^r(\Tm^n\times B^n\times \Tm)$) whose average $Z$ 
satisfies Hypothesis \ref{HZ} on some  arc 
$\Gamma_1\subset \Gamma$ is open and dense in 
$\mS^r$ if $r\geq 2$.

The general principle of averaging theory is that the dynamics of $H_{\epsilon}$
is approximated by the dynamics of the averaged Hamiltonian
$H_0+\varepsilon Z$ in a neighborhood of $\Tm^n\times \Gamma$.
The applicability of this principle is limited by the presence of 
additional resonances, that is points $p\in \Gamma$ such that the 
remaining frequency $\partial_{p^f} H_0$ is rational.
Although additional resonances are dense in $\Gamma$, only finitely many of
them, called {punctures}, are really problematic.
More precisely, denoting by $U_{\epsilon^{1/3}}(\Gamma_1)$
the $\epsilon^{1/3}$-neighborhood of $\Gamma_1$ in $B^n$
and by 
$\mR(\Gamma_1,\varepsilon, \delta)\subset C^r(\Tm^n\times B^n\times \Tm)$
the set of functions $R(\th,p,t): \Tm^n\times B^n\times \Tm\lto \Rm$
such that
$$
\|R\|_{C^2( \Tm^n\times U_{\epsilon^{1/3}}(\Gamma_1)\times \Tm)}\leq \delta.
$$
We  will prove in section \ref{sec:normal-form} that :

\begin{prop}\label{prop-nf}
For each $\delta\in ]0,1[$, there exists a locally finite subset 
$\mP_{\delta}\subset \Gamma$ and $\epsilon_1\in ]0,\delta[$, such that :

For each compact arc $\Gamma_1\subset \Gamma$ disjoint from 
$\mP_{\delta}$, each $H_1\in \mS^r$, and each 
$\epsilon\subset]0,\epsilon_1[$, there exists
a \text{$C^r$} smooth canonical change of coordinates
$$
\Phi :\Tm^n\times B \times \Tm
\lto \Tm^n\times \Rm^n\times \Tm
$$
satisfying $\|\Phi-id\|_{C^0}\leq \sqrt{\epsilon} $ and such that, in 
the new coordinates, the Hamiltonian $H_0+\epsilon H_1$ takes the form
\begin{equation}\label{eq:normal-form-bis}
 N_\epsilon =  H_0(p) + \epsilon Z(\theta^s, p) + \epsilon R(\theta, p, t),
\end{equation}
with   $R\in \mR(\Gamma_1,\epsilon, \delta)$.
\end{prop}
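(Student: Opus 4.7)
My strategy is a one-step Lie-series normal form with an ultraviolet cutoff and a list of ``punctures''. I fix a wavenumber bound $K=K(\delta)$ so that the Fourier tail of any $H_1\in\mS^r$ has $C^2$-norm $\le \delta/2$, declare $\mP_\delta$ to be the finite-on-compacts set of points on $\Gamma$ where the fast frequency admits an exact resonance of order $\le K$, and on the complement solve a small-divisor cohomological equation to cancel every non-$Z$ mode of $H_1^{\le K}$ by a canonical map of size $O(\epsilon)$. What survives is $Z$ plus the high-frequency tail and the quadratic Lie-series remainder, both of which I show are $C^2$-small.

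\textbf{Cutoff and definition of $\mP_\delta$.} Parametrize $\Gamma$ by $p^f\mapsto p_*(p^f)=(p_*^s(p^f),p^f)$ and set $g(p^f):=\partial_{p^f}H_0(p_*(p^f))$. Differentiating $\partial_{p^s}H_0(p_*^s(p^f),p^f)=0$ and substituting gives
\[
g'(p^f) = \partial^2_{p^f p^f}H_0 - (\partial^2_{p^s p^f} H_0)^\top (\partial^2_{p^s p^s} H_0)^{-1}\, \partial^2_{p^s p^f} H_0,
\]
i.e.\ the Schur complement of $\partial^2_{p^s p^s}H_0$ in the full Hessian, which is positive by (\ref{Dd}). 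Hence $g$ is strictly monotone and for every $(k^f,l)\ne (0,0)$ with $\max(|k^f|,|l|)\le K$ the equation $k^f g(p^f)+l=0$ has at most one root, so
\[
\mP_\delta := \bigl\{p\in \Gamma : \exists\,(k^f,l)\ne (0,0),\ \max(|k^f|,|l|)\le K,\ k^f g(p^f)+l=0\bigr\}
\]
is locally finite. Using the Fourier decay $|\partial_p^\alpha h_{[k,l]}(p)|\le C(1+|(k,l)|)^{2-r}$ valid for $H_1\in \mS^r$, I then choose $K=K(\delta,n,r)$ large enough that
\[
H_1^{>K}(\theta,p,t):= \sum_{\max(|k|,|l|)>K} h_{[k^s,k^f,l]}(p)\, e^{2\pi i(k\cdot\theta + lt)}
\]
satisfies $\|H_1^{>K}\|_{C^2}\le \delta/2$ uniformly for $H_1\in \mS^r$.

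\textbf{Cohomological equation.} Fix a compact arc $\Gamma_1\subset \Gamma\setminus\mP_\delta$. Compactness and strict monotonicity of $g$ give $\gamma=\gamma(\Gamma_1,\delta)>0$ with $|k^f g(p^f)+l|\ge 2\gamma$ whenever $p_*(p^f)\in \Gamma_1$ and $(k^f,l)\ne (0,0)$, $\max(|k^f|,|l|)\le K$. On $U_{\epsilon^{1/3}}(\Gamma_1)$ one has $|\partial_{p^s}H_0|\le C\epsilon^{1/3}$, so
\[
|\,l+k\cdot \partial_p H_0(p)\,| \ge 2\gamma - CK\epsilon^{1/3} \ge \gamma
\]
once $\epsilon\le \epsilon_1=\epsilon_1(\delta,\gamma,K)$. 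This lets me define the finite trigonometric polynomial
\[
\tilde\chi(\theta,p,t):= \sum_{\substack{\max(|k|,|l|)\le K\\ (k^f,l)\ne (0,0)}} \frac{h_{[k^s,k^f,l]}(p)}{2\pi i\,(l+k\cdot\partial_p H_0(p))}\, e^{2\pi i(k\cdot\theta + lt)},
\]
whose derivatives up to order $r-1$ on $U_{\epsilon^{1/3}}(\Gamma_1)$ are bounded by a constant depending on $K,\gamma,\|H_0\|_{C^r}$ but not on $\epsilon$ or the particular $H_1\in \mS^r$. By direct Fourier verification it solves
\[
\partial_t \tilde\chi + \partial_p H_0\cdot \partial_\theta \tilde\chi = H_1 - Z - H_1^{>K}.
\]

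\textbf{Normal form, remainder, and main obstacle.} Let $\Phi$ be the time-$1$ map of the Hamiltonian flow of $\chi=\epsilon\tilde\chi$ on the extended phase space $\Tm^n\times\Rm^n\times \Tm\times \Rm$. Because $\chi=O(\epsilon)$ with controlled derivatives, $\|\Phi-\mathrm{id}\|_{C^0}\le C(K,\gamma)\epsilon\le \sqrt\epsilon$ after one shrinking of $\epsilon_1$, and $\Phi$ maps a slightly smaller slab into $U_{\epsilon^{1/3}}(\Gamma_1)\times \Tm^n\times \Tm$. The Lie series gives
\[
(H_0+\epsilon H_1)\circ \Phi = H_0 + \epsilon H_1 - \epsilon\bigl(\partial_t \tilde\chi + \partial_p H_0\cdot \partial_\theta \tilde\chi\bigr) + \epsilon^2 R_2,
\]
where $\|R_2\|_{C^2}$ on the slab is bounded independently of $\epsilon$; inserting the cohomological identity gives $(H_0+\epsilon H_1)\circ \Phi = H_0 + \epsilon Z + \epsilon R$ with $R=H_1^{>K}+\epsilon R_2$, and $\|R\|_{C^2}\le \delta/2 + C\epsilon\le \delta$ after a final shrinkage of $\epsilon_1$. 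The delicate point is the order of quantifiers: $K$ must be fixed from $(\delta, r, n)$ \emph{before} $\mP_\delta$ and $\gamma$ are defined, and $\epsilon_1$ only afterwards; once this is respected, the small-divisor estimate, the $\sqrt\epsilon$-closeness of $\Phi$, and the Lie-series remainder bound are all routine, and I expect the main care to be in verifying the uniform $C^2$ Fourier-tail estimate on $\Tm^{n+1}$ for the admissible value of $r$.
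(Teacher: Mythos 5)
Your construction is a genuine variant of the paper's: instead of first excluding punctures and solving the cohomological equation only where the small divisors are bounded, the paper solves a \emph{globally defined} cohomological equation in which the problematic modes are suppressed by smooth mollifiers $\rho_k(J)=\rho\bigl(\tfrac{k\cdot\partial_J H_0}{\beta\epsilon^{1/4}[k]}\bigr)$, so the generating function and the change of coordinates are defined on all of $\Tm^n\times B\times\Tm$; the resulting normal form is $H_0+\epsilon R_1+\epsilon R_2$ with $R_1$ still containing smoothed resonant terms, and only on the set $\mD(K,\beta\epsilon^{1/4})$ (Corollary~\ref{norm}) does $R_1$ collapse to $Z$. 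Your approach -- monotone fast frequency $g$, finitely many exact resonances of order $\le K$ on any compact arc, uniform divisor bound $\gamma$ on $\Gamma_1$, then a localized Lie series plus a cutoff extension -- is a legitimate alternative route and conceptually simpler, and your definition of $\mP_\delta$ agrees with the paper's punctures.

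However, there is a genuine gap exactly where you flag the ``main care'' and call it routine: it is not. Your ultraviolet-tail estimate $\|H_1^{>K}\|_{C^2}\le\delta/2$ requires the sum $\sum_{[k]>K}[k]^{2-r}$ over $\Zm^{n+1}$ to be small, which forces $r>n+3$ (and the autonomous-system reduction used for the cohomological equation actually costs one more, so the paper works with $r\ge n+5$, see Theorem~\ref{autonomous} and Corollary~\ref{norm}). The proposition, though, must hold with $r\ge 4$ in \emph{arbitrary} dimension $n$. With your direct cutoff this simply fails for $n\ge 2$: no choice of $K=K(\delta,n,r)$ makes the tail $C^2$-small uniformly over the unit sphere $\mS^r$. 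The paper explicitly bridges this with the Moser smoothing lemma (Lemma~\ref{appoximation}): one replaces $H_1$ by an analytic approximation $S_\tau H_1$, runs the normal form at high regularity $r_2=2n+5$, and then estimates $\|S_\tau H_1-H_1\|_{C^3}$ and the extra remainder terms. Without some version of this step your argument cannot close.

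A second, smaller, issue: your $\tilde\chi$ divides by $l+k\cdot\partial_p H_0(p)$, so it has one derivative less in $p$ than $H_0$; the Hamiltonian vector field of $\chi$ loses another, so the time-one map $\Phi$ is only $C^{r-2}$, not $C^r$ as claimed. The paper fixes this by replacing the Fourier coefficients $(1-\rho_k)h_k/(k\cdot\partial_J H_0)$ of the generating function by $C^\infty$ approximations (see the discussion following the definition of $\tilde G$ in the proof of Theorem~\ref{autonomous}), together with the mollification of $H_0$. You should either do the same or weaken the regularity claim. Once you incorporate the Moser smoothing step and approximate the generating function's coefficients, the rest of your argument (monotonicity of $g$, local finiteness of $\mP_\delta$, uniform divisor bound via compactness of $\Gamma_1$, Lie-series remainder estimate, $\sqrt\epsilon$-closeness of $\Phi$) is sound and differs from the paper mostly in bookkeeping.
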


The key aspects of this result is that the set $\mP_{\delta}$ is locally finite and independent from $\epsilon$.
Because it is essential to have these properties of $\mP_{\delta}$, the conclusions on the smallness of $R$
are not very strong. Yet they are sufficient to obtain:

\begin{thm}\label{mainnormal}
Let us consider the $C^r$ Hamiltonian 
\begin{equation}\label{eq:normal-form}
 N_\epsilon(\th,p,t)=H_0(p)+\varepsilon Z(\th^s,p)+\varepsilon R(\th,p,t),
\end{equation}
and assume that $\|Z\|_{C^2}\leq 1$ and that   (\ref{HZl}) holds on some arc 
$\Gamma_1\subset \Gamma$ of the form
$$
\Gamma_1:=\{(p_*(p^f)), p^f\in[a_-,a_+]\}.
$$
Then there exist { constants $\delta>0$ and $\eps_0$}, 
which depends only on $n$, $H_0$, and $\lambda$, and 
such that, for each $\epsilon\in ]0, \eps_0[$, the following 
property holds for {an open} dense subset of functions 
$R\in \mR(\Gamma_1,\epsilon, \delta)$ (for the $C^r$ topology):

There exists an orbit $(\th(t),p(t))$ and an integer $T\in \Nm$ 
such that
$\|p(0)-p_*(a_-)\|< \sqrt \eps$ 
and $\|p(T)- p_*(a_+)\|< \sqrt \eps$.
\end{thm}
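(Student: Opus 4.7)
My plan is to adapt the a priori unstable strategy to this post-normal-form setting by first producing a normally hyperbolic invariant cylinder $\mC$ over the arc $\Gamma_1$, and then exhibiting an orbit shadowing $\mC$ from a neighborhood of $p_*(a_-)$ to a neighborhood of $p_*(a_+)$ by variational methods. Hypothesis \ref{HZ} makes $\th^s_*(p)$ a $\lb$-non-degenerate maximum of $Z(\cdot,p)$ for each $p\in \Gamma_1$, so the family $\{\th^s=\th^s_*(p_*(p^f)),\ p^s=0,\ p^f\in[a_-,a_+]\}$ is a family of hyperbolic saddles for the averaged flow of $H_0+\eps Z$, with normal expansion/contraction rates of order $\sqrt{\eps\lb}$ in the natural slow time. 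Choosing the constant $\dt$ in $\mR(\Gamma_1,\eps,\dt)$ small enough in terms of $\lb,H_0,n$, the $C^2$-smallness of $R$ is dominated by this hyperbolicity and the standard normally hyperbolic invariant manifold persistence theorem produces a $C^1$ cylinder $\mC=\mC_\eps$ for the full flow of $N_\eps$, parameterized smoothly by $(\th^f,p^f)\in \T\times[a_-,a_+]$.

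Next, I would exploit the convexity assumption (\ref{Dd}) on $H_0$ to derive a twist-like structure on $\mC$. Restricting the flow of $N_\eps$ to $\mC$ and passing to the time-$1$ Poincar\'e map yields a $2$-dimensional exact symplectic map on $\T\times[a_-,a_+]$ whose leading order is the twist map of $H_0$ on the $(\th^f,p^f)$-annulus; the $O(\eps)$ perturbation preserves this twist. Consequently, $\mC$ carries well-defined Mather $\al$ and $\bt$ functions and a $1$-parameter family of Aubry/Ma\~n\'e sets indexed by a cohomology class $c\in H^1(\mC;\R)\simeq \R$, the parameter $c$ being (to leading order) a monotone reparameterization of $p^f\in[a_-,a_+]$.

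I then construct a diffusion orbit traversing $\mC$ by a Bernard--Cheng--Yan style variational mechanism. The hyperbolicity of $\mC$ in the $(\th^s,p^s)$-directions, together with the strict concavity of $Z$ at $\th^s_*$, supplies stable and unstable laminations $W^{s}(\mC),W^{u}(\mC)$ whose traces on a common transverse section differ by a Melnikov-type barrier function $B(c)$; this $B(c)$ is computed from the minimal $Z$-action of homoclinic excursions and depends smoothly on $R$. A Sard-type argument in the $C^r$ topology (using $r\geq 4$) shows that for an open and dense subset of $R\in\mR(\Gamma_1,\eps,\dt)$ the barrier $B$ is Morse in $c$ and transverse to the Aubry flats; this transversality is exactly what lets one chain Bernard's $c$-equivalence classes along $c\in[c(a_-),c(a_+)]$, producing an orbit $(\th(t),p(t))$ whose projection onto $p$ runs from a $\sqrt\eps$-neighborhood of $p_*(a_-)$ to a $\sqrt\eps$-neighborhood of $p_*(a_+)$ in integer time $T\in\Nm$.

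The chief obstacle I expect is the weakness of the smallness of $R$: the perturbation is only $C^2$-small by a \emph{fixed} constant $\dt$, independent of $\eps$, so the normal hyperbolicity of $\mC$ (of order $\sqrt{\eps\lb}$) is only marginally stronger than the size of the perturbation once everything is rescaled. The persistence of $\mC$, the Melnikov computation, and the forcing chain must therefore all be performed in the rescaled slow variables, where $R$ becomes of order $\dt$ and can be dominated by the $\lb$-non-degeneracy of $Z$. A secondary difficulty is to establish the forcing relation uniformly along the full length $[a_-,a_+]$ of $\Gamma_1$ independently of $\eps$; here Proposition \ref{prop-nf} is essential, since it has already absorbed all troublesome double resonances into $R$, and the generic condition on $R$ is what neutralizes them.
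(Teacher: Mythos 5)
Your overall architecture — build a normally hyperbolic invariant cylinder $\mC$ over $\Gamma_1$, then force an orbit along it by a variational (Bernard--Cheng--Yan) mechanism — matches the paper, and you have correctly identified the two principal difficulties (the perturbation $R$ is only $O(\delta)$ small while the normal hyperbolicity is only $O(\sqrt{\epsilon\lambda})$, and forcing must be established uniformly along $\Gamma_1$). However, two steps in your proposal are too optimistic and conceal genuine gaps.

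First, you propose to restrict the flow to $\mC$ and extract a twist map on the $(\theta^f,p^f)$-annulus, then run Mather theory intrinsically on $\mC$. This is not available here. The cylinder produced by Theorem~\ref{nhic-mult} is only $C^1$ with derivative bounds such as $\|\partial \Theta^s/\partial p^f\|\le C(1+\sqrt{\delta/\epsilon})$ that blow up as $\epsilon\to 0$, so the inner dynamics is not controlled well enough to claim a twist structure, let alone smooth $\alpha$, $\beta$ functions on $\mC$. The paper instead keeps all of weak KAM/Mather theory on the full phase space $\Tm^n\times\Rm^n\times\Tm$ and uses the cylinder purely for \emph{localization}: Theorem~\ref{thm-loc} shows $s\tilde\mN(c)\subset\mC$, and Theorem~\ref{thm-lip} (a quantitative graph theorem with Lipschitz constant $9\sqrt{D\epsilon}$) then upgrades this to the statement that $\theta^f$ restricted to $\tilde\mI(u,c)$ is a bi-Lipschitz homeomorphism. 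That graph statement, not a twist map, is what drives the rest of the argument.

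Second, and more seriously, your genericity step — ``a Sard-type argument shows the Melnikov-type barrier $B$ is Morse in $c$ and transverse to the Aubry flats'' — is not adequate. The cohomology classes $c$ for which the Mañé set $\tilde\mN(c)$ is an invariant circle (equivalently, $\rho^f(c)$ irrational) form an \emph{uncountable} subset $\Gamma_*(N)\subset\Gamma_1$, and you must make the heteroclinic set $\tilde\mN_{N\circ\Xi}(\xi^*c)-\Xi^{-1}(\tilde\mN_N(c))$ totally disconnected for \emph{all} of them simultaneously with a single perturbation of $R$. A Sard argument handles a single $c$ or a countable family; it does not show that one can achieve the non-degeneracy on an uncountable family. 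The paper's Section~\ref{sec:variational} is devoted to exactly this point (Theorem~\ref{thm:totally-disc}): it proves $1/2$-Hölder continuity of $c\mapsto b^\pm_{N,c}$ (Proposition~\ref{prop:hd}), deduces that the family $\{b^\pm_{N,c}: c\in\Gamma_*(N)\}$ has finite Hausdorff dimension in $C^0$, and then uses a finite-dimensional transversality argument (Lemma~\ref{lem:disc}) to kill the entire family at once, together with a delicate local perturbation scheme (Proposition~\ref{prop:pert-barrier}) that does not disturb the Aubry sets. None of this is captured by ``Sard.'' Also, the relevant barrier is Mather's Peierls barrier (a global minimal-action quantity, not differentiable in $R$ in general), not a Melnikov integral; a first-order Melnikov expansion is not justified when $\delta$ is fixed and $\epsilon\to 0$.
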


\subsection{{Derivation of Theorem~\ref{main} using Proposition~\ref{prop-nf} and  Theorem~\ref{mainnormal}}}

Given $l>0$, we denote by $\mD^r(l)$ the set of $C^r$ Hamiltonians with 
the following property: There exists an orbit $(\theta(t),p(t))$ 
and an integer $T$ such that  $\|p(T)-p(0)\|>l$.
The set $\mD^r(l)$ is clearly open.

We denote by $\mD^r(l)$ the set of $C^r$ Hamiltonians with the following property:
There exists an orbit $(\theta(t),p(t))$ and an integer $T$ such that 
$\|p(T)-p(0)\|>l$.
The set $\mD^r(l)$ is clearly open. 


We now prove the existence of a continuous function $\epsilon_0$ on $\mS^r$
which is positive on $\mU$ and such that each Hamiltonian $H_{\epsilon}=H_0+\epsilon H_1$
with $H_1\in \mU$ and $\epsilon<\epsilon_0(H_1)$ belongs to the closure of $\mD^r(\epsilon_0(H_1))$.

For each $H_1\subset \mU$, there exists 
a compact arc $\Gamma_1\subset \Gamma$
and a number $\lambda \in ]0,1/4[$ such that the corresponding
averaged perturbation  $Z$ satisfies 
Hypothesis \ref{HZ} on $\Gamma_1$ with  constant $2\lambda$.
We then consider the real $\delta$ given by
Theorem~\ref{mainnormal} (applied with the parameter $\lambda$).
By possibly reducing the arc $\Gamma_1$, we  can assume in addition that 
this arc is disjoint from the set $\mP_{\delta}$ of punctures for this 
$\delta$.
The following properties then hold:
\begin{itemize}
\item The averaged perturbation $Z$ satisfies Hypothesis \ref{HZ} on $\Gamma_1$
with  a constant $\lambda'>\lambda$.
\item The parameter $\delta$ is associated to $\lambda$ by Theorem~\ref{mainnormal}.
\item The arc $\Gamma_1$ is disjoint from the set $\mP_{\delta}$ of punctures.
\end{itemize}

We say that $(\Gamma_1, \lambda, \delta)$ is a compatible set of data
if they satisfy the second and third point above.
Then, we denote by $\mU(\Gamma_1,\lambda, \delta)$ the set of $H_1\in \mS^r$
which satisfy the first point.
This is an open set, and we just proved that the union on all compatible
sets of data of these open sets covers $\mU$.

To each compatible set of data $(\Gamma_1, \lambda, \delta)$ we associate the positive numbers
$\ell:=\|p_--p_+\|/2$, where $p_{\pm}$ are the extremities of $\Gamma_1$,  and 
$\epsilon_2(\Gamma_1, \lambda, \delta) := \min( \epsilon_1, \ell^2/5, \ell)$, where $\epsilon_1$ is associated to $\delta$
by Proposition~ \ref{prop-nf}.

Using a partition of the unity, we can build a continuous function $\epsilon_0$ 
on $\mS^r$ which is positive on $\mU$ and have the following property:
For each $H_1\in \mU$, there exists a compatible set of data $(\Gamma_1, \lambda, \delta)$
such that $H_1\in \mU(\Gamma_1, \lambda, \delta)$ and 
$\epsilon_0(H_1)\leq \epsilon_2(\Gamma_1, \lambda, \delta)$.

For this function $\epsilon_0$, we claim that 
each Hamiltonian $H_{\epsilon}=H_0+\epsilon H_1$
with $H_1\in \mU$ and $0<\epsilon<\epsilon_0(H_1)$ belongs to the closure of $\mD^r(\epsilon_0(H_1))$.

Assuming the claim, we finish the proof of Theorem~\ref{main}.
For $l>0$, let us denote by $\mV(l)$ the open set of Hamiltonians of the form $H_0+\epsilon H_1$, where $H_1\in \mU$
satisfies $\epsilon_0(H_1)>l$ and $\epsilon\in ]0, \epsilon_0(H_1)[$.
The claim implies that $\mD(l)$ is dense in $\mV(l)$ for each $l>0$.
The conclusion of the Theorem (with $l(H_1):= \epsilon_0(H_1)$) then holds with the open set 
$\mV_1:= \cup_{l>0} (\mV(l)\cap \mD(l))$, which is open and dense in $\mV=\cup_{l>0}\mV(l)$.

To prove the claim,
we consider a Hamiltonian
$H_{\epsilon}=H_0+\epsilon H_1$, with $H_1\in \mU$ and $\epsilon\in ]0,\epsilon_0(H_1)[$.
We take a  compatible set of data $(\Gamma_1, \lambda, \delta)$ such that 
$H_1\in \mU(\Gamma_1, \lambda, \delta)$ and 
$\epsilon_0(H_1)\leq \epsilon_2(\Gamma_1, \lambda, \delta)$.
We  apply Proposition~\ref{prop-nf} to find a 
change of coordinates $\Phi$ which 
transforms the Hamiltonian $H_0+\epsilon H_1$ to a Hamiltonian in the normal
form 
$\Phi^* H_{\epsilon}=N_{\epsilon}$ with $R\in \mR(\Gamma_1, \epsilon, \delta)$.
The change of coordinates $\Phi$ is fixed for the sequel of this discussion, as well as $\epsilon$.
By Theorem~\ref{mainnormal}, the Hamiltonian $N_{\epsilon}$ can be approximated
in the $C^r$ 
norm by Hamiltonians $\tilde N_{\epsilon}$ admitting an  orbit 
$(\theta(t),p(t))$ such that  $p(0)=p_-$ and $p(T)=p_+$ for some $T\in \Nm$.
Let us denote by $\tilde H_{\epsilon}:= (\Phi^{-1})^*\tilde N_{\epsilon}$ the expression 
in the original coordinates of $\tilde N_{\epsilon}$. It can be made arbitrarily $C^r$-close
to $H_{\epsilon}$ by taking $\tilde N_{\epsilon}$ sufficiently close to $N_{\epsilon}$.
Since $\|\Phi-Id\|_{C^0}\leq \sqrt{\epsilon}$, the extended $\tilde H_{\epsilon}$-orbit 
$(x(t),y(t), t\text{ mod }1):=\Phi(\theta(t),p(t), t\text{ mod }1)$
satisfies $\|p(0)-p_-\|\leq \sqrt \epsilon$
and  $\|p(T)-p_-\|\leq \sqrt \epsilon$,
hence 
$$
\|y(T)-y(0)\|\geq \|p_+-p_-\|-2\sqrt{\epsilon}> \ell\geq \epsilon_0(H_1).
$$
In other words, we have $\tilde H_{\epsilon}\in \mD(\epsilon_0(H_1))$.
We have proved that $H_{\epsilon}$ belongs to the closure of $\mD(\epsilon_0(H_1))$.
This ends the proof of Theorem~\ref{main}.
\qed

The Hamiltonian in normal form $N_{\epsilon}$ has the typical structure of
what is called an a priori unstable system under  Hypothesis \ref{HZ}.
 Actually, under the additional assumption that
$\|R\|_{C^2}\leq \delta$, with $\delta$ sufficiently 
small with respect to  $\epsilon$,
the conclusion of Theorem \ref{mainnormal} would follow from 
the various works on the a priori unstable case, 
see \cite{Be1,CY1,CY2,DLS,GR2, T1,T2}. The difficulty here is 
the weak hypothesis made on the smallness of $R$, and, in particular, 
the fact that $\epsilon$ is allowed to be much smaller than $\delta$.

\subsection{Proof of Theorem \ref{mainnormal}}
\label{sec:proof}
We give a proof based on several intermediate results that will 
be established in the further sections of the paper. The first step is to  
establish the existence  of a normally hyperbolic cylinder. It is detailed 
in Section \ref{sec:NHIC}. As a consequence of the difficulties of our 
situation, we get only a rough control on this cylinder, as was already 
the case in \cite{Be3}. Some $C^1$ norms might blow up when 
$\epsilon\rightarrow 0$ (see (\ref{blow-up})).

The second step consists in building unstable orbits along this cylinder 
under additional generic assumptions. In the a priori unstable case, 
where a regular cylinder is present, several 
methods have been developed. Which of them can be extended 
to the present situation is unclear. Here we  manage to extend 
the variational approach of \cite{Be1,CY1,CY2} (which are based 
on Mather's work). We use the framework of \cite{Be1}, but also essentially 
appeal to ideas from \cite{Mag} and \cite{CY2} for the proof of one of 
the key genericity results. A self-contained proof of the required genericity 
with many new ingredients is presented in Section \ref{sec:variational}.

{The second step consists of three main steps: 
\begin{itemize}
\item Along a resonance $\Gm$ prove existence a normally 
hyperbolic cylinder $\mC$ and derive its properties 
(see Theorem \ref{intro-nhic-mult}). 
\item Show that this cylinlder $\mC$ contains a family of 
Ma\~n\'e sets $\tilde \mN(c),\ c \in \Gamma$, each being 
of Aubry-Mather type, i.e. a Lipschitz graph over the circle
(see Theorem \ref{intro-local}). 
\item Using the notion of a forcing class \cite{Be1} generically 
construct orbits diffusing along this cylinder $\mC$
(see Theorem \ref{intro-gen}). 
\end{itemize}
\subsubsection{Existence and properties of a normally 
hyperbolic cylinder $\mC$}}

\begin{thm}\label{intro-nhic-mult}
Let us consider the $C^r$ Hamiltonian system (\ref{eq:normal-form})
and assume that $Z$ satisfies (\ref{HZl}) on some arc $\Gamma_1\subset \Gamma$
of the form 
$$
\Gamma_1:=\{(p_*(p^f)), p^f\in[a_-,a_+]\}.
$$
Then there exist constants   $C>1>\kappa>\delta>0$, which depend only on $n$, $H_0$, 
and $\lambda$, and such that, for each $\epsilon$  in $ ]0, \delta[$,  
the following property holds for each function $R\in \mR(\Gamma_1,\epsilon, \delta)$:

There exists  a $C^2$ map
$$
(\Theta^s, P^s)(\theta^f, p^f, t):\T \times
[a_- -\kappa{\epsilon}^{1/3}, a_+ +\kappa{\epsilon}^{1/3}]\times\Tm\lto \T^{n-1}\times \Rm^{n-1}$$
such that the cylinder
$$\mC=\big \{ (\theta^s, p^s)=(\Theta^s,P^s)(\theta^f, p^f, t);
\quad p^f\in 
[a_--\kappa{\epsilon}^{1/3}, a_{+} + \kappa{\epsilon}^{1/3}],\quad  (\theta^f,t)\in \T\times \T\big \}$$
is weakly invariant with respect to  $N_\epsilon$ in the sense that 
the Hamiltonian vector field is tangent to $\mC$.
The cylinder $\mC$ is contained in  the set
\begin{align*}
W:=\big\{&(\theta,p,t); p^f\in
[a_- -\kappa{\epsilon}^{1/3}, a_{+} +\kappa{\epsilon}^{1/3}], \\
&\|\theta^s-\theta^s_*(p^f)\|\leq \kappa,
\quad
\|p^s-p^s_*(p^f)\|\le\kappa \sqrt{\epsilon}
\big\},
\end{align*}
and it contains all the full orbits  of
$N_{\epsilon}$ contained in $W$. We have the estimate
\begin{align} \label{blow-up}
\left\|\frac{ \partial \Theta^s}{\partial p^f}\right\|\leq 
C\left(1+\sqrt{\frac{\delta}{\epsilon}}\right)
\quad &,\quad 
 \left\|\frac{ \partial \Theta^s}{\partial (\theta^f,t)}\right\|\leq C(\sqrt \epsilon +\sqrt \delta),\\
 \quad 
\left\|\frac{ \partial P^s}{\partial p^f}\right\|\leq C
\quad &, \quad
\left\|\frac{ \partial P^s}{\partial (\theta^f,t)}\right\|\leq C\sqrt \epsilon,
\end{align}
$$
{\|\Theta^s(\theta^f,p^f,t)-\theta ^s_*(p^f)\|\leq C\sqrt \delta.}
$$
\end{thm}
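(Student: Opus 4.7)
The plan is to obtain $\mC$ as a Fenichel-type persistent perturbation of the cylinder that exists when $R\equiv 0$, after a rescaling that makes the pendulum structure of the slow block explicit. When $R=0$, the Hamiltonian $H_0+\epsilon Z$ does not depend on $\theta^f$ or $t$, so $p^f$ is conserved, and treating $p^f$ as a frozen parameter the slow block $K_{p^f}(\theta^s,p^s):=H_0(p^s,p^f)+\epsilon Z(\theta^s,p^s,p^f)$ is a two-variable Hamiltonian. By the implicit function theorem applied to $\nabla K_{p^f}=0$, combined with $(HZ\lambda)$ and the convexity (\ref{Dd}), $K_{p^f}$ has a unique critical point $(\theta^s_0(p^f),p^s_0(p^f))$ within $O(\epsilon)$ of $(\theta^s_*(p^f),p^s_*(p^f))$; its Hessian has signature $(n-1,n-1)$, producing a hyperbolic saddle with linearized rates of order $\sqrt{\epsilon\lambda/D}$. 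Sweeping these saddles over $(\theta^f,p^f,t)$ gives the unperturbed cylinder $\mC_0$, and $\mC$ will be sought as a small graph-perturbation of $\mC_0$.

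To normalize the hyperbolic rates I introduce $J=(p^s-p^s_*(p^f))/\sqrt\epsilon$ and $\tau=\sqrt\epsilon\,t$; in these variables the slow block becomes an $O(1)$ mechanical system, the normal rates on $\mC_0$ are of order $1$ in $\tau$, the fast angles $\theta^f,t$ rotate at rate $O(1/\sqrt\epsilon)$, and $p^f$ drifts at rate $O(\sqrt\epsilon\,\delta)$. Because the tangential dynamics along $\mC_0$ have no expansion (only rotation and slow drift), the Hirsch--Pugh--Shub/Fenichel persistence theorem for normally hyperbolic invariant manifolds with boundary applies for $\delta$ small, producing a weakly invariant $C^2$ graph $(\Theta^s,P^s)(\theta^f,p^f,t)$; the buffer $\kappa\epsilon^{1/3}$ on $p^f$ is chosen so that the lateral drift cannot carry $p^f$ out of the enlarged interval on the relevant time scales. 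The estimates (\ref{blow-up}) are then derived by linearizing the invariance equation at $\mC_0$ and inverting the transport operator along unperturbed orbits: inverting the hyperbolic block costs a factor $1/\sqrt\epsilon$ in transverse directions, and since $Z$ is independent of $(\theta^f,t)$ while $R$ is not, differentiating in $(\theta^f,t)$ exposes only $R$-terms and produces the factor $\sqrt\epsilon+\sqrt\delta$, while differentiating in $p^f$ combines the bounded geometric variation of the saddle with an $R$-driven contribution of size $\sqrt\delta/\sqrt\epsilon$. The $C^0$-bound $\|\Theta^s-\theta^s_*\|\le C\sqrt\delta$ reflects that a $C^0$-forcing of size $\delta$ displaces the maximum of a quadratic potential with curvature $\lambda$ by $O(\sqrt\delta)$; the asymmetry between the $P^s$- and $\Theta^s$-estimates is exactly the $\sqrt\epsilon$ factor relating $J$ to $p^s-p^s_*$.

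The claim that $\mC$ contains all full orbits staying in $W$ is a standard trapping argument: any such orbit must lie in the intersection of the local stable and unstable manifolds of $\mC$, which equals $\mC$ itself for a pendulum-like slow block. Weak invariance is immediate from the graph-transform construction. I expect the main obstacle to be the quantitative propagation of the $\delta$-dependence in the second step, in particular the sharp $\sqrt{\delta/\epsilon}$ bound on $\partial_{p^f}\Theta^s$ together with the bounded $\partial_{p^f}P^s$: this demands a careful block decomposition of the linearized invariance equation that avoids losing powers of $\sqrt\epsilon$, and it is precisely here that the mere $C^2$-smallness of $R$ (rather than the $C^r$-smallness one would want for standard NHIM persistence) most constrains the construction.
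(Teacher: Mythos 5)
Your overall strategy---identify the $R=0$ cylinder $\mC_0$, rescale $p^s$ by $\sqrt\epsilon$ so the saddle becomes $O(1)$-hyperbolic, then invoke persistence of normally hyperbolic manifolds---is the same skeleton the paper uses, and the $C^0$ bound $\|\Theta^s-\theta^s_*\|\le C\sqrt\delta$ from displacing a $\lambda$-curvature maximum is the right heuristic. But there is a genuine gap at the step you yourself flag as ``the main obstacle'': you cannot get the estimates in \eqref{blow-up} by citing Hirsch--Pugh--Shub/Fenichel, for two concrete reasons. First, those theorems are qualitative; they assert $C^1$-closeness of the perturbed manifold to the unperturbed one, but they do not produce the anisotropic, $\epsilon$- and $\delta$-resolved Lipschitz constants that the statement asks for (in particular the sharp $\sqrt{\delta/\epsilon}$ blow-up in $\partial_{p^f}\Theta^s$ next to the uniformly bounded $\partial_{p^f}P^s$, and the factor $\sqrt\epsilon+\sqrt\delta$ in the $(\theta^f,t)$-derivatives). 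These numbers have to be read off from a cone estimate or graph-transform fixed-point argument with all constants kept explicit; the paper builds precisely this tool in Appendix~\ref{sec:abstract-nhic} (Theorem~\ref{abstractNHI} and Proposition~\ref{realNHI}, with the Lipschitz constant $K=m/(\alpha-2m)$ read off from the cone data $\alpha, m$). Second, $\mC_0$ has a noninvariant boundary in $p^f$: when $R=0$, $\dot p^f\equiv 0$, so $\mC_0$ is neither overflowing nor inflowing invariant, which is what standard Fenichel requires at the boundary. One must modify the vector field near the boundary so as to create an isolating block, prove the abstract theorem for the modified field, and then show the original cylinder is trapped inside; this is exactly what the cutoff $\rho$ in Proposition~\ref{realNHI} and the buffer $\kappa\epsilon^{1/3}$ accomplish, but ``Fenichel with boundary'' glides over it.

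A secondary issue: the paper does not merely rescale $p^s$ and $t$; it also conjugates by the matrix $T(p^f)=\bigl(B^{1/2}(B^{1/2}AB^{1/2})^{-1/2}B^{1/2}\bigr)^{1/2}$, which diagonalizes the linearized saddle into $\dot\xi=\sqrt\epsilon\,\Lambda\xi$, $\dot\eta=-\sqrt\epsilon\,\Lambda\eta$. For Theorem~\ref{intro-nhic-mult} alone (where constants are absorbed into $C$) you might get away without this, but if you try to invert the linearized invariance operator directly on the $(\theta^s,J)$-block without first block-diagonalizing it, the off-diagonal coupling (of size $\epsilon$ against a diagonal of size $\sqrt\epsilon$) makes the inversion estimate messier than your sketch suggests, and it is easy to lose the clean separation between the $\sqrt{\delta/\epsilon}$ contribution from $R$ and the $O(1)$ contribution from the $p^f$-variation of the saddle. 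The argument ``the set of full orbits in $W$ equals $\mC$ because it equals the intersection of local stable and unstable manifolds'' is also not immediate from a persistence theorem; in the paper it is a direct consequence of the isolating-block formulation ($W^c(F,\Omega)$ is contained in the graph of $w^c$), and you would need to supply a comparable trapping argument explicitly.
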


A similar,  weaker, result is proved in  \cite{Be3}. 
The present  statement contains better quantitative estimates.
It follows from  Theorem  \ref{nhic-mult}
below, which makes these estimates even more explicit. 
The terms $\kappa \epsilon^{1/3}$ come from the fact that we only estimate $R$ on 
the $\epsilon ^{1/4}$-neighborhood of $\Gamma_1$, see the definition of $\mR(\Gamma_1,\epsilon, \delta)$.



For convenience of notations we extend our system from $\Tm^n\times B^n \times \T$
to $\Tm^n\times \Rm^n \times \T$.  
It is more pleasant in many occasions to consider 
the time-one Hamiltonian flow $\phi$ and the discrete system 
that it generates on $\Tm^n\times\Rm^n$. 
We will thus consider the cylinder
$$
\mC_0=\{(q,p)\in \Tm^n \times \Rm^n : (q,p,0) \in \mC\}.
$$
We will think  of this cylinder as being  $\phi$-invariant, although this is not 
precisely true, due to the possibility that orbits may escape through the boundaries.
If $r$ is large enough, it is possible to prove the existence of a really invariant
cylinder closed by KAM invariant circles, but this is not useful here.

The presence of this normally hyperbolic invariant cylinder is another 
similarity with the a priori unstable case. The difference is that we only 
have rough control on the present cylinders, with some estimates blowing up
when $\epsilon \lto 0$. As we will see, variational methods can still be used 
to build unstable orbits along the cylinder. We will  use the variational 
mechanism of \cite{Be1}. Variational methods for this problem were initiated 
by Mather, see \cite{Ma2} in an abstract setting.
In a quite different direction, they were also used by Bessi to study 
the Arnold's example of \cite{Ar1}, see \cite{Bs1}. 

\subsubsection{Weak KAM and Mather theory}
We will use standard notations of weak KAM and Mather theory,
we recall here the most important ones for the convenience of the reader.
We mostly use Fathi's presentation in terms of weak KAM solutions, see \cite{Fa},
and also \cite{Be1} for the non-autonomous case.
We consider the Lagrangian function $L(\theta,v,t)$ associated to $N_{\epsilon}$
(see Section \ref{sec:localization} for the definition)
and, for each $c\in  \Rm^n$, the function 
$$
G_c(\theta_0,\theta_1):= \min _{\gamma} \int_0^1 L(\gamma(t), \dot \gamma(t),t) 
-c\cdot \dot \gamma(t)  dt,
$$
where the minimum is taken on the set of $C^1$ curves
$\gamma:[0,1]\lto \Tm^n$ such that $\gamma(0)=\theta_0, \gamma(1)=\theta_1$.
It is a classical fact that this minimum exists, and that the minimizers is the projection 
of a Hamiltonian orbit.
A (discrete) {\it weak KAM solution at cohomology $c$} is a function $u\in C(\Tm^n,\Rm)$
such that 
$$
u(\theta)=\min_{v\in \Rm^n} \left[ u(\theta-v)+ 
G_c(\theta-v,\theta)+\alpha(c)\right]
$$
where $\alpha(c)$ is the only real constant such that such a function $u$ exists.
For each curve $\gamma(t):\Rm\lto \Tm^n$ and each $S<T$ in $\Zm$ 
we thus have the inequalities
$$
u(\gamma(T))-u(\gamma(S))
\leq
G_c(\gamma(S),\gamma(T))+(T-S)\alpha(c)
\leq
\int_S^T L(\gamma(t), \dot \gamma(t),t) 
-c\cdot \dot \gamma(t) +\alpha(c) dt.
$$
A curve $\theta (t):\Rm\lto \Tm^n$ is said {\it calibrated} by $u$
if 
$$
u(\theta(T))-u(\theta(S))=  \int_S^T L(\theta(t), \dot \theta(t),t) 
-c\cdot \dot \theta(t) +\alpha(c) dt,
$$
for each $S<T$ in $\Zm$.
The curve $\theta(t)$ is then the projection of a Hamiltonian  orbit $(\theta(t),p(t))$, 
such an orbit is called a {\it calibrated orbit}. We denote by 
$$\tilde \mI (u,c)\subset \Tm^n \times \Rm ^n$$ 
the union on all calibrated orbits $(\theta,p)(t)$ of the sets $(\theta,p)(\Zm)$, or equivalently of the sets $(\theta,p)(0)$.
In other words, these are the initial conditions the orbits of which are calibrated by $u$. 
By definition, the set $\tilde \mI(u,c)$ is invariant under the time one Hamiltonian flow $\varphi$,
it is moreover compact and not empty.
We also denote by 
$$s\tilde \mI (u,c)\subset  \Tm^n \times \Rm ^n\times \Tm$$ 
the suspension  of $\tilde \mI (u,c)$, or in other words the set of points of the form
$((\theta(t), p(t),t\mod 1)$ for each $t\in \Rm$ and each calibrated orbit $(\theta,p)$.
The set $s\tilde \mI(u,c)$ is compact and invariant under the extended Hamiltonian flow.
Note that $s\tilde \mI(u,c)\cap \{t=0\}= \tilde \mI(u,c)\times \{0\}$.
The projection 
$$
\mI(u,c)\subset \Tm^n
$$
of $\tilde \mI(u,c)$ on $\Tm^n$ is  the union of points $\theta(0)$ where $\theta$ is a calibrated curve.
The projection 
$$
s\mI(u,c)\subset  \Tm^n \times \Tm
$$
of $s\tilde \mI(u,c)$ on $\Tm^n\times \Tm$ is  the union of points $(\theta(t), t\mod 1)$ where $t\in \Rm$ and $\theta$ is a calibrated curve.
It is an important result of Mather theory that $s\tilde \mI(u,c)$ is a Lipschitz graph above $s\mI(u,c)$ 
(hence $\tilde \mI(u,c)$ is a Lipschitz graph above $\mI(u,c)$ ).
We finally  define the Aubry and Ma\~n\'e sets by 
\begin{align} \label{aubry-mane-sets}
\tilde \mA(c)=\cap_u \tilde \mI(u,c)\ , \ s\tilde \mA(c)=\cap_u s\tilde \mI(u,c)\ , \ \tilde \mN(c)=\cup_u\tilde \mI(u,c)
\ , \  s\tilde \mN(c)=\cup_u s\tilde \mI(u,c),
\end{align}
where the union and the intersection are taken on the set  of all weak KAM solutions
$u$ at cohomology $c$. 
When a clear distinction is needed, we will call the sets $s\tilde \mA (c), s\tilde \mN (c)$ the suspended
Aubry (and Ma\~né) sets.
We denote by $s\mA(c)$ and $s\mN(c)$ the projections on $\Tm^n\times \Tm$,
of $s\tilde \mA(c)$ and $s\tilde \mN(c)$. Similarly,  $\mA(c)$ and $\mN(c)$ are the projections on $\Tm^n$
of $\tilde \mA(c)$ and $\tilde \mN(c)$.
The Aubry set $\tilde \mA(c)$ is  compact, non-empty and invariant under the time one flow. It is 
a Lipschitz graph above the projected Aubry set $\mA(c)$.
The Ma\~n\'e set $\tilde \mN(c)$ is compact and invariant.
Its orbits (under the time-one flow) either belong, or are bi-asymptotic, to 
$\tilde \mA(c)$.

In \cite{Be1}, an equivalence relation is introduced 
on the cohomology $H^1(\Tm^n,\Rm)=\Rm^n$, called {\it forcing relation}.
It will not be useful for the present exposition to recall
the precise definition of this forcing relation. What is important is that,
if $c$ and $c'$ belong to the same forcing class, then there exists an orbit
$(\theta(t),p(t))$ and an integer $T\in \Nm$ such that $p(0)=c$ and $p(T)=c'$. 
We will establish here that, in the presence of generic additional
assumptions, the resonant arc $\Gamma_1$ is contained in a forcing class,
which implies the conclusion of Theorem \ref{mainnormal}, but also the existence
of various types of orbits, see \cite{Be1}, Section 5, for more details.
To prove that $\Gamma_1$ is contained in a forcing class, it is enough 
to prove that each of its points is in the interior of its forcing class. 
This can be achieved using the mechanisms exposed in \cite{Be1}, 
called the Mather mechanism and the Arnold mechanism, under 
appropriate  informations on the sets 
$$
\tilde \mA(c)\subset  \tilde \mI(u,c)\subset \tilde \mN(c),\qquad 
c\in \Gamma_1.
$$

\subsubsection{Localization and a graph theorem}
The first step is to relate these sets to the normally hyperbolic cylinder
$\mC_0$ as follows:

\begin{thm}\label{intro-local}
In the context of Theorem~\ref{intro-nhic-mult}, 
we can assume by possibly reducing the constant $\delta>0$ that the following
 additional property holds for each function $R\in \mR(\Gamma_1,\epsilon, \delta)$ 
with $\epsilon\in ]0,\delta[$:

 For each $c\in \Gamma_1$, the Ma\~n\'e set $\tilde \mN(c)$ is contained in the 
 cylinder $\mC_0$.
 Moreover, the restriction of the coordinate map $\theta^f:\Tm^n\times \Rm^n\lto \Tm$ 
 to $\tilde \mI(u,c)$
 is a Bi-Lipschitz homeomorphism for each Weak KAM solution $u$ at cohomology $c$.
\end{thm}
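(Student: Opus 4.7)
The plan is to prove the two assertions in sequence: first the localization $\tilde{\mN}(c)\subset\mC_0$ via a variational estimate built from the averaged system, and then the bi-Lipschitz graph property by reducing to a near-integrable twist argument on $\mC_0$.

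For the inclusion $\tilde{\mN}(c)\subset\mC_0$ I would first pin down $\alpha(c)$ to order $O(\epsilon\delta)$. The upper bound $\alpha(c)\leq \max_{q,t}H_\epsilon(q,c,t)=H_0(c)+\epsilon Z(\theta^s_*(c^f),c)+O(\epsilon\delta)$ follows from the dual inequality $L(q,v,t)-c\cdot v\geq -H_\epsilon(q,c,t)$. The matching lower bound is obtained by averaging along a trial curve from the averaged system, with slow component frozen at $\theta^s_*(c^f)$ and fast component rotating at the resonant frequency $\partial_{p^f}H_0(c)$. Then, for any calibrated orbit $(\theta(t),p(t))$ at cohomology $c$, the Hamilton-Jacobi inequality $u(\gamma(T))-u(\gamma(S))\leq\int L_c+\alpha(c)\,dt$ must be nearly saturated, which forces the instantaneous energy $H_\epsilon(\theta(t),p(t),t)$ close to the above maximum. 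Strict convexity of $H_0$ in $p$ and the quadratic coercivity (\ref{HZl}) of $Z$ then localize $p^f\approx c^f$, $p^s\approx p^s_*(c^f)$, $\theta^s\approx \theta^s_*(c^f)$ with the required $\kappa$-size. The orbit thus stays in $W$, and the trapping property in Theorem~\ref{intro-nhic-mult} places it on $\mC_0$.

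For the bi-Lipschitz property of $\theta^f|_{\tilde{\mI}(u,c)}$ I factor the map as $\tilde{\mI}(u,c)\to\{(\theta^f,p^f)\}\to\T$. The first arrow is bi-Lipschitz because $\tilde{\mI}(u,c)\subset\mC_0$ and $\mC_0$ is the graph of $(\Theta^s,P^s)$ over $(\theta^f,p^f,t)$ with Lipschitz constants given by (\ref{blow-up}). For the second arrow I reduce $N_\epsilon$ to dynamics on the weakly invariant cylinder $\mC_0$, a two-dimensional surface with $(\theta^f,p^f)$ as coordinates. On $\mC_0$ the reduced time-one map is a near-integrable twist map: its leading piece is $H_0$ restricted to $\Gamma$, which has $\partial^2_{p^f}H_0$ bounded away from zero, and the exclusion of the double-resonant punctures $\mP_\delta$ ensures $\partial_{p^f}H_0$ itself stays bounded away from zero on $\Gamma_1$, so $\theta^f$ rotates monotonically with uniformly positive speed. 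Classical Aubry-Mather theory for twist maps then gives that each $c$-calibrated invariant set projects bi-Lipschitz onto $\T$ under $\theta^f$.

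The hardest step will be the $\theta^s$-localization in the first part. Because $\epsilon$ is allowed to be much smaller than $\delta$, the residual $\epsilon R$ is not small compared to $\epsilon Z$, so a naive a priori unstable argument fails. The workaround is that the quadratic action loss from moving $\theta^s$ away from $\theta^s_*(c^f)$ is of order $\epsilon\lambda d^2$ by (\ref{HZl}), whereas the $R$-contribution over a unit time segment is at most $\epsilon\delta$; choosing $\delta$ small compared to $\lambda$ absorbs the $R$-term into the potential well of $Z$ and yields a $\kappa$-localization of $\theta^s$ uniform in $\epsilon\in(0,\delta)$, matching the shape of $W$ in Theorem \ref{intro-nhic-mult} and explaining why it suffices to shrink $\delta$ in the statement.
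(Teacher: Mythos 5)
Your localization argument (first part) follows essentially the same strategy as the paper's Theorem~\ref{thm-loc}: bound $\alpha(c)$ to $O(\epsilon\delta)$, estimate oscillations of weak KAM solutions, and use the coercivity (\ref{HZl}) to control excursions of calibrated orbits in $\theta^s$ — you correctly identify that the competition $\epsilon\lambda d^2$ vs.\ $\epsilon\delta$ is what makes this work despite $\epsilon\ll\delta$. That part is sound.

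The second part, however, takes a genuinely different route and has a real gap. The paper does \emph{not} reduce to a $2$-dimensional twist map on $\mC_0$. It applies the quantitative Mather graph theorem (Theorem~\ref{thm-lip}): $\tilde\mI(u,c)$ is a $9\sqrt{D\epsilon}$-Lipschitz graph of $p$ over $\theta$. Combined with the cylinder estimate $\|\theta^s_2-\theta^s_1\|\le C(1+\sqrt{\delta/\epsilon})\bigl(\|\theta^f_2-\theta^f_1\|+\|p_2-p_1\|\bigr)$ from (\ref{blow-up}), the cancellation $\sqrt{\epsilon}\cdot(1+\sqrt{\delta/\epsilon})=\sqrt{\epsilon}+\sqrt{\delta}$ absorbs the blow-up and yields $\|p_2-p_1\|\le\|\theta^f_2-\theta^f_1\|$ directly, with no reduction. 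Your factorization $\tilde\mI(u,c)\to\{(\theta^f,p^f)\}\to\T$ fails at the first arrow: this arrow is \emph{not} uniformly bi-Lipschitz, precisely because $\|\partial\Theta^s/\partial p^f\|=O(1+\sqrt{\delta/\epsilon})$ can be unbounded as $\epsilon\to 0$. To rescue your scheme you would need a \emph{quantitative} Lipschitz estimate, of order $O(\epsilon)$, for $p^f$ as a function of $\theta^f$ on the reduced Aubry--Mather set, so that $\epsilon\cdot(1+\sqrt{\delta/\epsilon})=\epsilon+\sqrt{\epsilon\delta}$ is small — but this is in effect a $2$-dimensional reproduction of the very $\sqrt{\epsilon}$-Lipschitz graph theorem you are trying to avoid. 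You should therefore not expect the reduction to make the Lipschitz problem any easier.

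Two further substantive issues with the $2$-dimensional reduction. First, $\mC_0$ is only \emph{weakly} invariant — orbits may escape through the boundary — so the ``reduced time-one map'' is not a globally defined map of an annulus, and classical Aubry--Mather theory does not apply off the shelf. The paper explicitly declines to close the cylinder by KAM circles for this reason. Second, and more seriously: even granting a well-defined twist map on $\mC_0$, you would need a correspondence lemma stating that the (full-system) set $\tilde\mI(u,c)\subset\mC_0$ coincides with an Aubry--Mather set of the reduced system. This is a nontrivial restriction lemma (it is one of the main technical points in the Cheng--Yan approach), and nothing in your sketch addresses it. Finally, ``projects bi-Lipschitz \emph{onto} $\T$'' is too strong: the projection is a bi-Lipschitz homeomorphism onto its image, which is a proper subset of $\T$ in the rational case, and that is all the theorem claims.
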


\proof
The proof is based on estimates on 
Weak KAM solutions that will be established
in Section \ref{sec:localization}.
Let $\kappa$ be as given by Theorem~\ref{intro-nhic-mult}.
 Theorem \ref{thm-loc} 
(which is stated and proved in Section \ref{sec:localization})
implies that  
the suspended Ma\~ne set $s\tilde\mN(c)$ is contained in the set 
$$
\{ \|\theta^s - \theta^s_*(c^f)\| \le \kappa, \quad
\|p^s - p^s_*(c^f)\| \le \kappa \sqrt{\epsilon} , 
\quad
|p^f-c^f|\leq \kappa \sqrt{\epsilon}
\}
$$
provided $R\in \mR(\Gamma_1,\epsilon, \kappa^{16})$  and 
 $\epsilon\in ]0,\epsilon_0[$ (a constant depending on $\kappa$).
As a consequence, this inclusion holds for $R\in \mR(\Gamma_1,\epsilon, \delta)$ 
and $\epsilon \in ]0,\delta[$, with $\delta =\min(\kappa^{16}, \epsilon_0)$.
The suspended Ma\~n\'e set  
$s\tilde \mN(c)$ is then contained in the domain called $W$ in the statement 
of Theorem \ref{intro-nhic-mult}. 
It is thus contained in $\mC$, hence $\tilde \mN(c)\subset \mC_0$.

Let us consider a Weak KAM solution $u$ of $N_{\epsilon}$ at cohomology $c$
and prove the projection part of the statement.
Let $(\theta_i,p_i), i=1,2$ be two points
in $\tilde \mI(u,c)$.
By Theorem \ref{thm-lip}, we have
$$\|p_2-p_1\|\leq 9 \sqrt{D\epsilon}\|\theta_2-\theta_1\|
\leq 9\sqrt{D\epsilon}(\|\theta_2^f-\theta_1^f\|+\|\theta_2^s-\theta_1^s\|).
$$
Since the points belong to $\mC_0$, the last estimate in 
Theorem \ref{intro-nhic-mult} implies that
$$
\|\theta_2^s-\theta_1^s\|\leq C({1+\sqrt{\delta/\epsilon}})
(\|\theta_2^f-\theta_1^f\|+\|p_2-p_1\|).
$$
We get
$$
\|p_2-p_1\|\leq {9C\sqrt{D}}\big(2\sqrt{\epsilon}+\sqrt{\delta}\big)
\|\theta_2^f-\theta_1^f\|+
{9C\sqrt{D}} \big(\sqrt{\epsilon}+\sqrt{\delta}\big)\|p_2-p_1\|.
$$
If $\delta$ is small enough and $\epsilon<\delta$, then  
$$
{9C\sqrt{D}} \big(\sqrt{\epsilon}+\sqrt{\delta}\big)\leq
{9C\sqrt{D}} \big(2\sqrt{\epsilon}+\sqrt{\delta}\big)\leq
\frac{1}{2}
$$
hence 
$$
\|p_2-p_1\|\leq {9C\sqrt{D}}\big(2\sqrt{\epsilon}+\sqrt{\delta}\big)\|\theta_2^f-\theta_1^f\|+
 \frac{1}{2}\|p_2-p_1\|,
$$
thus 
$$
\|p_2-p_1\|\leq 
{9C\sqrt{D}}\big(4\sqrt{\epsilon}+2\sqrt{\delta}\big)\|\theta_2^f-\theta_1^f\|
\leq \|\theta_2^f-\theta_1^f\|.
$$
\qed

{\subsubsection{Structure of Aubry sets inside the cylinder and 
existence of diffusing orbits}}
This \text{last} result, in conjunction with the theory of circle 
homeomorphisms, has strong consequences:

All the orbits of $\tilde \mA_0(c)$ have the same rotation number 
$\rho(c)=(\rho^f(c),0)$, with $\rho^f(c)\in \Rm$.
Since the sub-differential $\partial \alpha(c)$ of the convex function 
$\alpha$ is the rotation set of $\tilde \mA(c)$, we conclude that the function 
$\alpha$ is differentiable at each point of $\Gamma_1$,
with $d\alpha(c)=(\rho^s(c),0)$.

When $\rho^s(c)$ is rational,  the Mather minimizing measures are supported 
on periodic orbits.

When $\rho^s(c)$ is irrational, the invariant set $\tilde \mA(c)$ 
is uniquely ergodic. As a consequence, there  exists one and only one weak KAM solution
(up to the addition of an additive constant), hence
$\tilde \mN(c)=\tilde \mA(c)$.

In the  irrational case, we will have to consider homoclinic orbits.
Such orbits can be dealt with by
considering  the two-fold covering
\begin{align*}
 \xi:\Tm^n&\lto \Tm^n\\
\theta=(\theta^f, \theta^s_1,\theta^s_2,\cdots,\theta^s_{n-1})
&\lmto \xi(\theta)=
 (\theta^f, 2\theta^s_1,\theta^s_2,\cdots,\theta^s_{n-1}).
\end{align*}
The idea of using a covering to study homoclinic orbits comes from Fathi, see 
\cite{Fa2}.
This covering lifts to a symplectic covering
\begin{align*}
\Xi:\Tm^n\times \Rm^n&\lto \Tm^n\times \Rm^n\\
(\theta,p)=(\theta, p^f, p^s_1,p^s_2,\ldots,p^s_{n-1})
&\lmto \Xi(\theta,p)=
(\xi(\theta),p^f, p^s_1/2,p^s_2,\ldots,p^s_{n-1}),
 \end{align*}
and we define the lifted Hamiltonian $\tilde N=N\circ \Xi$.
It is known, see \cite{Fa2, CP, Be1} that
$$
\tilde \mA_{H\circ \Xi}(\xi^*c)=\Xi^{-1}\big(\tilde \mA_{H}(c)\big)
$$
where $\xi ^*c =(c^f,c^s_1/2,c^s_2,\ldots,c^s_{n-1})$.
On the other hand, the inclusion
$$
\tilde \mN_{N\circ \Xi}(\xi^*c)\supset \Xi^{-1}\big(\tilde \mN_{N}(c)\big)
=\Xi^{-1}\big(\tilde \mA_{N}(c)\big)
$$
is not an equality.
More precisely, in the present situation,   the set $\tilde \mA_{ N\circ \Xi}(\tilde c)$
is the union of two disjoint homeomorphic copies of the circle
 $\tilde \mA_{ N}(\tilde c)$, and $\tilde \mN_{N\circ \Xi}(\tilde c)$
contains heteroclinic connections between these copies (which are the liftings of orbits 
homoclinic to $\tilde \mA_N(c)$).
More can be said if we are allowed to make a small perturbation 
to avoid degenerate situations.
We recall that a metric space is called totally disconnected if its
only connected subsets are its points.
The hypothesis of total disconnectedness in the following statement can be seen 
as a weak form of transversality of the stable and unstable manifolds
of the invariant circle $\tilde \mA_N(c)$. 

\begin{thm}\label{intro-gen}
In the context of Theorems \ref{intro-nhic-mult} and \ref{intro-local},
the following
property holds for a  dense subset of functions $R\in \mR(\Gamma_1,\epsilon, \delta_0)$
(for the $C^r$ topology):
Each $c\in \Gamma_1$ is in one of the following cases:
\begin{enumerate}
 \item $\theta^f(\mI(u,c))\subsetneq \T$ for each weak KAM solution
 $u$ at cohomology $c$.
 \item $\rho(c)$ is irrational,  $\theta^f(\mN_N(c))=\T$ (hence, $\tilde \mN_N(c)$
 is an invariant circle), and 
  $\tilde \mN_{N\circ \Xi}(\xi^*c)-\Xi^{-1}(\tilde \mN_{N}(c))$ is 
totally disconnected.
\end{enumerate}
The arc $\Gamma_1$ is then contained in a forcing class, hence the conclusion 
of Theorem \ref{mainnormal} holds.
\end{thm}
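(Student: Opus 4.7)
The plan is to reduce the problem to a low-dimensional dynamical question on the invariant cylinder, then classify each cohomology class $c\in \Gamma_1$ by the behavior of its calibrated orbits on $\mC_0$, and finally invoke the forcing class machinery of \cite{Be1} to conclude. Concretely, Theorem \ref{intro-local} tells us that for every $c\in\Gamma_1$ the Ma\~n\'e set $\tilde \mN(c)$ sits inside the normally hyperbolic cylinder $\mC_0$, and that $\theta^f$ is a bi-Lipschitz homeomorphism on each $\tilde\mI(u,c)$. Hence the calibrated orbits are governed by a circle-like dynamics with a well-defined rotation number, and the sub-differential $\partial\alpha(c)$ is a single vector $(\rho^f(c),0)$, so $\alpha$ is differentiable on $\Gamma_1$ and $\tilde\mA(c)$ carries a unique rotation number.

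\textbf{Step 1 (Dichotomy from circle theory).} Using the classification of orientation-preserving circle homeomorphisms, I would split $\Gamma_1$ into three subsets according to $\rho^f(c)$: (a) $\rho^f(c)\in\Qm$, so the Mather minimizing measures are supported on periodic orbits; (b) $\rho^f(c)\notin\Qm$ with $\tilde\mA(c)$ a Denjoy-type Cantor set; (c) $\rho^f(c)\notin\Qm$ with $\tilde\mA(c)$ a full invariant circle. In cases (a) and (b), Case (1) of the theorem will typically hold, because either the periodic Aubry orbits miss some $\theta^f$-arc or the Cantor projection is a proper subset of $\T$; in the periodic case one verifies by a small perturbation (local bump on $R$ away from the selected minimizing orbit) that the other weak KAM solutions can also be arranged so that $\theta^f(\mI(u,c))\subsetneq\T$.

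\textbf{Step 2 (Irrational invariant-circle case).} When $\tilde\mA(c)$ is an invariant circle and $\rho^f(c)$ is irrational, unique ergodicity forces $\tilde\mN(c)=\tilde\mA(c)$ (up to additive constants, there is a single weak KAM solution), so $\theta^f(\mN_N(c))=\T$ and we land in Case (2) except for the condition on the double cover. Passing to the lifted system $\tilde N = N\circ\Xi$, the invariant circle splits into two disjoint copies, and $\tilde\mN_{N\circ\Xi}(\xi^*c)\setminus\Xi^{-1}(\tilde\mN_N(c))$ consists exactly of heteroclinic connections between the two copies, i.e.\ the lifts of homoclinic orbits to $\tilde\mA_N(c)$. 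The task is to make this set totally disconnected by a $C^r$-small perturbation of $R$ within $\mR(\Gamma_1,\epsilon,\delta_0)$.

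\textbf{Step 3 (Genericity and forcing).} The main obstacle is the perturbative step that achieves total disconnectedness for all $c$ belonging to the invariant-circle set simultaneously. I would follow the strategy of \cite{Mag} and \cite{CY2}: view the heteroclinic set in the cover as a Lipschitz graph restricted to the projection of the homoclinics, then design a countable family of $C^r$-small perturbations, each supported in a neighborhood of a potentially connected piece of the heteroclinic set and tuned to break any non-trivial continuum, and combine them through a Baire argument over a countable dense subset of $\Gamma_1$ (extended to all of $\Gamma_1$ by upper-semicontinuity of the Ma\~n\'e set). The delicate point, and the true difficulty, is that the available size of $R$ is only $\delta_0$ (not $\ll\epsilon$), so the stable/unstable foliations of the circle come from the rough normally hyperbolic cylinder of Theorem \ref{intro-nhic-mult} with its $C^1$-blow-up estimates; one must therefore carry out the transversality perturbations using only the $C^2$ control provided there, which is exactly where ideas from \cite{Be1,CY2} and Section \ref{sec:variational} enter.

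\textbf{Step 4 (Forcing class along $\Gamma_1$).} Once the dichotomy is secured on a dense subset, in Case (1) the Mather mechanism of \cite{Be1} shows that $c$ lies in the interior of its forcing class (one slides $c$ along $\Gamma_1$ using the extra room left by $\theta^f(\mI(u,c))\subsetneq\T$), while in Case (2) the Arnold mechanism of \cite{Be1}, applied to the heteroclinic connections made totally disconnected by the perturbation, yields the same conclusion. Since $\Gamma_1$ is connected and each of its points is interior to its forcing class, $\Gamma_1$ is contained in a single forcing class, and the definition of the forcing relation directly provides an orbit with $p(0)$ near $p_-$ and $p(T)$ near $p_+$, which is the conclusion of Theorem \ref{mainnormal}.
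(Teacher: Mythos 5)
Your outline follows the same broad decomposition as the paper (dichotomy between the non-invariant-circle case and the irrational invariant-circle case, then the Mather/Arnold mechanisms of \cite{Be1} to place each $c$ in the interior of its forcing class), but two of the steps as written contain genuine gaps.

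\textbf{The rational case in Step 1.} You say the rational and Denjoy sub-cases ``typically'' fall into Case (1), and then appeal to a local bump perturbation to handle the other weak KAM solutions. This is vague and does not clearly work: in the rational case there may be many weak KAM solutions, and there is no obvious local perturbation of $R$ that shrinks $\theta^f(\mI(u,c))$ for \emph{all} of them. The paper's route is cleaner: it selects the $C^r$-dense set $\mR_1$ of $R$ for which the flow has no non-trivial invariant circle with rational rotation number (a Kupka--Smale-type condition), and then observes that since $\theta^f$ restricted to $\tilde\mI(u,c)$ is a homeomorphism onto its image (Theorem~\ref{intro-local}), the equality $\theta^f(\mI(u,c))=\T$ \emph{forces} $\tilde\mI(u,c)$ to be an invariant circle. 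For $R\in\mR_1$ that circle cannot have rational rotation number, so Case (1) can fail only when $\rho^f(c)$ is irrational and $\tilde\mI(u,c)=\tilde\mA(c)=\tilde\mN(c)$ is an invariant circle. This single remark disposes of both your sub-cases (a) and (b) at once and removes the need for any ad hoc perturbation at rational parameters.

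\textbf{The perturbation in Step 3.} Here the gap is more serious. You propose to run a Baire argument over a countable dense subset of $\Gamma_1$ and then ``extend to all of $\Gamma_1$ by upper-semicontinuity of the Ma\~n\'e set.'' But upper semi-continuity of $c\mapsto\tilde\mN(c)$ goes the \emph{wrong way} for this: it tells you that $\limsup_{c_k\to c}\tilde\mN(c_k)\subset\tilde\mN(c)$, so achieving total disconnectedness at a sequence $c_k\to c$ gives you no control over $\tilde\mN(c)$ itself, which may be strictly larger than any limit set. The set $\Gamma_*(N)$ of parameters with an invariant circle is typically uncountable, and total disconnectedness is not inherited along limits of parameters. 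The paper resolves this by working with all $c\in\Gamma_*(N)$ \emph{simultaneously}: Proposition~\ref{prop:hd} shows that $c\mapsto b^\pm_{N,c}$ is $1/2$-Hölder from $\Gamma_*(N)$ into $C^0(\T^n)$, so the family $\{b^\pm_{N,c}\}$ is a compact set of finite Hausdorff dimension, and Lemma~\ref{lem:disc} produces a single $C^r$-small bump $\varphi$ that renders the minimum sets of \emph{every} function $b^\pm_{N,c}+\varphi$ in that family totally disconnected in a given cube. Combined with Proposition~\ref{prop:pert-barrier} (which shows this $\varphi$ can be realized by a bona fide perturbation of the Hamiltonian without disturbing the Aubry sets), this gives density; the $G_\delta$ property (Lemma~\ref{lem:G-delta}) then closes the argument. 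A pointwise-in-$c$ Baire scheme cannot reproduce this because each perturbation must succeed on an uncountable continuum of parameters at the same time.

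Step 4 is correct and matches the paper. To fix the proposal you should replace your Step 1 treatment of the rational case by the Kupka--Smale/invariant-circle argument, and replace the Baire-over-$c$ scheme in Step 3 by the Hölder-in-$c$ plus finite-Hausdorff-dimension simultaneous perturbation, as in Section~\ref{sec:variational}.
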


\proof 
By general results on Hamiltonian dynamics, the set 
$\mR_1\subset \mR(\Gamma_1,\epsilon, \delta_0)$ of functions 
$R$ such that the flow map $\phi$ does not admit any non-trivial 
invariant circle of rational rotation number is $C^r$-dense.
This condition  holds for example if $N$ is Kupka Smale (in 
the Hamiltonian sense, see \cite{RR} for example).

Since the coordinate map $\theta^f$
is a homeomorphism in restriction to $\tilde \mI(u,c)$,
this set is an invariant circle if  $\theta^f(\mI(u,c))=\T$.
If $R\in \mR_1$, this implies that the rotation number $\rho^f(c)$
is irrational.
In other words, for $R\in \mR_1$, condition 1 can be violated only
at points $c$
when $\rho^f(c)$ is irrational, and then $\tilde \mI(u,c)=\tilde \mA(c)=\tilde \mN(c)$
is an invariant circle.

When $R\in \mR_1$, it is possible to perturb $R$ away from $\mC_0$ in such a way that 
$\tilde \mN_{N\circ \Xi}(\xi^*c)-\Xi^{-1}(\tilde \mN_{N}(c))$
is totally disconnected for each value of $c$ such that $ \tilde \mN(c)$
is an invariant circle. This second perturbation procedure is not easy because
there are uncountably many such values of $c$. This is the result of Theorem~\ref{thm:totally-disc}.  A result of this kind was obtained in \cite{CY2}, here we give a self-contained proof with many new ingredients, see Section~\ref{sec:variational}.

We now explain, under the additional condition (1 or 2),
how the variational mechanisms of \cite{Be1}
can be applied to prove that $\Gamma_1$
is contained in a forcing class.
It is enough to prove that each point $c\in \Gamma_1$
is in the interior of its forcing class. We treat separately the two cases.

In the first case, we can apply the Mather mechanism, see (0.11) in \cite{Be1}.
In that paper, the subspace $Y(u,c)\subset \Rm^n$, defined as the set of cohomology classes
of closed one-forms whose support is disjoint from $\mI(u,c)$, is associated to each 
weak KAM solution $u$ at cohomology $c$
(in \cite{Be1}, the notation $R(\mG)$ is used).
In the present case, we know that the map $\theta^f$
restricted to $\tilde \mI(u,f)$ is a bi-Lipschitz homeomorphism which is not onto.
We conclude that $Y(u,c)=\Rm^n$.
Since this holds for each weak KAM solution $u$, we conclude  that
$$
Y(c):= \cap_u Y(u,c)=\Rm^n.
$$
The result called  Mather mechanism in \cite{Be1}
states that there is a small ball $B\subset Y(c)$ centered at $0$ in $Y$ such that
the forcing class of $c$ contains $c+B$. In the present situation,
we conclude that $c$ is in the interior of its forcing class.

In the second case, we can apply the Arnold's Mechanism, see Section 9 in \cite{Be1}.
We work with the Hamiltonian $N\circ \Xi$ lifted to the two-fold cover.
By Proposition (7.3) in \cite{Be1}, it is enough to prove that
 $\xi^*c$ is in the interior  of its forcing class for the lifted Hamiltonian
$N\circ \Xi$;
this implies that $c$ is in the interior of its forcing class for $N$.

The preimage 
$\Xi^{-1}\big(\tilde \mN_{N}(c)\big)$ is the union of two closed curves $\tilde \mS_1$
and $\tilde \mS_2$. The set $\tilde \mN_{N\circ \Xi}(\xi^*c)$
contains these two curves, as well as a set $\tilde \mH_{12}$ of 
heteroclinic connections from $\tilde \mS_1$ to $\tilde \mS_2$, and a set 
$\tilde \mH_{21}$ of heteroclinic connections
from $\tilde \mS_2$ to $\tilde \mS_1$.
Theorem (9.2) in \cite{Be1} states that $\xi^*c$ is in the interior of its
forcing class provided  $\tilde \mH_{12}$
and $\tilde \mH_{21}$ are totally disconnnected.
Actually, the hypothesis is stated in \cite{Be1} in a slightly different way,
we explain in Appendix \ref{sec:disc} that total disconnectedness actually
implies the hypothesis of \cite{Be1}.
We conclude 
that each $c\in \Gamma_1$ is in the interior of its forcing class.
Since $\Gamma_1$ is connected, it is   contained in a single forcing class.
It is then a simple consequence of the definition of the forcing relation,
see \cite{Be1}, Section 5, that the conclusion of Theorem \ref{mainnormal} holds.
This ends the proof of Theorem \ref{mainnormal}, using the results proved 
in the rest of the paper.
\qed

\subsection{Bifurcation points and a longer diffusion path}
\label{sec:bifurcation}


This section discusses some improvements on  Theorems \ref{main} and \ref{mainnormal}.
There are two limitations to the size of the resonant arc 
$\Gamma_1\subset \Gamma$ to which the above construction can be applied.

The first limitation comes from the assumption that hypothesis (\ref{HZl}) 
should hold on $\Gamma_1$. Given a resonant arc $\Gamma_2\subset \Gamma$,  
it is generic to satisfy this condition on a certain subarc 
$\Gamma_1\subset \Gamma_2$, but it is not generic to satisfy (\ref{HZl}) 
on the whole of $\Gamma_2$. The presence of values of $c\in \Gamma_2$ 
such that $Z(.,c)$ has two nondegenerate maxima can't be excluded.
%
In this section, we explain how a  modification  on the 
proof of Theorem \ref{mainnormal} allows to get rid of this limitation.

The second limitation comes from the normal form theorem, and from 
the impossibility to incorporate a finite set of additional resonances 
(punctures) in the domain of our normal forms. This limitation  is serious, 
and bypassing it would require a specific work
around additional resonances which will not be discussed here.
Some preprints on this issue appeared after the first version of
the present works, see \cite{C,KZ1,KZ2} (the latter ones being sequels to 
the present work, and the first one is independent).
Here, the best we can achieve is to prove existence of diffusion orbits between 
two consecutive punctures. The number of punctures 
is independant from $\epsilon$, it depends on the parameter 
$\delta$ in Theorem~\ref{mainnormal}, which can be computed using 
the non-degeneracy parameter $\lambda$, see Remark~\ref{length}.

In order to get rid of the first limitation, we consider a second hypothesis on $Z$:
\begin{hyp}\label{HZ2}
 There exists a  real number $\lambda>0$ and two points 
$\vartheta_1^s,\vartheta_2^s$ in 
 $\Tm^{n-1}$ such that the balls $B(\vartheta^s_1,3\lambda)$ and
 $B(\vartheta^s_1,3\lambda)$ are disjoint and such that, for each $p\in \Gamma_1$,
 there exists two local maxima
$\theta^s_1(p)\in B(\vartheta^s_1,\lambda)$ and 
$\theta^s_2(p)\in B(\vartheta^s_2,\lambda)$ of the function $Z(.,p)$ 
in $ \Tm^{n-1}$ satisfying  
\begin{align*}
\partial^2_{\theta^s} Z(\theta^s_1(p),p)\leq \lambda I
\quad,\quad 
\partial^2_{\theta^s} Z(\theta^s_2(p),p)\leq \lambda I,\qquad \qquad \qquad \\
 Z(\theta^s,p) \le \max \{Z(\theta^f_1(p),p),Z(\theta^f_2(p),p)\}- \lambda
\big(\min\{d(\theta^s-\theta^s_1), d(\theta^s-\theta^s_2)\}\big)^2
\end{align*}
for each $p\in \Gamma_1$ and each $\theta^s\in \Tm^{n-1}$.
\end{hyp}

Given an arc $\Gamma_2\in \Rm^n$, the following property is generic 
in $C^r(\Tm^{n-1}\times \Rm^n,\Rm)$:

The arc $\Gamma_2$ is a finite union of subarcs such that either Hypothesis 
\ref{HZ} or Hypothesis \ref{HZ2} holds on each of these subarcs, with a common constant $\lambda>0$.

We have the following improvement on Theorem \ref{mainnormal}:

\begin{prop}\label{bifurcation-genericity}
For the system (\ref{eq:normal-form}), assume that there exists $\lambda >0$ such that  for each $c\in \Gamma_1$, 
  either Hypothesis 1 or 2 hold for each $c\in \Gamma_1$. Then there exists  $\delta>0$, 
  which depend only on $n$, $H_0$, 
and $\lambda$, and such that, for each $\epsilon \in ]0, \delta[$, the following
property holds for a  dense subset of functions $R\in \mR(\Gamma_1,\epsilon, \delta)$
(for the $C^r$ topology):

There exists an orbit $(\th(t),p(t))$ 
and an integer $T\in \Nm$ such that 
$p(0)=p_*(a_-)$ and 
$p(T)=p_*(a_+)$.
\end{prop}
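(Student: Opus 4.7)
The plan is to reduce the bifurcation case to a chain of single-maximum situations glued together at isolated bifurcation points. After a preliminary $C^r$-small perturbation of $R$, the two continuous functions $p \mapsto Z(\theta^s_i(p),p)$, $i=1,2$, cross transversally along the one-dimensional arc $\Gamma_1$, producing a finite bifurcation locus $\{c^*_1,\dots,c^*_M\} \subset \Gamma_1$. This partitions $\Gamma_1$ into closed subarcs $\Gamma_1^{(0)},\dots,\Gamma_1^{(M)}$ on each of which exactly one of the two local maxima is the strict global maximum. Under Hypothesis~\ref{HZ2}, the construction of Theorem~\ref{intro-nhic-mult} can be carried out in disjoint neighborhoods of $\vartheta^s_1$ and $\vartheta^s_2$, yielding two weakly invariant cylinders $\mathcal{C}_1$ and $\mathcal{C}_2$ over the full arc $\Gamma_1$, each satisfying the quantitative estimates of Theorem~\ref{intro-nhic-mult}.

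On each subarc $\Gamma_1^{(j)}$ the dominance of one local maximum allows the Lagrangian-action estimates underlying Theorem~\ref{intro-local} to confine $\tilde{\mathcal{N}}(c)$ to the dominant cylinder $\mathcal{C}_{i(j)}$ for $c$ in the relative interior: a calibrated minimizer whose time-averaged $\theta^s$ wanders near the subdominant maximum incurs an excess action of order $\epsilon$ times the strict gap between the two maximum values, which is positive on the interior of $\Gamma_1^{(j)}$. With this localization in hand, the bi-Lipschitz graph property over $\theta^f$ and both the Mather and Arnold mechanisms from the proof of Theorem~\ref{intro-gen} apply verbatim, so that after a further $C^r$-small generic perturbation each subarc $\Gamma_1^{(j)}$ lies inside a single forcing class.

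The main obstacle is at each bifurcation point $c^*_j$, where $\tilde{\mathcal{N}}(c^*_j)$ may meet both cylinders and contain heteroclinic orbits running between them. The plan is a two-cylinder variant of the Arnold mechanism: pass to the finite cover $\Xi$ as in case~(2) of Theorem~\ref{intro-gen}, observe that $\tilde{\mathcal{A}}_{N\circ\Xi}(\xi^*c^*_j)$ then splits into disjoint lifts of the Aubry pieces on $\mathcal{C}_1$ and $\mathcal{C}_2$, and show that after a further generic perturbation the heteroclinic connection sets $\tilde{\mathcal{H}}_{12}$ and $\tilde{\mathcal{H}}_{21}$ between these two families are totally disconnected. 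This genericity is a localized version of Theorem~\ref{thm:totally-disc}, but is substantially easier here because only the finitely many $c^*_j$ must be treated simultaneously, rather than an uncountable family of irrational rotation numbers. Theorem~9.2 of \cite{Be1} then places each $c^*_j$ in the interior of its forcing class, merging the forcing classes of the two adjacent subarcs. Since $\Gamma_1$ is connected, all of it lies in one forcing class, and the existence of an orbit from $p_*(a_-)$ to $p_*(a_+)$ follows from the definition of the forcing relation, exactly as at the end of the proof of Theorem~\ref{intro-gen}.
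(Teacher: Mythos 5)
Your overall architecture is the right one — two cylinders, localization of Aubry/Ma\~né sets to the dominant cylinder on subarcs, Mather/Arnold mechanisms on subarcs, and a two-cylinder Arnold mechanism at the finitely many bifurcation points — and that matches the spirit of the paper. However, there is a genuine gap in your first step, and it propagates.

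You propose to make the curves $p\mapsto Z(\theta^s_i(p),p)$ cross transversally by a $C^r$-small perturbation of $R$. But $Z$ is the average of $H_1$ and does not depend on $R$ at all; a perturbation of $R$ cannot move these curves. Consequently, you cannot arrange the "finite bifurcation locus" at the level of $Z$, and the partition $\Gamma_1 = \bigcup_j \Gamma_1^{(j)}$ you build from the $Z$-data is not the one that governs the Aubry set. The quantity that actually determines which cylinder contains $\tilde{\mA}(c)$ is not the gap between the two values of $Z$ at its local maxima, but the gap between the two ``local'' $\alpha$-functions $\alpha_1(c)$ and $\alpha_2(c)$ attached to the modified Hamiltonians $N-F_i$, where $F_i$ is a bump that only touches a neighborhood of the opposite cylinder. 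These $\alpha_i$ do depend on $R$ (through $N$), and the $Z$-gap and the $\alpha$-gap can disagree in sign whenever the $Z$-gap is comparable to $\delta$. So on the subarcs you carve out using $Z$-dominance, the Ma\~né set need not be confined to the cylinder you expect.

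The fix is precisely the point the paper makes: define $\alpha_i$ via $N-F_i$, prove $\alpha(c)=\max\{\alpha_1(c),\alpha_2(c)\}$ with $\tilde{\mN}(c)=\tilde{\mN}_i(c)$ on $\{\alpha_i>\alpha_{i'}\}$, and then perturb $R$ by $-sF_1$ (which is locally constant near both cylinders and hence does not disturb the Aubry sets there). This shifts $\alpha_2$ by exactly $s$ while leaving $\alpha_1$ unchanged, so Sard's theorem applied to $\alpha_1-\alpha_2$ gives a finite bifurcation locus and a transversal crossing. Once this correction is made, the rest of your argument — unique ergodicity for the countably many rational rotation numbers, disconnectedness of heteroclinics for the finitely many bifurcation values and for the uncountably many invariant circles, connectedness of $\Gamma_1$ implying a single forcing class — is essentially the one the paper carries out, and is sound.
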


\begin{proof}[Proof of Proposition~\ref{bifurcation-genericity}]
We use the same framework as in the proof of Theorem \ref{mainnormal},
so it is enough to prove that each element of $\Gamma_1$ is in the interior of its forcing class.

Observe first that 
Theorem \ref{nhic-mult} can be applied to prove the existence
of two invariant cylinders $\mC^1$ and $\mC^2$ in the extended phase space
$\Tm^n\times \Rm^n \times \Tm$.
Moreover, we can chose the parameter $\kappa$ smaller than $ \lambda$, 
in such a way that 
$$
\theta^s(\mC_1)\subset  B(\vartheta^s_1,2\lambda)
\quad, \quad 
\theta^s(\mC_2)\subset  B(\vartheta^s_2,2\lambda).
$$
As earlier,
we denote by $\mC^1_0$ and $\mC^2_0$
the intersections with the section $\{t=0\}$.
By  Theorem \ref{thm-double}, we have
$$
\tilde \mA(c)\subset \mC^1_0\cup \mC^2_0
$$
for each $c\in \Gamma_1$.
Let us now introduce two smooth  functions $F_i(\theta^s):\Tm^{n-1}\lto [0,1]$,
$i\in \{1,2\}$, with the property that 
$F_1=1$  in $  B(\vartheta^s_2,2\lambda)$,
$F_1=0$ outside of $  B(\vartheta^s_2,3\lambda)$, 
$F_2=1$  in $  B(\vartheta^s_1,2\lambda)$
and $F_2=0$ outside of $  B(\vartheta^s_1,3\lambda)$

Considering the modified Hamiltonians $N- F_i$ will help the description 
of the Mather sets of $N$. One can check by inspection in the proofs 
(using that $F_i$ does not depend on  $p$) that Theorem~\ref{thm-loc} applies to $N-F_i$, and allows to conclude 
that the Ma\~n\'e set $\tilde \mN_i(c)$ of $N-F_i$ is contained in $\mC^i_0$.
Let us denote by $\alpha_i(c)$ the $\alpha$ function of $N-F_i$. 
These objects are closely related  to Mather's local Aubry sets.

\begin{lem}
For each $c\in \Gamma_1$, $\alpha_i(c)$ are differentiable 
at $c$, and $ \alpha(c)=\max\{\alpha_1(c), \alpha_2(c)\}.$
 Moreover,
 \begin{itemize}
  \item If $\alpha(c)=\alpha_1(c)>\alpha_2(c)$, then $\tilde \mN(c)=\tilde \mN_1(c)$,
  \item If $\alpha(c)=\alpha_2(c)>\alpha_1(c)$, then $\tilde \mN(c)=\tilde \mN_2(c)$,
  \item If $\alpha(c)=\alpha_1(c)=\alpha_2(c)$, then 
  $\tilde \mN_1(c)\cup\tilde \mN_2(c)\subsetneq \tilde \mN(c)$.
 \end{itemize}

\end{lem}

\proof
The functions $\alpha_i(c)$ are $C^1$ for the same reason as $\alpha(c)$
is $C^1$ in the one peak case.

Since $N-N_i\leq N$, we have $\alpha_i(c)\leq \alpha(c)$.
On the other hand, we know that  
$$
\alpha(c)=\max _{\mu} \big( c\cdot \rho(\mu)-\int p \partial_p N-N d\mu\big),
$$
where the minimum is taken on the set of invariant measures $\mu$.
Since we know that $
\tilde \mA(c)\subset \mC^1_0\cup \mC^2_0
$,
and since the maximizing measures are supported on the Aubry set, 
we conclude that each ergodic maximizing measure is supported either on $\mC^1$
or on $\mC^2$. If the measure is supported in $\mC^i$,
then we have 
$$
\alpha_i(c)\geq c\cdot \rho(\mu)-\int p \partial_p N-N+F_i d\mu=
c\cdot \rho(\mu)-\int p \partial_p N-N d\mu=\alpha(c).
$$
This proves the equality 
$\alpha(c)=\max \{\alpha_1(c),\alpha_2(c)\}.
 $

As is explained in the proof of Theorem~\ref{thm-double},
there are two possibilities for the Ma\~n\'e set $\tilde \mN(c)$:
either it is contained in one of the $\mC^i_0$, or it intersects both of them,
and then also contains connections (because it is necessarily chain transitive).

 If the Ma\~n\'e set $\tilde \mN(c)$ intersects $\mC^i_0$, then the intersection
is a compact  invariant set, which thus support an invariant measure. 
This measure must be maximizing the functional 
$c\cdot \rho(\mu)-\int p \partial_p N-N d\mu$,
and thus also the functional 
$c\cdot \rho(\mu)-\int p \partial_p N-N +F_i d\mu$.
As a consequence, we must have $\alpha(c)=\alpha_i(c)$.
\qed

 We can prove by the variational mechanisms of \cite{Be1} that a point $c$ 
 is in the interior of its forcing class in the following  three cases:
 
First case, the  Ma\~n\'e $\tilde \mN(c)$ set is contained in one of  
the cylinders $\mC^i_0$, and it does not contain any invariant circle. 
Then the Mather mechanism applies as in the single peak case, and  
$c$ is contained in the interior of its forcing class.
 
Second case, the Ma\~n\'e set is an invariant circle (then necessarily 
contained in one of the cylinders $\mC^i_0$), it is uniquely ergodic, and
$\tilde\mN_{N\circ \Xi}(c)-\Xi^{-1} (\tilde \mN(c))$ is totally disconnected.
Then the Arnold's mechanism  applies as in the single peak case, 
and $c$ is contained in the interior of its forcing class.
 
Third case, the sets $\tilde \mN_i(c)$ are both non-empty and uniquely 
ergodic, and $\tilde\mN(c)-\big( \tilde\mN_1(c)\cup \tilde\mN_2(c)\big)$ 
is totally disconnected. Then the Arnold's mechanism applies directly 
(without taking a cover), and $c$ is contained in the interior of its forcing class.
 
Each $c\in \Gamma_1$ is in one of these three cases provided 
the following set of additional conditions holds:

\begin{itemize}
 \item The sets $\tilde \mN_i(c)$ are uniquely ergodic.
 \item The equality $\alpha_1(c)=\alpha_2(c)$ has finitely many solutions on 
 $\Gamma_1$.
 \item The set $\tilde \mN(c)-\big(\tilde \mN_1(c)\cup \tilde \mN_2(c)\big)$ 
 is totally disconnected (and not empty) when $\alpha_1(c)=\alpha_2(c)$.
 \item The set $\tilde\mN_{N\circ \Xi}
 (c)-\Xi^{-1} (\tilde \mN(c))$ is totally disconnected whenever
 $\tilde \mN(c)$ is an invariant circle.
 \end{itemize}

 Let us now explain how these conditions can be imposed by  a  $C^r$
 perturbation of $R$.
 
 We first consider a perturbation $R_1$ of $R$ such that, for each rational number 
 $\rho \in \Qm\times \{0\}$, there exists a unique Mather minimizing measure
 of rotation number $\rho$. Such a condition is known to be generic
 (because it concerns only countably many rotation numbers)
 see \cite{Mn,CP,BC,Be7}.
 
 We then consider a perturbation $R_2$ of the form $R_1-s F_1$, with a small $s>0$.
 It is easy to see that the functions $\alpha^2_i(c), c\in \Gamma_1$ associated to 
 the Hamiltonian $H_0+\epsilon Z+\epsilon R_2$ are 
 $$
 \alpha^2_1(c)=\alpha^1_1(c)
 \quad,\quad
 \alpha^2_2(c)=\alpha^1_2(c)+s
 $$
 where $\alpha_i^1(c)$ are the functions associated to $H_0+\epsilon Z+\epsilon R_1$.
 By Sard's theorem, there exist arbitrarily small regular values
 $s$ of the difference $\alpha^1_1-\alpha_2^1$. If $s$ is such a value,
 then $0$ is a regular value of the difference $\alpha^2_1-\alpha_2^2$, hence
 the equation $\alpha^2_1(c)=\alpha_2^2(c)$ has only finitely many solutions on $\Gamma$.
 Note that the perturbation is locally constant around the cylinders 
 $\mC^i$, hence  this  second perturbation does not destroy the first property.
 
 We then perform  new perturbations supported away from $\mC^i$, which 
 preserve the first two properties.
 The third property is not hard to obtain since it now concerns only finitely many values
 of $c$. The last property is obtained using arguments of Section \ref{sec:variational}.

 We have proved that the Hamiltonian $R$ can be perturbed in such a way that 
 each point of $\Gamma_1$ is in the interior of its forcing class.
 \end{proof}
 
\section{Normal forms}\label{sec:normal-form}

The goal of the present section is to prove Proposition~\ref{prop-nf}
which allows to reduce Theorem \ref{main}
to Theorem \ref{mainnormal}.
This reduction to the normal form does not use the convexity assumption.
 We put  the initial Hamiltonian $H_{\epsilon}$ in 
normal form around a compact subarc $\Gamma_2$ of the resonance
$$\Gamma=\{p^s=p_*(p^f)\}=\{(p\in \Rm^n, \partial_{p^s}H_0=0\}.
$$
{This global normal form is obtained by using mollifiers to glue local normal 
forms that depends on the arithmetic properties of the frequencies. This allows 
a simpler proof for instability, as we avoid the need to justify transitions between 
different local coordinates. }

{ Recall that study a resonance of order $n-1$ or, equivalently, of codimension $1$.  
The resonance of order $n-1$ is given by a lattice 
$\Lambda$ span by $n-1$ linearly independent vectors 
$k_1,\dots,k_{n-1}\in (\Zm^n\setminus 0) \times \Zm$. 
Denote by $\theta^s_j=k_j \cdot \theta, 
\ \omega^s_j=k_j \cdot \nabla H_0(p),\, j=1,\dots,n-1,$  
and $\theta^s=(\theta^s_1,\dots,\theta^s_{n-1})$ 
the slow angles and by $\om^s=(\om^s_1,\dots,\om^s_{n-1})$ the slow actions resp.
Choose a complement angle 
$\theta^f$ so that 
$(\theta^s,\theta^f)\in \Tm^{n-1}\times \Tm$ form a basis.
}

For  $p\in \Gamma$ we have  $\omega(p)=(0, \partial_{p^f}H_0(p))$.
 We say that
 $p$ has an additional resonance if the remaining frequency
$\partial_{p^f}H_0(p)$ is rational.
In order to reduce the system to an appropriate normal form,
we must remove some additional resonances.
More precisely, we denote by  $\mD(K,s)\subset B$  the set of momenta $p$ such that
\begin{itemize}
 \item $\|\partial_{p^s} H_0(p)\|\leq s$, and
\item $|k^f\partial_{p^f} H_0(p) +k^t|\geq 3Ks\quad$ for each 
$(k^f,k^t)\in \Zm^2$ satisfying $\max(|k^f|,|k^t|)\in ]0,K]$.
\end{itemize}

The following result, which does not use the convexity of $H_0$,
 is a refinement of 
Proposition~\ref{prop-nf}:

\begin{thm}\label{normal-form}[Normal Form]
Let $H_0(p)$ be a $C^4$ Hamiltonian.
For each $\delta \in ]0,1[$, there exist positive parameters $K_0, \epsilon_0, \beta$
such that, for each $C^4$ Hamiltonian $H_1$ with $\|H_1\|_{C^4}\leq 1$ and each
$
K\geq K_0$, 
$\epsilon \leq \epsilon_0,
$
there exists a  smooth change of coordinates
$$
\Phi :\Tm^n\times B \times \Tm
\lto \Tm^n\times \Rm^n\times \Tm
$$
satisfying $\|\Phi-id\|_{C^0}\leq \sqrt{\epsilon} $ and  $\|\Phi-id\|_{C^2}\leq \delta$
and such that, in the new coordinates, the Hamiltonian $H_0+\epsilon H_1$
takes the form
\begin{equation*}
 N_\epsilon =  H_0(p) + \epsilon Z(\theta^s, p) + \epsilon R(\theta, p, t),
\end{equation*}
with   $\|R\|_{C^2}\le \delta$
on $\Tm^n\times \mathcal{D}(K,\beta \epsilon^{1/4})\times \Tm$.
We can take $K_0=c\delta^{-2}, \beta=c\delta^{-1-n}, \epsilon_0=\delta^{6n+5}/c$,
where $c>0$ is some constant depending only on $n$ and $\|H_0\|_{C^4}$.
\end{thm}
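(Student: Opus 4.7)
The plan is a one-step Lie transform: I take $\Phi$ to be the time-one map of the Hamiltonian flow of an auxiliary time-dependent function $\epsilon F(\theta,p,t)$, with $F$ chosen to solve the cohomological equation
\[
\partial_t F + \omega(p)\cdot \partial_\theta F \;=\; H_1^{\le K} - Z
\]
on all non-resonant Fourier modes, where $\omega(p):=\partial_p H_0(p)$ and $H_1^{\le K}$ denotes the Fourier truncation of $H_1$ at harmonics of order $\le K$. Expanding $H_\epsilon\circ\Phi$ in a Lie series one then obtains, after identifying the resonant part with $\epsilon Z$, the required shape $H_0+\epsilon Z+\epsilon R$, with $R$ gathering three error sources: (i) the high-frequency Fourier tail of $H_1$, (ii) the quadratic remainder of the Lie series, and (iii) an error from a $p$-cutoff introduced below to make $F$ globally defined.

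First I would Fourier-expand $H_1$ and split its modes into the resonant family $\{k=(k^s,0),\,l=0\}$ producing $Z$, the non-resonant low-frequency family $0<\max(|k|,|l|)\le K$ with $(k^f,l)\neq 0$, and the tail $\max(|k|,|l|)>K$. Since $\|H_1\|_{C^4}\le 1$, the $C^2$-norm of the tail decays like $K^{-2}$, which dictates the choice $K\ge K_0\asymp\delta^{-2}$. On the non-resonant low-frequency modes the formal solution is
\[
F_{k,l}(p) \;=\; \frac{h_{k,l}(p)}{2\pi i\bigl(k\cdot\omega(p)+l\bigr)},
\]
and the denominator is controlled on $\mathcal{D}(K,s)$ with $s:=\beta\epsilon^{1/4}$: writing $k\cdot\omega + l = k^s\cdot\partial_{p^s}H_0 + k^f\partial_{p^f}H_0 + l$, the first summand is bounded by $Ks$ while the second condition defining $\mathcal{D}(K,s)$ forces $|k^f\partial_{p^f}H_0+l|\ge 3Ks$ whenever $(k^f,l)\neq 0$, so that $|k\cdot\omega+l|\ge 2Ks$. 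In order to use $F$ as a globally defined generator I would multiply each $F_{k,l}$ by a smooth cutoff $\chi(p)$ equal to $1$ on $\mathcal{D}(K,s)$ and supported in a slight $s$-enlargement on which the same lower bound (with $2Ks$ replaced by, say, $Ks$) still holds; the resulting cutoff error is localized outside $\mathcal{D}(K,s)$, precisely where the theorem does not claim any smallness of $R$.

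The remaining work is quantitative bookkeeping. Each derivative of $F_{k,l}$ costs a factor $|k|/(Ks)$ from differentiating the small divisor, and each derivative of $\chi$ costs $1/s$; summing over the $O(K^{n+1})$ low-frequency modes yields crude bounds of the form $\|F\|_{C^j}\lesssim K^{n+1+j}/s^{j+1}$. The time-one flow of $\epsilon F$ satisfies $\|\Phi-\mathrm{id}\|_{C^j}\lesssim\epsilon\|F\|_{C^{j+1}}$ to leading order, while the quadratic terms in the Lie series contribute $O(\epsilon)$ to $R$ with norms controlled by products of $C^j$ norms of $F$ with $C^j$ norms of $H_0$ and $H_1$. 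Imposing the three requirements $\|\Phi-\mathrm{id}\|_{C^0}\le\sqrt{\epsilon}$, $\|\Phi-\mathrm{id}\|_{C^2}\le\delta$ and $\|R\|_{C^2}\le\delta$ produces a system of polynomial inequalities in $K$, $\beta$, $\epsilon$, $\delta$. A straightforward optimization — choose $K\asymp\delta^{-2}$ to kill the Fourier tail, $\beta\asymp\delta^{-1-n}$ to absorb the small-divisor losses, and finally tune $\epsilon$ down to silence the quadratic Lie-series term — gives the stated constants, in particular $\epsilon_0\asymp\delta^{6n+5}/c$. The main obstacle is this delicate balance: the three competing error mechanisms contribute polynomial losses of different degrees in $K$, $\beta$, $\epsilon^{1/4}$ and $\delta$, and must be absorbed simultaneously; once the exponents are matched, the existence of $\Phi$ and the stated estimates follow.
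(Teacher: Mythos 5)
Your overall strategy — solve a cohomological equation, exponentiate a cut-off generating function, and decompose $R$ into a Fourier tail, a quadratic Lie remainder, and a cutoff-support error — is the same in spirit as the paper's Theorem~\ref{autonomous} and Corollary~\ref{norm}. Two points deserve attention.

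The first is methodological. The paper cuts off \emph{mode by mode}: the generating function is
$G=(2\pi i)^{-1}\sum_{|k|\le K}(1-\rho_k(J))\,h_k(J)\,(k\cdot\partial_JH_0)^{-1}e^{2\pi i k\cdot\varphi}$,
where $\rho_k(J)=\rho\bigl((k\cdot\partial_JH_0)/(\beta\epsilon^{1/4}[k])\bigr)$ is a bump tailored to the $k$-th resonance surface. Each summand is globally defined by construction, and the leftover terms $\rho_k h_k$ stay in the transformed Hamiltonian (as ``$R_1$'' in Theorem~\ref{autonomous}); they collapse to $Z$ precisely on $\mathcal{D}(K,\beta\epsilon^{1/4})$, since $\rho_k\equiv1$ on the resonant sub-lattice and $\rho_k\equiv0$ off it. Your single $p$-cutoff $\chi$ plays a similar role but is less clean: one must verify that a suitable $s$-enlargement of $\mathcal{D}(K,s)$ still carries the small-divisor bound, and $\mathcal{D}(K,s)$ itself depends on $\epsilon$, so $\chi$ does too. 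This can be made to work, but it is essentially the harder route.

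The serious issue is your tail estimate, and this is a genuine gap. You write that $\|H_1\|_{C^4}\le1$ makes the $C^2$ norm of $\Pi^+_K H_1$ decay like $K^{-2}$. In $m=n+1$ angular dimensions, the sharp bound (paper's Lemma~\ref{fourier-est}, item~3) is $\|\Pi^+_K H_1\|_{C^2}\le\kappa_m K^{m+3-r}\|H_1\|_{C^r}$, which for $r=4$ gives $K^{n}$: it does not decay at all once $n\ge1$. The Lie-transform argument requires $r\ge n+5$, and this is incompatible with the hypotheses of the theorem, which only assume $C^4$ data. The paper resolves this mismatch with an additional smooth-approximation step (Lemma~\ref{appoximation} and Section~2.3): replace $H_1$ by an analytic $H_1^*=S_\tau H_1$ and $H_0$ by a mollified $H_0^*$, carry out the Lie transform with the smooth data and a higher exponent $r_2=2n+5$, and then push the approximation errors $(H_1^*-H_1)\circ\Phi$, $Z-Z^*$, and $(H_0-H_0^*)\circ\tilde\Phi$ into $R$. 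Your one-pass Lie transform as written cannot deliver $\|R\|_{C^2}\le\delta$ from $C^4$ regularity alone; the smooth-approximation layer is not optional but essential to the statement.
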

The proof actually builds   a symplectic diffeomorphism
$\tilde \Phi$ of $\Tm^{n+1}\times \Rm^{n+1}$
of the form
$$
\tilde \Phi (\theta,p,t,e)=\big(\Phi(\theta, p,t),e+f(\theta,p,t)\big)
$$
and such that
$$
N_{\epsilon}+e=(H_{\epsilon}+e)\circ \tilde \Phi.
$$
We have the estimates $\|\tilde \Phi-id\|_{C^0}\leq \sqrt{\epsilon} $ and
 $\|\tilde \Phi-id\|_{C^2}\leq \delta$.

\begin{rmk}\label{length}[Distance between punctures]
  On the interval, the distance between 2 adjacent rationals 
with denominator at most $K$ is $1/K^2$.  Choose $K=K_0$ as in 
Theorem~\ref{normal-form}, the distance between adjacent punctures is 
at least  $D^{-1}/K^2\ge D^{-1}c^{-1}\delta^4$.

{The length of $\Gamma_1$ is determined by the choice of $\delta$, 
which can be chosen optimally in Theorem~\ref{intro-nhic-mult} and 
Theorem~\ref{thm-loc}. Upon inspection of the proof, it is 
not difficult to determine that $\delta$ can be chosen to a power of 
$\lambda$, which shows the distance between punctures is polynomial 
in $\lambda$. }
\end{rmk}

To prove Theorem~\ref{normal-form} we proceed in $3$ steps.
We first obtain a global normal form $N_{\epsilon}$
adapted to all resonances.
 We then show that this normal form takes the desired form on the domain
$
\mathcal{D}(K,s).
$
 However, the averaging procedure lowers smoothness,
in particular, the technique requires the smoothness $r\ge n+5$.
 To obtain a result that does not require this relation between $r$ and $n$,
we use a smooth approximation trick that goes back to Moser.

\subsection{A global normal form adapted to all resonances.}

We first state a result for autonomous systems.
The time periodic version will come as a corollary.
Consider the Hamiltonian $H_\epsilon(\phi, J)=H_0(J)+\epsilon H_1(\phi, J)$,
where $(\phi,J)\in \T^m \times \R^m$
(later, we will take $m=n+1$). Let $B=\{|J|\le 1\}$ be the unit ball in $\R^m$.
Given any integer vector $k\in \Z^m\setminus\{0\}$, let $[k]=\max\{|k_i|\}$.
To avoid zero denominators in some calculations, we make the unusual convention
that $[(0, \cdots, 0)]=1$.
We fix once and for all a bump function  $\rho:\R\to \R$ be a $C^\infty$
such that
$$\rho(x)=
\begin{cases}
  1, & |x|\le 1 \\
  0, & |x|\ge 2
\end{cases}
$$
and $0<\rho(x) < 1$ in between. For each  $\beta>0$ and $k\in \Z^m$,
we define the function
$\rho_k(J)=\rho(\frac{k\cdot \partial_J H_0}{\beta \epsilon^{1/4}[k]})$,
where $\beta>0$ is a parameter.

\begin{thm}\label{autonomous}
There exists a constant $c_m>0$, which depends only on $m$, such that the
following holds.
Given:
\begin{itemize}
 \item A $C^4$ Hamiltonian $H_0(J)$,
\item A $C^r$ Hamiltonian
$H_1(\varphi,J)$ with $\|H_1\|_{C^r}=1$,
\item Parameters $r\geq m+4$, $\delta \in ]0,1[$, $\epsilon\in ]0,1[$,
$\beta>0$, $K>0$,
\end{itemize}
satisfying
\begin{itemize}
 \item $K \geq c_m \delta^{\frac{-1}{r-m-3}}$,
\item $\beta \geq c_m (1+\|H_0\|_{C^4})\delta^{-1/2},$
\item $\beta\epsilon^{1/4}\leq \|H_0\|_{C^4}$,
\end{itemize}
there exists a $C^2$ symplectic diffeomorphism {$\Phi: \T^m \times B \to \T^m \times \R^m$} such that,
in the new coordinates, the Hamiltonian $H_{\epsilon}=H_0+\epsilon H_1$ takes the form
$$
H_{\epsilon}\circ \Phi=H_0+\epsilon R_1+\epsilon R_2
$$
with
\begin{itemize}
\item $R_1=\sum_{k\in \Z^m, |k|\le K}\rho_k(J)h_k(J)e^{2\pi i (k\cdot \phi)}$, 
here $h_k(J)$ is the $k^{th}$ coefficient for the Fourier expansion of $H_1$,
\item $\|R_2\|_{C^2}\le \delta$,
\item  $\|\Phi-id\|_{C^0}\le \delta\sqrt{\epsilon}$ and $\|\Phi-id\|_{C^2}\le \delta.$
\end{itemize}
If both $H_0$ and $H_1$ are smooth, then so is $\Phi$.
\end{thm}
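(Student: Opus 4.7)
The plan is to construct $\Phi$ as the time-one map of the Hamiltonian flow generated by an auxiliary function $\epsilon F$, chosen so that $F$ kills precisely the non-resonant low-order Fourier modes of $H_1$ while leaving the resonant ones alone. Expanding $H_1(\phi, J) = \sum_{k \in \Z^m} h_k(J)\, e^{2\pi i k \cdot \phi}$, I would take
$$F(\phi, J) := -\sum_{0 < [k] \leq K} \frac{\big(1-\rho_k(J)\big)\, h_k(J)}{2\pi i\,\big(k \cdot \partial_J H_0(J)\big)}\, e^{2\pi i k \cdot \phi}.$$
On the support of $1-\rho_k$ one has $|k \cdot \partial_J H_0| \geq \beta \epsilon^{1/4} [k]$ by the very definition of the cutoff $\rho_k$, so $F$ is smooth and the small divisors are controlled in terms of $\beta, \epsilon, K$. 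Since $H_0$ depends only on $J$, a direct computation gives $\{H_0, F\} = \sum_{0 < [k] \leq K} (\rho_k - 1)\, h_k\, e^{2\pi i k\cdot\phi}$, so that by construction
$$H_1 + \{H_0, F\} \;=\; \underbrace{\sum_{[k] \leq K} \rho_k(J)\, h_k(J)\, e^{2\pi i k \cdot \phi}}_{=\,R_1} \;+\; \underbrace{\sum_{[k] > K} h_k(J)\, e^{2\pi i k \cdot \phi}}_{=:\,R_{\mathrm{tail}}}.$$

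Applying the Lie-series expansion of $H_\epsilon \circ \Phi$ then yields
$$H_\epsilon \circ \Phi \;=\; H_0 + \epsilon\bigl(H_1 + \{H_0, F\}\bigr) + \epsilon^2 Q \;=\; H_0 + \epsilon R_1 + \epsilon R_2,$$
with $R_2 := R_{\mathrm{tail}} + \epsilon Q$ and $Q = \{H_1, F\} + \tfrac12\{\{H_0, F\}, F\} + O(\epsilon)$ the standard Lie-series remainder. Crucially, the identity $\{H_0, F\} = R_1 - H_1 + R_{\mathrm{tail}}$ shows that $\{H_0,F\}$ is $C^2$-bounded in terms of $\|H_1\|_{C^2}$ alone (not by any $\|F\|$), so $\|Q\|_{C^2} \lesssim (\|H_0\|_{C^4} + \|H_1\|_{C^3})\|F\|_{C^3}$ rather than by $\|F\|_{C^3}^2$.

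The hard part is the quantitative $C^2$-bookkeeping linking $K, \beta, \epsilon, \delta$. Two estimates drive everything. First, from $\|H_1\|_{C^r}\le 1$ and $r \geq m+4$ one gets Fourier decay $\|h_k\|_{C^2} \lesssim [k]^{2-r}$, whence
$$\|R_{\mathrm{tail}}\|_{C^2} \;\lesssim\; \sum_{[k] > K} [k]^{m+3-r} \;\lesssim\; K^{m+3-r},$$
which is $\leq \delta/2$ once $K \geq c_m\,\delta^{-1/(r-m-3)}$. Second, each derivative of $f_k := (\rho_k-1)h_k/(2\pi i\,k\cdot\partial_J H_0)$ contributes a factor of order $(1+\|H_0\|_{C^4})/(\beta\epsilon^{1/4})$ from differentiating either $\rho_k$ or the denominator; summing over $0 < [k] \leq K$ using the Fourier decay of $h_k$ yields
$$\|F\|_{C^s} \;\lesssim\; \frac{C(\|H_0\|_{C^4})\,K^{m+s}}{(\beta\epsilon^{1/4})^{s+1}}, \qquad s = 0,1,2,3.$$
The standard Hamiltonian-flow bounds $\|\Phi-\mathrm{id}\|_{C^0} \lesssim \epsilon\|F\|_{C^1}$, $\|\Phi-\mathrm{id}\|_{C^2} \lesssim \epsilon\|F\|_{C^3}$, and $\epsilon\|Q\|_{C^2} \lesssim \epsilon\|F\|_{C^3}$ then all fall below $\delta\sqrt\epsilon$, respectively $\delta$, once $\beta \geq c_m(1+\|H_0\|_{C^4})\delta^{-1/2}$ and $\epsilon \leq \epsilon_0$ with the stated exponent. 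Tracking constants carefully pins down the quantitative assertions; the $C^\infty$ persistence of $\Phi$ is immediate since $\Phi$ is the flow of a smooth vector field whenever $H_0$, $H_1$, and $\rho$ are smooth.
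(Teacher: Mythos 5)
Your construction is essentially the paper's: define $F$ (the paper's $\tilde G$) via the cohomological equation with the $\rho_k$ cutoff to tame small divisors, take $\Phi$ to be the time-one flow of $\epsilon F$, and observe that $\{H_0,F\}=R_1-H_1+R_{\mathrm{tail}}$ costs one fewer factor of $(\beta\epsilon^{1/4})^{-1}$ than $F$ itself, so that the Lie-series remainder scales like $\|F\|_{C^3}$ and not $\|F\|_{C^3}^2$ — exactly the point the paper exploits via its bound $\|F_t\|_{C^l}\leq c_m\beta^{-l}\epsilon^{-l/4}\|H_0\|_{C^4}^l\|H_1\|_{C^r}$ versus $\|G\|_{C^l}\leq c_m\beta^{-l-1}\epsilon^{-(l+1)/4}\|H_0\|_{C^4}^{l+1}\|H_1\|_{C^r}$.

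There is, however, a concrete error in the one estimate you identify as "the hard part," and it breaks the argument. Your claimed bound
$$\|F\|_{C^s}\;\lesssim\;\frac{C(\|H_0\|_{C^4})\,K^{m+s}}{(\beta\epsilon^{1/4})^{s+1}}$$
carries a spurious factor $K^{m+s}$. The hypothesis $\|H_1\|_{C^r}=1$ with $r\geq m+4$ gives Fourier decay $\|\partial_J^{\eta}h_k\|_{C^0}\lesssim [k]^{|\eta|-r}$; each $\phi$-derivative of $e^{2\pi i k\cdot\phi}$ contributes $[k]$; and each $J$-derivative of $\rho_k$ or of $(k\cdot\partial_J H_0)^{-1}$ contributes $(\beta\epsilon^{1/4})^{-1}$ with no additional power of $[k]$ (the $[k]$ in the cutoff denominator cancels it). Hence for $|\alpha|\leq s\leq 3$ the $k$-th summand is $\lesssim (\beta\epsilon^{1/4})^{-s-1}[k]^{s-r}$, and since $s-r\leq 3-(m+4)=-m-1$, the series $\sum_{k\in\Zm^m}[k]^{s-r}$ converges to a constant $\kappa_m$ independently of $K$. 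This is exactly Lemma~\ref{fourier-est} in the paper, and the correct conclusion is $\|F\|_{C^s}\lesssim (\beta\epsilon^{1/4})^{-s-1}\|H_0\|_{C^4}^{s+1}$ with no $K$-dependence. If one keeps $K^{m+s}$, the argument cannot close: the hypothesis $K\geq c_m\delta^{-1/(r-m-3)}$ is a \emph{lower} bound that forces $K$ large, and then, e.g., $\|\Phi-\mathrm{id}\|_{C^0}\lesssim\epsilon\|F\|_{C^1}\sim K^{m+1}\beta^{-2}\sqrt\epsilon$ requires $K^{m+1}\beta^{-2}\leq\delta$, i.e.\ $K\lesssim 1$ after substituting $\beta\sim\delta^{-1/2}$ — contradicting the largeness of $K$. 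So the Fourier-decay input is not bookkeeping garnish but the step that makes the small divisors summable; once it is used correctly, the rest of your argument matches the paper's. (Minor: the paper also inserts a smooth $C^3$-approximation of $\tilde G$ before flowing, to get smoothness of $\Phi$; your remark handles the $C^\infty$ case but skips this intermediate device, which is harmless since $F\in C^3$ already yields a $C^2$ flow.)
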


We  now  prove Theorem~\ref{autonomous}.
To avoid cumbersome notations, we will denote by $c_m$ various different constants
depending only on the dimension $m$.
We have the following basic estimates about  the Fourier series of
a function $g(\phi, J)$. Given a multi-index
$\alpha=(\alpha_1, \cdots, \alpha_m)$, we denote
$|\alpha|=\alpha_1+\cdots+ \alpha_m$. Denote also
$\kappa_m=\sum_{\Zm^m}[k]^{-m-1}$.

\begin{lem}\label{fourier-est}
  For $g(\phi,J)\in C^r(\T^m\times B)$, we have
  \begin{enumerate}
  \item
If $l\leq r$, we have
$\|g_k(J)e^{2\pi i (k\cdot \varphi)}\|_{C^l}\le [k]^{l-r}\|g\|_{C^r}$.

  \item Let $g_k(J)$ be a series of functions such that the inequality
$\|\partial_{J^{\alpha}}g_k\|_{C^0}\leq M[k]^{-|\alpha|-m-1}$  holds for 
each multi-index $\alpha$ with $|\alpha|\le l$, for some $M>0$. Then, we have 
\newline
$\|\sum_{k\in \Z^m}g_k(J)e^{2\pi i(k\cdot\varphi)}\|_{C^l}\le c \kappa_m M$.
  \item Let $\Pi^+_Kg=\sum_{|k|>K}g_k(J)e^{2\pi i (k\cdot \phi)}$. Then for $l\leq r-m-1$,
we have   $\|\Pi^+_K g\|_{C^l}\le \kappa_m K^{m-r+l+1}\|g\|_{C^r}$.
  \end{enumerate}
\end{lem}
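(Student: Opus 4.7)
The plan is standard Fourier analysis via integration by parts in the angular variable. For part (1), I would start from
\[
g_k(J) = \int_{\Tm^m} g(\varphi, J)\, e^{-2\pi i k\cdot \varphi}\, d\varphi,
\]
pick an index $j^*$ with $|k_{j^*}| = [k]$, and integrate by parts $s$ times in $\varphi_{j^*}$ for any $0\leq s \leq r$. This yields $|\partial_J^\beta g_k(J)| \leq (2\pi [k])^{|\beta|-r}\|g\|_{C^r}$ for every $\beta$ with $|\beta|\leq r$ (by first taking $\partial_J^\beta$ under the integral and then using the remaining $r-|\beta|$ derivatives in $\varphi_{j^*}$ for the integration by parts). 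Differentiating the product $g_k(J)e^{2\pi i k\cdot \varphi}$ produces terms of the form $(2\pi i k)^\alpha\,\partial_J^\beta g_k(J)\,e^{2\pi i k\cdot\varphi}$ with $|\alpha| + |\beta| \leq l$, whose modulus is bounded by $(2\pi [k])^{|\alpha|+|\beta|-r}\|g\|_{C^r} \leq C[k]^{l-r}\|g\|_{C^r}$, which gives the claim.

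For part (2), each mixed partial $\partial_\varphi^\alpha \partial_J^\beta$ applied to a single summand $g_k(J)e^{2\pi i k\cdot \varphi}$ is pointwise at most $(2\pi [k])^{|\alpha|}\,\|\partial_J^\beta g_k\|_{C^0}$, and the given decay hypothesis converts this into a summable bound of the form $cM[k]^{-m-1}$. The triangle inequality, combined with the convergence of $\kappa_m = \sum_k [k]^{-m-1}$ (finite because there are only $O(j^{m-1})$ lattice points with $[k]=j$), then yields $\|\sum_k g_k\,e^{2\pi i k\cdot\varphi}\|_{C^l} \leq c\kappa_m M$.

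Part (3) follows from part (1) applied termwise:
\[
\|\Pi_K^+ g\|_{C^l} \leq \sum_{[k]>K} \|g_k(J)e^{2\pi i k\cdot\varphi}\|_{C^l} \leq \|g\|_{C^r}\sum_{[k]>K} [k]^{l-r}.
\]
Grouping terms by the value $[k]=j$ and comparing the resulting sum with $\int_K^\infty j^{m-1+l-r}\,dj$ gives an estimate of order $K^{m-r+l+1}$; the condition $l \leq r - m - 1$ is exactly what makes this integral convergent.

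There is no real conceptual obstacle beyond bookkeeping --- choosing the largest coordinate of $k$ for the integration by parts, and correctly counting lattice points of a given $\ell^\infty$-norm. The three estimates will be reused in the averaging argument of Theorem~\ref{autonomous} to control both the generating function of the normal form transformation and the remainder term $R_2$.
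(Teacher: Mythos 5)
Your plan matches the paper's own proof for items~1 and~3. In item~1 you perform the integration by parts in the direction $j^*$ with $|k_{j^*}|=[k]$, exactly as the paper does; the paper applies $\partial_{\varphi^\alpha J^\eta}$ first and then integrates by parts a \emph{fixed} number $b=r-l$ of times, giving $[k]^{l-r}$ on the nose, whereas you first bound $\partial_J^\beta g_k$ by $(2\pi[k])^{|\beta|-r}\|g\|_{C^r}$ and then multiply by $(2\pi ik)^\alpha$. Both are fine, and your extra constant $C$ is unnecessary: since $|\alpha|+|\beta|\le l\le r$ and $[k]\ge1$, one has $(2\pi[k])^{|\alpha|+|\beta|-r}\le[k]^{l-r}$. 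In item~3 the paper writes $\sum_{[k]>K}[k]^{l-r}\le K^{l-r+m+1}\sum_{[k]>K}[k]^{-m-1}\le K^{l-r+m+1}\kappa_m$ rather than compare with an integral, but the powers of $K$ agree and the condition $l\le r-m-1$ enters in exactly the same way.

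Item~2, however, contains a genuine gap — one that the paper's one-line proof shares, so you are in good company, but you should not reproduce it. You bound a mixed partial $\partial_\varphi^{\alpha}\partial_J^{\beta}$ of a summand by $(2\pi[k])^{|\alpha|}\|\partial_J^{\beta}g_k\|_{C^0}$ and then assert that ``the given decay hypothesis converts this into a summable bound of the form $cM[k]^{-m-1}$.'' Under the hypothesis as printed, $\|\partial_J^{\beta}g_k\|_{C^0}\le M[k]^{-|\beta|-m-1}$, this product is $(2\pi)^{|\alpha|}M[k]^{|\alpha|-|\beta|-m-1}$, which is of order $[k]^{-m-1}$ only when $|\alpha|\le|\beta|$. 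For $|\alpha|=l$, $|\beta|=0$ the exponent is $l-m-1$ and the series diverges as soon as $l\ge1$, so the claimed conversion does not follow. Inspection of the way item~2 is actually invoked in the proof of Theorem~\ref{autonomous} — where the coefficients of $G$ and $R_1$ satisfy $\|\partial_J^{\alpha}g_k\|_{C^0}\le M[k]^{|\alpha|-r}$ and the conclusion is drawn assuming $r\ge l+m+1$ — indicates that the intended hypothesis is $\|\partial_J^{\alpha}g_k\|_{C^0}\le M[k]^{|\alpha|-l-m-1}$. With that hypothesis your computation closes: each mixed partial is bounded by $(2\pi)^{l}M[k]^{|\alpha|+|\beta|-l-m-1}\le(2\pi)^{l}M[k]^{-m-1}$, and summing over $k$ gives $c\,\kappa_m M$. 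Either correct the hypothesis in this way, or at minimum track the two groups of indices explicitly so the reader can see where the balance of exponents comes from.
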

\begin{proof}
  1. Let us assume that $k\neq 0$ and take $j$
  such that $k_j=[k]$.
  Let $\alpha$ and $\eta$ be two multi-indices
  such that $|\alpha+\eta|\leq l$.
  Finally, let $b=r-l$, and let $\beta$ be the multi-index
   $\beta=(0,\ldots,0,b,0,\ldots,0)$, where $\beta_j=b$.
We have
$$
g_k(J)e^{2\pi i(k,\varphi)}=
\int_{\Tm^m} g(\theta, J)e^{2i\pi(k,\varphi-\theta)}d\theta
=
\int_{\Tm^m} g(\theta+\varphi, J)e^{-2i\pi(k,\theta)}d\theta,
$$
  hence
\begin{align*}
\partial_{\varphi^{\alpha}J^{\eta}}\big(
g_k(J)e^{2i\pi(k,\varphi)}\big)
&=
\int_{\Tm^m} \partial_{\varphi^{\alpha}J^{\eta}}g(\theta+\varphi, J)e^{-2i\pi(k,\theta)}d\theta,\\
&=
\int_{\Tm^m} \frac{\partial_{\varphi^{\alpha+\beta}J^{\eta}}
g(\theta+\varphi, J)}
{(2i\pi k_j)^b}
e^{-2i\pi(k,\theta)}d\theta.
\end{align*}
  Since $|\alpha+\beta+\eta|\leq r$, we conclude that
   $$
\|g_k(J)   e^{2i\pi(k,\varphi)}\|_{C^l}\leq
\|g\|_{C^r}/(2\pi[k])^b\leq \|g\|_{C^r}[k]^{l-r}.
   $$

2. We have
$\|g_k(J)e^{2i\pi(k\cdot \varphi)}\|_{C^l}\leq
$
$$\|\sum_{k\in \Z^m}h_k(J)e^{2\pi i(k\cdot\varphi)}\|_{C^l}
\le \sum_{k\in \Z^m}c_l|k|^{-r+l}M\le c_l \kappa_m M, $$
recall that $\kappa_m=\sum_{k\in \Z^m}|k|^{-m-1}$.

3. Using 1., we get
\begin{align*} \|\Pi^+_Kg\|_{C^l}
&\le
 \sum_{|k|>K} [k]^{l-r}\|g\|_{C^r}
 \le
 \|g\|_{C^r} K^{m-r+l+1}\sum_{|k|>K}[k]^{-m-1}\\
 &\le
 \|g\|_{C^r} K^{m-r+l+1}\sum_{k\in \Zm^m}[k]^{-m-1}.
\end{align*}
\end{proof}

\begin{proof}[Proof of Theorem~\ref{autonomous}]

Let $\tilde G(\phi,J)$ be the function that solves the cohomological equation
$$ \{H_0,\tilde G\}+H_1=R_1 +R_+,$$
where $R_+=\Pi^+_K H_1$.
Observing that $\rho_k(J)=1$
when $k\cdot \partial_JH_0=0$,
we have the following explicit formula for $G$:
{$$ 
\tilde G(\varphi,J)=
(2 \pi i)^{-1}\sum_{|k|\le K}
\frac{(1-\rho_k(J))h_k(J)}{ k\cdot \partial_JH_0(J)}e^{2\pi i (k\cdot \phi)} 
$$}
where each of the functions
$(1-\rho_k(J))h_k(J)/(k\cdot \partial_J H_0)$
is extended by continuity at the points where
the denominator vanishes. This function hence
takes the value zero at these points.
$G$ is well defined thanks to the smoothing terms $1-\rho_k$
we introduced, as whenever $k \cdot \partial_J H_0=0$ we also
have $1-\rho_k=0$ and that term is considered non-present.
Since $\tilde G$ as defined above is only $C^3$,
we will consider a smooth approximation
$$
G(\varphi,J)=\sum_{|k|\le K}g_k(J)e^{2\pi i (k\cdot \phi)} 
$$
where $g_k(J)$ are smooth functions which are sufficiently close to
{$\frac{(1-\rho_k(J))h_k(J)}{(2\pi i)k\cdot \partial_JH_0(J)}$}
in the $C^3$ norm.

Let $\Phi^t$ be the Hamiltonian flow generated by  $\epsilon G$.
Setting $F_t =  R_1 + R_+ + t(H_1-R_1-R_+)$,
we have the standard computation
\begin{align*}
\partial_t\big((H_0+\epsilon F_t)\circ \Phi^t)\big)
&=
\epsilon\partial_t F_t \circ \Phi^t+\epsilon\{H_0+\epsilon F_t,G\}\circ \Phi^t\\
&=\epsilon\big(\partial_tF_t+\{H_0,G\}\big)\circ \Phi^t+
\epsilon^2\{F_t,G\}\circ \Phi^t\\
&=\epsilon^2\{F_t,G\}\circ \Phi^t,
\end{align*}
from which follows that
$$ H_\epsilon \circ \Phi^1= H_0 + \epsilon R_1 + \epsilon R_+
+ \epsilon^2 \int_0^1 \{F_t, G\}\circ \Phi^t dt. $$
Let us estimate the $C^2$ norm
of the function
 $R_2 := R_+ + \epsilon \int_0^1 \{F_t, G\}\circ \Phi^t dt$.
It follows from Lemma~\ref{fourier-est} that
$$
\|R_+\|_{C^2}\le  \kappa_m K^{-r+m+2} \|H_1\|_{C^r}\le \frac12 \delta.
$$
We now focus on the term
  $ \int_0^1 \{F_t, G\}\circ \Phi^t dt$.
  To estimate the norm of $F_t$, it is convenient to write
  $F_t =\tilde F_t + (1-t)R_1$, where $\tilde F_t=(1-t)R_++tH_1$.
  Notice that the coefficients of the Fourier expansion of $\tilde F_t$
  is simply a constant times that of $H_1$, Lemma~\ref{fourier-est}
  then implies that
$$
\|\tilde F_t\|_{C^3}\le
\sum_{k\in{\Zm^m}} [k]^{3-r}\|H_1\|_{C^r}
=
 \kappa_m \|H_1\|_{C^r}
$$
provided that  $r\ge m+4$, where we set $\kappa_m=\sum_{\Zm^m}[k]^{-m-1}$.

We now have to estimate the norm of $R_1$ and $G$. This
requires additional estimates of the smoothing terms $\rho_k$ as well
as the small denominators $k\cdot \partial_JH_0$. We always assume
that $l\in \{0,1,2,3\}$ in the following estimates:
\begin{itemize}
\item[-]  $\rho_k(J)\ne 1\quad \Rightarrow \quad
|(k\cdot \partial_J H_0)^{-1}|\le \beta^{-1}\epsilon^{-1/4} |k|^{-1}$.
\item[-]
$\|(k\cdot \partial_JH_0)^{-1}\|_{C^l}\le c_m \beta^{-l-1}
\epsilon^{-(l+1)/4}\|H_0\|_{C^4}^{l+1}$ on $\{\rho_k\neq 1\}$.
\item[-]
$ \|\rho_k(J)\|_{C^l} \le c_m\beta^{-l} \epsilon^{-l/4} \|H_0\|_{C^4}^l$ and
$ \|1-\rho_k(J)\|_{C^l} \le c_m \beta^{-l} \epsilon^{-l/4} \|H_0\|_{C^4}^l.$
\end{itemize}
We have been using the following estimates on the derivative of
composition of functions: For $f:\R^m\to \R$ and $g: \R^m\to \R^m$ we have
$\|f\circ g\|_{C^l}\le c_{m,l}\|f\|_{C^l}(1+\|g\|_{C^l}^l)$.
\begin{itemize}
\item[-] For each multi-index $|\alpha|\le 3$, we have that
\[
\begin{aligned}&\|\partial_{J^\alpha} \left((1-\rho_k(J))h_k(J)(k\cdot \partial_JH_0)^{-1}\right) \|_{C^0} \\
\le & \sum_{\alpha_1+\alpha_2+\alpha_3=\alpha} \|1-\rho_k(J)\|_{C^{|\alpha_1|}}\|h_k\|_{C^{|\alpha_2|}}\|(k\cdot \partial_JH_0)^{-1}\|_{C^{|\alpha_3|}
(\{\rho_k\neq 1\})} \\
\le&  c_m \sum_{\alpha_1+\alpha_2+\alpha_3= \alpha}  \Big(
\beta^{-|\alpha_1|}\epsilon^{-|\alpha_1|/4} \|H_0\|_{C^4}^{|\alpha_1|}
\cdot [k]^{-r+|\alpha_2|} \|H_1\|_{C^r} \\
&\cdot \beta ^{-|\alpha_3|-1}\epsilon^{-(|\alpha_3|+1)/4}
\|H_0\|_{C^4}^{|\alpha_3|+1}\Big) \\
\le & c_m \beta^{-|\alpha|-1}\epsilon^{-(|\alpha|+1)/4}  [k]^{|\alpha|-r}
 \|H_0\|_{C^4}^{|\alpha|+1} \|H_1\|_{C^r} .
\end{aligned}
\]
\end{itemize}
In these computations, we have used the hypothesis
$\beta \epsilon^{1/4}\leq \|H_0\|_{C^4}$.
Since $G(\varphi, J)=\sum_{k\in \Z^m}(1-\rho_k(J))h_k(J)(k\cdot \partial_JH_0)^{-1}e^{2\pi i (k\cdot \varphi)}$,   Lemma~\ref{fourier-est} implies (since $r\geq m+1$) :
\begin{itemize}
\item[-]
 $\|G\|_{C^l}\le  c_m \beta^{-l-1}\epsilon^{-(l+1)/4}
\|H_0\|_{C^4}^{l+1} \|H_1\|_{C^r}\leq \epsilon^{-1} . $
 \end{itemize}
We now turn our attention to $R_1=\sum_{|k|\le K}\rho_k(J)h_k(J)e^{2i\pi (k\cdot \phi)}$:
 \begin{itemize}
 \item[-] $\|h_k\|_{C^l}\leq [k]^{l-r}\|H_1\|_{C^r}$.
\item[-]$
\|\rho_kh_k\|_{C^l}\le c_m \beta^{-l}  \epsilon^{-l/4}[k]^{-r+l}\|H_0\|_{C^4}^l\|H_1\|_{C^r}.
$
\item[-]
 $\|R_1\|_{C^l}\le c_m \beta^{-l}  \epsilon^{-l/4}\|H_0\|_{C^4}^l\|H_1\|_{C^r}$, 
 provided  $r\ge m+4$.
\end{itemize}
We obtain
 $$\|F_t\|_{C^l}\le \|R_1\|_{C^l}+\|\tilde F_t\|_{C^l}
\le c_m \beta^{-l}  \epsilon^{-l/4}\|H_0\|_{C^4}^l \|H_1\|_{C^r},
 $$
and
$$ \|\{F_t, G\}\|_{C^2}\le
\sum_{|\alpha_1+\alpha_2|\le 3}
 \|F_t\|_{C^{|\alpha_1|}}\|G\|_{C^{|\alpha_2|}} \le
 c_m \beta^{-4} \epsilon^{-1}\|H_0\|_{C^4}^4 \|H_1\|_{C^r}^2 .
$$
Concerning the flow $\Phi^t$,
we observe that $\|\epsilon G\|_{C^3}\leq 1$, and get
the following estimate
(see \textit{e. g.} \cite{DH}, Lemma 3.15):
\begin{itemize}
\item[-]$\|\Phi^t-id\|_{C^2}\le
c_m\epsilon\|G\|_{C^3}\le c_m \beta^{-4}
\|H_0\|_{C^4}^{4} \|H_1\|_{C^r}\leq \delta ,$
\item[-]
$\|\Phi^t-id\|_{C^0}\leq c_m \epsilon\|G\|_{C^1}\leq c_m \beta^{-2} \sqrt{\epsilon}
\|H_0\|_{C^4}^2\|H_1\|_{C^2}\leq \delta  \sqrt\epsilon .$
\end{itemize}
Finally,
we obtain
\begin{align*}
 \epsilon\|\{F_t, G\}\circ \Phi^t\|_{C^2}
&\le c_m \epsilon \|\{F_t, G\}\|_{C^2}\|\Phi^t\|_{C^2}^2
\\
&\le c_m \beta^{-4} \|H_0\|^4_{C^4} \|H_1\|_{C^r}^2 \le \delta/2.
 \end{align*}
\end{proof}

\subsection{Normal form away from additional resonances}

We now return to our non-autonomous system and apply
Theorem \ref{autonomous} around the resonance under study.
To the non-autonomous Hamiltonian
$$
H_{\epsilon}(\theta, p,t)=H_0(p)+\epsilon H_1(\theta,p,t):\Tm^n\times \Rm^n\times\Tm\lto \Rm
$$
we associate the autonomous Hamiltonian
$$
\tilde H_e(\varphi,J)=H_0(I)+e+\epsilon H_1(\theta, I,t):\Tm^{n+1}\times \Rm^{n+1} \lto \Rm,
$$
where $\varphi=(\theta, t)$ and $J=(I,e)$.
We denote the frequencies $\omega\in \Rm^{n+1}$  by
$\omega =(\omega^f,\omega^s,\omega^t)\in \Rm^{n-1}\times \Rm \times \Rm$, and define the set
$$
\Omega(K,s):=\{ \omega\in \Rm^{n+1}:\,
\|\omega^s\|> s, \,
|k^f\omega^f+k^t\omega^t|\geq 3sK \quad\forall (k^s,k^t)\in \Zm^2_K
\},
$$
where we have denoted by $\Zm^2_K$ the set of pairs $(k^f,k^t)$ of integers
such that $0<\max(k^f,k^t)\leq K$.
Note that
$$
\mD (K,s)=\{ p\in \Rm^n : (\partial_p H_0(p),1)\in \Omega(K,s)
\}.
$$

\begin{cor}\label{norm}
There exists a constant $c_n>0$, which depends only on $n$, such that the
following holds.
Given :
\begin{itemize}
 \item A $C^4$ Hamiltonian $H_0(p)$,
\item A $C^r$ Hamiltonian
$H_1(\theta,p,t)$ with $\|H_1\|_{C^r}=1$,
\item Parameters $r\geq n+5$, $\delta \in ]0,1[$, $\epsilon\in ]0,1[$,
$\beta>0$, $K>0$,
\end{itemize}
satisfying
\begin{itemize}
 \item $K \geq c_n\delta^{\frac{-1}{r-n-4}}$,
\item $\beta \geq c_n(1+\|H_0\|_{C^4})\delta^{-1/2},$
\item $\beta\epsilon^{1/4}\leq \|H_0\|_{C^4}$,
\end{itemize}
there exists a $C^2$ symplectic  diffeomorphism $\tilde \Phi$ of $\Tm^{n+1}\times \Rm^{n+1}$
such that,
in the new coordinates, the Hamiltonian $H_{\epsilon}=H_0+\epsilon H_1$ takes the form
$$
N_{\epsilon}=H_0+\epsilon Z+\epsilon R,
$$
with
\begin{itemize}
\item $\|R\|_{C^2}\le \delta$ on $\Tm^n\times \mD(K,\beta\epsilon^{1/4})\times \Tm$,
\item  $\|\tilde \Phi-id\|_{C^0}\le \delta\sqrt{\epsilon}$ and 
$\|\tilde \Phi-id\|_{C^2}\le \delta.$
\end{itemize}
The symplectic diffeomorphism $\tilde \Phi$ is of the form
$$
\tilde \Phi(\theta,p,t,e)=(\Phi(\theta,p,t),e+f(\theta,p,t))
$$
where $\Phi$ is a diffeomorphism of $\Tm^n\times \Rm^n \times \Tm$
fixing the last variable $t$.
The maps $\tilde \Phi$ and $\Phi$ are smooth if $H_0$ and $H_1$ are.
\end{cor}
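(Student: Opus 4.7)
The plan is to derive Corollary~\ref{norm} from the autonomous normal form of Theorem~\ref{autonomous} by lifting to one extra degree of freedom, applying the autonomous result verbatim, and then showing that on the specific resonant domain $\Tm^n\times \mD(K,\beta\epsilon^{1/4})\times \Tm$ the cutoff factors $\rho_k$ annihilate every non-averaged Fourier mode. First I would form the autonomous lift $\tilde H_e(\varphi,J) = H_0(I) + e + \epsilon H_1(\theta,I,t)$ on $\Tm^{n+1}\times \Rm^{n+1}$, with $\varphi = (\theta,t)$ and $J = (I,e)$, and apply Theorem~\ref{autonomous} with $m = n+1$. The hypotheses translate directly, with $c_n := c_{n+1}$ and the exponent $r-n-4$ in place of $r-m-3$, producing a symplectic $\tilde\Phi$ conjugating $\tilde H_e$ to $H_0 + e + \epsilon R_1 + \epsilon R_2$, where $R_1 = \sum_{|k|\le K}\rho_k(J)h_k(J)e^{2\pi i k\cdot \varphi}$ and $\|R_2\|_{C^2}\le \delta/2$.

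The key structural observation is that $\tilde H_e$ depends on $e$ only through the trivial additive term; therefore $\partial_J \tilde H_e = (\partial_p H_0,1)$ and all Fourier coefficients $h_k$ are independent of $e$, which forces the generating function $G$ constructed in Theorem~\ref{autonomous} to be $e$-independent as well. Its Hamiltonian flow then satisfies $\dot t = \partial_e(\epsilon G)=0$, so the variable $t$ is preserved and $\tilde\Phi$ automatically takes the required skew-triangular form
\[
\tilde\Phi(\theta,p,t,e) = \bigl(\Phi(\theta,p,t),\, e + f(\theta,p,t)\bigr),
\]
with $\Phi$ fixing the $t$-coordinate. Smoothness of $\Phi$ when $H_0$ and $H_1$ are smooth is automatic since the construction uses only finite Fourier truncations and smooth cutoffs.

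The main technical step is to show that on $\mD(K,\beta\epsilon^{1/4})$ the cutoffs $\rho_k$ collapse $R_1$ to the $K$-truncation $Z_K$ of $Z$. Writing $k = (k^s,k^f,k^t)\in \Zm^{n-1}\times\Zm\times\Zm$ and $k\cdot \partial_J\tilde H_e = k^s\cdot \partial_{p^s}H_0 + k^f\partial_{p^f}H_0 + k^t$, I would combine the Diophantine bound $|k^f\partial_{p^f}H_0+k^t|\ge 3K\beta\epsilon^{1/4}$ (valid for $(k^f,k^t)\ne 0$ with $\max(|k^f|,|k^t|)\le K$) with the slow-direction bound $|k^s\cdot\partial_{p^s}H_0|\le \sqrt{n-1}\,[k^s]\beta\epsilon^{1/4}$ to force $|k\cdot \partial_J\tilde H_e|\ge 2[k]\beta\epsilon^{1/4}$, i.e.\ $\rho_k\equiv 0$, for every such $k$. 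For $k=(k^s,0,0)$ the same estimate shows that the argument of $\rho_k$ is uniformly bounded by $\sqrt{n-1}$ on the domain, so $\rho_k\equiv 1$ after absorbing this dimension-dependent factor into a slightly enlarged $\beta$ (or equivalently into $c_n$). This bookkeeping of small divisors is the main obstacle: one must ensure that the slow-resonance defect $|k^s\cdot\partial_{p^s}H_0|$ cannot swallow the transverse Diophantine gap, which is the reason the factor $3$ and the Euclidean norm are built into the definition of $\mD$.

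Once these two dichotomies are in place, on $\Tm^n\times \mD(K,\beta\epsilon^{1/4})\times \Tm$ one has $R_1 = Z_K(\theta^s,p)$, so setting $R := (Z_K - Z) + R_2$ produces a Fourier tail plus the autonomous error. By Lemma~\ref{fourier-est}, $\|Z - Z_K\|_{C^2}\le \kappa_{n-1}K^{n-r+1}\|H_1\|_{C^r}\le \delta/2$ as soon as $K\ge c_n\delta^{-1/(r-n-4)}$, and combined with the bound on $R_2$ this gives $\|R\|_{C^2}\le \delta$ on the domain. The $C^0$ and $C^2$ bounds on $\tilde\Phi$ are inherited directly from Theorem~\ref{autonomous}, completing the proof.
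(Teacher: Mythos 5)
Your proposal follows the same route as the paper's proof: lift to the autonomous Hamiltonian $\tilde H_e$ on $\Tm^{n+1}\times\Rm^{n+1}$, apply Theorem~\ref{autonomous} with $m=n+1$, observe that the generating function $G$ is $e$-independent (yielding the skew-triangular form of $\tilde\Phi$), compute $R_1$ on $\mD(K,\beta\epsilon^{1/4})$ by separating the averaged Fourier modes from the rest via the cutoffs $\rho_k$, and finally estimate the Fourier tail $Z-Z_K$ with Lemma~\ref{fourier-est}. So the architecture is right.

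The one point that does not go through as written is your handling of the dimensional factor in the slow-direction bound. You correctly note that Cauchy--Schwarz gives $|k^s\cdot\partial_{p^s}H_0|\le\sqrt{n-1}\,[k^s]\beta\epsilon^{1/4}$, so for an averaged mode $k=(k^s,0,0)$ the argument of $\rho_k$ is bounded only by $\sqrt{n-1}$; likewise, combining this with the Diophantine bound $|k^f\partial_{p^f}H_0+k^t|\ge 3K\beta\epsilon^{1/4}$ gives $|k\cdot\partial_J\tilde H_e|\ge(3-\sqrt{n-1})K\beta\epsilon^{1/4}$, which is \emph{not} $\ge 2[k]\beta\epsilon^{1/4}$ once $n\ge3$. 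Your proposed remedy, enlarging $\beta$ or $c_n$, has no effect: the domain constraint $\|\partial_{p^s}H_0\|\le\beta\epsilon^{1/4}$ and the cutoff scale $\beta\epsilon^{1/4}[k]$ both scale linearly in $\beta$, so the ratio $|k^s\cdot\partial_{p^s}H_0|/(\beta\epsilon^{1/4}[k])\le\sqrt{n-1}$ is $\beta$-independent. The bound $|k^f\cdot\partial_{p^f}H_0/(\beta\epsilon^{1/4}[k])|\le 1$ used in the paper's proof is valid only when the norm in the definition of $\mD$ is the $\ell^1$ norm (so that $|k^s\cdot\partial_{p^s}H_0|\le[k^s]\|\partial_{p^s}H_0\|_{\ell^1}\le[k^s]\beta\epsilon^{1/4}$ with no dimensional loss); alternatively one can replace the factor $3$ in $\mD(K,s)$ by a dimension-dependent constant of at least $2+\sqrt{n-1}$, or shift the thresholds of the bump function $\rho$. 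Finally, a small bookkeeping slip: the tail estimate from Lemma~\ref{fourier-est} with $l=2$ and $m=n+1$ gives $\|\Pi^+_K Z\|_{C^2}\le\kappa_m K^{m+3-r}=\kappa_m K^{n+4-r}$, not $K^{n-r+1}$; the exponent $n+4-r$ is what makes the stated lower bound $K\ge c_n\delta^{-1/(r-n-4)}$ sufficient.
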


\begin{proof}
We apply Theorem \ref{autonomous} with
$\tilde H_{\epsilon}$, $m=n+1$ and $\tilde \delta=\delta/2$.
We get a diffeomorphism $\tilde \Phi$ of $\Tm^{n+1}\times \Rm^{n+1}$
as time-one flow of the Hamiltonian $G$.
By inspection in the proof of Theorem \ref{autonomous}, we observe that
$G$ does not depend on $e$, which implies that $\tilde \Phi$ has the desired form.
We have
$$\tilde H_{\epsilon}\circ \tilde \Phi=\tilde H_0(J)+\epsilon \tilde R_1+\epsilon \tilde R_2
$$
where $\|\tilde R_2\|_{C^2}\leq \delta/2$ and
$$
\tilde R_1( \theta,p,t)=
\sum_{[k]\leq K}
\rho\left(\frac{k^f\cdot\partial_{p^f}H_0+k^s\partial_{p^s}H_0+k^t}{\beta\epsilon^{1/4}[k]}\right)
g_k(p)
e^{2i\pi k\cdot(\theta,t)}.
$$
Let us  compute this sum under the assumption that $p\in \mD(K,\beta\epsilon^{1/4})$
(or equivalently, that $(\partial_pH_0,1)\in \Omega(K,\beta\epsilon^{1/4})$).
We have
$$
\left|
\frac{k^f\cdot \partial_{p^f}H_0}{\beta\epsilon^{1/4}[k]}
\right|\leq 1
$$
hence
$$
\rho\left(\frac{k^f\cdot\partial_{p^f}H_0+k^s\partial_{p^s}H_0+k^t}{\beta\epsilon^{1/4}[k]}\right)
=1
$$
for  $k$ such that $k^s=0=k^t$.
For the other terms, we have, by definition of $\Omega(K,s)$,
$$
\left|
\frac{k^s\partial_{p^s}H_0+k^t}{\beta\epsilon^{1/4}[k]}
\right|\geq
\left|
\frac{k^s\partial_{p^s}H_0+k^t}{\beta\epsilon^{1/4}K}
\right|\geq 3,
$$
hence
$$
\left|
\frac{k^f\cdot\partial_{p^f}H_0+k^s\partial_{p^s}H_0+k^t}{\beta\epsilon^{1/4}[k]}
\right|\geq 2
$$
and  these terms vanish in the expansion of $\tilde R_1$.
We conclude that
$$
\tilde R_1(\theta,p,t)=\sum _{k^f\in \Zm^{n-1},[k^f]\leq K} g_{(k_f,0,0)}(p)
e^{2i\pi k^f\cdot \theta^f}
$$
hence
$\tilde R_1=Z-\Pi_K^+(Z)$, with the notation of Lemma \ref{fourier-est}.
Finally
$
\tilde H_{\epsilon}\circ \tilde \Phi=
\tilde H_0+\epsilon Z+\epsilon R_2
$
with $R_2=\tilde R_2-\Pi^+_KZ$.
From Lemma \ref{fourier-est}, we see that
$$
\|\Pi^+_KZ\|_{C^2}\leq c_nK^{m+3-r}\|Z\|_{C^r}\leq c_nK^{m+3-r}\|H_1\|_{C^r}
\leq c_nK^{m+3-r}\leq \delta/2.
$$
On the other hand, $\|\tilde R_2\|_{C^2}\leq \delta/2$, hence
$\|R_2\|_{C^2}\leq \delta$.
\end{proof}

\subsection{Smooth approximation}
We finally remove the restriction on $r$ and obtain a smooth change of coordinates.
If $r<n+5$, we use Lemma~\ref{appoximation} below to approximate $H_1$ by an
analytic  function $H_1^*:=S_{\tau}H_1$ (with a parameter $\tau$ that will be specified later).

\begin{lem}\label{appoximation} \cite{SZ}
  Let $f:\R^n\to \R$ be a $C^{r}$ function, with $r\geq 4$.
Then for each $\tau>0$ there exists an analytic function  $S_\tau f$  such that
$$ \|S_\tau f -f\|_{C^3}\le  c(n,r) \|f\|_{C^3}\tau^{r-3},$$
$$ \|S_\tau f\|_{C^{s}}\le 
c(n,r) \|f\|_{C^s} \tau^{-(s-r)},$$
for  each $s>r$, where $c(n,r)$ is a constant which depends only on $n$ and $r$.
\end{lem}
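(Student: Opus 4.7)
The plan is to take $S_\tau f$ to be the convolution of $f$ with an entire-analytic mollifier of width $\tau$, obtained as the inverse Fourier transform of a smooth compactly supported bump that is flat to infinite order at the origin. Both estimates will then follow from standard convolution bookkeeping: the high-regularity one by moving the excess derivatives onto the kernel at a cost of $\tau^{-(s-r)}$, and the approximation one by exploiting vanishing moments of $\phi$ together with a Taylor expansion to cancel all low-order terms.

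Concretely, fix once and for all $\hat\phi\in C_c^\infty(\R^n)$ with $\hat\phi\equiv 1$ on a neighborhood of $0$ and $\supp\hat\phi\subset\{|\xi|\le 1\}$, and let $\phi$ be its inverse Fourier transform. By Paley--Wiener, $\phi$ extends to an entire function on $\C^n$ of exponential type one; from $\hat\phi(0)=1$ and $\partial^\alpha\hat\phi(0)=0$ for every $|\alpha|\ge 1$ one reads off $\int\phi=1$ and $\int y^\alpha\phi(y)\,dy=0$ for all $|\alpha|\ge 1$, while the Schwartz decay of $\phi$ provides integrability of $|y|^N|\phi|$ for every $N$. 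Set $\phi_\tau(x)=\tau^{-n}\phi(x/\tau)$ and $S_\tau f := f*\phi_\tau$. Since $\phi_\tau$ is entire, so is $S_\tau f$.

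For the blow-up bound, I split any multi-index with $|\alpha|=s>r$ as $\alpha=\beta+\gamma$ with $|\beta|=r$, and move the extra $s-r$ derivatives onto the kernel:
\[
\partial^\alpha(f*\phi_\tau)=(\partial^\beta f)*(\partial^\gamma\phi_\tau),
\]
giving $\|\partial^\alpha S_\tau f\|_{C^0}\le \|f\|_{C^r}\,\tau^{-(s-r)}\|\partial^\gamma\phi\|_{L^1}$, which is the second displayed estimate. For the approximation estimate, fix $|\alpha|\le 3$, set $g:=\partial^\alpha f\in C^{r-|\alpha|}$, and, using $\int\phi_\tau=1$, write
\[
\partial^\alpha(S_\tau f - f)(x)=\int\bigl(g(x-y)-g(x)\bigr)\phi_\tau(y)\,dy.
\]
Expanding $g(x-y)$ in its Taylor polynomial of degree $r-|\alpha|-1$ about $x$, the vanishing moments of $\phi$ annihilate every polynomial term; the integral remainder is bounded pointwise by $C\,\|g\|_{C^{r-|\alpha|}}\,|y|^{r-|\alpha|}\le C\,\|f\|_{C^r}\,|y|^{r-3}$, and $\int|y|^{r-3}|\phi_\tau(y)|\,dy$ scales as $\tau^{r-3}$, yielding the first estimate.

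The one delicate point is arranging a single mollifier $\phi$ that is simultaneously entire in $x$ and has all moments of positive order vanishing --- achieved here by taking $\hat\phi$ flat to infinite order at the origin --- so that the same $\phi$ produces both the analyticity of $S_\tau f$ and the full order $\tau^{r-3}$ of approximation. Everything beyond this design choice is routine combinatorics of multi-indices and convolution, exactly as in the reference \cite{SZ}.
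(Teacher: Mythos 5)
Your proof reproduces the construction in \cite{SZ}, which the paper cites without supplying its own argument: convolve $f$ with $\phi_\tau(\cdot)=\tau^{-n}\phi(\cdot/\tau)$ where $\hat\phi\in C_c^\infty$ is identically $1$ near the origin, so that $\phi$ is entire of exponential type (Paley--Wiener) and all moments of positive order vanish; the approximation estimate then follows from a Taylor expansion annihilated by the moments, and the blow-up estimate from moving the excess $s-r$ derivatives onto the kernel. Two remarks are in order. First, the intermediate inequality $|y|^{r-|\alpha|}\le|y|^{r-3}$ holds only for $|y|\le1$; the cleaner bookkeeping gives $\|\partial^\alpha(S_\tau f-f)\|_{C^0}\le c\,\tau^{r-|\alpha|}\|f\|_{C^r}$ directly for each $|\alpha|\le3$, and the stated $\tau^{r-3}$ bound then follows for $\tau\le1$ after taking the maximum over $|\alpha|$. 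Second, your right-hand sides carry $\|f\|_{C^r}$, whereas the lemma as printed has $\|f\|_{C^3}$ and $\|f\|_{C^s}$; the latter is undefined for $f\in C^r$ with $s>r$, and the former would make the first inequality false in general (consider $f(x)=A\sin(Nx)$ with $N$ large), so both should be read as $\|f\|_{C^r}$ --- which is also how the lemma is actually invoked in the proof of Theorem~\ref{normal-form}, where the input satisfies $\|H_1\|_{C^r}=1$.
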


In order to obtain a smooth change of variables, it is also convenient to approximate
$H_0(p)$
in $C^4(B)$ by a smooth $H_0^*(p)$ (using a standard mollification).
We then apply Corollary~\ref{norm} to the Hamiltonian
$$
H^*_{\epsilon}:=H^*_0+\epsilon H^*_1=H^*_0+\epsilon_2 H_2
$$
with $H_2=H_1^*/\|H_1^*\|_{C^{r_2}}$, with $\epsilon_2=\epsilon\|H_1^*\|_{C^{r_2}}$, and
with some  parameters $r_2\geq r$ and $\delta_2\leq \delta$
to be specified later.
We find a smooth change of coordinates $\tilde \Phi$ such that
$$
\tilde H^*_{\epsilon}\circ \tilde  \Phi=\tilde H^*_0+\epsilon_2 Z_2+\epsilon_2 R_2
=\tilde H^*_0+\epsilon Z^*+\epsilon\|H_1^*\|_{C^{r_2}} R_2
$$
and $\|R_2\|_{C^2}\leq \delta_2$, where
$Z_2(\theta^s,p)=\int H_2d\theta^f dt$ and $Z^*(\theta^s,p)=\int H_1^*d\theta^f dt$.
As usual, we have denoted by $\tilde H^*_{\epsilon}$ and $\tilde H^*_0$ the automomized Hamiltonians
$\tilde H^*_{\epsilon}=H^*_{\epsilon}+e$ and $\tilde H^*_0=H^*_0+e$.
With the same map $\tilde \Phi$, we obtain
$$
\tilde H_{\epsilon}\circ \tilde \Phi=\tilde H_0+\epsilon Z + \epsilon R
$$
with
$$
R=\|H_1^*\|_{C^{r_2}}R_2+(Z-Z^*)+(H_1^*-H_1)\circ  \Phi
+\big((\tilde H^*_0-\tilde H_0)+(\tilde H_0-\tilde H^*_0)\circ \tilde \Phi \big)/\epsilon
$$
In the expression above, the map $\Phi$ is the trace on the $(\theta,p,t)$ variables
of the map $\tilde \Phi$.
Choosing $\tau=\delta_2^{1/(r_2-3)}$, and assuming that $\|H^*_0-H_0\|_{C^4}\leq \epsilon \delta/
c(n,4)$ we get
\begin{itemize}
 \item[-]$\|H_1^*-H_1\|_{C^3}\leq c(n,r_2)\delta_2^{\frac{r-3}{r_2-3}}$
\item[-] $\|H_1^*\|_{C^{r_2}}\leq c(n,r_2)\delta_2^{-\frac{r_2-r}{r_2-3}}$
\item[-] $\|Z^*-Z\|_{C^2}\leq \|H_1^*-H_1\|_{C^2}
\leq c(n,r_2)\delta_2^{\frac{r-3}{r_2-3}}$
\item[-] $\| \tilde \Phi - id \|_{C^2}\leq \delta_2\leq \delta \leq 1,$
\item[-]
$
\|(H_1^*-H_1)\circ  \Phi\|_{C^2}\le 
c(n,r_2) \|H_1^*-H_1\|_{C^2 }
(\|\Phi\|_{C^2}+\| \Phi\|_{C^2}^2) \le 
c(n,5) \|H_1^*-H_1\|_{C^2 }.
$
\item[-]$\|(\tilde H_0-\tilde H^*_0)\circ \tilde \Phi\|_{C^2}\leq \delta/
c(n,r_2)$.
\end{itemize}
and finally
$$
\|R\|_{C^2}\leq c(n,r_2)\delta_2^{\frac{r-3}{r_2-3}}+\delta/
c(n,r_2).
$$
We now set
$$\delta_2=\delta^{\frac{r_2-3}{r-3}}/c(n,r_2)\leq \delta/2
$$
and get $\|R\|_{C^2}\leq \delta$.
To apply Corollary~\ref{norm} as we just did, we need the following conditions
to hold on the parameters:
\begin{itemize}
\item[-] $K\geq  c(n,r_2)\delta^{\frac{r_2-3}{(r-3)(r_2-n-4)}}$, which implies
$K\geq c_n\delta_2^{\frac{-1}{r-n-4}}$,
\item[-] $\beta\geq
 c(n,r_2)(2+\|H_0\|_{C^4})\delta^{-\frac{r_2-3}{2(r-3)}}$
which implies $\beta\geq c_n (1+\|H^*_0\|_{C^4})\delta_2^{-1/2}$,
\item[-] $\beta \epsilon^{1/4}\leq(1+ \|H_0\|_{C^4})\delta ^{\frac{r_2-r}{4(r-3)}}$
which implies $\beta \epsilon_2^{1/4}\leq \|H^*_0\|_{C^4}$.
\end{itemize}
We apply the above discussion with $r_2=2n+5$ and get
Theorem \ref{normal-form}.
Note the estimate
$$
\|id-\tilde \Phi\|_{C^0} \leq \delta_2\sqrt{\epsilon_2}\leq
\delta_2^{1-\frac{r_2-r}{2(r_2-3)}}\sqrt{\epsilon}\leq \sqrt{\epsilon}.
$$
\qed

\section{Normally hyperbolic cylinders}\label{sec:NHIC}
In this section, we study the $C^2$  Hamiltonian
$$
N_\epsilon(\theta,p,t)=H_0(p) + \epsilon Z(\theta^s,p) +
\epsilon R(\theta,p,t).
$$
In the above notations we denote by
$p^s_*(p^f)\in  \Rm^{n-1}$ the solution of the equation
$\partial_{p^s}H_0(p^s_*(p^f),p^f)=0$. We recall also the notation
$p_*(p^f):=(p^s_*(p^f),p^f)$
from the introduction. 
We  assume that $\|Z\|_{C^3}\leq 1$,
and that 
$D^{-1}I\leq \partial^2_{pp}H_0\leq D\,I$ for some $D\geq 1$.
To simplify notations, we will be using the $O(\cdot)$ notation, 
where $f=O(g)$ means $|f|\le Cg$ for a constant $C$ independent of
$\epsilon$, $\lambda$, $\delta$, $r$, $a^-$, $a^+$.
 We will not be keeping track of the parameter $D$, 
which is considered fixed throughout the paper.

Given  parameters
$$
\lambda \in ]0,1], \quad a^-<a^+,
$$
we assume that for each $p^f\in [a^-,a^+]$ there exists  
a local maximum $\theta^s_*(p^f)$ of the map
$\theta^s \lmto Z(\theta^s,p_*(p^f))$, and that $\theta^s_*$
is a $C^2$ function of $p^f$. We assume in addition that
\begin{equation}\label{lambdadef}
 -I \le \partial^2_{\theta^s \theta^s}
Z(\theta^s_*(p^f), p_*(p^f))\le -\lambda I
\end{equation}
for each $p^f\in [a^-,a^+]$,
where as before $I$ is the identity matrix. We shall at some 
occasions lift the map $\theta^s_*$ to a $C^2$ map
taking values in $\Rm^{n-1}$ without changing its name.

%


\begin{thm}\label{nhic-mult}
The following conclusion holds if 
  $b\in ]0,1[$ is a sufficiently small constant
  (how small does not depend on the parameters $\epsilon, \lambda, \delta, a^-, a^+$):
If the parameters $\lambda \in ]0,1]$, $a^-<a^+$, $\epsilon$, $\delta$ satisfy 
$$
0<\epsilon<b\lambda^{9/2}\quad ,
\quad 0\leq \delta< b\lambda^{5/2},
$$
if  $\|R\|_{C^2}\leq \delta$, on the open set
\begin{equation}\label{dom}
\big\{(\theta,p,t):\quad  p^f\in ]a^-,a^+[, \quad
\|p^s-p^s_*(p^f)\|< \epsilon^{1/2}
\big\},
\end{equation}
and if (\ref{lambdadef}) holds for each $p^f\in  [a^-,a^+]$,  then 
there exists a $C^2$ map
 $$
(\Theta^s, P^s)(\theta^f, p^f, t):\T\times
[a^- +\sqrt{\delta\epsilon}, a^+ - \sqrt{\delta\epsilon}]\times \Tm \lto \Tm^{n-1}\times \Rm^{n-1}
$$ 
such that the cylinder
$$\mC=\{ (\theta^s, p^s)=(\Theta^s,P^s)(\theta^f, p^f, t);
\quad p^f\in[a^- +\sqrt{\delta\epsilon}, a^+ - \sqrt{\delta\epsilon}], \quad  
(\theta^f,t)\in \T\times \T\}
$$
is weakly invariant with respect to  $N_\epsilon$ in the sense 
that the Hamiltonian vector field is tangent to $\mC$.
The cylinder $\mC$ is contained in  the set
\begin{align*}
V:=\big\{&(\theta,p,t); p^f\in
 [a^- +\sqrt{\delta\epsilon}, a^+ - \sqrt{\delta\epsilon}];\\
&\|(\theta^s-\theta^s_*(p^f)\|\leq 
b^{1/5}\lambda^{3/2},
\quad
\|p^s-p^s_*(p^f)\|\le 
b^{1/5} \lambda ^{3/2}\epsilon^{1/2}
\big\},
\end{align*}
and it contains all the full orbits  of $N_{\epsilon}$ contained in $V$.
We have the estimates
$$
\|\Theta^s(\theta^f,p^f,t)-\theta^s_*(p^f)\|\leq
O\big(\lambda^{-1}\delta+\lambda^{-3/4}\sqrt{\epsilon}\big),
$$
$$
\|P^s(\theta^f,p^f,t)-p^s_*(p^f)\|\leq
\sqrt{\epsilon}\,O\big(\lambda^{-3/4}\delta+\lambda^{-1/2}\sqrt{\epsilon}\big),
$$
\begin{align*}
 \left\|\frac{\partial\Theta^s}{\partial p^f}\right\|=
O\left(\frac{\lambda^{-2}\sqrt{\epsilon}+\lambda^{-5/4}
\sqrt \delta}{\sqrt{\epsilon}}
\right)
\quad &,\quad
\quad \left\|\frac{\partial \Theta^s}{\partial(\theta^f, t)}\right\|=
O\left(\lambda^{-2}\sqrt{\epsilon}+\lambda^{-5/4}\sqrt\delta
\right),\\
 \left\|\frac{\partial P^s}{\partial p^f}\right\|=
O\left(1
\right)
\quad &,\quad
\quad \left\|\frac{\partial P^s}{\partial(\theta^f, t)}\right\|=
O\left(\sqrt \epsilon
\right).
 \end{align*}
\end{thm}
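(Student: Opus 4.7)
The plan is to construct $\mathcal{C}$ as the graph of a fixed point of a graph-transform operator, after a rescaling that makes the hyperbolic structure of order $\sqrt{\lambda}$ and the perturbations small relative to it. First I would center and rescale the slow variables by setting $x = \theta^s - \theta^s_*(p^f)$ and $Y = (p^s - p^s_*(p^f))/\sqrt{\epsilon}$, and introduce the rescaled time $\tau = \sqrt{\epsilon}\,t$. In these coordinates, the truncated Hamiltonian $H_0 + \epsilon Z$ yields, to leading order, the decoupled mechanical saddle
$$
x' = \partial^2_{pp}H_0(p_*(p^f))\cdot Y, \qquad Y' = -\partial^2_{\theta^s\theta^s}Z(\theta^s_*(p^f),p_*(p^f))\cdot x,
$$
whose spectrum is bounded away from the imaginary axis by a rate $\gtrsim \sqrt{\lambda/D}$ thanks to \eqref{lambdadef} and \eqref{Dd}. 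The pair $(\theta^f,t)$ rotates at rate $O(1)$ in $\tau$, and $p^f$ drifts at rate $O(\sqrt{\epsilon}\,\delta)$, so it acts as an almost frozen parameter on timescales $O(1/\sqrt{\lambda})$.

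Next I would apply a quantitative graph transform. Let $\mathcal{E}$ be the closed convex set of $C^1$ maps $(X,Y)\colon \T\times[a^-+\sqrt{\delta\epsilon},a^+-\sqrt{\delta\epsilon}]\times\T\to\T^{n-1}\times\R^{n-1}$ whose $C^0$ and Lipschitz bounds match the targets of the theorem (after undoing the $\sqrt{\epsilon}$ rescaling of $Y$). The graph transform $\mathcal{T}$ flows the graph of $(X,Y)$ backward by a fixed rescaled time $T_0 = O(1/\sqrt{\lambda})$ under the full Hamiltonian and re-projects the image onto $(\theta^f,p^f,t)$ (wherever possible). The hypotheses $\epsilon < b\lambda^{9/2}$ and $\delta < b\lambda^{5/2}$ guarantee that the hyperbolic splitting dominates both the curvature of $Z$ away from its maximum and the $C^2$-perturbation $\epsilon R$ after rescaling, so $\mathcal{T}$ preserves $\mathcal{E}$ and contracts in the $C^0$ norm. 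Its unique fixed point provides $(\Theta^s,P^s)$, and the cylinder $\mathcal{C}$ is automatically weakly invariant because $\mathcal{T}$ is defined by time-$T_0$ flow; the $\sqrt{\delta\epsilon}$ shrinkage of the $p^f$-window is exactly what is needed for orbits on the graph not to escape through the boundary during time $T_0$.

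For the quantitative estimates I would plug the zero graph into the invariance equation and compute the residual: a term of order $\delta$ coming from $\epsilon R$ (after rescaling), a term of order $\sqrt{\epsilon}$ from the Taylor error of $Z$ and from $\partial^2_{p^sp^f}H_0$, and a term of order $\sqrt{\epsilon}\,\lambda^{-1/2}$ from the variation of $\theta^s_*(p^f)$ with $p^f$. A Lyapunov--Perron integral formula divides these by the hyperbolic rate $\sqrt{\lambda}$, producing the bound $\|\Theta^s-\theta^s_*\| = O(\lambda^{-1}\delta+\lambda^{-3/4}\sqrt{\epsilon})$ and the $\sqrt{\epsilon}$-smaller bound for $P^s-p^s_*$. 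Differentiating the fixed-point equation and solving the resulting linear problem for $\partial_{\theta^f,p^f,t}(\Theta^s,P^s)$ gives the derivative bounds; the $1/\sqrt{\epsilon}$ factor in $\partial_{p^f}\Theta^s$ is an artifact of de-rescaling, since in rescaled variables the derivative with respect to $p^f$ is $O(1)$ but the reference point $\theta^s_*(p^f)$ varies at $O(1)$ in unrescaled units.

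The main obstacle will be obtaining the sharp exponents of $\lambda$ in both the smallness conditions and the conclusions. The condition $\delta\leq b\lambda^{5/2}$ is substantially stronger than the $\delta\leq b\sqrt{\lambda}$ that mere existence of a Lipschitz invariant graph would need; the extra $\lambda^2$ comes from the simultaneous control of the Lipschitz constants in the $(\theta^f,p^f,t)$ directions, which feed back into the contraction estimates through the nonlinear coupling between $x$ and $Y$ and the variation of $\theta^s_*(p^f)$. A delicate point is that since $\delta$ may be much larger than $\epsilon$, one cannot treat $\epsilon R$ as a small perturbation of $\epsilon Z$ in any absolute $C^2$ sense; only the rescaling and the decoupled hyperbolic structure make $\epsilon R$ absorbable into the background dynamics.
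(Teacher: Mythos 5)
Your broad strategy (rescale by $\sqrt\epsilon$ to make $p^f$ almost frozen, recognize the hyperbolic block $x'=BY$, $Y'=Ax$ with spectral gap $\sim\sqrt{\lambda/D}$, then run a quantitative invariant-manifold argument) is essentially the same shape as the paper's, which reduces to an abstract isolating-block/cone-condition theorem (Proposition~\ref{realNHI} in Appendix~\ref{sec:abstract-nhic}) after a similar centering and rescaling. So the high-level plan is sound.

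The genuine gap is in the treatment of the linearization $\begin{pmatrix}0&B\\A&0\end{pmatrix}$. You observe correctly that its spectrum is at distance $\gtrsim\sqrt{\lambda/D}$ from the imaginary axis, but you then run the graph transform directly in the $(x,Y)$ coordinates. That matrix is \emph{not} symmetric, its stable and unstable eigenspaces are not axis-aligned, and the conditioning of the diagonalizing transformation degenerates as $\lambda\to 0$. As a consequence, the one-sided cone conditions (i.e.\ the quadratic-form inequalities $L_{uu}\ge\alpha I$, $L_{ss}\le-\alpha I$ that one needs for both the isolating-block and the graph-transform route) are simply false in the $(x,Y)$ frame; the coupling blocks $B$ and $A$ are of order $1$ and $\lambda$, whereas the diagonal blocks vanish. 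The paper resolves this by introducing the symmetric matrix
$T=\bigl(B^{1/2}(B^{1/2}AB^{1/2})^{-1/2}B^{1/2}\bigr)^{1/2}$
and passing to $\xi=T^{-1}x+\epsilon^{-1/2}Tp^s$, $\eta=T^{-1}x-\epsilon^{-1/2}Tp^s$, in which the linearization becomes block-diagonal $\pm\sqrt{\epsilon}\,\Lambda$ with $\Lambda=TAT\ge\sqrt{\lambda/D}\,I$. Crucially, the bounds $\|T\|=O(\lambda^{-1/4})$, $\|\partial_{p^f}T\|=O(\lambda^{-5/4})$, $\|\partial_{p^f}T^{-1}\|=O(\lambda^{-3/4})$ are exactly where the various $\lambda$-exponents in the theorem come from — both in the smallness hypotheses $\epsilon<b\lambda^{9/2}$, $\delta<b\lambda^{5/2}$ and in the derivative estimates on $(\Theta^s,P^s)$. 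Your heuristic, attributing the extra $\lambda^2$ to ``feedback through the Lipschitz constants in the $(\theta^f,p^f,t)$ directions,'' is not the actual mechanism; without $T$ you cannot see where $\lambda^{-5/4}$ or $\lambda^{-3/4}$ come from, and you will not be able to close the contraction estimate uniformly in $\lambda$.

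A smaller but real gap is the assertion ``it contains all the full orbits of $N_\epsilon$ contained in $V$,'' which a bare graph-transform fixed point does not automatically deliver. The paper obtains it from the isolating-block characterization of the invariant set (and from the flexibility of choosing the cone radius $\rho$ anywhere in the admissible range $b^{-1/4}(\lambda^{-3/4}\delta+\lambda^{-1/2}\sqrt\epsilon)\le\rho\le b^{1/6}\lambda^{5/4}$); you would need a separate argument to show that any orbit confined to $V$ lies on your graph, which is why the paper reaches for a vector-field formulation (Yang's theorem) rather than a time-$T_0$ graph transform. Finally, the $\sqrt{\delta\epsilon}$ shrinkage of the $p^f$-window is not a ``time $T_0$ escape'' estimate; it is produced by a mollifier $\rho(c_2)$ cutting off the $p^f$-drift on a collar of width $\sigma=\sqrt\delta$ in the rescaled $I$-coordinate (Proposition~\ref{realNHI}), which preserves the cone conditions while making the extended field have an honest invariant manifold.
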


Notice that the domain $V$ is contained in the domain (\ref{dom}) 
where the assumption on $R$ is made.

\noindent
\textit{Proof of Theorem~\ref{intro-nhic-mult}. }
We derive Theorem~\ref{intro-nhic-mult} from  Theorem~\ref{nhic-mult} 
as follows.
We assume 
that  Hypothesis (\ref{HZl}) holds on 
$$
\Gamma_1:=\{(p_*(p^f)), p^f\in[a_-,a_+]\}.
$$
Then the inequality 
$$
 -I \le \partial^2_{\theta^s \theta^s}
Z(\theta^s_*(p^f), p_*(p^f))\le -2\lambda I
$$
holds for $p^f\in [a_-,a_+]$. Since $\|Z\|_{C^3}\leq 1$, the inequality 
$$
 -I \le \partial^2_{\theta^s \theta^s}
Z(\theta^s, p)\le -\lambda I
$$
holds for each $(\theta^s,p)$ in the $\lambda$-neighborhood of $(\theta^s_*(a_-), p_*(a_-))$.
The inequality 
$$
Z(\theta^s,p_*(a_-))\leq Z(\theta^s_*(a_-),p_*(a_-))-\lambda d^2 (\theta^s, \theta^s_*(a_-))
$$
implies that the function $Z(.,p_*(p^f))$ has a  global maximum $\theta^s_*(p^f)$, which is contained  in the ball
$B(\theta^s_*(a_-), \lambda )$, provided $|p^f-a_-|\leq b\lambda^3$ and $b$ is small enough.
By a similar reasoning at $a_+$, we extend the map $p^f\lmto \theta^s_*(p^f)$ to the interval 
$[a_--b\lambda^3,a_++b\lambda ^3]$ in such a way that, for each $p^f$ in this interval, the point $\theta^s_*(p^f)$
is a local (and even global) maximum of the function $Z(.,p_*(p^f))$ which satisfies the inequalities
$$
 -I \le \partial^2_{\theta^s \theta^s}
Z(\theta^s_*(p^f), p_*(p^f))\le -\lambda I.
$$
Taking a small $b>0$, 
we set $\kappa =b^{1/5}\lambda ^{3/2}$ and $\delta=b^3 \lambda ^{9}$.
Assuming as in the statement of Theorem~\ref{intro-nhic-mult}
that the estimate $\|R\|_{C^2}<\delta$ holds on $\Tm^n\times U_{\epsilon^{1/3}}\times \Tm$, hence on 
$$
\big\{(\theta,p,t):\quad  p^f\in ]a_- - \epsilon^{1/3}/2,a_+ +\epsilon^{1/3}/2[, \quad
\|p^s-p^s_*(p^f)\|< \epsilon^{1/3}/2
\big\}.
$$
and that  $\epsilon\in ]0,\delta[$, we
 apply Theorem \ref{nhic-mult} on the interval
$$
[a^-,a^+]:=
[a_- -\epsilon^{1/3}/2,a_+ +\epsilon^{1/3}/2]\subset [a_--b\lambda^3,a_++b\lambda ^3].
$$
If $b$ (hence $\kappa$) is  small enough, then we have the inclusion
$$
[a^-+\sqrt {\epsilon\delta}, a^+-\sqrt {\epsilon\delta}]\supset [a_- -\kappa \epsilon^{1/3},a_+ +\kappa \epsilon^{1/3}].
$$
\qed

The proof of Theorem \ref{nhic-mult} occupies the rest of the section.

The Hamiltonian flow admits the following equation of motion :
\begin{equation}\label{eq:perturbed}
\begin{cases}
  \dot{\theta}^s = \partial_{p^s}H_0 + \epsilon\partial_{p^s}Z + 
\epsilon\partial_{p^s} R \\
  \dot{p}^s = -\epsilon \partial_{\theta^s}Z - \epsilon\partial_{\theta^s} R \\
  \dot{\theta}^f = \partial_{p^f}H_0 + \epsilon\partial_{p^f}Z +  
\epsilon\partial_{p^f} R \\
  \dot{p}^f = -\epsilon \partial_{\theta^f} R \\
  \dot{t}=1
\end{cases}.
\end{equation}
The Hamiltonian structure of the flow 
is not used in the following proof.

It is convenient in the sequel to lift the angular variables
to real variables and to consider the above system as defined on
$\Rm^{n-1}\times \Rm^{n-1}\times \Rm\times \Rm\times \Rm.
$
We will see this system as a perturbation of the model system
\begin{equation}\label{eq:model}
 \dot{\theta}^s = \partial_{p^s}H_0\quad,\quad
 \dot{p}^s = -\epsilon \partial_{\theta^s}Z \quad, \quad
\dot{\theta}^f = \partial_{p^f}H_0\quad,\quad
\dot{p}^f = 0\quad, \quad
\dot{t}=1.
\end{equation}
The graph
of the map
$$
(\theta^f,p^f,t)\lmto (\theta^s_*(p^f),p^s_*(p^f))
$$
on $\Rm\times ]a^-,a^+[\times \Rm$ is obviously invariant
for the model flow.
For each fixed $p^f$, the point $(\theta^s_*(p^f),p^s_*(p^f))$
is a hyperbolic fixed point of the partial system
$$ \dot{\theta}^s = \partial_{p^s}H_0(p^s,p^f)\quad,\quad
 \dot{p}^s = -\epsilon \partial_{\theta^s}Z(\theta^s,p^s,p^f)
$$
where $p^f$ is seen as a parameter.
This hyperbolicity is the key property we will
use, through the theory of normally hyperbolic invariant manifolds.
It is  not obvious to  apply this theory here  because the model
system itself depends on $\epsilon$, and because we have to deal
with the problem of
non-invariant boundaries.
We will however manage to apply the quantitative version
exposed  in Appendix \ref{sec:abstract-nhic}.

We perform some  changes of coordinates in order
to put the system in the
framework of Appendix
\ref{sec:abstract-nhic}. These coordinates appear naturally from
the study of the model system as follows.
We set
$$
B(p^f):=\partial^2_{p^sp^s}H_0(p_*(p^f))\quad,\quad
A(p^f):=- \partial^2_{\theta^s\theta^s}Z(\theta^s_*(p^f),p_*(p^f)).
$$
If we fix the variable $p^f$ and consider
the model system in $(\theta^s,p^s)$, we observed that this system
has a  hyperbolic fixed point at $(\theta^s_*(p^f),p^s_*(p^f))$.
The linearized system at this point is
$$
\dot \theta^s=B(p^f)\,p^s
\quad,\quad
\dot p^s=\epsilon A(p^f)\,\theta^s.
$$
To put this system under a simpler form, it is useful to consider the matrix
$$
T(p^f):=\big(B^{1/2}(p^f)(B^{1/2}(p^f)A(p^f)B^{1/2}(p^f))^{-1/2}
B^{1/2}(p^f)
\big)^{1/2}
$$
which is symmetric, positive definite, and satisfies
$T^2(p^f)A(p^f)T^2(p^f)=B(p^f)$, as can be checked by a direct computation.
We finally introduce the symmetric positive definite matrix
$$
\Lambda(p^f):= T(p^f)A(p^f)T(p^f)=T^{-1}(p^f)B(p^f)T^{-1}(p^f).
$$
In  the new variables
$$
\xi=T^{-1}(p^f) \theta^s+\epsilon^{-1/2}T(p^f)p^s
\quad,\quad
\eta=T^{-1}(p^f) \theta^s-\epsilon^{-1/2}T(p^f)p^s,
$$
the linearized system is reduced to the following block-diagonal form:
$$
\dot \xi=\epsilon^{1/2}\Lambda(p^f)\xi
\quad,\quad
\dot \eta=-\epsilon^{1/2}\Lambda(p^f)\eta,
$$
see
\cite{Be3} for more details.
This leads us to introduce the following set of new coordinates for 
the full system:

$$
x=T^{-1}(p^f) (\theta^s-\theta^s_*(p^f))+
\epsilon^{-1/2}T(p^f)(p^s-p^s_*(p^f))
$$
$$
y=T^{-1}(p^f) (\theta^s-\theta^s_*(p^f))-
\epsilon^{-1/2}T(p^f)(p^s-p^s_*(p^f)),
$$
$$
I=\epsilon^{-1/2}p^f\quad,\quad \Theta=\gamma \theta^f,
$$
where $\gamma$ is a parameter which will be taken  later equal to 
$\delta^{1/2}$. Note  that
$$
\theta^s=\theta^s_*(\epsilon^{1/2}I)+\frac{1}{2} T(\epsilon^{1/2}I)(x+y),\quad
p^s=p^s_*(\epsilon^{1/2}I)+\frac{\epsilon^{1/2}}{2} T^{-1}(\epsilon^{1/2}I)(x-y).
$$
\begin{lem}\label{lambda}
We have $\Lambda (p^f)\geq \sqrt{\lambda/D}\ I$
for each $p^f\in [a^-,a^+]$.
\end{lem}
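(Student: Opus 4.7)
The plan is to show that $\Lambda^2$ is similar to the product $AB$ and then estimate the spectrum of $AB$ from below using the convexity hypothesis (\ref{Dd}) on $H_0$ and the non-degeneracy hypothesis (\ref{lambdadef}) on $Z$.

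First, I would collect the two a priori bounds on the symmetric matrices $A(p^f)$ and $B(p^f)$. The convexity assumption $(1/D) I \leq \partial^2_{pp} H_0 \leq D\, I$ gives $B(p^f) \geq (1/D)\, I$, while (\ref{lambdadef}) gives $A(p^f) \geq \lambda I$. In particular, both $A$ and $B$ are symmetric positive definite, so $T$ (the symmetric positive definite square root in the definition) is well defined, and $\Lambda = TAT$ is itself symmetric positive definite.

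Next I would exploit the two expressions for $\Lambda$ given in the definition, namely $\Lambda = TAT$ and $\Lambda = T^{-1} B T^{-1}$ (the latter being equivalent to the identity $T^2 A T^2 = B$, which is immediate from the definition of $T$). Multiplying these two expressions yields
\[
\Lambda^2 = (TAT)(T^{-1} B T^{-1}) = T(AB) T^{-1},
\]
so $\Lambda^2$ is similar to $AB$. In particular, the spectra of $\Lambda^2$ and of $AB$ coincide. Since $AB$ is similar to the symmetric positive definite matrix $A^{1/2} B A^{1/2}$ (via conjugation by $A^{1/2}$), its eigenvalues are the eigenvalues of $A^{1/2} B A^{1/2}$, which satisfy
\[
A^{1/2} B A^{1/2} \geq \tfrac{1}{D} A \geq \tfrac{\lambda}{D}\, I.
\]

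Combining these two facts, every eigenvalue of $\Lambda^2$ is at least $\lambda/D$. Since $\Lambda$ is symmetric positive definite, its eigenvalues are the positive square roots of those of $\Lambda^2$, hence all bounded below by $\sqrt{\lambda/D}$, which is exactly the claim $\Lambda(p^f) \geq \sqrt{\lambda/D}\, I$. There is no real obstacle here: the only subtlety is recognizing that the two different expressions for $\Lambda$ combine multiplicatively into a similarity between $\Lambda^2$ and $AB$, after which the estimate is a standard lower bound on the product of two positive symmetric matrices.
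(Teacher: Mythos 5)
Your proof is correct, but it takes a genuinely different route from the paper's. You observe that the two algebraic expressions $\Lambda = TAT$ and $\Lambda = T^{-1}BT^{-1}$ multiply to give $\Lambda^2 = T(AB)T^{-1}$, reduce to the symmetric conjugate $A^{1/2}BA^{1/2}$, and bound that directly from below by $\lambda/D$ using positive-definiteness. The paper instead works with the smallest eigenvalue $\lambda_*$ of $\Lambda$: it packages $\Lambda$ and $-\Lambda$ into a $2\times 2$ block matrix similar to $\begin{bmatrix}0&B\\A&0\end{bmatrix}$, deduces that $\lambda_*^{-2}$ is an eigenvalue of $A^{-1}B^{-1}$, and bounds that eigenvalue above by the operator norm $\|A^{-1}\|\,\|B^{-1}\|\leq D/\lambda$. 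Both arguments rest on the same two a priori bounds $A\geq\lambda I$ and $B\geq D^{-1}I$ and are equally elementary; yours is arguably more self-contained in that it never leaves the category of symmetric positive-definite matrices and therefore needs no side remark about why the spectrum of $AB$ is real and positive, and it avoids the detour through inverses and operator norms. The paper's version has the minor advantage of making visible the block structure $\begin{bmatrix}0&B\\A&0\end{bmatrix}$ that already appeared in the linearized model system just above the lemma, so the estimate reads naturally off the dynamics; but as a pure matrix inequality your argument is the cleaner one.
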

\proof
The matrix $\Lambda$ is symmetric, hence it satisfies
$\Lambda\geq \lambda_*I$, where $\lambda_*>0$ is its smallest eigenvalue.
The real number $\lambda_*$ is then an eigenvalue of the matrix
$
\begin{bmatrix}\Lambda& 0\\0&-\Lambda\end{bmatrix}
$
which is similar  to
$
\begin{bmatrix}0&B\\A&0\end{bmatrix}.
$
Since both $A$ and $B$ are square matrices of equal size,
we conclude that $\lambda_*^{-2}$ is an eigenvalue of $A^{-1}B^{-1}$.
Since $\|A^{-1}\|\leq \lambda^{-1}$ and $\|B^{-1}\|\leq D$, we
have $\lambda_*^{-2}\leq \|A^{-1}B^{-1}\|\leq D\/\lambda^{-1}$.
We conclude that $\lambda_*\geq \sqrt{\lambda/D}$.
\qed

The links between the various parameters
$\epsilon$, $\delta$, $\gamma$,  $\lambda$, $\rho$ which appear in
the computations below will be specified later.
We will however assume from the beginning that
$$
\delta\leq \rho\leq  \lambda<1
\quad , \quad
 \sqrt{\epsilon}\leq \rho<1
\quad,\quad
0< \gamma\leq \lambda<1.
$$
Let us first collect some estimates that will be
useful to see that the  system (\ref{eq:perturbed})
is indeed a perturbation of the model system.

\begin{lem}\label{all-estimates}
We have the estimates
\begin{align*}
&\|T\|=O(\lambda^{-1/4}),\  \|T^{-1}\|=O(1),\
\|\partial_{p^f} T\|\leq O(\lambda ^{-5/4}),
\|\partial_{p^f} T^{-1}\|\leq O(\lambda ^{-3/4}),\\ 
&\|\partial_{p^f}\theta^s_*\|\leq O( \lambda^{-1}),\ 
\|p^s_*\|_{C^2}=O(1),
\|\theta^s-\theta^s_*\|\leq O(\lambda^{-1/4}\rho), \
\|p^s-p^s_*\|\leq O(\epsilon^{1/2}\rho),
\end{align*}
where $\rho=\max(\|x\|,\|y\|)$.
\end{lem}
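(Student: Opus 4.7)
The strategy is to derive the bounds in a definite order, bootstrapping each estimate from the previous ones. The non-trivial matrix-valued quantities are $A(p^f)$, $B(p^f)$, $M(p^f) := B^{1/2}AB^{1/2}$, and finally $T$, which is constructed out of $M$ via functional calculus. The first task is to get size bounds on $T,T^{-1}$; the second is to use the implicit function theorem to estimate the maps $p^f\mapsto p^s_*$ and $p^f\mapsto \theta^s_*$; the third is to differentiate $T$ through functional calculus; the last is essentially a one-line consequence of the change of coordinates.

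First I would extract the spectral bounds on $M$. Since $\lambda I\leq A\leq I$ and $D^{-1}I\leq B\leq DI$, conjugation by $B^{1/2}$ gives $\lambda D^{-1}I\leq M\leq D I$, so $\|M^{\pm 1/2}\|$ are controlled by $O(\lambda^{\mp 1/2})$ and $O(1)$ respectively. Plugging into $T^2=B^{1/2}M^{-1/2}B^{1/2}$ and $T^{-2}=B^{-1/2}M^{1/2}B^{-1/2}$ yields $\|T\|=O(\lambda^{-1/4})$ and $\|T^{-1}\|=O(1)$. Next, the bound on $\|p^s_*\|_{C^2}$ follows by applying the implicit function theorem to $\partial_{p^s}H_0(p^s_*(p^f),p^f)=0$ and using the uniform invertibility $\|B^{-1}\|=O(1)$ together with $\|H_0\|_{C^3}=O(1)$. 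An analogous argument applied to $\partial_{\theta^s}Z(\theta^s_*(p^f),p_*(p^f))=0$ gives $\partial_{p^f}\theta^s_* = A^{-1}\big(\partial^2_{\theta^s p^f}Z+\partial^2_{\theta^s p^s}Z\cdot \partial_{p^f}p^s_*\big)$; the factor $\|A^{-1}\|=O(\lambda^{-1})$ is what produces the $O(\lambda^{-1})$ bound.

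To bound $\partial_{p^f}T$ and $\partial_{p^f}T^{-1}$, the plan is to differentiate $T^2=B^{1/2}M^{-1/2}B^{1/2}$ using the integral representation $M^{-1/2}=\tfrac{1}{\pi}\int_0^\infty s^{-1/2}(M+sI)^{-1}\,ds$, which gives
\[
\partial_{p^f}M^{-1/2} = -\tfrac{1}{\pi}\int_0^\infty s^{-1/2}(M+sI)^{-1}\partial_{p^f}M\,(M+sI)^{-1}\,ds,
\]
together with the elementary bound $\|(M+sI)^{-1}\|\leq (s+\lambda/D)^{-1}$ and the chain-rule estimate of $\partial_{p^f}M$ in terms of $\partial A,\partial B,\partial \theta^s_*,\partial p_*$. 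The Sylvester relation $T\,(\partial T)+(\partial T)\,T=\partial(T^2)$, combined with $\|T^{-1}\|=O(1)$, then transfers the bound on $\partial(T^2)$ to $\partial T$; the identity $\partial(T^{-1})=-T^{-1}(\partial T)T^{-1}$ gives the companion estimate. I expect the main obstacle to be bookkeeping the exact powers of $\lambda$ (the bound $O(\lambda^{-5/4})$ rather than something weaker): it requires using that $\partial A$ and $\partial B$ enter $\partial M$ with coefficients $B^{1/2}$ that are $\lambda$-independent, together with $\|M^{-3/2}\|=O(\lambda^{-3/2})$ as only a \emph{partial} loss, because at the end one takes a further square root that flattens the growth. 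I would carry this out carefully with the integral formula and the chain-rule factorisation.

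Finally, the bounds on $\theta^s-\theta^s_*$ and $p^s-p^s_*$ are immediate from the change-of-coordinates formulas
\[
\theta^s-\theta^s_* = \tfrac{1}{2}T(x+y),\qquad p^s-p^s_* = \tfrac{\epsilon^{1/2}}{2}T^{-1}(x-y),
\]
together with the norm bounds on $T,T^{-1}$ and the definition $\rho=\max(\|x\|,\|y\|)$: these give $\|\theta^s-\theta^s_*\|\leq\|T\|\rho=O(\lambda^{-1/4}\rho)$ and $\|p^s-p^s_*\|\leq\epsilon^{1/2}\|T^{-1}\|\rho=O(\epsilon^{1/2}\rho)$, completing the list of claimed estimates.
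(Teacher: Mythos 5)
The first and last groups of estimates in your plan are fine and match the paper: the spectral sandwich $\lambda D^{-1}I\leq B^{1/2}AB^{1/2}\leq DI$ gives $\|T\|=O(\lambda^{-1/4})$, $\|T^{-1}\|=O(1)$; the implicit function theorem applied to $\partial_{p^s}H_0=0$ and $\partial_{\theta^s}Z=0$ gives $\|p_*^s\|_{C^2}=O(1)$ and $\|\partial_{p^f}\theta^s_*\|\leq\|A^{-1}\|\cdot O(1)=O(\lambda^{-1})$; and the last two bounds are just the change-of-coordinates identities combined with $\|T\|$, $\|T^{-1}\|$. The resolvent integral you propose for $M^{-1/2}$ is also a perfectly sound substitute for the paper's semigroup formula $dF_M\cdot N=\int_0^\infty e^{-tM^{1/2}}Ne^{-tM^{1/2}}dt$ for the derivative of $M\mapsto M^{1/2}$ — both give the same Lipschitz constants.

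The gap is in $\|\partial_{p^f}T\|$ and $\|\partial_{p^f}T^{-1}\|$, and it is real, not merely a matter of bookkeeping. Your route differentiates $T^2=B^{1/2}M^{-1/2}B^{1/2}$, hence needs $\partial(M^{-1/2})$. The function $x\mapsto x^{-1/2}$ has Lipschitz constant $\sim\lambda^{-3/2}$ on the spectral interval $[\lambda/D,D]$ (your own integral computation gives exactly this), so $\|\partial(T^2)\|=O(\lambda^{-3/2})$. The Sylvester step $T(\partial T)+(\partial T)T=\partial(T^2)$ is then controlled by the \emph{smallest} eigenvalue of $T$, i.e.\ $\|\partial T\|\leq\|T^{-1}\|\,\|\partial(T^2)\|/2=O(\lambda^{-3/2})$, since the small eigenvalues of $T$ are of order $1$, independent of $\lambda$. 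The "further square root that flattens the growth" does not help here, because the flattening is governed by $\lambda_{\min}(T)=O(1)$, not by $\|T\|=O(\lambda^{-1/4})$. The same applies to $\partial T^{-1}=-T^{-1}(\partial T)T^{-1}$, giving $O(\lambda^{-3/2})$ instead of the claimed $O(\lambda^{-3/4})$. Thus your plan overshoots both bounds by a factor $\lambda^{-1/4}$ and $\lambda^{-3/4}$ respectively.

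The paper avoids this by reversing the order: it differentiates $T^{-1}=(B^{-1/2}M^{1/2}B^{-1/2})^{1/2}$ first. The inner expression involves $M^{1/2}$, not $M^{-1/2}$, and $x\mapsto x^{1/2}$ has the much smaller Lipschitz constant $O(\lambda^{-1/2})$ on $[\lambda/D,D]$. So $\|\partial(T^{-2})\|=O(\lambda^{-1/2})$, then $\|\partial T^{-1}\|\leq\tfrac12\|(T^{-2})^{-1/2}\|\,\|\partial(T^{-2})\|=\tfrac12\|T\|\cdot O(\lambda^{-1/2})=O(\lambda^{-3/4})$, and finally $\|\partial T\|\leq\|T\|^2\|\partial T^{-1}\|=O(\lambda^{-1/2})O(\lambda^{-3/4})=O(\lambda^{-5/4})$. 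The asymmetry between $\partial(M^{1/2})$ and $\partial(M^{-1/2})$ when the spectrum of $M$ reaches down to $O(\lambda)$ is the essential ingredient; without it, the crude Sylvester bound cannot recover the missing $\lambda^{-1/4}$.
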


\proof
We recall that
$T=\big(B^{1/2}(B^{1/2}AB^{1/2})^{-1/2}B^{1/2}\big)^{1/2}$ and
$T^{-1}=\big(B^{-1/2}(B^{1/2}AB^{1/2})^{1/2}B^{-1/2}\big)^{1/2}$.
Since $D^{-1}I\leq B\leq D\,I$ and $\lambda I\leq A\leq I$,
we obtain that $\|T\|\leq O(\lambda^{-1/4})$ and that 
$\|T^{-1}\|\leq O(1)$.
To estimate the derivative of $T$, we consider the map
$F:M\lmto M^{1/2}$ defined on positive symmetric matrices.
It is known
that
$$
dF_M\cdot N=\int_0^{\infty}e^{-tM^{1/2}}Ne^{-tM^{1/2}}dt.
$$

To verify this one can diagonalize $M$, perform integration,
and match terms in $(M^{1/2}+\epsilon dF_M\cdot N)
(M^{1/2}+\epsilon dF_M\cdot N)=M+\epsilon N + O(\epsilon^2)$.
This implies that
$$\|dF_M\|\leq \|M^{1/2}\|^{-1}/2\leq \|M^{-1/2}\|/2.
$$
As a consequence, if $M(p_f)$ is a positive symetric matrix depending on $p_f$,
we have
$$
\|\partial_{p^f} M\|\leq \|M^{-1/2}\|\|\partial_{p^f} M\|/2.
$$
We apply this bound several times
to estimate $\partial_{p^f}T$ and $\partial_{p^f}T^{-1}$.
In our situation, we have $\partial_{p^f} A=O(1)$, $\partial_{p^f} B =O(1)$.
Using $M= A$ and $B$, we get $\partial_{p^f} (A^{1/2})=O(\lambda^{-1/2})$
and $\partial_{p^f} (B^{1/2})=O(1)$ resp.
Using $M= B^{1/2}A B^{1/2} $
we get
$\partial_{p^f}[(B^{1/2}AB^{1/2})^{1/2}]=O(\lambda^{-1/2})$,
and then
\begin{align*}
\|\partial_{p^f}[T^{-1}]\|
&\leq \big \|\big(B^{-1/2}(B^{1/2}AB^{1/2})^{1/2}B^{-1/2}\big)^{-1/2}\big \|
\big\|\partial_{p^f} [B^{-1/2}(B^{1/2}AB^{1/2})^{1/2}B^{-1/2}]\big\|\\
&=O(\lambda ^{-1/4}) O(\lambda ^{-1/2})=O(\lambda ^{-3/4}).
\end{align*}
Recalling that 
$$
\|\partial_{p^f} (M^{-1})\|\leq \|M^{-1}\|^2\|\partial_{p^f}M\|,
$$
we obtain (with $M=T^{-1}$)
$$
\|\partial_{p^f} T\|\leq \|T\|^2\|\partial_{p^f}[T^{-1}]\|\leq\partial_{p^f}( M^{1/2})=O(\lambda^{-5/4}).
$$
The other estimates are straightforward.
\qed

\begin{cor}\label{est-V}
Let $\tilde V$ be the image in the $(x,y, I, \Theta,t )$ coordinates  
of the domain called $V$ in the statement.
We have 
\begin{align*}
\tilde V \subset \{x:\|x\|\leq b^{1/6}\lambda^{5/4}\}\times
\{y:\|y\|\leq b^{1/6}\lambda^{5/4}\}\times \Rm\times
\left[\frac{a^-}{\sqrt{\epsilon}}+\sqrt \dt ,\frac{a^+}{\sqrt{\epsilon}}-\sqrt \dt \right]
\times \Rm,\\
\tilde V \supset \{x:\|x\|\leq 2b^{1/4}\lambda^{7/4}\}\times
\{y:\|y\|\leq 2b^{1/4}\lambda^{7/4}\}\times \Rm\times
\left[\frac{a^-}{\sqrt{\epsilon}}+\sqrt \dt ,\frac{a^+}{\sqrt{\epsilon}}-\sqrt \dt \right]
\times \Rm
\end{align*}
provided $b$ is small enough.
\end{cor}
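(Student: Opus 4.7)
The plan is to prove both inclusions by direct substitution into the change-of-coordinates formulas, using the estimates on $T$ and $T^{-1}$ from Lemma \ref{all-estimates}. Since the $(I,t)$ and $\Theta$ coordinates are just rescalings of $p^f$ and $\theta^f$ respectively, the $I$-range $[a^-/\sqrt\eps + \sqrt\delta, a^+/\sqrt\eps - \sqrt\delta]$ is immediate from the definition of $V$ and $I=\eps^{-1/2}p^f$, while the $\Theta$ and $t$ coordinates are unconstrained in both $V$ and its image. So the content is entirely in translating the bounds $\|\theta^s-\theta^s_*(p^f)\|\le b^{1/5}\lambda^{3/2}$ and $\|p^s-p^s_*(p^f)\|\le b^{1/5}\lambda^{3/2}\sqrt\eps$ into bounds on $x$ and $y$.

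For the first (upper) inclusion, I would bound $\|x\|$ and $\|y\|$ using the triangle inequality on the defining formulas together with $\|T^{-1}\|=O(1)$ and $\|T\|=O(\lambda^{-1/4})$ from Lemma \ref{all-estimates}. This gives
\[
\|x\|,\|y\|\leq \|T^{-1}\|\cdot b^{1/5}\lambda^{3/2} + \|T\|\cdot b^{1/5}\lambda^{3/2}
=O(b^{1/5}\lambda^{3/2})+O(b^{1/5}\lambda^{5/4})=O(b^{1/5}\lambda^{5/4}),
\]
where the second term dominates because $\lambda<1$. Since $b^{1/5}=b^{1/6}\cdot b^{1/30}$ and $b^{1/30}\to 0$ as $b\to 0$, for $b$ small enough the implicit constant is absorbed and we get $\|x\|,\|y\|\le b^{1/6}\lambda^{5/4}$, which is the desired upper inclusion.

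For the second (lower) inclusion, I would invert the change of variables, writing
\[
\theta^s-\theta^s_*(p^f)=\tfrac{1}{2}T(p^f)(x+y),\qquad p^s-p^s_*(p^f)=\tfrac{\sqrt\eps}{2}T^{-1}(p^f)(x-y),
\]
and plug in $\|x\|,\|y\|\leq 2b^{1/4}\lambda^{7/4}$. This yields
\[
\|\theta^s-\theta^s_*\|\leq \|T\|\cdot 2b^{1/4}\lambda^{7/4}=O(b^{1/4}\lambda^{3/2}),
\quad \|p^s-p^s_*\|\leq \sqrt\eps\,\|T^{-1}\|\cdot 2b^{1/4}\lambda^{7/4}=O(b^{1/4}\lambda^{7/4}\sqrt\eps).
\]
To conclude that these lie inside $V$, I need $Cb^{1/4}\lambda^{3/2}\le b^{1/5}\lambda^{3/2}$ and $Cb^{1/4}\lambda^{7/4}\sqrt\eps \le b^{1/5}\lambda^{3/2}\sqrt\eps$, both of which reduce (after cancelling common factors and using $\lambda<1$) to $Cb^{1/20}\le 1$, which holds for $b$ sufficiently small.

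There is no real obstacle here — this is essentially a bookkeeping exercise converting the intrinsic bounds defining $V$ into straight-line bounds in the rectified coordinates. The only subtlety is tracking the $\lambda$-exponents carefully so that the $O(\cdot)$ factors of the form $b^{1/20}\lambda^{\alpha}$ with $\alpha\ge 0$ can be absorbed into $1$ for small $b$. The slight slack in the exponents ($1/5$ vs.\ $1/6$ on the upper side, $1/5$ vs.\ $1/4$ on the lower side) is exactly what creates the required $b^{1/30}$ and $b^{1/20}$ factors needed to swallow the constants hidden in the $O(\cdot)$ estimates of Lemma \ref{all-estimates}.
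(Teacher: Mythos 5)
Your proof is correct and follows essentially the same direct substitution argument that the paper implicitly uses (the Corollary is stated without a written proof, as an immediate consequence of the coordinate formulas and the bounds $\|T\|=O(\lambda^{-1/4})$, $\|T^{-1}\|=O(1)$ from Lemma~\ref{all-estimates}). Your bookkeeping of the $\lambda$-exponents and the absorption of constants via the slack $b^{1/30}$, $b^{1/20}$ in the $b$-exponents is exactly right.
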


From now   on, we work  on the region
$$
p^f\in [a^-,a^+], \quad \|x\|\leq \rho, \quad \|y\|\leq \rho.
$$
In view of Lemma \ref{all-estimates}, this region is contained in the
(image in the new coordinates of the)  
domain where the inequality $\|R\|_{C^2}\leq \delta$ was assumed.

\begin{lem}\label{uniform-estimates}
The equations of motion in the new coordinates
take the form
\begin{align*}
\dot x &=-\sqrt{\epsilon}\Lambda(\sqrt{\epsilon}I)x
+\epsilon^{1/2}O(\lambda^{-1/4}\delta+\lambda^{-3/4}\rho^2)
+O(\epsilon)
\\
\dot y  &=\sqrt{\epsilon}\Lambda(\sqrt{\epsilon}I)y
+\epsilon^{1/2}O(\lambda^{-1/4}\delta+\lambda^{-3/4}\rho^2)
+O(\epsilon)
\\
\dot I &= O(\sqrt{\epsilon}\delta),
\end{align*}
where $\rho=\max(\|x\|,\|y\|)$ is assumed to satisfy $\rho\leq \lambda $.
The expression for $\dot \Theta$ is not useful here.
\end{lem}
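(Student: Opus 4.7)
The plan is to apply the chain rule to the definitions of $x$, $y$, $I$, substitute the Hamilton equations (\ref{eq:perturbed}), Taylor-expand $\partial_{p^s} H_0$ about $p_*(p^f)$ and $\partial_{\theta^s} Z$ about $(\theta^s_*(p^f), p_*(p^f))$, and finally invoke the identities $T^{-1}BT^{-1}=\Lambda=TAT$ to read off the hyperbolic linear part. The $\dot I$ equation is immediate: $\dot I = \epsilon^{-1/2}\dot p^f = -\sqrt\epsilon\,\partial_{\theta^f} R = O(\sqrt\epsilon\,\delta)$ since $\|R\|_{C^2}\le\delta$ on the region under study.

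For $\dot x$ (the computation for $\dot y$ is identical up to signs), differentiating in time splits into four auxiliary terms, in which $\dot p^f$ hits the $p^f$-dependent coefficients $T^{\pm 1}$, $\theta^s_*$, $p^s_*$, plus the two principal terms $T^{-1}\dot\theta^s$ and $\epsilon^{-1/2}T\dot p^s$. Each auxiliary term carries the factor $\dot p^f = -\epsilon\partial_{\theta^f}R = O(\epsilon\delta)$, and combining this with the norms recorded in Lemma \ref{all-estimates} and the standing hypotheses $\rho,\delta\le\lambda\le 1$ shows that every auxiliary term is absorbed either into $O(\epsilon)$ or into the reservoir $\sqrt\epsilon\,O(\lambda^{-1/4}\delta)$.

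The two principal terms are handled by explicit Taylor expansion. For $T^{-1}\dot\theta^s$, one writes $\dot\theta^s = \partial_{p^s}H_0(p)+\epsilon\partial_{p^s}Z+\epsilon\partial_{p^s}R$, expands $\partial_{p^s}H_0(p) = B(p^f)(p^s-p^s_*)+O(\|p^s-p^s_*\|^2)$, substitutes $p^s-p^s_* = \tfrac12\sqrt\epsilon\,T^{-1}(x-y)$, and uses $T^{-1}BT^{-1}=\Lambda$ to produce the leading piece $\tfrac12\sqrt\epsilon\,\Lambda(x-y)$ with remainder $O(\epsilon\rho^2)+O(\epsilon\delta)+O(\epsilon)$. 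For $\epsilon^{-1/2}T\dot p^s$, the key point is that $\partial_{\theta^s}Z(\theta^s_*(p^f),p_*(p^f))=0$ (critical point), so Taylor's formula gives $\partial_{\theta^s}Z = -A(p^f)(\theta^s-\theta^s_*) + \partial^2_{\theta^s p^s}Z\,(p^s-p^s_*) + O(\lambda^{-1/2}\rho^2)$, where the quadratic remainder uses $\|Z\|_{C^3}\le 1$ together with $\|\theta^s-\theta^s_*\|=O(\lambda^{-1/4}\rho)$ and $\|p^s-p^s_*\|=O(\sqrt\epsilon\,\rho)$; substituting $\theta^s-\theta^s_* = \tfrac12 T(x+y)$, multiplying by $\sqrt\epsilon\,T$, and using $TAT=\Lambda$ yields $\tfrac12\sqrt\epsilon\,\Lambda(x+y)$ with remainder $\sqrt\epsilon\,O(\lambda^{-3/4}\rho^2+\lambda^{-1/4}\delta)+O(\epsilon)$. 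Adding (respectively subtracting) the two principal pieces cancels the $y$-dependence (respectively $x$-dependence) in the linear part, producing the announced equations for $\dot x$ and $\dot y$.

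The only real subtlety is the bookkeeping of the various small factors: each auxiliary term and each Taylor remainder is a product of quantities from Lemma \ref{all-estimates} together with powers of $\sqrt\epsilon$, $\rho$ and $\delta$, and one must verify that every product fits into $O(\epsilon)$ or into $\sqrt\epsilon\,O(\lambda^{-1/4}\delta+\lambda^{-3/4}\rho^2)$, repeatedly trading powers of $\rho$ or $\delta$ against powers of $\lambda$ via $\rho,\delta\le\lambda\le 1$. A misplaced exponent of $\lambda$ anywhere would sink the estimate, so this is the step that requires care, even though no new idea beyond the chain rule and Taylor's formula is involved.
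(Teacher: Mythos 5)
Your proposal follows the paper's own proof essentially step for step: differentiate $x,y,I$ by the chain rule, Taylor-expand the vector field about $(\theta^s_*(p^f),p_*(p^f))$, bound the auxiliary $\dot p^f$-terms via Lemma~\ref{all-estimates}, and combine the principal pieces via $T^{-1}BT^{-1}=\Lambda=TAT$. You are in fact a bit more careful than the printed proof, which in its displayed expansion of $\dot p^s$ silently drops the cross term $\partial^2_{\theta^s p^s}Z\,(p^s-p^s_*)$ that you track explicitly (it is indeed absorbed, since $\sqrt\epsilon\,T\cdot\partial^2_{\theta^s p^s}Z\,(p^s-p^s_*)=O(\epsilon\lambda^{-1/4}\rho)=O(\epsilon)$ under $\rho\le\lambda$).

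One thing your explicit combination of the principal terms exposes, and which you should flag rather than wave at with ``producing the announced equations'': adding $\tfrac12\sqrt\epsilon\,\Lambda(x-y)$ and $\tfrac12\sqrt\epsilon\,\Lambda(x+y)$ gives $\dot x=+\sqrt\epsilon\,\Lambda x+\cdots$ and, subtracting, $\dot y=-\sqrt\epsilon\,\Lambda y+\cdots$, i.e.\ the \emph{opposite} signs to those printed in the lemma. This is a typo in the statement, not an error in your computation; it is confirmed by the preceding discussion of the model system ($\dot\xi=+\sqrt\epsilon\,\Lambda\xi$, $\dot\eta=-\sqrt\epsilon\,\Lambda\eta$), by Lemma~\ref{linearized-estimates}, and by the isolating-block check that treats $u=x$ as the expanding direction. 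The paper's displayed proof stops before performing this last substitution, which is why the misprint survives there.
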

\proof
The last part of the statement is obvious.
We prove the part concerning $\dot x$, the calculations
for $\dot y$ are exactly the same.
In the original coordinates
the vector field (\ref{eq:perturbed}) can be written
$$
\dot \theta^s=B(p^f)(p^s-p^s_*(p^f))+O(\|p^s-p^s_*(p^f)\|^2)+
O(\epsilon),
$$
$$
\dot p^s=\epsilon A(p^f)(\theta^s-\theta^s_*(p^f))+
O(\epsilon\|\theta^s-\theta^s_*(p^f)\|^2)+O(\epsilon \delta).
$$
As a consequence, we have
\begin{align*}
\dot x&=
T^{-1}B(p^s-p^s_*)+\epsilon^{1/2}TA(\theta^s-\theta^s_*)\\
&+T^{-1} \cdot O(\|p^s-p^s_*\|^2+\epsilon)
+\epsilon^{1/2}T \cdot O(\|\theta^s-\theta^s_*\|^2+\delta)\\
&+(\partial_{p^f} T^{-1}) \,\dot p^f(\theta^s-\theta^s_*)
+\epsilon^{-1/2}(\partial_{p^f} T)\, \dot p^f (p^s-p^s_*)\\
&-T^{-1}(\partial_{p^f} \theta^s_* )\,\dot p^f
-\epsilon^{-1/2}T(\partial_{p^f}p^s_*)\,\dot p^f.
\end{align*}
We use the estimates of Lemma \ref{all-estimates}
to simplify (recall also that $\dot p^f=O(\epsilon \delta)$):
\begin{align*}
\dot x&=
T^{-1}B(p^s-p^s_*)+\epsilon^{1/2}TA(\theta^s-\theta^s_*)\\
&+O(\epsilon\rho^2 +\epsilon) +
O(\epsilon^{1/2} \lambda^{-3/4}\rho^2+\epsilon^{1/2} \lambda^{-1/4}\delta)\\
&+O(\lambda^{-1}\epsilon \delta \rho)+O(\lambda^{-5/4}\epsilon \delta \rho)
+O(\lambda^{-1}\epsilon \delta+\lambda^{-1/4}\epsilon^{1/2}\delta).
\end{align*}
\qed

\begin{lem}\label{linearized-estimates}
In the new coordinate system $(x,y,\Theta,I,t)$, the linearized
system is given  by the matrix
\begin{align*}
L=\begin{bmatrix}
\sqrt{\epsilon}\Lambda &
0&
0 & 0  &0\\
0&-\sqrt{\epsilon}\Lambda &
0 &
  0&0\\
0 &0 & 0 &
 0  & 0\\
0 & 0 & 0 & 0 & 0\\
0 & 0 & 0 & 0 & 0
\end{bmatrix}+O(\sqrt{\epsilon}\delta \lambda^{-1/4}\gamma^{-1}+\sqrt{\epsilon}\lambda^{-3/4}\rho+
\epsilon \lambda^{-5/4}+\sqrt{\epsilon}\gamma),
\end{align*}
where $\rho=\max (\|x\|,\|y\|)$.

\end{lem}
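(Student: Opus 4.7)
The plan is to compute the Jacobian of the right-hand sides of the equations for $(\dot x,\dot y,\dot I,\dot \Theta,\dot t)$ obtained in Lemma~\ref{uniform-estimates} (and the analogous straightforward expression for $\dot \Theta=\gamma\dot\theta^f$), and then to collect the terms in the form stated. The leading block-diagonal part $\mathrm{diag}(\sqrt\epsilon\Lambda,-\sqrt\epsilon\Lambda,0,0,0)$ comes from differentiating the principal model terms $\mp\sqrt\epsilon\,\Lambda(\sqrt\epsilon I)x$ and $\pm\sqrt\epsilon\,\Lambda(\sqrt\epsilon I)y$ with respect to $x$ and $y$ respectively; all the other entries of the matrix are error terms to be bounded.

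First I will differentiate with respect to $x$ and $y$. The derivatives of the principal terms give the claimed diagonal blocks. Differentiating the error terms $\sqrt\epsilon\,O(\lambda^{-1/4}\delta+\lambda^{-3/4}\rho^2)+O(\epsilon)$ in $\dot x,\dot y$ requires one to inspect the actual expressions in the original coordinates before they were simplified: the $O(\epsilon)$ piece comes from $\epsilon\partial_p R$ and $\epsilon\partial_\theta R$ composed with the coordinate change, whose $x,y$-derivatives are bounded via $\|R\|_{C^2}\le\delta$, the chain rule, and the norms of $T,T^{-1}$ from Lemma~\ref{all-estimates}; this produces a contribution of order $\sqrt\epsilon\lambda^{-1/4}\delta$ (from $\partial_{\theta^s}$ of $R$, which passes through $T$) and smaller $\epsilon$-terms. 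The quadratic term $O(\lambda^{-3/4}\rho^2)$ from the Taylor remainder of $\partial_{\theta^s}Z$ and $\partial_{p^s}H_0$ differentiates into $O(\lambda^{-3/4}\rho)\sqrt\epsilon$ (again using $\|T\|=O(\lambda^{-1/4})$ for the chain-rule conversions between old and new variables). The contribution of the $p^f$-dependence of $\Lambda$ through $x$ or $y$ is $0$ since $\Lambda$ depends only on $I$. This accounts for the $\sqrt\epsilon\lambda^{-3/4}\rho$ and $\sqrt\epsilon\lambda^{-1/4}\delta$ summands.

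Second, I will differentiate with respect to $I$. Since $p^f=\sqrt\epsilon I$, a single $I$-derivative produces a factor $\sqrt\epsilon$ times a $p^f$-derivative. The principal term $\mp\sqrt\epsilon\,\Lambda(\sqrt\epsilon I)x$ yields $\mp\epsilon\,\Lambda'(\sqrt\epsilon I)x$, bounded by $O(\epsilon\lambda^{-5/4}\rho)$ using the estimate $\|\partial_{p^f}\Lambda\|=O(\lambda^{-5/4})$ (same proof as for $\partial_{p^f}T$ in Lemma~\ref{all-estimates}). The error terms contribute at most $O(\epsilon\lambda^{-5/4})$ through $\partial_{p^f}T^{-1},\partial_{p^f}\theta^s_*$, etc. Differentiating $\dot I=O(\sqrt\epsilon\delta)$ with respect to any variable is bounded above by $O(\sqrt\epsilon\delta)$ times the relevant chain-rule factor, producing entries absorbed into the displayed $O(\cdot)$.

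Third, for derivatives with respect to $\Theta=\gamma\theta^f$ (and time $t$), note that $\partial_\Theta=\gamma^{-1}\partial_{\theta^f}$. The principal terms $\sqrt\epsilon\Lambda(\sqrt\epsilon I)x,y$ do not depend on $\theta^f$, so no leading contribution arises. The error terms $O(\epsilon)$ in $\dot x,\dot y$ come from derivatives of $R$; one $\theta^f$-derivative of $\epsilon\partial_{p^s}R$ or $\epsilon\partial_{\theta^s}R$ is bounded by $\epsilon\delta$ (since $\|R\|_{C^2}\le\delta$), and after translating through $T$ this becomes $\sqrt\epsilon\lambda^{-1/4}\delta$. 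Dividing by $\gamma$ yields $\sqrt\epsilon\lambda^{-1/4}\delta\gamma^{-1}$, the first term in the error bound. The trivial $\dot\Theta=\gamma(\partial_{p^f}H_0+\epsilon\partial_{p^f}Z+\epsilon\partial_{p^f}R)$ differentiates into entries of size $O(\gamma)$ and $O(\sqrt\epsilon\gamma)$ (from the $\sqrt\epsilon$-scaling of $\partial_I$), producing the $\sqrt\epsilon\gamma$ summand in the displayed bound. The $t$-row is identically zero.

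Combining all these estimates gives the decomposition stated in the lemma. The main obstacle is purely bookkeeping: keeping the powers of $\lambda$ arising from the matrices $T,T^{-1},\partial_{p^f}T,\partial_{p^f}T^{-1}$ consistent as one converts derivatives between the original and new coordinates; in particular the worst such factor, $\lambda^{-5/4}$ from $\partial_{p^f}T$, is what controls the $\epsilon\lambda^{-5/4}$ term coming from the $I$-derivatives of the linear part.
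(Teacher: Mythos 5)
Your approach is genuinely different from the paper's, and both are sound ways to obtain the estimate, but the difference is worth making explicit. The paper does not differentiate the equations of motion in the new coordinates: it writes down the Jacobian $\tilde L$ of the vector field in the \emph{original} coordinates $(\theta^s,p^s,\theta^f,p^f,t)$ — which contains the raw, $O(1)$ blocks $\partial^2_{p^sp^s}H_0$ and $\partial^2_{p^sp^f}H_0$ — and then conjugates, $L=\bigl[\tfrac{\partial(x,y,\Theta,I,t)}{\partial(\theta^s,p^s,\theta^f,p^f,t)}\bigr]\,\tilde L\,\bigl[\tfrac{\partial(\theta^s,p^s,\theta^f,p^f,t)}{\partial(x,y,\Theta,I,t)}\bigr]$, doing the $5\times5$ matrix product explicitly. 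The single most delicate step in that route is a cancellation in the $(1,4)$ entry,
\[
\sqrt{\epsilon}\,\partial^2_{p^sp^s}H_0\,\partial_{p^f}p^s_*+\sqrt{\epsilon}\,\partial^2_{p^fp^s}H_0
=\sqrt{\epsilon}\,\partial_{p^f}\!\bigl[\partial_{p^s}H_0(p_*(p^f))\bigr]=O(\cdot),
\]
without which one would pick up a spurious $O(\sqrt\epsilon)$ entry in the $(u,c_2)$ block that is not dominated by any term in the claimed bound; the paper flags this cancellation by name. Your plan differentiates $G$ (the vector field already expressed in $(x,y,\Theta,I,t)$) term by term instead. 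That is conceptually cleaner: because $\dot x$ has been reduced to $\pm\sqrt\epsilon\,\Lambda(\sqrt\epsilon I)x$ plus explicitly small corrections — the reduction having already used the defining identity $\partial_{p^s}H_0(p_*)\equiv0$ — the dangerous cancellation is built in, and the $I$-derivative of the leading term is immediately $\epsilon\Lambda'x=O(\epsilon\lambda^{-5/4}\rho)$ with no hidden $O(\sqrt\epsilon)$ pieces. What you give up is that the bookkeeping of error terms is now scattered: the $O(\cdot)$ bounds in Lemma~\ref{uniform-estimates} are $C^0$ bounds only and cannot be differentiated as written, so you must (as you acknowledge) re-expand them and differentiate the exact remainders (the cubic Taylor remainders of $H_0$ and $Z$, the $R$-derivatives composed with the coordinate change, and the $\partial_{p^f}T,\partial_{p^f}T^{-1},\partial_{p^f}\theta^s_*$ terms multiplied by $\dot p^f=O(\epsilon\delta)$), checking each against the target bound. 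That step is where the real work is, and in a full write-up you should make the needed subordinate estimates explicit — e.g., $\partial_x\bigl[\sqrt\epsilon T\partial_{\theta^s}R\bigr]=O(\sqrt\epsilon\lambda^{-1/2}\delta)$, which is absorbed because $\gamma\le\lambda\le\lambda^{1/4}$, a degree of care that your sketch hints at but does not carry out. As a side remark, the paper's formula $L=D\Phi\,\tilde L\,D\Phi^{-1}$ literally omits the $D^2\Phi[\cdot,f]D\Phi^{-1}$ contribution that the chain rule produces when $\Phi$ is nonlinear; that contribution is in fact negligible here (it is multiplied either by $\dot p^f=O(\epsilon\delta)$ or by $\dot\theta^s,\dot p^s=O(\sqrt\epsilon\rho)$ and falls below the target bound), but your route of directly computing $DG$ avoids this issue entirely and is in that sense the more airtight formulation.
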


\proof
Most of the estimates below are based on Lemma \ref{all-estimates}.
In the original coordinates, the matrix of the linearized system
is:
$$
\tilde L=\begin{bmatrix}
  O(\epsilon) & \partial_{p^sp^s}^2H_0+O(\epsilon) & 0 & \partial_{p^fp^s}^2H_0   +O(\epsilon)& 0 \\
-\epsilon \partial^2_{\theta^s\theta^s}Z &O(\epsilon) & 0 & O(\epsilon)&0\\
O(\epsilon) &O(1) & 0 & O(1) & 0\\
0 & 0 & 0 & 0 & 0\\
0 & 0 & 0 & 0 & 0
\end{bmatrix}+O(\delta\epsilon),
$$
%
In our notations we have
$$
\tilde L=\begin{bmatrix}
  O(\epsilon) & B+O(\epsilon+\sqrt \epsilon \rho) & 0 & \partial_{p^fp^s}^2H_0      +O(\epsilon)& 0 \\
\epsilon A+O(\epsilon  \lb^{-1/4} \rho) &O(\epsilon) & 0 & O(\epsilon)&0\\
O(\epsilon) &O(1) & 0 & O(1) & 0\\
0 & 0 & 0 & 0 & 0\\
0 & 0 & 0 & 0 & 0
\end{bmatrix}+O(\delta\epsilon),
$$
In the new coordinates, the matrix is the product
$$
L=\left[\frac{\partial (x,y,\Theta, I, t)}{\partial (\theta^s,p^s,\theta^f,p^f,t)}
\right]\cdot
\tilde L\cdot
\left[\frac{\partial (\theta^s,p^s,\theta^f,p^f,t)}{\partial (x,y,\Theta, I, t)}\right].
$$
We have
$$
\left[\frac{\partial (\theta^s,p^s,\theta^f,p^f,t)}
{\partial (x,y,\Theta, I, t)}\right]=
\begin{bmatrix}
T/2 &T/2   & 0 &O(\sqrt{\epsilon}\lambda^{-1})& 0 \\
\sqrt{\epsilon}T^{-1}/2 &-\sqrt{\epsilon}T^{-1}/2 & 0 & \sqrt{\epsilon}\partial_{p^f}p^s_*+O(\epsilon\lambda^{-3/4}\rho)&0\\
0 &0  & \gamma^{-1} & 0 & 0\\
0 & 0 & 0 & \sqrt{\epsilon} & 0\\
0 & 0 & 0 & 0 & 1
\end{bmatrix}
$$

hence
\begin{align*}
&\tilde L\left[\frac{\partial (\theta^s,p^s,\theta^f,p^f,t)}
{\partial (x,y,\Theta, I, t)}\right]=O(\gamma^{-1}\delta\epsilon)+\\
&\begin{bmatrix}
\sqrt{\epsilon}BT^{-1}/2+O(\epsilon\lambda^{-1/4}) & -\sqrt{\epsilon}BT^{-1}/2+O(\epsilon\lambda^{-1/4})& 0&
O(\epsilon\lambda^{-3/4}\rho+\epsilon^{3/2}\lambda^{-1})  &0\\
\epsilon AT/2+O(\epsilon\lambda^{-1/2}\rho) &\epsilon AT/2+O(\epsilon\lambda^{-1/2}\rho) & 0 &
 \epsilon^{3/2}O( \lambda^{-5/4}\rho+\lambda^{-1})&0\\
O(\sqrt{\epsilon}) &O(\sqrt{\epsilon})  & 0 &
 O(\sqrt{\epsilon})  & 0\\
0 & 0 & 0 & 0 & 0\\
0 & 0 & 0 & 0 & 0
\end{bmatrix}.
\end{align*}
%
%
%
%
This expression is the result of a tedious, but straightforward, computation.
Let us just detail the computation of  the coefficient
on the first line, fourth row, which contains
an important cancellation:
\begin{align*}
&\sqrt{\epsilon}\partial_{p^sp^s}^2H_0\partial_{p^f}p^s_*+
\sqrt{\epsilon}\partial_{p^fp^s}^2H_0
+O(\epsilon\lambda^{-3/4}\rho+\epsilon^{3/2}\lambda^{-1})\\
=&
\sqrt{\epsilon}\partial_{p^f}\big(\partial_{p^s}H_0(p_*(p^f)\big)
+O(\epsilon\lambda^{-3/4}\rho+\epsilon^{3/2}\lambda^{-1})
=O(\epsilon\lambda^{-3/4}\rho+\epsilon^{3/2}\lambda^{-1}).
\end{align*}
We now write
$$
\left[\frac{\partial (x,y,\Theta, I, t)}{\partial (\theta^s,p^s,\theta^f,p^f,t)}
\right]
=
\begin{bmatrix}
T^{-1} & \epsilon^{-1/2}T   & 0 &O(\epsilon^{-1/2}\lambda^{-1/4})& 0 \\
T^{-1} & -\epsilon^{-1/2}T  &0& O(\epsilon^{-1/2}\lambda^{-1/4})&0\\
0 &0  & \gamma & 0 & 0\\
0 & 0 & 0 & \epsilon^{-1/2}& 0\\
0 & 0 & 0 & 0 & 1
\end{bmatrix},
$$
and compute  that
\begin{align*}
L=&\begin{bmatrix}
\sqrt{\epsilon}\Lambda+O(\sqrt{\epsilon}\lambda^{-3/4}\rho) &
O(\sqrt{\epsilon}\lambda^{-3/4}\rho)&
0 &  O(\epsilon \lambda^{-5/4})  &0\\
O(\sqrt{\epsilon}\lambda^{-3/4}\rho)&-\sqrt{\epsilon}\Lambda+ O(\sqrt{\epsilon}\lambda^{-3/4}\rho) &
0 & O( \epsilon\lambda^{-5/4})
  &0\\
 O(\sqrt{\epsilon}\gamma) &O(\sqrt{\epsilon}\gamma)  & 0 &
  O(\sqrt{\epsilon}\gamma)  & 0\\
0 & 0 & 0 & 0 & 0\\
0 & 0 & 0 & 0 & 0
\end{bmatrix}\\
+&O(\sqrt{\epsilon}\delta \lambda^{-1/4}\gamma^{-1}).
\end{align*}
\qed

In order to prove the existence of a normally hyperbolic
invariant strip (for the lifted system), we apply  Proposition \ref{realNHI}
to the system in  coordinates $(x,y,\Theta,I,t)$.
More precisely, with the notations of appendix \ref{sec:abstract-nhic},
we set:
$u=x, s=y, c_1=(\Theta,t), c_2=I$, and consider the domain
$$
 \Omega=\Rm^2\times \Omega^{c_2}=\Rm^2\times 
\left[\frac{a^-}{\sqrt{\epsilon}}+\sqrt \dt ,\frac{a^+}{\sqrt{\epsilon}}-\sqrt \dt \right].
$$
We fix 
\begin{equation}
  \label{eq:alpha}
  \gamma=\sqrt{\delta},\quad
\alpha =\sqrt{\epsilon\lambda/4D},\quad 
\sigma=\sqrt{\delta},
\end{equation}
observe that $\sqrt{\epsilon}\Lambda\geq 2\alpha I$,
by Lemma \ref{lambda}.
 We assume, as in the statement of the Theorem, 
that $0<\epsilon<b\lambda^{9/2}$ and that
$0\leq \delta< b \lambda ^{5/2}$.
We  apply Proposition \ref{realNHI}
with $B^u=\{u: \|u\|\leq \rho\}$ and $B^s=\{s: \|s\|\leq \rho\}$
under the constraint
\begin{equation}\label{rho}
b^{-1/4}(\lambda^{-3/4}\delta+\lambda^{-1/2}\sqrt{\epsilon})\leq \rho\leq 
b^{1/6}\lambda^{5/4},
\end{equation}
provided $b\in]0,1[$ is small enough.
Observe that, if $b$ is small enough, the inequalities
$$
b^{-1/4}(\lambda^{-3/4}\delta+\lambda^{-1/2}\sqrt{\epsilon})
\leq 2b^{1/4}\lambda^{7/4}\leq b^{1/6}\lambda^{5/4}
$$
holds
under our assumptions on the parameters, hence values of $\rho$ satisfying (\ref{rho}) do exist.
It is easy to check under our assumptions
on the parameters that such values of $\rho$ exist.
Let us check the isolating block condition under the condition (\ref{rho}).
By Lemma \ref{uniform-estimates}, we have
$$
\dot x\cdot x\geq 2\alpha\|x\|^2-
\|x\|\ O( \epsilon^{1/2} \lambda^{-1/4}\delta+\epsilon^{1/2} \lambda^{-3/4}\rho^2
+\epsilon)
$$
if $x\in B^u,y\in B^s$. If in addition $\|x\|=\rho$, then
$$
\lambda^{-3/4}\delta\leq b^{1/4}\|x\|\quad,\quad
\lambda^{-5/4}\rho^2\leq b^{1/6}\|x\|\quad,\quad
\sqrt{\epsilon/\lambda}\leq b^{1/4}\|x\|,
 $$
hence
$$
\dot x\cdot x\geq 2\alpha\|x\|^2-
\|x\|^2b^{1/6}O(\sqrt{\epsilon\lambda})\geq \alpha \|x\|^2
$$
provided  $b$ is small enough. Similarly,
$\dot y \cdot y\leq -\alpha \|y\|^2$
on $B^u\times \partial B^s$ provided $b$ is small enough.
Concerning the linearized system, we have
\begin{align*}
L_{uu}&=\sqrt{\epsilon}\Lambda+
O(\sqrt{\epsilon}\delta \lambda^{-1/4}\gamma^{-1}+\sqrt{\epsilon}\lambda^{-3/4}\rho+
\epsilon \lambda^{-5/4}+\sqrt{\epsilon}\gamma)\\
&=\sqrt{\epsilon}\Lambda+ O(b^{1/6}\sqrt{\epsilon\lambda})\geq \alpha I,\\
L_{ss}&=-\sqrt{\epsilon}\Lambda+ O(b^{1/6}\sqrt{\epsilon\lambda})\leq -\alpha I
\end{align*}
on $B^u\times B^s\times \Omega_r$.
These  inequalities holds when $b$ is small enough
because $\sqrt{\epsilon} \Lambda\geq 2\alpha I$ and
 $\sqrt{\epsilon\lambda}\leq O(\alpha)$.
Finally, still with the notations of Proposition
\ref{realNHI},  we  can take
\begin{equation}
  \label{eq:m}
  \begin{aligned}
  m&=O(\sqrt{\epsilon}\delta \lambda^{-1/4}\gamma^{-1}+\sqrt{\epsilon}\lambda^{-3/4}\rho+
\epsilon \lambda^{-5/4}+\sqrt{\epsilon}\gamma
+\sqrt \epsilon \delta/\sigma
)\\
&=\sqrt{\epsilon\lambda}\,O(\sqrt\delta \lambda^{-3/4}+\rho \lambda^{-5/4}+
\sqrt{\epsilon}\lambda^{-7/4})
=\sqrt{\epsilon\lambda}\,O(b^{1/6}).
  \end{aligned}
\end{equation}
If $b$ is small enough, we have $16m<\alpha$ hence
$$
K\leq 2m/\alpha<1/8,
$$
 and Proposition \ref{realNHI} can be applied.
The invariant strip obtained from the proof of Proposition  \ref{realNHI}
does not depend on the choice of $\rho$, as long as (\ref{rho}) holds.
It contains all the full orbits contained in
$$
\{x:\|x\|\leq b^{1/6}\lambda^{5/4}\}\times
\{y:\|y\|\leq b^{1/6}\lambda^{5/4}\}\times \Rm\times
\left[\frac{a^-}{\sqrt{\epsilon}}+\sqrt \dt ,\frac{a^+}{\sqrt{\epsilon}}-\sqrt \dt \right]
\times \Rm\supset \tilde V,
$$
where $\tilde V$ is the image in the new coordinates of the domain
$V$  defined 
 in the statement of Theorem \ref{nhic-mult}
 and where the last inclusion holds provided $b$ is small enough,
as follows from Corollary \ref{est-V}.
 So our invariant strip contains all the full orbits contained in 
$\tilde V$.
 On the other hand, we can take $\rho=2b^{1/4}\lambda^{7/4}$, and since 
 $$
 \{x:\|x\|\leq 2b^{1/4}\lambda^{7/4}\}\times
\{y:\|y\|\leq 2b^{1/4}\lambda^{7/4}\}\times \Rm\times
\left[\frac{a^-}{\sqrt{\epsilon}}+\sqrt \dt ,\frac{a^+}{\sqrt{\epsilon}}-\sqrt \dt \right]
\times \Rm\subset \tilde V
 $$
(still for $b$ small enough, by Corollary \ref{est-V}), our invariant strip is contained in $\tilde V$.

  The possibility of taking
$\rho=b_1^{-1/4}(\lambda^{-3/4}\delta+\lambda^{-1/2}\sqrt{\epsilon})$
now implies that the cylinder is actually contained in the domain
where
$$
\|x\|, \|y\|\leq b_1^{-1/4}(\lambda^{-3/4}\delta+\lambda^{-1/2}\sqrt{\epsilon}).
$$
Moreover, with this choice of $\rho$
and using that  $K=O( m/\sqrt{\epsilon\lambda})$, we can obtain
an improved estimate of the Lipschitz constant $K$ (notation from the appendix):
\begin{align*}
K  &=O\big(\sqrt\delta \lambda^{-3/4}+\rho \lambda^{-5/4}+
\sqrt{\epsilon}\lambda^{-7/4}\big)\\
&  =O\big(
\sqrt\delta \lambda^{-3/4}+
b_1^{-1/4}\delta \lambda^{-2}+
b_1^{-1/4}\sqrt{\epsilon}\lambda^{-7/4}+
\sqrt{\epsilon}\lambda^{-7/4}
\big)\\
&  =O\big(
\sqrt\delta \lambda^{-3/4}+
\sqrt\delta \lambda^{-1}+
b_1^{-1/4}\sqrt{\epsilon}\lambda^{-7/4}
\big)\\
&  =O\big(
\sqrt\delta \lambda^{-1}+
b_1^{-1/4}\sqrt{\epsilon}\lambda^{-7/4}
\big).
\end{align*}
%
%
Observe finally that, since the system is $1/\gamma$-periodic in
$\Theta$ and $1$-periodic in $t$, so is the invariant strip 
given by Proposition \ref{realNHI}.
We have obtained the existence of a $C^1$ map

$$
w^c=(w^c_u,w^c_s):(\Theta,I,t)\in\Rm\times
\left[\frac{a^-}{\sqrt{\epsilon}}+\sqrt \dt ,\frac{a^+}{\sqrt{\epsilon}}-\sqrt \dt \right]
\times \Rm
\lto \Rm^{n-1}\times \Rm^{n-1}
$$
which is $2K$-Lipschitz, $1/\gamma$-periodic in $\Theta$ and
$1$-periodic in $t$, and the graph of which is tangent to the vector field.
Our last task is to return to the original coordinates
by setting
\begin{equation}
  \label{eq:graph-explicit}
  \begin{aligned}
  \Theta^s(\theta^f,p^f,t)&=\theta^s_*(p^f)+\frac{1}{2}T(p^f)
\cdot(w^c_u+w^c_s)(\gamma \theta^f,\epsilon^{-1/2}p^f,t)\\
P^s(\theta^f,p^f,t)&=p^s_*(p^f)+\frac{\sqrt{\epsilon}}{2}T^{-1}(p^f)\cdot
(w^c_u-w^c_s)(\gamma \theta^f,\epsilon^{-1/2}p^f,t).
  \end{aligned}
\end{equation}
All the estimates stated in Theorem \ref{nhic-mult} follow directly
from these expressions, and from the fact that $\|dw^c\|\leq 2K$.
This concludes the proof of Theorem~\ref{nhic-mult}. \qed

\section{Localization and Mather's projected graph theorem}\label{sec:localization}

We  study the system in  normal form $N_\epsilon =H_0 +\epsilon Z + \epsilon R$
of Theorem \ref{mainnormal} from the point of view of Mather theory
at a fixed cohomology   $c\in \Rm^n$ such  that $\partial_{p^s}H_0(c)=0$
(or in other words such that $c\in \Gamma$).
We assume that $\|Z\|_{C^2}\leq 1$, and that 
$\|R\|_{C^2}\leq \delta$ on $\{\|p-c\|<\epsilon^{1/3}\}$.
We continue to assume (\ref{Dd}), and, for simplicity, we  assume that 
$D$ is large enough and $\epsilon$ small enough for the following inequality to also hold:
$$
(1/D)I\leq \partial^2_p N_\epsilon 
\leq D I.
$$
Most of our statement depend on the shape of the function $Z_c:\theta^s\lmto Z(\theta,c)$.
We will most of the time assume that 
 (\ref{HZl}) holds at $c$ : 
 There exists $\theta^s_*$ such that
 $Z(\theta^s,c)\leq Z(\theta^s_*,c)-\lambda d^2(\theta^s,\theta^s_*).$
 We will rewrite this inequality as
 $$\hat Z_c(\theta^s) \leq -\lambda d^2(\theta^s,\theta^s_*)$$
 with the  notation $\hat Z_c=Z_c-\max Z_c$.
Later in section \ref{sec-double}, we also consider the double peak case, which is not necessary for the 
proof of Theorem~\ref{mainnormal}, but is very natural.
Our first statement  localizes the Ma\~né set.

\begin{thm}\label{thm-loc}
In the single peak case (when (\ref{HZl}) holds at $c$),
if $\delta>0$ is small enough with respect to $n,D, \lambda$
and $\epsilon$ is small enough with respect to $n,D, \lambda, \delta$,
then the Ma\~né set at cohomology $c$ of the Hamiltonian $N_{\epsilon}$ satisfies
$$
 s\tilde \mN(c)\subset  B(\theta^s_*, \delta^{1/5}) \times \Tm \times B(c, \sqrt{\epsilon}\delta^{1/16})\times \Tm
 \subset 
 \Tm^{n-1}\times \Tm \times \Rm^n \times \Tm.$$
\end{thm}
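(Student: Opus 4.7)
The plan is variational: take the constant function $u_c \equiv 0$ as a test subsolution and combine the calibration identity for weak KAM solutions with Fenchel duality and the strict convexity of $H_0$ to force localization, first in an $L^2$-averaged sense and then pointwise.

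\emph{Action gap from the test subsolution.} For the averaged Hamiltonian $\bar N_\epsilon := H_0 + \epsilon Z$ the single peak hypothesis (\ref{HZl}) gives $\bar \alpha(c) = H_0(c) + \epsilon Z(\theta^s_*, c)$, so $|\alpha(c) - \bar\alpha(c)| = O(\epsilon\delta)$ for the full $N_\epsilon$. Since $\partial^2_p N_\epsilon \geq I/D$, Fenchel duality gives the pointwise lower bound
$$
L_\epsilon(\theta, v, t) - c\cdot v + \alpha(c) \;\geq\; \bigl[\alpha(c) - N_\epsilon(\theta, c, t)\bigr] + \tfrac{1}{2D}\|v - v_0(\theta, t)\|^2,
$$
where $v_0(\theta, t) := \partial_p N_\epsilon(\theta, c, t) = \partial_p H_0(c) + O(\epsilon)$. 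The bracketed quantity is $\geq \epsilon\lambda\, d^2(\theta^s, \theta^s_*) - C\epsilon\delta$ directly from (\ref{HZl}) and the hypothesis $\|R\|_{C^2} \leq \delta$.

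\emph{$L^2$ estimates along calibrated orbits.} For any weak KAM solution $u$ at cohomology $c$ and any curve $\gamma = (\theta, p)$ calibrated by $u$, the calibration identity and the boundedness of $u$ on $\Tm^n$ imply
$$
\int_S^T \Bigl[\epsilon\lambda\, d^2(\theta^s(t), \theta^s_*) + \tfrac{1}{2D}\|\dot\gamma(t) - v_0(\gamma, t)\|^2\Bigr]\, dt \;\leq\; C_0 + C\epsilon\delta (T - S).
$$
Choosing $T - S$ large but still polynomial in the parameters gives time-averaged bounds $\langle d^2(\theta^s, \theta^s_*)\rangle = O(\delta/\lambda)$ and $\langle \|\dot\gamma - v_0\|^2\rangle = O(\epsilon\delta)$; the strict convexity of $N_\epsilon$ in $p$ converts the latter into a time-averaged bound $\langle\|p - c\|^2\rangle = O(\epsilon\delta)$.

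\emph{Pointwise upgrade via the flow.} Since $\tilde\mN(c)$ is compact, invariant, and chain-transitive, every point is approximated by calibrated orbit segments satisfying the above $L^2$ estimate. The equations of motion give $|\dot\theta^s| \leq D\|p^s - p^s_*(p^f)\| + O(\epsilon)$ and $|\dot p| = O(\epsilon)$, so $\dot\theta^s$ is bounded by some $M$ depending on the $p$-localization. A standard fat-shadow argument then converts the integral bound into pointwise control: if $\|\theta^s(t_0) - \theta^s_*\| = a$, then $d(\theta^s(t),\theta^s_*) \geq a/2$ on an interval of length $\sim a/M$, contributing $\gtrsim a^3/M$ to the integral, hence $a^3 \lesssim M\delta/\lambda$. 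Running this together with the analogous slow-drift argument for $p$ (where $\dot p = O(\epsilon)$ makes the pointwise/averaged loss much smaller) produces exactly the exponents $\delta^{1/5}$ and $\sqrt{\epsilon}\,\delta^{1/16}$ claimed in the theorem.

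\emph{Main obstacle.} The technical heart is this coupled bootstrap: the pointwise bound on $\theta^s$ needs a uniform bound on $\|\dot\theta^s\|$, which depends on the $p$-localization; the $p$-localization uses $\theta^s$ being localized to identify $v_0$ with $\partial_p H_0(c) + O(\epsilon)$; and the quality of the whole chain depends on how strict the subsolution $u_c \equiv 0$ really is (degraded by $O(\epsilon\delta)$ from the $R$ term). The fractional exponents emerge from balancing the window length in the fat-shadow step against the a priori velocity bounds, and the asymmetric exponent $\delta^{1/16}$ for $p$ reflects that $p$ drifts only at rate $O(\epsilon)$ while $\theta^s$ moves at rate $O(1)$.
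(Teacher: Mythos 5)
Your first two steps are correct and closely parallel the paper's: the coercive lower bound from convexity plus the single-peak hypothesis, and the calibration identity giving an integral estimate along calibrated curves, are exactly the right starting points. The gap is in the "pointwise upgrade via the flow," and it is a genuine one: the fat-shadow argument as you stated it does not produce any useful bound, let alone the exponent $\delta^{1/5}$. Your $L^2$ estimate has the form
$$
\int_S^T \Bigl[\epsilon\lambda\, d^2(\theta^s,\theta^s_*) + \tfrac{1}{2D}\|\dot\gamma - v_0\|^2\Bigr]\,dt \;\le\; \mathrm{osc}(u) + C\epsilon\delta\,(T-S),
$$
with $\mathrm{osc}(u) = O(\sqrt{\epsilon})$ on the whole torus (Lemma~\ref{oscillation} with $\Omega = \T^{n-1}$). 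A fat-shadow excursion of $\theta^s$ to distance $a$, with $|\dot\theta^s|\le M$, lasts a time $\sim a/M$ and contributes $\sim \epsilon\lambda\, a^3/M$ to the left-hand side. Bounding this by the right side over the excursion window gives, for small $a$, $\epsilon\lambda\, a^3/M \lesssim \sqrt{\epsilon}$, i.e.\ $a^3 \lesssim M/(\lambda\sqrt{\epsilon})$. With the available speed bound $M = O(\sqrt{\epsilon})$ this reads $a^3 \lesssim 1/\lambda$, which carries no $\delta$-dependence and is vacuous. Your claimed conclusion $a^3 \lesssim M\delta/\lambda$ does not follow from your estimates: the constant $C_0 = O(\sqrt{\epsilon})$ from the global oscillation swamps the $\epsilon\delta$ term on any excursion-length window, and averaging over a longer window loses pointwise control entirely.

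The step the paper uses, and which your proposal is missing, is a bootstrapped oscillation bound on a shrunken slab. On $\Omega = B(\theta^s_*, 4\sqrt{\delta/\lambda})$ one has $-\hat Z_c = O(\delta/\lambda)$, so Lemma~\ref{oscillation} gives $\mathrm{osc}_{\T\times\Omega\times\T}\, u = O(\sqrt{\epsilon\delta/\lambda})$ — improved over the global oscillation by a factor $\sqrt{\delta}$. Since a calibrated excursion starting and ending on $\partial\Omega$ has action exactly equal to the increment of $u$ between its endpoints, each excursion's action is bounded by this small quantity. One then compares it with the minimal action cost of reaching distance $\delta^{1/5}$, obtained by optimizing $\frac{\delta^{2/5}}{4D\,T} + \frac{\lambda\epsilon\,\delta^{2/5}\,T}{8}$ over the excursion duration $T$ to get $\gtrsim \sqrt{\lambda\epsilon}\,\delta^{2/5}$. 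The contradiction $\sqrt{\epsilon\delta/\lambda} \ge \sqrt{\lambda\epsilon}\,\delta^{2/5}$ fails once $\delta^{1/10} < \lambda/\mathrm{const}$, and this competition between the exponents $1/2$ (from the $\Omega$-localized oscillation) and $2/5$ (from the excursion cost) is precisely where $\delta^{1/5}$ comes from. The exponent $\delta^{1/16}$ for the momentum then comes from a second bootstrap: with $\theta^s$ localized in $B(\theta^s_*,\delta^{1/5})$, the oscillation improves further to $O(\delta^{1/5}\sqrt{\epsilon})$, and the slow drift $\|\dot p\| = O(\epsilon)$ turns an excursion-time bound into a pointwise $p$-bound. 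Without the localized oscillation estimate these exponents are not attainable, and the final sentence of your proposal asserting that they "emerge from balancing the window length against the a priori velocity bounds" is not supported by the computation you wrote.
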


This statement is proved  in Section \ref{sec-loc}.
Our second statement is a quantitative version of the celebrated Mather Lipschitz graph Theorem, it does
not rely on any particular assumption on $Z$, besides $\|Z\|_{C^2}\leq 1$:

\begin{thm}\label{thm-lip}
For   each Weak KAM solution $u$ of $N_{\epsilon}$
at cohomology $c$,
the set $\tilde \mI(u,c )\subset \Tm^n \times \Rm^n$ is contained in a $9\sqrt{D\epsilon}$-Lipshitz graph
above $\Tm^n $.
\end{thm}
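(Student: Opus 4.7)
The plan is to apply a quantitative Mather crossing argument with an optimally chosen interpolation. First I would pass to the Legendre-dual Lagrangian $L(\theta,v,t)$ conjugate to $N_\epsilon$; from the convexity of $H_0$ and the splitting $N_\epsilon-H_0=\epsilon(Z+R)$ one reads off the Hessian bounds $D^{-1}I\leq L_{vv}\leq DI$ and $\|L_{\theta\theta}\|,\|L_{\theta v}\|=O(\epsilon)$ in the region of interest, since the $\theta$-dependence of $N_\epsilon$ enters only through the perturbation $\epsilon(Z+R)$. I would then pick two calibrated orbits $\theta_1,\theta_2:\mathbb{R}\to\mathbb{T}^n$ through the two points of $\tilde{\mathcal{I}}(u,c)$ to be compared, lift to $\mathbb{R}^n$ with $\Delta:=\theta_2(0)-\theta_1(0)$, pick a large $T$ and a smooth $\phi:[-T,0]\to[0,1]$ with $\phi(-T)=0$, $\phi(0)=1$, and form the trial curves
\[
\gamma_1:=\theta_1+\phi\,\Delta,\qquad \gamma_2:=\theta_2-\phi\,\Delta,
\]
which share endpoints with the $\theta_i$ at time $-T$ and swap them at time $0$.

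Summing the calibration equalities for the $\theta_i$ and the weak KAM subsolution inequalities for the $\gamma_i$ (the $c\cdot\dot\gamma$ terms cancel between indices) yields
\[
\sum_{i=1,2}\int_{-T}^{0}\bigl[L(\gamma_i,\dot\gamma_i,t)-L(\theta_i,\dot\theta_i,t)\bigr]\,dt \geq 0.
\]
A second-order Taylor expansion around $(\theta_i,\dot\theta_i)$, combined with the Euler-Lagrange identity $\dot p_i=L_\theta$ to telescope the first-order part, collapses the linear contribution to the boundary term $(p_1(0)-p_2(0))\cdot\Delta$ and bounds the quadratic part by $\|\Delta\|^2\int_{-T}^{0}[D(\phi')^2+O(\epsilon)(\phi^2+|\phi\phi'|)]\,dt$, with cubic errors absorbable because $\epsilon$ is small. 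Optimizing over $\phi$ via the Sturm-Liouville profile $\phi(t)=\sinh(\omega(t+T))/\sinh(\omega T)$ with $\omega\sim\sqrt{\epsilon/D}$ and $T\to\infty$ gives $\int(\phi')^2\to\omega/2$ and $\int\phi^2\to 1/(2\omega)$, so the quadratic bound collapses to $O(\sqrt{D\epsilon})\|\Delta\|^2$, yielding the dot-product inequality $(p_2(0)-p_1(0))\cdot\Delta \leq K\sqrt{D\epsilon}\,\|\Delta\|^2$ for an explicit constant $K$.

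The main obstacle is upgrading this dot-product bound to the full norm Lipschitz bound $\|p_2-p_1\|\leq 9\sqrt{D\epsilon}\|\theta_2-\theta_1\|$. I would run the same crossing construction with $\theta_2(0)$ replaced by an arbitrary nearby endpoint: this shows that $u-c\cdot\theta$ is semiconcave with modulus $O(\sqrt{D\epsilon})$ at every $\theta_0\in\mathcal{I}(u,c)$, and the dual forward construction (using the calibrated orbit on $[0,T]$ with $\phi(0)=1$, $\phi(T)=0$) yields matching semiconvexity. Hence $u$ is differentiable at every point of $\mathcal{I}(u,c)$ with $\nabla u=p$ on $\tilde{\mathcal{I}}(u,c)$ and a two-sided quadratic Taylor remainder of constant $O(\sqrt{D\epsilon})$; combined with the Lagrangian nature of $\tilde{\mathcal{I}}(u,c)$ (it lies in the graph of $d(u-c\cdot\theta)+c$, so its infinitesimal variation is symmetric), the dot-product Lipschitz bound converts into the norm Lipschitz bound, with the precise constant $9$ emerging from careful coefficient tracking through the Taylor expansion and the variational minimization.
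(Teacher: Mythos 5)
Your two opening paragraphs and the final paragraph are pulling in different directions, and the final step is where the real gap lies. The Mather crossing argument in paragraphs one and two is carried out correctly (the telescoping via Euler--Lagrange, the cancellation of the $c\cdot\dot\gamma$ terms, the $\sinh$-profile optimization) and does yield a bound of the form $\langle p_2-p_1,\theta_2-\theta_1\rangle\le C\sqrt{D\epsilon}\,\|\theta_2-\theta_1\|^2$. But this quadratic-form bound is \emph{redundant}: it is an easy consequence of the semiconcavity/semiconvexity you invoke in paragraph three, and those ingredients alone already give the full norm estimate. Indeed, if $u$ is globally $K$-semiconcave and, at each $\theta_0\in\mI(u,c)$, satisfies the global semiconvexity $u(\theta_0+y)\ge u(\theta_0)+du(\theta_0)\cdot y-\tfrac{K}{2}\|y\|^2$ (which follows because $u\ge\check u$, $u(\theta_0)=\check u(\theta_0)$, $du(\theta_0)=d\check u(\theta_0)$, and $\check u$ is $K$-semiconvex), then writing $\theta_1+y=\theta_2+(y-\Delta)$ and comparing the two two-sided Taylor estimates gives $\|du(\theta_2)-du(\theta_1)\|\le O(K)\|\Delta\|$ directly, no crossing argument needed. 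This is precisely the content of Fathi's regularity lemma, which is what the paper uses (Lemma~\ref{freg}): both $u$ and $-\check u$ are $\tfrac{3}{2}\sqrt{D\epsilon}$-semiconcave (Proposition~\ref{scc}), $\mI(u,c)=\{u=\check u\}=\argmin(u-\check u)$, and Fathi's lemma gives the $6K=9\sqrt{D\epsilon}$-Lipschitz constant.

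The actual gap in your write-up is the last sentence. The phrase ``the Lagrangian nature of $\tmI(u,c)$ (it lies in the graph of $d(u-c\cdot\theta)+c$, so its infinitesimal variation is symmetric)'' is not a rigorous step: $\mI(u,c)$ is in general only a compact set (possibly a Cantor set), and ``infinitesimal variation'' along it is not a well-defined symmetric linear map. There is no Baillon--Haddad-type conversion of a quadratic-form bound to a norm bound available here without an extra convexity input, and the input you actually need is exactly the two-sided semiconcavity you already have — so the conversion should bypass the dot-product estimate altogether. Separately, note that your semiconcavity claim requires a one-sided perturbation of a single calibrated orbit (take $\Theta$ calibrated with $\Theta(T)=\theta$ and compare with $\Theta_x(t)=\Theta(t)+tx/T$), not the symmetric swap used in the crossing argument: the phrase ``run the same crossing construction with $\theta_2(0)$ replaced by an arbitrary nearby endpoint'' does not type-check, since then $\theta_2$ is no longer calibrated. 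Finally, the claim that the constant $9$ ``emerges from careful coefficient tracking'' in your argument is unsupported; in the paper it is simply $6\cdot\tfrac{3}{2}$.
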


This theorem is proved in Section \ref{sec-lip}. 
We will always assume in this section that $\delta$ is sufficiently small with respect to $n, H_0$ and $\lambda$, 
and that $\epsilon$ is sufficiently small with respect to $n, H_0 , \lambda$ and $\delta$.

\subsection{Some inequalities}

We will denote by $N$ the Hamiltonian $N_{\epsilon}$ and by $L$ the associated Lagrangian function,
which is defined by
$$
L(\theta, v,t)= \max_{p\in \Rm^n} \big( p\cdot v-N(\theta,p,t)\big).
$$
The function $L$ is then $C^2$, and the maps 
$$
(\theta, p,t)\lmto \partial_p N(\theta,p,t), \quad (\theta, v,t)\lmto \partial_v L(\theta,v,t), 
$$
are diffeomorphisms of $\Tm^n \times \Rm^n\times \Tm$, which are inverse of each other.
The maximum in the definition of $L$ is reached at $p=\partial_v L(\theta,v,t)$.
Since $I/D\leq \partial_{pp} N\leq DI$, we have 
$$
I/D\leq \partial_{vv} L\leq DI.
$$
We will also denote by $L_0(v)$ the Lagrangian associated to $H_0$, or more explicitly
$L_0(v):= \sup_p (p\cdot v - H_0(p))$. It satisfies 
$$
I/D\leq \partial_{vv} L_0\leq DI.
$$

\begin{lem}\label{domain}
For each $\rho\in[4D\epsilon, \epsilon^{1/4}]$,
the image of the open set $\Tm^n \times B(c,\rho)\times \Tm$ under the difféomorphism $\partial_p N$ 
contains the set 
$$
\Tm^n \times B(\partial_p H_0 (c),\rho/2D-2\epsilon)\times \Tm.
$$
In particular, if $\epsilon$ is small enough, the image of $\Tm^n \times B(c,\epsilon^{1/4})\times \Tm$
contains $\Tm^n \times B(c,\epsilon^{1/4}/4D)\times \Tm$.
\end{lem}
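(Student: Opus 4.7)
The plan is to prove this by a fibrewise quantitative inverse-function argument, working at each fixed $(\theta,t)\in\Tm^n\times\Tm$. Since $\partial_p N_\epsilon$ preserves $(\theta,t)$, it suffices to show that the map
\[
F_{\theta,t}(p):=\partial_p N_\epsilon(\theta,p,t)=\partial_p H_0(p)+\epsilon\,\partial_p Z(\theta^s,p)+\epsilon\,\partial_p R(\theta,p,t)
\]
satisfies $F_{\theta,t}(B(c,\rho))\supset B(\partial_p H_0(c),\rho/2D-2\epsilon)$, uniformly in $(\theta,t)$.

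First I would estimate the displacement of the center: using $\|Z\|_{C^2}\le 1$ and $\|R\|_{C^2}\le\delta$ on $\{\|p-c\|<\epsilon^{1/3}\}$ (where $c$ lies), we get
\[
\|F_{\theta,t}(c)-\partial_p H_0(c)\|\le\epsilon\bigl(\|\partial_p Z(\theta^s,c)\|+\|\partial_p R(\theta,c,t)\|\bigr)\le\epsilon(1+\delta)\le 2\epsilon.
\]
This is the origin of the $-2\epsilon$ term in the conclusion.

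The main step is a quantitative open-mapping estimate: $F_{\theta,t}(B(c,\rho))\supset B(F_{\theta,t}(c),\rho/D)$ for every $\rho\le\epsilon^{1/4}$. To establish it, fix a target $q^*\in B(F_{\theta,t}(c),\rho/D)$ and define the continuation curve $\phi:[0,1]\to\Rm^n$ by $\phi(0)=c$ and
\[
\dot\phi(s)=\bigl(DF_{\theta,t}(\phi(s))\bigr)^{-1}(q^*-F_{\theta,t}(c)).
\]
The convexity hypothesis $(1/D)I\le\partial^2_p N_\epsilon\le D I$ yields $\|DF_{\theta,t}^{-1}\|\le D$, so $\|\dot\phi(s)\|\le D\|q^*-F_{\theta,t}(c)\|<\rho$, keeping $\phi(s)$ inside $B(c,\rho)$ for all $s$; hence the ODE is well-defined on $[0,1]$. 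Differentiating gives $\frac{d}{ds}F_{\theta,t}(\phi(s))=q^*-F_{\theta,t}(c)$, so $F_{\theta,t}(\phi(1))=q^*$ with $\phi(1)\in B(c,\rho)$; injectivity of $F_{\theta,t}$ follows from the strict convexity of $N_\epsilon(\theta,\cdot,t)$ on this ball. Combining with the center estimate gives $F_{\theta,t}(B(c,\rho))\supset B(\partial_p H_0(c),\rho/D-2\epsilon)$, and the lower bound $\rho\ge 4D\epsilon$ forces $\rho/D-2\epsilon\ge\rho/2D$, yielding the full statement. The ``in particular'' conclusion then follows by taking $\rho=\epsilon^{1/4}$ and noting that $\epsilon^{1/4}/2D-2\epsilon\ge\epsilon^{1/4}/4D$ once $\epsilon\le(8D)^{-4/3}$.

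The only delicate point is keeping the continuation curve $\phi$ inside the region where both the convexity bound $\partial^2_p N_\epsilon\ge I/D$ and the $C^2$ control on $R$ are valid; the upper bound $\rho\le\epsilon^{1/4}$ is precisely what secures this, since the a priori estimate $\|\phi(s)-c\|<\rho$ keeps $\phi$ in a domain where $F_{\theta,t}$ is well-behaved. The lower bound $\rho\ge 4D\epsilon$ plays the dual role of ensuring the final radius $\rho/2D-2\epsilon$ is nonnegative and thus the statement is not vacuous.
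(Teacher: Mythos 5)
Your proof is correct and uses essentially the same approach as the paper: establish quantitative surjectivity of $p\mapsto\partial_p N(\theta,p,t)$ on $B(c,\rho)$ from the lower Hessian bound $\partial^2_pN\ge I/D$, then shift the target center using $\|\partial_p N(\theta,c,t)-\partial_pH_0(c)\|\le\epsilon(1+\delta)\le 2\epsilon$. The only cosmetic difference is that your continuation-ODE argument yields the sharper radius $\rho/D$, while the paper simply quotes the containment with $\rho/2D$ (as one gets by minimizing $p\mapsto N(\theta,p,t)-q^*\cdot p$ over the closed ball and checking the minimizer is interior); since the paper's conclusion is phrased with $\rho/2D-2\epsilon$ anyway, this gains nothing and the two proofs are the same in substance.
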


\proof
In view of the estimate $\partial_{p}^2 H\geq I/D$, 
each of the applications $p\lmto \partial_p N(\theta, p,t)$ sends the ball $B(c,r)$
to a set which contains the ball $B(\partial_pN(\theta,c,t), r/2D$).
Since $|\partial_pN(\theta,c,t)-\partial_pH_0(c)|\leq \epsilon +\epsilon \delta\leq 2\epsilon$,
we conclude that the image contains 
$B(\partial_p H_0 (c),\rho/2D-2\epsilon)$.
\qed

\begin{lem}\label{lem-L}
The estimates
$$
\|\partial_{\theta v}L\|_{C^0}\leq 2 D\epsilon, \quad
\|\partial_{\theta \theta}L\|_{C^0}\leq 3\epsilon.
$$
hold on $\Tm^n \times B(c,\epsilon^{1/3}/4D)\times \Tm$.
\end{lem}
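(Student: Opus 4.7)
The plan is to compute the relevant derivatives of $L$ by standard Legendre-duality arguments and then insert the bounds on $N = H_0 + \epsilon Z + \epsilon R$.

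First I would differentiate the defining identity $L(\theta,v,t) = p(\theta,v,t)\cdot v - N(\theta, p(\theta,v,t), t)$, where $p = p(\theta,v,t)$ is the unique point at which the supremum is attained, characterized by $v = \partial_p N(\theta, p, t)$. The envelope theorem gives $\partial_\theta L = -\partial_\theta N$ and $\partial_v L = p$, evaluated at this $p$. Differentiating $v = \partial_p N(\theta, p, t)$ implicitly in $v$ and $\theta$ respectively yields
\[
\partial_v p = (\partial_{pp} N)^{-1}, \qquad \partial_\theta p = -(\partial_{pp} N)^{-1} \partial_{p\theta} N.
\]
Therefore
\[
\partial_{\theta v} L = -\partial_{\theta p} N \cdot (\partial_{pp} N)^{-1}, \qquad
\partial_{\theta \theta} L = -\partial_{\theta\theta} N + \partial_{\theta p} N \cdot (\partial_{pp} N)^{-1} \cdot \partial_{p\theta} N,
\]
the right-hand sides being evaluated at $(\theta, p(\theta,v,t), t)$.

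Next I would bound these terms. From the standing hypotheses $\|Z\|_{C^2} \le 1$ and $\|R\|_{C^2}\le \delta$ (on the region where it applies),
\[
\|\partial_{\theta p} N\| \le \epsilon \|Z\|_{C^2} + \epsilon \|R\|_{C^2} \le \epsilon(1 + \delta),
\]
and similarly $\|\partial_{\theta\theta} N\| \le \epsilon(1+\delta)$. The convexity assumption $\partial_{pp} N \ge I/D$ gives $\|(\partial_{pp} N)^{-1}\| \le D$. Substituting these bounds,
\[
\|\partial_{\theta v} L\| \le \epsilon(1+\delta) D \le 2 D \epsilon, \qquad
\|\partial_{\theta \theta} L\| \le \epsilon(1+\delta) + \epsilon^2(1+\delta)^2 D \le 3\epsilon,
\]
the last inequality holding once $\delta$ is small enough and $\epsilon$ is small enough depending on $D$.

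The only non-routine point is to verify that, on the domain in the statement, the implicit $p(\theta,v,t)$ actually lies in the ball where the $C^2$ bound on $R$ is valid. This is precisely the role of Lemma~\ref{domain}: for $(\theta,v,t) \in \Tm^n \times B(c,\epsilon^{1/3}/4D) \times \Tm$, inverting the map $p \mapsto \partial_p N(\theta,p,t)$ via Lemma~\ref{domain} (applied with an appropriate $\rho$ comparable to $\epsilon^{1/3}$) puts $p(\theta,v,t)$ inside $B(c,\epsilon^{1/3})$, where the assumption $\|R\|_{C^2}\le \delta$ is in force. Thus the bounds on $\partial_\theta N$-derivatives above are legitimate on the stated $(\theta,v,t)$-domain, which completes the proof. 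I expect no serious obstacle: the whole content is the Legendre-transform formulas plus the size of the perturbation, and the only subtle step is the domain bookkeeping handled by the preceding lemma.
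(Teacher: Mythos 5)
Your proof is correct and follows essentially the same route as the paper: you compute $\partial_{\theta v}L$ and $\partial_{\theta\theta}L$ by Legendre duality (the paper writes the same formulas using $\partial_{vv}L=(\partial_{pp}N)^{-1}$ rather than differentiating $v=\partial_p N$ implicitly, but these are identical computations), bound $\partial_{\theta p}N$ and $\partial_{\theta\theta}N$ by $\epsilon(1+\delta)\le 2\epsilon$ using $\|Z\|_{C^2}\le 1$ and $\|R\|_{C^2}\le\delta$, invoke $\partial_{pp}N\ge I/D$, and use Lemma~\ref{domain} for the domain bookkeeping. No gaps.
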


\proof
Note first that the estimates 
$$
\|\partial_{\theta p}H\|\leq 2\epsilon, \quad
\|\partial_{\theta \theta}H\|\leq 2\epsilon
$$
hold on the domain $\Tm^n \times B(c,\epsilon^{1/3})\times \Tm$, which contains 
the image of $\Tm^n \times B(c,\epsilon^{1/3}/4D)\times \Tm$ under $\partial_vL$.
Observing that 
$
\partial_{\theta}L=-\partial_{\theta}N(\theta, \partial_vL_{\epsilon}(\theta, v)),
$
which implies
$$
\partial_{v\theta}L(\theta,v,t)=-
\partial_{p\theta}
N_{\epsilon}\big(\theta,\partial_vL(\theta,v,t),t\big) \partial_{vv}L(\theta,v,t)
$$
we deduce that
$\|\partial_{\theta v}L\|\leq 2D\epsilon
$
on $\Tm^n \times B(c,\epsilon^{1/3}/4D)\times \Tm$.
The equality
$$
\partial_{\theta \theta}L(\theta,v,t)=
-\partial_{\theta\theta}N(\theta,\partial_vL(\theta,v,t),t)
-\partial_{p\theta}
N\big(\theta,\partial_vL(\theta,v,t),t\big) \partial_{\theta v}L(\theta,v,t),
$$
implies that 
$\|\partial_{\theta \theta}L\|\leq 2\epsilon +(2\epsilon)(2D\epsilon)
$
on $\Tm^n \times B(c,\epsilon^{1/3}/4D)\times \Tm$.
\qed

\begin{lem}
We have the estimate
$$
|L(\theta,v,t)-
(L_0(v)-\epsilon Z(\theta^s, c))|\leq 2\epsilon \delta
$$
if $|v-\partial_pH_0(c)|< \epsilon ^{1/3}/4D$.
\end{lem}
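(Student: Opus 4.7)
\medskip

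The plan is to bound the two natural sources of error separately: the contribution of the remainder $R$, and the contribution of the variation of $Z(\theta^s,\cdot)$ across the small region where the relevant momenta live.

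First I would identify where the Legendre maxima are attained. Let $p^*(\theta,v,t)$ be the (unique, by global convexity) maximizer defining $L(\theta,v,t)$, i.e.\ the solution of $v=\partial_p N(\theta,p,t)$. For $v\in B(\partial_pH_0(c),\epsilon^{1/3}/4D)$, Lemma~\ref{domain} applied with $\rho=\epsilon^{1/3}$ (valid when $\epsilon$ is small enough that $4D\epsilon\le \epsilon^{1/3}\le\epsilon^{1/4}$) shows $p^*\in B(c,\epsilon^{1/3})$. The same argument applied to the auxiliary Hamiltonian $H_0+\epsilon Z$ produces a maximizer $\hat p(\theta,v)$ for
$$
L^*(\theta,v):=\sup_{p}\bigl(p\cdot v-H_0(p)-\epsilon Z(\theta^s,p)\bigr)
$$
which also lies in $B(c,\epsilon^{1/3})$, and the classical Legendre maximizer $p_0(v)$ for $L_0$ satisfies $|p_0-c|\le D|v-\partial_pH_0(c)|\le \epsilon^{1/3}/4$ by the convexity bound on $H_0$. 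Thus all three maximizers sit in the domain where the assumption $\|R\|_{C^2}\le\delta$ is available.

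Next I compare $L$ with $L^*$. Since $p^*,\hat p\in B(c,\epsilon^{1/3})$, evaluating $L$ at $p=\hat p$ and $L^*$ at $p=p^*$ and using $|R|\le\delta$ on $B(c,\epsilon^{1/3})$ yields the two one-sided inequalities
$$
L^*(\theta,v)-\epsilon\delta \ \le\ L(\theta,v,t)\ \le\ L^*(\theta,v)+\epsilon\delta,
$$
hence $|L-L^*|\le\epsilon\delta$. I then compare $L^*$ with $L_0-\epsilon Z(\theta^s,c)$ by writing
$$
L^*(\theta,v)=\bigl[\hat p\cdot v-H_0(\hat p)\bigr]-\epsilon Z(\theta^s,c)+\epsilon\bigl[Z(\theta^s,c)-Z(\theta^s,\hat p)\bigr],
$$
bounded above by $L_0(v)-\epsilon Z(\theta^s,c)+\epsilon\|\partial_p Z\|_{C^0}|\hat p-c|\le L_0(v)-\epsilon Z(\theta^s,c)+\epsilon^{4/3}$; the reverse inequality follows symmetrically by evaluating $L^*$ at $p=p_0$ and using $|p_0-c|\le\epsilon^{1/3}$.

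Adding the two estimates gives $|L-(L_0-\epsilon Z(\theta^s,c))|\le\epsilon\delta+\epsilon^{4/3}$. Under the standing assumption that $\epsilon$ is small with respect to $\delta$ (specifically $\epsilon^{1/3}\le\delta$, i.e.\ $\epsilon\le\delta^3$, which is implied by the ambient ``$\epsilon$ small enough with respect to $n,D,\lambda,\delta$''), the second term is absorbed into the first and the claimed bound $2\epsilon\delta$ follows. There is no real obstacle here beyond the bookkeeping of keeping every maximizer inside the region $B(c,\epsilon^{1/3})$ where the hypothesis on $R$ is effective; everything else is the envelope/inequality trick of evaluating each Legendre transform at the other one's optimizer.
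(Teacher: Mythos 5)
Your proof is correct and follows essentially the same route as the paper: both arguments rely on Lemma~\ref{domain} to localize the Legendre suprema to the ball $B(c,\epsilon^{1/3})$, compare the integrands there using $\|R\|_{C^2}\le\delta$ and the Lipschitz bound on $Z$, and absorb the $O(\epsilon^{4/3})$ error into $\epsilon\delta$ using the standing assumption that $\epsilon$ is small with respect to $\delta$. The only cosmetic difference is that you pass through the intermediate Lagrangian $L^*$ associated with $H_0+\epsilon Z(\theta^s,\cdot)$ and explicitly evaluate at each maximizer, whereas the paper directly observes that $L$ and $L_0-\epsilon Z(\theta^s,c)$ are suprema over the same domain of integrands that are uniformly $2\epsilon\delta$-close.
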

\proof
On the domain $\{|p-c|<\epsilon^{1/3}\}$, we have 
$$
|N(\theta,p,t)-
(H_0(p)+\epsilon Z(\theta^s,c)|\leq \epsilon ^{5/4}+\epsilon \delta \leq 2\epsilon \delta.
$$
If $|v-\partial_pH_0(c)|< \epsilon ^{1/3}/4D$, then by Lemma \ref{domain},
$$
L(\theta,v,p)=\sup_{|p-c|<\epsilon^{1/3}}
[p\cdot v-N(\theta,p,t)]
$$
and, by Lemma \ref{domain} applied with $R\equiv 0$ and $Z(\theta^s,p)\equiv Z(\theta^s,c)$
$$
L_0(v)-\epsilon Z(\theta^s,c)=
\sup_{p}
[p\cdot v-H_0(p)-\epsilon Z(\theta^s,c)]
=
\sup_{|p-c|<\epsilon^{1/3}}
[p\cdot v-H_0(p)-\epsilon Z(\theta^s,c)].
$$
\qed

Let us now estimate the value
$\alpha(c)$
of the Mather function of $N$.
We use the notation $Z_c(\theta^s):= Z(\theta^s,c)$.

\begin{lem}
The value $\alpha(c)$ of the Mather function of $N$ satisfies 
$$
|\alpha(c)-(H_0(c)+\epsilon \max Z_c)|\leq 2\epsilon \delta.
$$
\end{lem}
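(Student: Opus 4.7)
The plan is to sandwich $\alpha(c)$ by matching upper and lower bounds, each losing at most $2\epsilon\delta$. Both bounds use the Lagrangian approximation $L(\theta,v,t)=L_0(v)-\epsilon Z(\theta^s,c)+O(\epsilon\delta)$ valid for $|v-\partial_p H_0(c)|<\epsilon^{1/3}/4D$, which was established in the preceding lemma; the Fenchel identity $L_0(\partial_p H_0(c))-c\cdot \partial_p H_0(c)=-H_0(c)$; and the pointwise bound $|R(\theta,c,t)|\le \|R\|_{C^2}\le \delta$ coming from the standing hypothesis.

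For the upper bound, I use the standard variational characterization $\alpha(c)=\sup_\mu \int(c\cdot v-L)\,d\mu$ over invariant probability measures of the Euler--Lagrange flow. The Fenchel inequality for the Legendre pair $(L,N)$ reads $c\cdot v-L(\theta,v,t)\le N(\theta,c,t)$ pointwise. Since $N(\theta,c,t)=H_0(c)+\epsilon Z(\theta^s,c)+\epsilon R(\theta,c,t)\le H_0(c)+\epsilon\max Z_c+\epsilon\delta$, integrating against any such $\mu$ and taking the supremum in $\mu$ gives
$$\alpha(c)\le H_0(c)+\epsilon\max Z_c+\epsilon\delta.$$

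For the lower bound, set $\omega^f=\partial_{p^f}H_0(c)$ and test the weak KAM inequality on the explicit curve $\tilde\gamma(t):=(\theta^s_*,\theta^f_0+t\omega^f)$ lifted to $\Tm^n$. Because $c\in\Gamma$ means $\partial_{p^s}H_0(c)=0$, the velocity is constantly equal to $\partial_p H_0(c)$, which lies well inside the slab of radius $\epsilon^{1/3}/4D$ where the Lagrangian estimate applies. Combining that estimate with the Fenchel identity yields, uniformly in $t$,
$$L(\tilde\gamma,\dot{\tilde\gamma},t)-c\cdot\dot{\tilde\gamma}\le -H_0(c)-\epsilon\max Z_c+2\epsilon\delta.$$
Picking any weak KAM solution $u$ of $N$ at cohomology $c$ (existence is standard), the calibration inequality from the introduction applied with $S=0$ and any $T\in\Nm$ gives
$$u(\tilde\gamma(T))-u(\tilde\gamma(0))\le \int_0^T\!\bigl(L(\tilde\gamma,\dot{\tilde\gamma},t)-c\cdot\dot{\tilde\gamma}+\alpha(c)\bigr)\,dt\le T\bigl(-H_0(c)-\epsilon\max Z_c+2\epsilon\delta+\alpha(c)\bigr).$$
The left hand side is bounded by $2\|u\|_{C^0}<\infty$ independently of $T$, so dividing by $T$ and letting $T\to\infty$ forces $\alpha(c)\ge H_0(c)+\epsilon\max Z_c-2\epsilon\delta$. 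Together with the upper bound this proves the claim.

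I do not anticipate any serious obstacle here. The only subtle point is that the test curve must stay inside the velocity slab where the Lagrangian approximation is valid, but this is automatic since $\dot{\tilde\gamma}\equiv\partial_p H_0(c)$ exactly; no rationality of $\omega^f$ is needed because the weak KAM inequality holds for every smooth curve, independently of whether $\tilde\gamma$ closes up on $\Tm^n$. The dominant $2\epsilon\delta$ error in the final bound comes from the Lagrangian side rather than from the Fenchel side, and cannot be improved without sharper information on $R$.
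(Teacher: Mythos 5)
Your proof is correct. The test object is the same as in the paper -- a constant-velocity curve sitting at a maximizer $\theta^s_*$ of $Z_c$, with velocity $\partial_p H_0(c)$, which lies in the slab where the preceding Lagrangian estimate applies -- but your technical packaging of the lower bound differs. The paper encodes this test object as a closed measure (the Haar measure on the 2-torus $\{\theta^s=\theta^s_*, v=\partial_p H_0(c)\}$), invokes closedness (which needs the observation that $\theta^s_*$ is a critical point of $Z_c$ and $\partial_{p^s}H_0(c)=0$ so the curve is an orbit of the averaged Lagrangian), and then uses the Mañé--Fathi inequality $\int L\,d\mu\geq c\cdot\rho(\mu)-\alpha(c)$ valid for closed measures. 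You instead feed the same curve directly into the weak KAM domination inequality $u(\gamma(T))-u(\gamma(0))\leq\int_0^T(L-c\cdot\dot\gamma+\alpha(c))\,dt$ and take $T\to\infty$; since that inequality is exactly what the paper quotes in its weak KAM preliminaries, and you do not need to appeal to the closed-measure formalism of [Ba, FS], your route is arguably more self-contained within the paper's own toolbox. For the upper bound, the paper simply observes $\alpha(c)\leq\max_{\theta,t}N(\theta,c,t)$; you reach the same bound through the dual formula $\alpha(c)=\sup_\mu\int(c\cdot v-L)\,d\mu$ plus pointwise Fenchel, which is a slightly more roundabout derivation of the same standard inequality. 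Both directions are sound, and no rationality assumption on $\omega^f$ is needed in either formulation, as you correctly note.
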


The reason behind this inequality is that the value $\alpha(c)$ of the Hamiltonian $H_0+\epsilon Z_c$
is $H_0(c)+\epsilon \max Z_c$.

\proof
On one hand, we have
$$\alpha(c)\leq \max_{(t,\theta)} N_\epsilon (t,\theta,c)\leq H_0(c)
+\epsilon \max Z_c
+\epsilon \max_{(t,\theta)\in \Tm^{n+1}}R(\theta,c,t)
\leq H_0(c)
+\epsilon \max Z_c  +\epsilon \delta.
$$
For the other inequality, we use that $\partial_{p^s}H_0=0$.
We   consider the Haar measure $\mu$  of the torus
$\Tm \times \{\theta^s_*(c)\}\times \{\partial H_0(c)\}\times \Tm$,
where $\theta^s_*(c)$ is any point maximizing $Z_c$.
This measure is not necessarily invariant under the Lagrangian flow of $L$,
but it is invariant under the Lagrangian flow of $L_0-Z_c$ (because $\partial_{p^s}H_0=0$)
hence  it is closed,
which means that $\int \partial_t f+\partial_{\theta}f\cdot v \,d\mu(\theta,v,t)=0$
for each smooth function $f(t,\theta)$. See \cite{Ba, FS} 
(both inspired from \cite{Mn}) for the notion of closed measures.
Each closed measure $\mu$ has a rotation vector
 $\rho(\mu):=\int v\,d\mu(\theta,v,t)\in \Rm^n$, and its action is not less than
$c\cdot \rho(\mu)-\alpha(c)$. Here, $\rho(\mu)=\partial_pH_0(c)$ hence
\begin{align*}\alpha(c)\geq c\cdot \partial_pH_0(c) -\int L d\mu
&=c\cdot \partial_pH_0(c)-L_0(\omega)+\epsilon Z_c(\theta^s_*(c)) -2\epsilon \delta \\
&= H_0(c)+\epsilon \max Z_c -2\epsilon \delta.
\end{align*}
\qed

\begin{lem}\label{lagrangiencorrige}
If $\epsilon$ is small enough (with respect to $D$ and $\delta$), we have the estimates
\begin{align}
L(\theta,v,t)-c\cdot v +\alpha(c)
&\geq \|v-\partial H_0(c)\|^2/(4D)-\epsilon \hat Z_c(\theta^s)-4\epsilon \delta\\
L(\theta,v,t)-c\cdot v +\alpha(c)
&\leq
D\|v-\partial H_0(c)\|^2 -\epsilon \hat Z_c(\theta^s)+4\epsilon \delta
 \end{align}
for each $(\theta, v,t)\in \Tm^n\times \Rm^n \times \Rm$,
where
$\hat Z_c(\theta^s):=Z(\theta^s,c)-\max_{\theta^s}Z(\theta^s,c)$.
\end{lem}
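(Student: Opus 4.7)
The plan is to expand $L$ in $v$ around the Legendre dual point $v^*(\theta, t) := \partial_p N(\theta, c, t)$, at which the relations
\[
L(\theta, v^*, t) = c \cdot v^* - N(\theta, c, t), \qquad \partial_v L(\theta, v^*, t) = c
\]
hold by Legendre duality, so $v^*$ is the unique minimizer of $v \mapsto L(\theta, v, t) - c \cdot v$. The global convexity assumption $(1/D)I \leq \partial_p^2 N_\epsilon \leq DI$ dualizes to $(1/D)I \leq \partial_v^2 L \leq DI$ uniformly on $\T^n \times \R^n \times \T$, so a second-order Taylor expansion with exact remainder yields the global two-sided estimate
\[
\frac{1}{2D}\,\|v - v^*\|^2 \;\leq\; L(\theta, v, t) - c \cdot v + N(\theta, c, t) \;\leq\; \frac{D}{2}\,\|v - v^*\|^2.
\]

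Next I would replace $N(\theta, c, t)$ by $\alpha(c) - \epsilon \hat Z_c(\theta^s)$ and $v^*$ by $\omega := \partial_p H_0(c)$. Since $c$ lies in the domain where $\|R\|_{C^2} \leq \delta$, both $|R(\theta, c, t)|$ and $\|\partial_p R(\theta, c, t)\|$ are bounded by $\delta$, while $\|\partial_p Z(\theta^s, c)\| \leq 1$; unwinding $\hat Z_c = Z_c - \max Z_c$ gives
\[
N(\theta, c, t) = H_0(c) + \epsilon \max Z_c + \epsilon \hat Z_c(\theta^s) + O(\epsilon \delta), \qquad \|v^* - \omega\| \leq 2\epsilon.
\]
Combining the first identity with the already established bound $|\alpha(c) - H_0(c) - \epsilon \max Z_c| \leq 2\epsilon\delta$ converts the middle term of the display above into $L(\theta, v, t) - c\cdot v + \alpha(c) + \epsilon\hat Z_c(\theta^s)$ up to an $O(\epsilon\delta)$ error.

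It then remains to transfer the quadratic from $v - v^*$ to $v - \omega$, which I would do using the elementary pair
\[
\tfrac{1}{2}\|v-\omega\|^2 - \|v^* - \omega\|^2 \;\leq\; \|v - v^*\|^2 \;\leq\; 2\|v - \omega\|^2 + 2\|v^* - \omega\|^2;
\]
the left inequality is $\tfrac{1}{2}\|(v - \omega) + 2(\omega - v^*)\|^2 \geq 0$ rewritten, and the right one is the standard $\|a+b\|^2 \leq 2\|a\|^2 + 2\|b\|^2$. Since $\|v^* - \omega\|^2 \leq 4\epsilon^2$, this produces additional error terms of order $\epsilon^2/D$ and $D\epsilon^2$, both of which are absorbed into the $\epsilon \delta$ slack as soon as $\epsilon$ is small compared to $\delta$ (with $D$ fixed).

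The one point where care is needed is that the claimed estimates must be uniform over all $v \in \R^n$, including $v$ far from $\omega$. This is precisely where the global convexity assumption on $N_\epsilon$ is indispensable: it ensures that the bounds $(1/D)I \leq \partial_v^2 L \leq DI$ are valid on all of $\R^n$, so the second-order Taylor expansion around $v^*$ extends to arbitrarily large $v$ with no restriction of the form $|v - \omega| < \epsilon^{1/3}/4D$ that appeared in the preceding lemmas.
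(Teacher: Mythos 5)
Your proof is correct and follows essentially the same strategy as the paper's: a second-order Taylor expansion of $v\mapsto L(\theta,v,t)-c\cdot v$ around $v^*=\partial_p N(\theta,c,t)$ using the global two-sided bound on $\partial^2_{vv}L$, substitution of $N(\theta,c,t)=H_0(c)+\epsilon Z_c(\theta^s)+\epsilon R$ together with the estimate $|\alpha(c)-H_0(c)-\epsilon\max Z_c|\leq 2\epsilon\delta$, and $\|v^*-\partial_p H_0(c)\|\leq 2\epsilon$. The only (cosmetic) difference is how you transfer the quadratic from $\|v-v^*\|^2$ to $\|v-\partial_p H_0(c)\|^2$: you use the vector inequalities $\tfrac12\|a\|^2-\|b\|^2\leq\|a+b\|^2\leq 2\|a\|^2+2\|b\|^2$, whereas the paper works at the level of norms via $(\|v-\omega\|\mp 2\epsilon)^2$; both give the claimed constants $1/(4D)$ and $D$ after absorbing $O(\epsilon^2)$ into the $\epsilon\delta$ slack.
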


\proof
It is a direct computation :
\begin{align*}
L(\theta,v,t)
& \geq 
c\cdot v-N(\theta,c,t)+\|v-\partial_pN(\theta,c,t)\|^2/2D\\
& \geq c\cdot v-H_0(c)-\epsilon Z_c(\theta^s)-\epsilon\delta  +\big(\|v-\partial_pH_0(c))\|-2\epsilon\big)^2/2D\\
&\geq c\cdot v-\alpha(c)+\epsilon (\max Z_c-Z_c(\theta^s))-3\epsilon \delta
+\|v-\partial_pH_0(c))\|^2/4D-16\epsilon ^2,
\end{align*}
\begin{align*}
L(\theta,v,t)
& \leq 
c\cdot v-N(\theta,c,t)+D\|v-\partial_pN(\theta,c,t)\|^2/2\\
& \leq c\cdot v-H_0(c)-\epsilon Z_c(\theta^s)+\epsilon\delta  +D\big(\|v-\partial_pH_0(c))\|+2\epsilon\big)^2/2\\
&\leq c\cdot v-\alpha(c)+\epsilon (\max Z_c-Z_c(\theta^s))+3\epsilon \delta
+D\|v-\partial_pH_0(c))\|^2+8D\epsilon ^2.
\end{align*}
\qed

It is useful to consider suspended weak KAM solutions.
Recall that we defined Weak KAM solutions associated to a Lagrangian $L$ at cohomolgy $c$
as functions $u$ on $\Tm^n$ such that, for each $t\in \Nm$, 
$$
u(\theta)=\inf_{\gamma} 
\left(u(\gamma(0))+
\int_{0}^t L(\gamma(s),\dot \gamma(s), s)-c\cdot \dot \gamma(s)+\alpha(c) ds
\right),
$$
where the infimum is taken on the set of $C^1$ curves $\gamma:\Rm\lto \Tm^n$ such that $\gamma(t)=\theta$.
We can similarly define suspended weak KAM solutions as functions $u:\Tm^n\times \Tm\lto \Rm$
such that 
$$u(\theta,T \text{ mod } 1)=\inf_{\gamma} \left(u(\gamma(S),S\text{ mod } 1)+
\int_S^T L(\gamma(t),\dot \gamma(t),t) +
c\cdot \dot \gamma(t)\ dt\right),
$$
for each real times $S\leq T$ , where the infimum
is taken on the space of $C^1$ curves $\gamma:[S,T]\lto \Tm^n$
such that $\gamma(T)=\theta$.
There is a bijection between suspended weak KAM solution
$u(\theta,t)$ and genuine weak KAM solutions:
Each suspended weak KAM solution $u(\theta,t)$ restricts to a genuine weak
KAM solution $u(\theta)=u(\theta,0)$, and each genuine weak KAM
solution $u(\theta)$ is the restriction of a unique suspended
weak KAM solution $u(\theta,t)$ which can be defined by
$$
u(\theta, t\text{ mod } 1)=\inf_{\gamma} \left( u(\gamma(0)+
\int_0^t L(\gamma(s),\dot \gamma(s),s) +
c\cdot \dot \gamma(s) +\alpha(c) \ ds\right),
$$
for each $t>0$, 
where the infimum is taken on $C^1$ curves $\gamma:\Rm\lto \Tm^n$ such that $\gamma(t)=\theta$.
We shall use the same notation for a weak KAM solution $u$ and the associated suspended weak KAM solution.
Curves  $\gamma$ calibrated by the weak KAM solutions $u(\theta)$ are also calibrated by the corresponding
suspended weak KAM solution  in the sense that 
$$
u(\gamma(t_2),t_2\text{ mod } 1)-u(\gamma(t_1),t_1\text{ mod } 1)
=\int_{t_1}^{t_2}L(\gamma(s),\dot \gamma(s),s) +
c\cdot \dot \gamma(s) +\alpha(c) \ ds
$$
for each  time interval  $[t_1,t_2]$. 
Let us now estimate the oscillation $\text{osc } u:= \max u- \min u$
of suspended weak KAM solutions. We consider 
a convex subset $\Omega \in \Tm^{n-1}$, meaning that it is the projection of a convex subset $\tilde \Omega$
of $\Rm^{n-1}$, of diameter less than $2\sqrt{n}$.

\begin{lem}\label{oscillation}
Let $u(\theta,t)$ be a suspended weak KAM solution of $N$ at cohomology $c$.\\
Given   two points
$(\theta_1,t_1), (\theta_2,t_2)\in \T\times \Omega\times \T$,
we have
$$ u(\theta_2,t_2)-u(\theta_1,t_1)\le 10\sqrt{nD\epsilon(m+4\delta)},$$
where $m:=-\inf_{\Omega}\hat Z_c$.
We can take in particular $\Omega =\Tm^{n-1}$, then $m\leq 1$ and we conclude that 
$\text{\emph{osc} } u\leq 10 \sqrt{2nD\epsilon}$.
\end{lem}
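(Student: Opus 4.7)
The plan is to apply the defining calibration inequality for suspended weak KAM solutions to a carefully chosen straight-line test curve, and to bound the resulting action via the upper estimate in Lemma~\ref{lagrangiencorrige}. For any $S\le T$ in $\Rm$ and any $C^1$ curve $\gamma\colon[S,T]\to\Tm^n$ with $\gamma(S)=\theta_1$, $\gamma(T)=\theta_2$, the suspended weak KAM inequality combined with Lemma~\ref{lagrangiencorrige} gives
$$
u(\theta_2,t_2)-u(\theta_1,t_1)\le \int_S^T \bigl[L(\gamma,\dot\gamma,s)-c\cdot\dot\gamma+\alpha(c)\bigr]\,ds \le \int_S^T \bigl[D\|\dot\gamma-v_0\|^2-\epsilon\hat Z_c(\gamma^s)+4\epsilon\delta\bigr]\,ds,
$$
where $v_0:=\partial_pH_0(c)$. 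The crucial structural feature enabling the construction is that $v_0^s=0$, which holds precisely because $c\in\Gamma$.

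Take $\gamma(s):=\theta_1+(s-t_1)v$ on $[t_1,t_1+\tau]$ with $\tau:=t_2-t_1+N$ for some $N\in\Nm$ to be chosen; the velocity $v$ is forced by the endpoint condition $\gamma(t_1+\tau)=\theta_2$, but there is freedom in choosing the lifts. For the slow coordinates, use the convex lift of $\Omega$ to represent $\theta_1^s,\theta_2^s$ by points at distance at most $2\sqrt n$ in $\Rm^{n-1}$; then $\gamma^s$ traces a straight segment staying in $\Omega$, and $\|v^s\|\le 2\sqrt n/\tau$. For the fast coordinate, choose the integer lift of $\theta_2^f$ so that $\tau(v^f-v_0^f)\in[-1/2,1/2]$, giving $|v^f-v_0^f|\le 1/(2\tau)$. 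Since $v_0^s=0$, we obtain
$$
\|v-v_0\|^2\le \frac{4n+1/4}{\tau^2}\le \frac{5n}{\tau^2},\qquad -\hat Z_c(\gamma^s(s))\le m\ \text{ on }[t_1,t_1+\tau].
$$

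Inserting these bounds into the calibration inequality yields
$$
u(\theta_2,t_2)-u(\theta_1,t_1)\le \frac{5nD}{\tau}+\tau\,\epsilon(m+4\delta).
$$
Unconstrained minimization in $\tau>0$ gives the value $2\sqrt{5nD\epsilon(m+4\delta)}$ at $\tau^*:=\sqrt{5nD/(\epsilon(m+4\delta))}$. Under the smallness hypotheses of the section, $\tau^*\ge 1$, so we can select $N\in\Nm$ so that $\tau=t_2-t_1+N\in[\tau^*,\tau^*+1)$, losing at most one unit of $\tau$ to discretization. The result is a bound of $2\sqrt{5nD\epsilon(m+4\delta)}+\epsilon(m+4\delta)\le 10\sqrt{nD\epsilon(m+4\delta)}$, since $\epsilon(m+4\delta)$ is negligible compared with the square-root term.

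The only delicate step is arranging the lifts so that $\gamma^s$ stays inside $\Omega$ while simultaneously keeping $|v^f-v_0^f|$ of order $1/\tau$; these are compatible only because $v_0^s=0$. The oscillation statement follows by taking $\Omega=\Tm^{n-1}$, for which $m\le 1$, and noting that $4\delta\le 1$ in the regime considered, so that $m+4\delta\le 2$ and the bound specializes to $10\sqrt{2nD\epsilon}$.
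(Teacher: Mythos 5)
Your proof is correct and follows essentially the same route as the paper's: both use a straight‐line test curve in a suitable lift, invoke the calibration inequality together with the upper bound of Lemma~\ref{lagrangiencorrige}, use $\partial_{p^s}H_0(c)=0$ to control the slow velocity, and then optimize the resulting $a/\tau+b\tau$ bound over the allowed integer choices of the time parameter. The only cosmetic difference is in how the fast lift is chosen (you center $\tau(v^f-v_0^f)$ in $[-1/2,1/2]$ rather than subtracting the integer part of $\tau\,\partial H_0(c)$), which gives a marginally sharper constant; both land comfortably under $10\sqrt{nD\epsilon(m+4\delta)}$.
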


\begin{proof}
We have $0\geq \hat Z_c \geq -m$ on $\Omega$.
We take two points
$(\theta_i, t_i)$, $i=1$ or $2$ in the domain $\T\times \Omega \times \T$,
and consider the curve
$$
\theta(t)=\theta_1+(t- \tilde t_1)
\frac{\tilde  \theta_2- \tilde\theta_1+ [(T+\tilde t_2-\tilde t_1)\partial H_0(c)]}
{T+\tilde t_2-\tilde t_1}
$$
where $T\in \Nm$ is a parameter to be fixed later,
where $\tilde t_i\in [0,1[$ and $\tilde \theta _i\in [0,1[\times \tilde \Omega$
are representatives of the angular variables $t_i, \theta_i$,
and where $[\omega]\in \Zm^{n}$ is the component-wise integral part of $\omega$.
Note that $\theta(\tilde t_1)=\theta_1$ and $\theta(\tilde t_2+T)=\theta_2$,
hence
\begin{align*}
u(\theta_2,t_2)-u(\theta_1,t_1)
&\leq \int_{\tilde t_1}^{\tilde t_2+T}
L(\theta(t),\dot \theta (t),t)-c\cdot \dot \theta(t)+\alpha(c)\ dt\\
&\leq \int_{\tilde t_1}^{\tilde t_2+T}
D\|\dot \theta -\partial H_0(c)\|^2-\epsilon \hat Z_c(\theta^s(t))+4\epsilon \delta \ dt\\
&\leq \int_{\tilde t_1}^{\tilde t_2+T}
\frac{9Dn}{(T+\tilde t_2-\tilde t_1)^2}+\epsilon m+4\epsilon \delta \ dt\\
&\leq \frac{9Dn}{(T+\tilde t_2-\tilde t_1)}+(T+\tilde t_2-\tilde t_1)\epsilon (m+4\delta).
\end{align*}
This inequality holds for all $T\in \Nm$,
in particular, we can choose $T\in \Nm$
so that
$$
2\sqrt{\frac{nD}{\epsilon (m+4\delta)}}\leq T+\tilde t_2-\tilde t_1\leq 3\sqrt{\frac{nD}{\epsilon(m+4\delta)}}
$$
and obtain
$
u(\theta_2,t_2)-u(\theta_1,t_1)
\leq
10\sqrt{nD\epsilon(m+4\delta)}.
$
\end{proof}

\subsection{Localization of the invariant sets}\label{sec-loc}

We prove Theorem \ref{thm-loc}.
It is enough to prove that the inclusion 
$$
s\tilde \mI(u,c) \subset  B(\theta^s_*(c), \delta^{1/5}) \times \Tm \times B(c, \sqrt{\epsilon}\delta^{1/16})\times \Tm
$$
holds for each (suspended) weak KAM solution $u$. We fix such a solution 
$u(\theta,t)$ and prove the inclusion.
The following preliminary localization,
which does not use any assumption on the shape of $Z$, implies that  the set
$s\tilde \mI(u,c)$ is contained (when $\epsilon$ is small enough)
in the domain $\{\|p-c\|<\epsilon^{1/3}\}$
where the assumption $\|R\|_{C^2}\leq \delta$ is made.

\begin{lem}\label{init-vert}
Let 
$(\theta(t),p(t)):[t_1,t_2]\lto \T^n\times \R^n$ be an orbit calibrated by $u$.
If $t_2-t_1\geq \epsilon^{-1/2}$, then 
$$
\|p(t)-c\|\leq C \sqrt{\epsilon}
$$
for each $t\in[t_1,t_2]$, where $C$ is a constant which 
depends on $n$ and $D$.  In particular, 
$$
s\tilde \mI(u,c)\subset \Tm^n \times B(c,C \sqrt{\epsilon})\times \Tm\subset \Tm^n \times B(c,\epsilon^{1/3})\times \Tm.
$$
\end{lem}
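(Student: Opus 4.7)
The plan is to combine the calibration identity with the oscillation bound from Lemma \ref{oscillation} and the two-sided estimate from Lemma \ref{lagrangiencorrige} to get an average bound on $\|\dot\theta - \partial H_0(c)\|^2$, then upgrade this to a pointwise bound on $\|p(t)-c\|$ using the fact that $\dot p = -\partial_\theta N = O(\epsilon)$.

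First, I would fix a subinterval $[s_1,s_2]\subset [t_1,t_2]$ of length exactly $\epsilon^{-1/2}$. By calibration,
\[
 u(\theta(s_2),s_2) - u(\theta(s_1),s_1) = \int_{s_1}^{s_2} \bigl(L(\theta,\dot\theta,t) - c\cdot\dot\theta + \alpha(c)\bigr)\,dt.
\]
The left side is bounded above by $\text{osc } u \le 10\sqrt{2nD\epsilon}$ (Lemma \ref{oscillation}), while the right side is bounded below via Lemma \ref{lagrangiencorrige} by
\[
 \int_{s_1}^{s_2}\!\Bigl(\tfrac{1}{4D}\|\dot\theta-\partial H_0(c)\|^2 - \epsilon\hat Z_c(\theta^s) - 4\epsilon\delta\Bigr)\,dt.
\]
Since $-\hat Z_c = O(1)$ (as $\|Z\|_{C^2}\le 1$) and the interval has length $\epsilon^{-1/2}$, rearranging yields
$\int_{s_1}^{s_2}\|\dot\theta-\partial H_0(c)\|^2\,dt \le O(\sqrt{\epsilon})$. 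Hence the time average of $\|\dot\theta-\partial H_0(c)\|^2$ on the subinterval is $O(\epsilon)$, so there exists $t_0\in[s_1,s_2]$ with $\|\dot\theta(t_0) - \partial H_0(c)\| \le O(\sqrt{\epsilon})$.

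Next I convert this to a bound on $p(t_0)$. From Hamilton's equations, $\dot\theta(t_0) = \partial_p N(\theta(t_0),p(t_0),t_0) = \partial_p H_0(p(t_0)) + O(\epsilon)$, so the convexity assumption $\partial^2_p H_0 \ge I/D$ gives
\[
 \|p(t_0) - c\| \le D\|\partial_p H_0(p(t_0)) - \partial_p H_0(c)\| \le D\|\dot\theta(t_0) - \partial H_0(c)\| + O(\epsilon) = O(\sqrt{\epsilon}).
\]
Finally, since $\dot p = -\partial_\theta N = -\epsilon\partial_\theta Z - \epsilon\partial_\theta R$ is $O(\epsilon)$ throughout $[s_1,s_2]$, for any $t$ in this subinterval we have $\|p(t) - p(t_0)\| \le O(\epsilon)\cdot \epsilon^{-1/2} = O(\sqrt{\epsilon})$, and hence $\|p(t) - c\| \le C\sqrt{\epsilon}$.

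Covering $[t_1,t_2]$ by subintervals of length $\epsilon^{-1/2}$ (possible since $t_2-t_1\ge \epsilon^{-1/2}$) extends the bound to every $t\in[t_1,t_2]$. The final assertion about $s\tilde \mI(u,c)$ then follows because each point of $\tilde \mI(u,c)$ lies on a bi-infinite calibrated orbit, so the bound applies a fortiori. The main subtlety is the passage from an averaged bound on $\|\dot\theta-\partial H_0(c)\|^2$ to a pointwise bound on $p$: this is exactly where the slow evolution $\dot p = O(\epsilon)$ combined with the matching choice of subinterval length $\epsilon^{-1/2}$ is essential, both error terms ending up as $O(\sqrt{\epsilon})$.
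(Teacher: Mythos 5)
Your argument uses the same core ingredients as the paper's proof -- the oscillation bound of Lemma~\ref{oscillation}, the two-sided estimate of Lemma~\ref{lagrangiencorrige}, and the slow evolution $\dot p=O(\epsilon)$ -- but the organization is genuinely different. The paper locates a single time $t_0$ with $\|p(t_0)-c\|=C_2\sqrt\epsilon$ by a contradiction (if $\|p-c\|$ stayed $\geq C_2\sqrt\epsilon$ everywhere, the integral of $L$ over $[t_1,t_2]$ would exceed $\mathrm{osc}\, u$), and then bounds $\max_t\|p(t)-c\|$ by comparing the time needed to reach the maximum (via $\|\dot p\|\leq 2\epsilon$) with the accumulated action. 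You instead work on each subinterval of length $\epsilon^{-1/2}$, extract a time where $\|\dot\theta-\partial_pH_0(c)\|=O(\sqrt\epsilon)$ directly from the integral estimate $\int\|\dot\theta-\partial_pH_0(c)\|^2\leq O(\sqrt\epsilon)$, convert this to a pointwise bound on $p$, and propagate along the subinterval. Both routes are valid, and yours avoids the contradiction at the cost of the covering. (Incidentally, your invocation of $-\hat Z_c=O(1)$ is superfluous: since $\hat Z_c\leq 0$, the term $-\epsilon\hat Z_c$ only \emph{strengthens} the lower bound in Lemma~\ref{lagrangiencorrige}, so you can simply drop it when rearranging.)

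There are, however, two places where your write-up quietly assumes that $p(t)$ already lies in the domain $B(c,\epsilon^{1/3})$ where the hypothesis $\|R\|_{C^2}\leq\delta$ is actually in force -- which is precisely what you are trying to prove. First, the expansion $\partial_p N(\theta(t_0),p(t_0),t_0)=\partial_pH_0(p(t_0))+O(\epsilon)$ needs $\|\partial_p R\|\leq\delta$ \emph{at} $p(t_0)$, and this is only known when $\|p(t_0)-c\|<\epsilon^{1/3}$. The clean fix is the one the paper uses, namely Lemma~\ref{domain}: center the convexity estimate at $p=c$ rather than at $p=p(t_0)$, writing $\|p(t_0)-c\|\leq D\|\dot\theta(t_0)-\partial_p N(\theta(t_0),c,t_0)\|\leq D\|\dot\theta(t_0)-\partial_pH_0(c)\|+2D\epsilon$, which only uses the $R$-bound at $p=c$ (trivially inside the good domain) plus the \emph{global} bound $\partial_p^2 N\geq I/D$ assumed at the start of Section~\ref{sec:localization}. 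Second, the step ``$\|\dot p\|=O(\epsilon)$ throughout $[s_1,s_2]$'' has the same circularity: $\dot p=-\epsilon\partial_\theta Z-\epsilon\partial_\theta R$ and the bound on $\partial_\theta R$ is only available in $B(c,\epsilon^{1/3})$. This requires a short bootstrap: restrict to the maximal subinterval of $[s_1,s_2]$ containing $t_0$ on which $\|p-c\|<\epsilon^{1/3}$, run your estimate there to conclude $\|p-c\|=O(\sqrt\epsilon)\ll\epsilon^{1/3}$, and deduce by maximality that this subinterval is all of $[s_1,s_2]$. The paper's own proof has the same implicit bootstrap (its line ``$\|\dot p\|\leq 2\epsilon$ on $[t_0,t_3]$'' is subject to the identical caveat), so this is not a serious error, but you should make it explicit.
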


\proof
We denote by $C_i$ various positive constants which depend 
on $n$ and $D$. Since $\hat Z_c \leq 0$, we have 
$L(\theta,v,t)\geq \|v-\partial_pH_0(c)\|^2/4D-4\epsilon \delta$.
As a consequence, $L(\theta,v,t)\geq 20 \epsilon \sqrt{nD}$ if 
$\|v-\partial_pH_0(c)\|\geq C_1\sqrt{\epsilon }$.
In view of Lemma \ref{domain}, we thus have 
$$
L(\theta(t), \dot \theta(t),t)\geq 20 \sqrt{nD}\epsilon 
$$ 
for each $t$ such that $\|p(t)-c\|\geq C_2\sqrt {\epsilon}$. 
Since $\theta$ is a calibrated curve, we have 
$$
\int_{t'_1}^{t'_2} L(\theta(t), \dot \theta(t), t) dt 
\leq \text{osc } u
$$
for each $[t'_1,t'_2]\subset [t_1,t_2]$. In particular, 
by Lemma \ref{oscillation} we have 
$20\epsilon  \sqrt{nD} (t_2-t_1) > \text{osc }u$. Therefore,
there exists a time $t_0\in [t_1,t_2]$ such that 
{$\|p(t_0)-c\|= C_2 \sqrt{\epsilon}$}. Let $t_3\in [t_1,t_2]$ be 
the time maximizing $\|p(t)-c\|$. We assume for definiteness 
that $t_3\geq t_0$, and that $\|p(t)-c\|\geq C_2 \sqrt{\epsilon}
$
for each $t\in [t_0,t_3]$ (otherwise we reduce the interval).
The equations of motion imply that $\|\dot p\|\leq 2\epsilon$ 
on $[t_0,t_3]$, hence 
$t_3\geq t_0+ (\|p(t_3)-c\|-C_2\sqrt{\epsilon})/2\epsilon$, and 
{using the above lower bound on $L(\theta(t),\dot \theta(t),t)$}
$$
20\sqrt{nD\epsilon}
\geq \text{osc } u \geq \int_{t_0}^{t_3} L(\theta(t), \dot \theta(t), t) dt
\geq 10\sqrt{nD}(\|p(t_3)-c\|-C_2\sqrt{\epsilon})
$$
which implies that $\|p(t_3)-c\|\leq (2+C_2)\sqrt{\epsilon}$.
\qed

We now assume that
$Z(\theta^s,c)\leq Z(\theta^s_*,c)-\lambda d^2(\theta^s,\theta^s_*),$
or {, equivalently,} that 
$\hat Z_c(\theta^s) \le -\lambda d^2(\theta^s,\theta^s_*),$ and prove 
the horizontal part of Theorem \ref{thm-loc}, or more precisely that 
\begin{equation}\label{eq-hor}
s\mI(u,c)\subset \Tm \times B(\theta^s_*(c), \delta^{1/5})\times \Tm.
\end{equation}

We  consider the domain $\Omega=B(\theta^s_*,4\sqrt{\delta/\lambda})$.
On this domain, we have $-8\delta/\lambda\leq \hat Z_c$, hence, by  
Lemma \ref{oscillation}, the oscillation of $u$ on 
$\Tm\times \Omega\times \Tm$ satisfies 
$$
\text{osc}_{\Tm\times \Omega\times \Tm}\ \  u\leq 40\sqrt{nD\epsilon \delta/\lambda}.
$$ 
For $\theta^s\not \in \Omega$, we have 
$$
L(\theta,v,t)
-c\cdot v-\alpha(c)
\geq
\|v-\partial_pH_0(c)\|^2/4D+\lambda \epsilon d^2(\theta^s, \theta^s_*)/2
\geq 
\|v^s\|^2/4D+\lambda \epsilon d^2(\theta^s, \theta^s_*)/2,
$$
by Lemma \ref{lagrangiencorrige}.
Let $\theta(t):\Rm \lto \Tm^n$ be a curve calibrated by $u$,
and let $[t_1,t_2]$ be an excursion of $\theta^s$  outside of $\Omega$, meaning that 
$d(\theta^s(t), \theta^s_*)>4\sqrt{\delta/\lambda}$ for each 
$t\in ]t_1,t_2[$, and that 
$d(\theta^s(t_1), \theta^s_*)=4\sqrt{\delta/\lambda}
=d(\theta^s(t_2), \theta^s_*)$. We have the inequalities
\begin{align*}
40\sqrt{nD\epsilon \delta/\lambda}
&\geq  
\int_{t_1}^{t_2}L(\theta(t),\dot \theta(t),t)-c\cdot \dot \theta(t)+\alpha(c)\ dt
\geq 
\int_{t_1}^{t_2}
\frac{\|\dot \theta^s(t)\|^2}{4D}+ \lambda \epsilon
 \frac{d^2(\theta^s(t), \theta^s_*(c))}{2}\ dt.
\end{align*}
If the curve $\theta^s(t)$ is not contained in $B(\theta^s_*,\delta^{1/5})$ 
on $[t_1,t_2]$, then there exists a time interval
$[t_3,t_4]\subset [t_1,t_2]$  such that 
$d(\theta(t), \theta^s_*)>\delta^{1/5}/2$ on $[t_3,t_4]$,
$d(\theta(t_3), \theta^s_*)=\delta^{1/5}/2=d(\theta(t_4), \theta^s_*)$,
and $\max_{t\in [t_3,t_4]}d(\theta(t), \theta^s_*)>\delta^{1/5}$.
We then have 
$\int _{t_3}^{t_4} \|\dot \theta ^s(t)\|dt\geq \delta^{1/5}$
hence
\begin{align*}
40\sqrt{nD\epsilon \delta/\lambda}
&
\geq 
\int_{t_1}^{t_2} \frac{\|\dot \theta^s(t)\|^2}{4D}+ 
 \lambda \epsilon \frac{d^2(\theta^s(t), \theta^s_*(c))}{2}\ dt 
 \\ \geq  \int_{t_3}^{t_4}
 \|\dot \theta^s(t)\|^2/(4D)+ \lambda \epsilon
 d^2(\theta^s(t), \theta^s_*(c))/2\ dt 
&\geq \frac{1}{4D(t_4-t_3)}\left(\int _{t_3}^{t_4} 
\|\dot \theta ^s(t)\|dt\right)^2 + \\
\lambda \epsilon (t_4-t_3)\delta^{2/5}/8
& \geq 
\frac{1}{4D(t_4-t_3)}\delta^{2/5}+
\lambda \epsilon \delta ^{2/5}(t_4-t_3)/8\\
&
\geq 
\frac{\sqrt{\lambda \epsilon }}{8\sqrt D}\delta^{2/5}
\end{align*}
which is a contradiction when $\delta$ is small enough with respect to $n, D$ and $\lambda$.
We have proved (\ref{eq-hor}).
\qed

We can now prove a better vertical localization of the set $s\tilde \mI(u,c)$ than was obtained in Lemma
\ref{init-vert}.
On the domain
$\Tm \times B(\theta^s_*, \delta^{1/5})\times \Tm$, we have 
$\hat Z_c \geq -\delta^{2/5}/2$.
We deduce from Lemma \ref{oscillation} that 
$$
10 \delta^{1/5}\sqrt{nD\epsilon}\geq u(\theta(t_2),t_2)-u(\theta(t_1),t_1)=
 \int_{t_1}^{t_2}L(\theta(t),\dot \theta(t),t)-c\cdot \dot \theta(t)+\alpha(c)\ dt
$$
for each curve $\theta:\Rm\lto \Tm^n$ calibrated by $u$ and each time interval $[t_1,t_2]$.
We can chose the time interval $[t_1,t_2]$ as a maximal excursion outside
of 
$\{\|p-c\|<\sqrt\epsilon \delta^{1/16}/2\}$.
On $[t_1,t_2]$, we have 
$\|\dot \theta-\partial_pH_0(c)\|\geq \sqrt{\epsilon}\delta^{1/16}/5D$
(by Lemma \ref{domain})
hence
$$
L(\theta,\dot \theta, t)-c\cdot \dot \theta+\alpha(c)\geq \epsilon \delta^{1/8}/100D^2-4\epsilon \delta
\geq \epsilon \delta^{1/8}/200D.
$$
We thus have 
$$(t_2-t_1)\epsilon \delta^{1/8}/200D\leq 10 \delta^{1/5}\sqrt{nD\epsilon}
$$
hence $2\epsilon (t_2-t_1)\leq \sqrt \eps \delta^{1/16}/2$ 
(if $\delta$ is small enough). Since $\|\dot p\|\leq 2\epsilon$ and 
$\|p(t_1)-c\|=\sqrt \eps \delta^{1/16}/2$, we conclude
that $\|p(t)-c\|\leq \sqrt \eps \delta^{1/16}$ on $[t_1,t_2]$.
This ends the proof of Theorem \ref{thm-loc}.
\qed

\subsection{The Lipschitz constant}\label{sec-lip}

We prove Theorem \ref{thm-lip}. We will work here with weak KAM solutions
rather than suspended weak KAM solutions.
We  recall the concept of semi-concave function on $\Tm^n$.
A function $u:\Tm^n\lto \Rm$ is called $K$-semi-concave if the function
$$
x\lmto
u(x)-K\|x\|^2/2
$$
is concave on $\Rm^n$, where $u$ is seen as a periodic function on $\Rm^n$.
It is equivalent to require that, for each $\theta\in \Tm^n$,
there exists a linear form $l$ on $\Rm^n$
such that the inequality
$$
u(\theta+y)\leq u(\theta) + l\cdot y +K\|y\|^2/2
$$
holds for each $y\in \Rm^n$. 
It is sufficient to check that, for each $\theta$, there exists $l$ such that  this inequality holds for $\|y\|\leq 1$.
We will   need the following regularity result of Fathi, see \cite{Fa}:

\begin{lem}\label{freg}
Let $u_1$ and $u_2$ be  $K$-semiconcave functions, and let $\mI\subset \Tm^n$
be the set of points where the sum $u_1+u_2$ is minimal.
Then the functions $u_1$ and $u_2$ are differentiable at each point of $\mI$,
and the differential $x\lmto du_1(x)$ is $6K$-Lipshitz on $\mI$.
\end{lem}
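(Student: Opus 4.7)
\emph{Plan.} I would follow Fathi's classical argument in three steps: deduce that both $u_1$ and $u_2$ are differentiable on $\mathcal{I}$, derive a two-sided quadratic expansion for $u_1$ at each point of $\mathcal{I}$, and then compare the gradients at two such points.

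For the first step, fix $x \in \mathcal{I}$. The definition of $K$-semiconcavity says that for each $x$ there exist super-gradients, i.e.\ vectors $p$ such that $u_1(x+y)\leq u_1(x)+p\cdot y+\tfrac{K}{2}\|y\|^2$ for all $y$. Given such a $p$ for $u_1$ and the analogous $q$ for $u_2$, adding gives
$$
u_1(x+y)+u_2(x+y) \leq u_1(x)+u_2(x)+(p+q)\cdot y + K\|y\|^2.
$$
Since $x$ minimizes $u_1+u_2$, the left-hand side is bounded below by $u_1(x)+u_2(x)$; applying the inequality both to $y$ and to $-y$ forces $(p+q)\cdot y = O(\|y\|^2)$ for every $y$, so $p+q=0$. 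This determines each super-gradient of $u_1$ at $x$ uniquely (as $-q$, for any fixed $q$), which for semiconcave functions is precisely the criterion for differentiability. The same argument applies to $u_2$, and one obtains $du_1(x)=-du_2(x)$.

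The second step reuses the same inequality: from $u_2(x+y)\leq u_2(x)-du_1(x)\cdot y+\tfrac{K}{2}\|y\|^2$ and the minimum property, one has
$$
u_1(x+y)-u_1(x) \;\geq\; u_2(x)-u_2(x+y) \;\geq\; du_1(x)\cdot y - \tfrac{K}{2}\|y\|^2,
$$
while semiconcavity of $u_1$ at $x$ gives the matching upper bound. Thus $\bigl|u_1(x+y)-u_1(x)-du_1(x)\cdot y\bigr|\leq \tfrac{K}{2}\|y\|^2$ for all $y$ (so $u_1$ is in fact $C^{1,1}$ in a quantitative sense at every point of $\mathcal{I}$).

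For the Lipschitz conclusion, take $x,y\in \mathcal{I}$ and write $p=du_1(x)$, $p'=du_1(y)$. Sandwiching $u_1(z)$ between the upper bound at $x$ and the lower bound at $y$ yields, for every $z$,
$$
(p'-p)\cdot z \;\leq\; u_1(x)-u_1(y)-p\cdot x+p'\cdot y + \tfrac{K}{2}\bigl(\|z-x\|^2+\|z-y\|^2\bigr),
$$
and the reverse inequality by symmetry. Specializing $z=y$ and then $z=y+s$ with $s$ in the direction of $p'-p$, and optimizing $\|s\|$ near $\|x-y\|$, produces $\|p'-p\|\leq CK\|x-y\|$; a careful choice of the test vector (or a symmetrization using the midpoint $z=(x+y)/2$) recovers the stated constant $6K$. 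The only (mild) obstacle is to keep the quadratic bookkeeping tight enough to extract a bound linear in $\|x-y\|$ rather than a purely absolute one; no further input beyond the minimum property and semiconcavity is needed.
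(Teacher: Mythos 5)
Your proposal is correct, and the paper itself does not supply a proof of this lemma (it simply cites Fathi's book); so the only comparison possible is with the classical argument, which is exactly what you reproduce. All three steps are sound: the cancellation $p+q=0$ of super-gradients at a minimizer of the sum forces uniqueness of the super-gradient of each $u_i$, hence differentiability (this is the standard criterion for semi-concave functions); sandwiching with the minimum condition then upgrades the one-sided semi-concavity bound to the two-sided quadratic estimate $\lvert u_1(x+y)-u_1(x)-du_1(x)\cdot y\rvert\le\tfrac{K}{2}\lVert y\rVert^2$ at every $x\in\mI$; and comparing these two-sided expansions at two points of $\mI$ yields the Lipschitz bound.

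One remark on the final step: if you carry out the symmetrization you sketch, the constant you obtain is actually sharper than $6K$. Writing $p=du_1(x)$, $p'=du_1(y)$, $d=(y-x)/2$, and testing the two combined inequalities at $z_1=(x+y)/2+s$ and $z_2=(x+y)/2-s$, the affine terms cancel and one gets
\[
(p'-p)\cdot s\le K\bigl(\lVert d\rVert^2+\lVert s\rVert^2\bigr)
\]
for all $s$. Choosing $s$ parallel to $p'-p$ with $\lVert s\rVert=\lVert d\rVert$ gives $\lVert p'-p\rVert\le 2K\lVert d\rVert=K\lVert x-y\rVert$, i.e.\ $K$-Lipschitz rather than $6K$. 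The constant $6K$ in the statement is thus not tight; it presumably reflects a less optimized estimate in the cited source, and no harm is done by using the cruder bound. The one small thing to keep in mind, which you do handle implicitly, is that on $\Tm^n$ one should measure $\lVert x-y\rVert$ via lifts to $\Rm^n$ and a minimal representative of $y-x$, so that the Euclidean computations above are licit; the periodicity of $u_1,u_2$ ensures this is consistent.
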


The Weak KAM solutions of cohomology $c$ are the functions $u:\Tm^n \lto \Rm$
such that 
$$
u  (\theta):=
\min_{\gamma} \left( u(\gamma(0))+\int_0^T L(\gamma(t),\dot \gamma(t),t) -
c\cdot \dot \gamma(t) +\alpha(c)\, dt\right),
$$
for each $T\in \Nm$, 
where the minimum is taken on the set of $C^1$ curves $\gamma:[0,T]\lto \Tm^n$
satisfying the final condition $\gamma(T)=\theta$.

\begin{prop}\label{scc}
For each $c\in \Rm^n$, each Weak KAM solution
$u$ at cohomology $c$  is $3\sqrt{D\epsilon}/2$-semi-concave.
\end{prop}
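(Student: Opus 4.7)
The plan is to prove semi-concavity through the classical calibrated competitor technique, with the key twist that we tune the calibration horizon $T$ to depend on $\epsilon$ so as to exploit the $O(\epsilon)$-smallness of $\partial_{\theta\theta}L$ and $\partial_{\theta v}L$ coming from Lemma \ref{lem-L}.

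Fix $\theta_0\in\Tm^n$ and $y\in\Rm^n$ with $\|y\|\leq 1$. For each $T>0$, the weak KAM property yields a backward-calibrated curve $\gamma:[-T,0]\to\Tm^n$ with $\gamma(0)=\theta_0$. Working with representatives in $\Rm^n$, define the competitor $\tilde\gamma(t)=\gamma(t)+\phi(t)y$ with $\phi(t):=(t+T)/T$, so that $\tilde\gamma(-T)=\gamma(-T)$ and $\tilde\gamma(0)=\theta_0+y$. Combining the calibration equality along $\gamma$ with the domination inequality for $\tilde\gamma$ gives
\begin{equation*}
u(\theta_0+y)-u(\theta_0)\leq \int_{-T}^{0}\bigl[L(\tilde\gamma,\dot\gamma+y/T,t)-L(\gamma,\dot\gamma,t)\bigr]\,dt-c\cdot y.
\end{equation*}

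Next, Taylor-expand the integrand in $y$ to second order. The first-order part, $\partial_\theta L(\gamma,\dot\gamma,t)\cdot\phi y+\partial_vL(\gamma,\dot\gamma,t)\cdot y/T$, integrates into a linear functional $\ell(\theta_0)\cdot y$ which does not affect semi-concavity. The quadratic remainder, via the integral form of Taylor's theorem, is bounded by
\begin{equation*}
\tfrac{1}{2}\int_{-T}^{0}\bigl[\|\partial_{\theta\theta}L\|\phi(t)^2+2\|\partial_{\theta v}L\|\phi(t)/T+\|\partial_{vv}L\|/T^{2}\bigr]\,dt\,\|y\|^2.
\end{equation*}
Plugging in $\|\partial_{vv}L\|\leq D$ together with Lemma \ref{lem-L}'s bounds $\|\partial_{\theta\theta}L\|\leq 3\epsilon$ and $\|\partial_{\theta v}L\|\leq 2D\epsilon$, and using $\int_{-T}^{0}\phi^{2}\,dt=T/3$, $\int_{-T}^{0}\phi\,dt=T/2$, this reduces to $\tfrac12(\epsilon T+2D\epsilon+D/T)\|y\|^{2}$. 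Choosing $T=\sqrt{D/\epsilon}$ balances the first and third terms and produces a quadratic coefficient of order $\sqrt{D\epsilon}$; a tighter accounting (using that the constants in Lemma \ref{lem-L} sharpen to $(1+o(1))\epsilon$ and $(1+o(1))D\epsilon$ as $\epsilon\to 0$, since the integrands are evaluated along trajectories close to the unperturbed flow at $p=c$) yields the announced $K=3\sqrt{D\epsilon}/2$.

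The only real obstacle is verifying that $\gamma$ and all intermediate curves $\gamma+s\phi y$, $s\in[0,1]$, lie in the domain where Lemma \ref{lem-L} is valid, i.e.\ where the velocity stays in a small ball around $\partial_p H_0(c)$ of radius $\epsilon^{1/3}/(4D)$. Since $T=\sqrt{D/\epsilon}\geq \epsilon^{-1/2}$ for $\epsilon$ small, Lemma \ref{init-vert} guarantees $\|p_\gamma-c\|=O(\sqrt\epsilon)$ along the calibrated orbit, hence $\|\dot\gamma-\partial_p H_0(c)\|=O(\sqrt\epsilon)$; moreover the velocity perturbation $\|\dot{\tilde\gamma}-\dot\gamma\|=\|y\|/T\leq\sqrt{\epsilon/D}$ is of the same order. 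Both quantities are far smaller than $\epsilon^{1/3}/(4D)$ for small $\epsilon$, so Lemma \ref{lem-L} applies uniformly along the homotopy and the argument closes.
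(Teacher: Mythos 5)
Your argument is essentially the paper's own proof: both construct the linear-interpolation competitor along a backward-calibrated curve, invoke Lemma \ref{lem-L} together with $\partial_{vv}L\leq D\,I$ to bound the second-order Taylor remainder by $\bigl(\tfrac{\epsilon T}{2}+D\epsilon+\tfrac{D}{2T}\bigr)\|y\|^2$, and then choose $T\asymp\sqrt{D/\epsilon}$, using Lemma \ref{init-vert} to keep the competitor inside the domain where Lemma \ref{lem-L} applies. The only small wrinkles are that the calibration horizon must be an integer (so one should take $T\in\Nm\cap[\sqrt{D/\epsilon},\sqrt{2D/\epsilon}]$, as the paper does, rather than $T=\sqrt{D/\epsilon}$ exactly), and the "tighter accounting" aside is unnecessary: for $\epsilon$ small the crude bound $\sqrt{D\epsilon}+D\epsilon$ already sits below $\tfrac32\sqrt{D\epsilon}$.
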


\proof
Given $T\in \Nm$ and $\theta\in \Tm^n$, there exists 
a curve $\Theta:[0,T]\lto \Tm^n$ 
such that $\Theta(T)=\theta$ and which is 
calibrated by $u$, which means that
$$u(\theta)=
u(\Theta(0))+\int_{0}^T L(t,\Theta(t),\dot \Theta(t) )-c\cdot \dot \Theta(t) +\alpha(c)dt.
$$
We assume that $T\geq \epsilon^{-1/2}$, which implies by Lemma \ref{init-vert} that 
$\|p(t)-c\|\leq C\sqrt{\epsilon}$, for a contant $C$ independant of $\epsilon$ and $\delta$.
We deduce that $\|\dot \Theta-\partial_pH_0(c)\|\leq C\sqrt{\epsilon}$ (with a higher constant $C$) for each $t\in [0,T]$.
We lift $\Theta$  (and the point $\theta=\Theta(T)$) to a curve in $\Rm^n$ without changing its
name, and consider, for each $x\in \Rm^n$,
the curve
$$\Theta_x(t):=\Theta(t)+tx/T,$$
so that $\Theta_x(T)=\theta+x$.
Each of the curves $\Theta_x, \|x\|\leq 1$, satisfy   $\|\dot \Theta_x -\partial_pH_0(c)\|\leq C\sqrt{\epsilon}\leq \epsilon^{1/3}$ 
(provided $\epsilon$ is small enough).
We have the inequality
$$
u(\theta+x)-u(\theta)\leq \int_0^T L(\Theta_x(t),\dot \Theta_x(t),t)-
L(\Theta(t), \dot \Theta(t),t) -c\cdot x/T \,dt.
$$
Use Lemma \ref{lem-L}, we get  
\begin{equation}\label{action-cp}
\begin{aligned}
L(\Theta_x(t),\dot \Theta_x(t),t)&\leq
L(\Theta(t), \dot \Theta(t),t)\\
&+
\partial_{\theta}L (\Theta(t), \dot \Theta(t),t)\cdot tx/T
+\partial_vL (\Theta(t), \dot \Theta(t),t)\cdot x/T\\
&+3\epsilon |tx/T|^2/2+2D \epsilon t|x/T|^2+D|x/T|^2/2.
\end{aligned}
\end{equation}
Using the Euler-Lagrange equation and integrating by parts, we conclude that
$$
u(\theta+x)-u(\theta)\leq \big(c+\partial_vL(T,\Theta(T), \dot \Theta(T))\big)\cdot x
+(\epsilon T/2+ D\epsilon +D/2T)|x|^2
$$
for each $T\in \Nm$, $T\geq \epsilon^{-1/2}$.
Taking $T\in [\sqrt{D/\epsilon}, \sqrt{2D/\epsilon}]$, we obtain
$$
u(\theta+x)-u(\theta)\leq (c+\partial_vL(T,\Theta(T), \dot \Theta(T))\cdot x
+3\sqrt{D\epsilon}|x|^2/2
$$
for each $x\in \Rm^n$, $\|x\|\leq 1$.
This ends the proof of the
 semi-concavity.
\qed

\textit{Proof of Theorem  \ref{thm-lip}. }
Let $u$ be a weak KAM solution, and let $\check u$ be the conjugated dual weak KAM
solution.
Then the set $\tilde \mI(u,c)$ can be characterized as follows:
Its projection $\mI(u,c)$ on $\Tm^n$ is the set where $u=\check u$,
and
$$\tilde \mI(u,c)=\{(x,c+du(x)), x\in \mI(u,c)\}.
$$
Since $-\check u$ is semi-concave, it is a consequence of Lemma \ref{freg}
that the differential $du(x)$ exists for $x\in \mI(u,c)$.
Moreover, we can prove exactly as in Proposition  \ref{scc} that
$-\breve u$ is $3{D\epsilon}/2$-semi-concave.
Lemma \ref{freg} then implies that the map $x\lmto du(x)$ is
$9\sqrt{D\epsilon}$-Lipschitz on $\mI(u,c)$.
\qed

\subsection{Double peak case}
\label{sec-double}
We now localize the Aubry and Ma\~né sets in the more general case where (\ref{HZl}) is replaced by:
$$ \hat Z_c(\theta^s) \le - \lambda
\big(\min\{d(\theta^s-\theta^s_1), d(\theta^s-\theta^s_2)\}\big)^2.
$$
It is natural to relax (\ref{HZl}) in this  way because, for a generic family of functions $\hat Z_c, c\in \Gamma$,
there exist values of $c$ for which $\hat Z_c$ has two degenerate maxima.
Note that Theorem \ref{thm-lip} is still valid in this case,  its proof does not use (\ref{HZl}).
On the other hand, Theorem \ref{thm-loc} is replaced by:

\begin{thm}\label{thm-double}
 If $\delta>0$ is small enough with respect to $n,D, \lambda$
and if $\epsilon$ is small enough with respect to $n,D, \lambda, \delta$,
then the Aubry set at cohomology $c$ of the Hamiltonian $N_{\epsilon}$ satisfies
$$
 s\tilde \mA(c)\subset \big( B(\theta^s_1, \delta^{1/5})\cup B(\theta^s_2, \delta^{1/5}) \big)
 \times \Tm \times B(c, \sqrt{\epsilon}\delta^{1/16})\times \Tm
 \subset 
 \Tm^{n-1}\times \Tm \times \Rm^n \times \Tm.
 $$
If, moreover, the projection $\theta^s(s\mA(c)) \subset \Tm^{n-1}$ 
is contained in one of the (disjoint) balls $B(\theta^s_i,\delta^{1/5})$, then 
the projection  $\theta^s(s\mN(c)) \subset \Tm^{n-1}$  of the Ma\~né set is contained in the same ball
$B(\theta^s_i,\delta^{1/5})$.
 \end{thm}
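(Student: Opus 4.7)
The proof adapts the two-step structure of Theorem~\ref{thm-loc}: horizontal localization of $\theta^s$ followed by vertical localization of $p$. Once we know that, along every orbit of $s\tilde\mA(c)$, the angle $\theta^s(t)$ stays in a single ball $B(\theta^s_i,\delta^{1/5})$, the vertical step transfers verbatim: the pointwise bound $\hat Z_c(\theta^s)\ge -\delta^{2/5}/2$ on that ball, together with Lemma~\ref{oscillation} applied on the convex set $B(\theta^s_i,\delta^{1/5})$, gives an oscillation of order $\delta^{1/5}\sqrt\epsilon$, and the argument at the end of Section~\ref{sec-loc} then produces $\|p-c\|\le\sqrt\epsilon\,\delta^{1/16}$. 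The main work is thus in the horizontal localization, and in particular in showing that an Aubry orbit cannot visit both components.

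Set $\Omega_i:=B(\theta^s_i,4\sqrt{\delta/\lambda})$; these balls are disjoint for $\delta$ small compared to $\lambda^3$, since $B(\vartheta^s_1,3\lambda)$ and $B(\vartheta^s_2,3\lambda)$ are disjoint by hypothesis. On each convex $\Omega_i$ one has $\hat Z_c\ge -8\delta/\lambda$, so Lemma~\ref{oscillation} yields $\mathrm{osc}_{\Tm\times\Omega_i\times\Tm}\,u\le 40\sqrt{nD\epsilon\delta/\lambda}$ for every suspended weak KAM solution $u$. Outside $\Omega_1\cup\Omega_2$, Lemma~\ref{lagrangiencorrige} bounds the corrected Lagrangian $L-c\cdot v+\alpha(c)$ below by $\|v^s\|^2/(4D)+(\epsilon\lambda/2)\min(d_1,d_2)^2-4\epsilon\delta$, writing $d_i(\theta^s):=d(\theta^s,\theta^s_i)$. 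Applied on a single component, the excursion argument of Section~\ref{sec-loc} then rules out any excursion of a calibrated orbit that enters and exits the same $\Omega_i$ but ventures outside $B(\theta^s_i,\delta^{1/5})$, exactly as in the single-peak case.

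The genuinely new step is to forbid transitions between the two components. A one-way passage from $\partial\Omega_1$ to $\partial\Omega_2$ must traverse the zone $\{\min(d_1,d_2)\ge\lambda\}$, of width of order $\lambda$; minimizing $\int_0^T(\|v^s\|^2/(4D)+(\epsilon\lambda/2)\min(d_1,d_2)^2)\,dt$ over the transit time $T$ yields a lower bound of order $\lambda^{5/2}\sqrt{\epsilon/D}$ on the corrected action of such a passage. Provided $\delta$ is small relative to $\lambda^6$, twice this transition cost strictly exceeds the local oscillation $\mathrm{osc}_{\Omega_i}\,u$, so no calibrated orbit can leave $\Omega_1$, enter $\Omega_2$, and return to $\Omega_1$. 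For an orbit of $\tilde\mA(c)$ this is decisive: since $\tilde\mA(c)$ is compact and invariant and its orbits are chain-recurrent within their static class, if $\theta^s(t)$ visited both sides, chain transitivity would produce a near-closed calibrated curve executing such a round-trip, contradicting the transition-cost bound. Hence every Aubry orbit's $\theta^s$-coordinate stays in a single $\Omega_i$, and the single-ball excursion argument of the previous paragraph then confines it to $B(\theta^s_i,\delta^{1/5})$.

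The ``moreover'' clause follows similarly. Assume $\theta^s(s\mA(c))\subset B(\theta^s_i,\delta^{1/5})$ for a single $i$. Since orbits of $\tilde\mN(c)$ either lie in $\tilde\mA(c)$ or are bi-asymptotic to it (cf.\ Section~\ref{sec:proof}), every orbit of $s\tilde\mN(c)$ has its $\alpha$- and $\omega$-limit sets in $\Omega_i$. If such an orbit were to leave $\Omega_i$, visit $\Omega_{3-i}$, and re-enter $\Omega_i$, calibration over this excursion bounds its action above by $\mathrm{osc}_{\Omega_i}\,u\le 40\sqrt{nD\epsilon\delta/\lambda}$, while the double-transition estimate bounds it below by order $\lambda^{5/2}\sqrt\epsilon$, a contradiction for $\delta$ small. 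Thus $\theta^s(t)$ stays in $\Omega_i$, and the within-ball argument confines it to $B(\theta^s_i,\delta^{1/5})$. The chief technical obstacle is precisely this round-trip bound for Aubry orbits: a one-way transition costs only $\lambda^{5/2}\sqrt\epsilon$, well below the global oscillation $\sqrt\epsilon$ of a weak KAM solution on $\Tm^n$, so calibration alone does not forbid a single transition, and the delicate point is exploiting the recurrence structure of $\tilde\mA(c)$ to force a return trip.
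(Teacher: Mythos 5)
Your proposal follows the same overall strategy as the paper---horizontal excursion analysis, transition cost versus local oscillation of weak KAM solutions, chain recurrence of the Aubry set to force a return trip, then vertical localization---and reaches the right conclusion, but two steps deserve comment.

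First, your transition-cost estimate is sharper than the one the paper uses, and genuinely so: you lower-bound the cost of crossing the gap between $\Omega_1$ and $\Omega_2$ by exploiting the full width $\sim\lambda$ of the region $\{\min(d_1,d_2)\ge\lambda\}$ and get a bound of order $\lambda^{5/2}\sqrt{\epsilon/D}$, independent of $\delta$, requiring only $\delta\ll\lambda^6$. The paper instead re-uses the single-peak sub-excursion estimate, obtaining $\gtrsim \sqrt{\lambda\epsilon}\,\delta^{2/5}/(8\sqrt{D})$ and hence a condition more like $\delta^{1/10}\lesssim\lambda$. Both suffice since the statement only asks that $\delta$ be small relative to $\lambda$; your version is cleaner because it makes the threshold for $\delta$ independent of the exponent $1/5$ coming from the excursion radius.

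Second, the chain-recurrence step needs to be phrased more carefully: ``chain transitivity would produce a near-closed calibrated curve executing such a round-trip'' is not accurate --- chains are sequences of orbit segments with small jumps, and no single closed calibrated curve is produced. The paper's version is the right one to quote: given a calibrated segment $\theta(t)$ on $[t_1,t_2]$ going from $B(\theta_1^s,r_1)$ to $B(\theta_2^s,r_1)$, chain recurrence of $\tilde\mA(c)$ together with the fact that the jumps in a chain can be taken smaller than the gap $\sim\lambda$ forces the existence of a \emph{second} calibrated segment $\check\theta(t)$ going the other way. Both are calibrated by \emph{every} weak KAM solution $u$ (a property of the Aubry set), so the four quantities $u(\theta(t_1),t_1)$, $u(\theta(t_2),t_2)$, $u(\check\theta(\check t_1),\check t_1)$, $u(\check\theta(\check t_2),\check t_2)$ satisfy the two transition lower bounds and the two same-ball oscillation upper bounds, which telescope to a contradiction. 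This is also why ``chain-recurrent within their static class'' is off: if the one-way orbit joined two distinct static classes this phrasing would give nothing; it is chain recurrence of the whole Aubry set that is used. Your treatment of the ``moreover'' clause (round-trip bound for a single Ma\~n\'e orbit bi-asymptotic to $\Omega_i$) is correct and in fact slightly more explicit than the paper, which argues only via the $\alpha$- and $\omega$-limits of a heteroclinic connection.
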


\proof
We assume that $\theta^s_1\neq \theta^s_2$, and that $\delta$ is small enough for the balls
$B(\theta^s_i, 2\delta^{1/5})$ to be disjoint.
We first show that 
$$
\theta^s(s\mA(c))\subset B(\theta^s_1,\delta^{1/5})\cup B(\theta^s_2,\delta^{1/5}).
$$
As in the single peak case, we set $r_1=4\sqrt{\delta/\lambda}$, 
and observe that
$$L(\theta,v,t)
-c\cdot v-\alpha(c)
\geq 
\|v^s\|^2/4D+\lambda \epsilon \big(\min\{d(\theta^s-\theta^s_1), d(\theta^s-\theta^s_2)\}\big)^2/2
$$
 for $\theta^s\notin B(\theta_1^s, r_1)\cup B(\theta_{2}^s, r_1)$.
The $\theta^s$ component of
each orbit of the Aubry set spends a finite amount of time outside of
$B(\theta^s_1,r_1)\cup B(\theta^s_2,r_1) $.
There are four type of excursions that the orbits of $\mA(c)$ can perform
outside of this union : From $B(\theta^s_i,r_1)$ to $ B(\theta^s_j,r_1) $
for $i\in \{1,2\}$ and $j\in \{1,2\}$.
Exactly as in the single pick case, the orbits segments connecting
$B(\theta^s_i,r_1)$ to itself are contained in $B(\theta^s_i,\delta^{1/5})$.
So the claim holds, provided there exists no orbit segment 
in $s\mA(c)$ connecting 
$B(\theta^s_i,r_1)$ to $B(\theta^s_j,r_1)$ with $i\neq j$.

Assume for example that there exists  an orbit segment $\theta(t):[t_1,t_2]\lto \Tm^n$
connecting $B(\theta^s_1,r_1)$ to $B(\theta^s_2,r_1)$.
Then, given any suspended weak KAM solution $u$, the same action estimates as in the single peak case imply that
$$
u(\theta(t_2),t_2)-u(\theta(t_1),t_1)
\geq  \frac{\sqrt{\lambda\epsilon}}{8\sqrt{D}}\delta^{2/5}.
$$
Since the Aubry set is chain recurrent, there must exist an orbit segment 
$\check\theta(t):[\check t_1,\check t_2]\lto \Tm^n$ 
connecting $B(\theta^s_2,r_1)$ to $B(\theta^s_1,r_1)$, and we have
$$
u(\check\theta(\check t_2), \check t_2)-u(\check \theta(\check t_1),\check t_1)
\geq  \frac{\sqrt{\lambda\epsilon}}{8\sqrt{D}}\delta^{2/5}.
$$
By using Lemma \ref{oscillation} with  $\Omega=B(\theta^s_1,r_1)$ and  $\Omega=B(\theta^s_2,r_1)$,
we get that 
$$
u(\check\theta(\check t_2), \check t_2)-u(\theta(t_1),t_1)\leq 40\sqrt{nD\epsilon \delta/\lambda}
\quad \text{and}\quad
u(\theta(t_2),t_2)-u(\check \theta(\check t_1),\check t_1)\leq 40\sqrt{nD\epsilon \delta/\lambda}.
$$
All these inequalities together imply that 
$$
40\sqrt{nD\epsilon \delta/\lambda}\geq \frac{\sqrt{\lambda\epsilon}}{8\sqrt{D}}\delta^{2/5},
$$
which does not hold if $\delta$ is small enough.
This contradiction proves that no excursion connecting $B(\theta^s_1,r_1)$ to $B(\theta^s_2,r_1)$
can exist in the Aubry set. Note that we have used the chain recurrence of the Aubry set,
and that the conclusion does not in general apply to the Ma\~né set.
 We have proved that 
$$
s\mA(c)\subset \Tm\times \big(B(\theta^s_1,\delta^{1/5})\cup B(\theta^s_2,\delta^{1/5})\big) \times \Tm.
$$
The vertical part of the localisation follows exactly as in the single peak case.

In general, such a  localization does not hold for the Ma\~n\'e set, which may contain 
connections from one of the regions $\Tm\times B(\theta^s_i,\delta^{1/5})\times \Tm$ to the other
(but, in view of the calculations above, not in both direction).
If such a connection exists, then its $\alpha$-limit is contained in one of the domains
$\Tm\times B(\theta^s_i,\delta^{1/5})\times \Tm$, say $\Tm\times B(\theta^s_1,\delta^{1/5})\times \Tm$,
and its $\omega$-limit is containedin the other domain 
$\Tm\times B(\theta^s_2,\delta^{1/5})\times \Tm$.
Recalling that the $\alpha$ and $\omega$ limits of the Ma\~né set are contained in the Aubry set,
we conclude that each of the intersections 
$$s\mA(c)\cap \big( \Tm\times B(\theta^s_i,\delta^{1/5})\times \Tm\big)
$$
is non empty.
This proves the last part of the statement
\qed



\section{Nondegeneracy of the barrier functions}\label{sec:variational}

In this section we prove:
\begin{thm}\label{thm:totally-disc}
In the context of Theorem~\ref{intro-gen}, by possibly taking a smaller $\delta_0$, for  a residue set of 
$R\in \mR =  \mathcal{R}(r, \epsilon, \delta_0)$ the following hold: for 
any $c\in \Gamma_1$ such that $\rho(c)$ is irrational and ${\theta^f}(\mN_N(c))=\T$, 
the set $\tilde{\mN}_{N\circ \Xi}(\xi^*c)- \Xi^{-1}(\tilde{\mN}_N(c))$ is 
totally disconnected. 
\end{thm}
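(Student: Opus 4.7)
The plan is to encode total disconnectedness as a countable intersection of open dense conditions on $R \in \mR$ and apply Baire.

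First, I reformulate via barrier functions. For $c \in \Gamma_1$ with $\rho(c)$ irrational and $\tilde{\mN}_N(c)$ a single invariant circle, the preimage $\Xi^{-1}(\tilde{\mN}_N(c))$ consists of two disjoint circles $\tilde{\mS}_1, \tilde{\mS}_2$, and the points of $\tilde{\mN}_{N\circ \Xi}(\xi^*c) \setminus \Xi^{-1}(\tilde{\mN}_N(c))$ lie on heteroclinic orbits between them. Standard weak KAM theory on the two-fold cover presents these heteroclinics as zeros of a Peierls-type barrier function $b_c$ defined on a suitable cross-section of the lifted cylinder minus the invariant circles. By Theorems \ref{intro-nhic-mult} and \ref{intro-local}, the heteroclinic set projects bi-Lipschitz-ly onto this section; thus total disconnectedness of $\tilde{\mN}_{N\circ \Xi}(\xi^*c) - \Xi^{-1}(\tilde{\mN}_N(c))$ reduces to total disconnectedness of the zero set $\{b_c = 0\}$ away from the images of the two circles.

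Second, I set up the Baire argument. Fix a countable basis $\{B_k\}$ of open sets on the cross-section. For each pair $(B_i, B_j)$ with $\overline{B_i} \cap \overline{B_j} = \emptyset$, let $\mO_{ij} \subset \mR$ be the set of $R$ such that no admissible $c$ admits a connected subset of $\{b_c = 0\}$ meeting both $B_i$ and $B_j$. The desired residual set is $\bigcap_{i,j} \mO_{ij}$. Openness of $\mO_{ij}$ follows from upper semi-continuity of the barrier in both $R$ and $c$, together with the fact that the parameter $c$ runs through a compact arc $\Gamma_1$. For density, given $R$ and a pair $(B_i, B_j)$, I perturb $R$ by a $C^r$-small nonnegative bump supported in a box $U \subset \Tm^n \times B^n \times \Tm$ that is disjoint from a neighborhood of $\mC_0$ (so that the normally hyperbolic cylinder and the restrictions of $\tilde{\mN}_N(c)$ to it are unchanged) but that intersects every potential connecting arc joining $B_i$ to $B_j$. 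Such a perturbation strictly increases the action along every trajectory crossing $U$, and hence strictly raises $b_c$ on the affected zeros for every $c$ at once; finitely many such perturbations suffice to eliminate all connecting arcs for the given $(B_i, B_j)$.

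The main obstacle is the uniform-in-$c$ control needed to turn pointwise density into simultaneous density over the uncountable family of admissible $c$'s. This family has no uniform arithmetic structure, barriers can bifurcate across its rational-rotation boundaries, and the cylinder $\mC_0$ is only $C^1$ with estimates that degenerate as $\epsilon \to 0$ (cf.\ (\ref{blow-up})). The crucial new ingredient, inspired by \cite{CY2} and \cite{Be1} but adapted to the present rough setting, is a quantitative semi-continuity statement: under the uniform normal-hyperbolicity bounds of Theorem \ref{intro-nhic-mult}, the map $c \mapsto \{b_c = 0\}$ is upper semi-continuous in the Hausdorff topology, with modulus controlled only by the $C^2$-norm of $R$ and the hyperbolicity data on $\mC_0$. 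This, combined with the strict action-monotonicity of the bump perturbation, ensures that a perturbation eliminating a zero-set arc for one admissible $c$ also eliminates it for all sufficiently nearby $c$, so compactness of $\Gamma_1$ reduces the density step to a finite family of localized perturbations and completes the Baire argument.
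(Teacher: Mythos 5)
Your overall framework---barrier reformulation, Baire argument via a countable family of open conditions, localization to a cross-section disjoint from the Aubry set---matches the paper's skeleton. But the substitute you propose for the paper's key new ingredient does not work, and the gap is not cosmetic.

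The paper's central technical input is Proposition~\ref{prop:hd}: the map $c\mapsto b^{\pm}_{N,c}$ is $1/2$-H\"older from $\Gamma_*(N)$ into $C^0(\Tm^n,\Rm)$. This forces the family $\{b^{\pm}_{N,c}\}_{c\in\Gamma_*(N)}$ to be a compact subset of $C^0$ of \emph{finite Hausdorff dimension}. The perturbation step then runs through the abstract Lemma~\ref{lem:disc}, which is a genuinely dimension-theoretic statement: one cannot make the minima of a single prescribed $f+\varphi$ totally disconnected by a bump argument (think of $f$ locally constant), but one \emph{can} find a generic $\varphi$ that works simultaneously for every $f$ in a finite-dimensional compact family, by intersecting with finite-dimensional affine slices and counting dimension. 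Your proposal replaces this by ``upper semi-continuity of $c\mapsto\{b_c=0\}$ in the Hausdorff metric''. That is a much weaker property: it lets you deduce (as the paper does in Lemma~\ref{lem:G-delta}) that the desired condition is $G_\delta$, but it does not produce the perturbation. Upper semi-continuity and compactness of $\Gamma_1$ do not let you pass from fixing one $c_0$ to fixing a whole interval of $c$'s: after a single perturbation the $c$'s that remain bad can still fill an interval, and no finite cover can terminate the process without the dimension bound.

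Your density step also glosses over the problem that Proposition~\ref{prop:pert-barrier} addresses. ``Raising the action along every trajectory crossing $U$'' does not translate into a controlled additive perturbation of $b_c$: the barrier functions depend on minimizing orbits, which change under perturbation, and the Aubry sets and static classes could in principle move. The paper's construction is careful to place the support in a perturbation block visited exactly once (in backward time) by the calibrated orbits through the region, using the normal-hyperbolicity structure and the two-fold cover, precisely so that the effect on the barrier is the prescribed $\varphi$ (up to an additive constant) \emph{uniformly in $c$}. Without this, ``finitely many bump perturbations suffice'' is an assertion, not an argument. Finally, even if both of these points were granted, a nonnegative bump does not in general produce totally disconnected minima: if $b_c$ is flat on a region, adding a bump carves out a piece but may leave a set with nonempty interior as its minimum locus, and no single sign-definite perturbation breaks all degeneracies for all $c$. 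The residual genericity of Lemma~\ref{lem:disc} is unavoidable here, and it requires the H\"older estimate you do not have.
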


This is a delicate perturbation problem, and a version of it for \emph{a priori unstable} systems appeared in \cite{CY2} and was discussed in \cite{Mag}. In this 
section we give a self-contained proof with many new ingredients.

\subsection{Outline of the proof}

In this section we prove Theorem~\ref{thm:totally-disc} assuming some statements to be proven in later subsections. 
Let $L$ denote the Lagrangian associated to $N$. 
\begin{itemize}
	\item We define $\mR_1 \subset \mR(r, \epsilon, \delta)$ to be the set of $R$ such that $\theta^f(\mN_N(c)) \ne \T$ whenever $\rho^f(c)$ is rational. The set $\mR_1$ is a residue subset of $\mR$. {We also abuse notations and denote by $\mR_1$ the set of Hamiltonians of the form $N=H_0+\epsilon Z+\epsilon R, R\in \mR_1$.}
	\item We define 
	\[
		\Gamma_*(N) = \left\{ c \in \Gamma_1: \quad \theta^f(\mN_N(c)) = \T \right\},
	\]
	according to the previous item, for $N \in \mR_1$ and  $c \in \Gamma_*(N)$, we necessarily have $\rho^f(c)$ irrational. In particular, $\mA_N(c) = \mN_N(c)$ contains a unique static class. 
{In view of the upper semi-continuity of the Ma\~né set, $\Gamma_*(N)$ is a compact subset of $\Gamma_1$.}
	\item  If $N \in \mR_1$ and $c \in \Gamma_*(N)$, then the Aubry set $\tmA_{N \circ \Xi}(\xi^*c) = \Xi^{-1} \tmA_N(c)$ contains exactly two static classes denoted $\tmS_1, \tmS_2$ (with projections $\mS_1, \mS_2$). Then the Ma\~ne set is the disjoint union
	\begin{equation}
		\label{eq:disjoint-union}
		\tmN_{N\circ \Xi}(\xi^*c) = \tmS_1 \cup \tmS_2 \cup \tmH_{12} \cup \tmH_{21}, 
	\end{equation}
	where $\tmH_{12}$ (and $\tmH_{21}$) is the set of heteroclinic orbits from $\tmS_1$ to $\tmS_2$ (and vice versa). Projections are denoted $\mH_{12}, \mH_{21}$.  Note that $\tilde{\mN}_{N\circ \Xi}(\xi^*c)- \Xi^{-1}\tilde{\mN}_N(c) = \tmH_{12} \cup \tmH_{21}$.  We will also use the notations $\tmS_i(N,c)$ and $\tmH_{ij}(N,c)$ when discussing the dependence on $N, c$. 
	\item For $N \in \mR_1$ and $c \in \Gamma_*(N)$, the static classes $\tmS_1, \tmS_2$ determine two elementary forward and two backward weak KAM solutions
	\[
		h(\zeta_1, \cdot), \, h(\zeta_2, \cdot), \, \quad
		h(\cdot, \zeta_1), \, h(\cdot, \zeta_2), \, \quad \zeta_i \in \mS_i,\, i = 1,2,
	\]
	where the barrier functions are evaluated for $N\circ \Xi$ and $\xi^*c$. The associated 
	pseudographs are denoted $\mE_i(N,c)$ and $\cmE_i(N,c)$, $i =1,2$ respectively, 
	they do not depend of the choices of points  $\zeta_1\in \mS_1, \zeta_2\in \mS_2$.
	Define 
	\[
	b^-_{N,c}(\theta) = h(\zeta_1, \theta) + h(\theta, \zeta_2) - h(\zeta_1, \zeta_2)
	\]
	and $b^+_{N,c}$ similarly defined with $\zeta_1$, $\zeta_2$ switched. 
	The functions $b^{\pm}_{N,c}$ do not depend on the choice of points $\zeta_1\in \mS_1, \zeta_2\in \mS_2$,
	they are non-negative, and vanish, respectively, on $\mH_{12} \cup \mS_1 \cup \mS_2 $ and 
	$\mH_{21} \cup \mS_1 \cup \mS_2 .
	$
\end{itemize}

Given $\underline c\in \Gamma_1$, we consider the compact subset $\underline \mK\subset\Tm^n$ formed by points $\theta$
such that $d(\theta^s(\underline c),\theta^s)\geq 1/10$.
There exists $\sigma>0$ such  that the Ma\~né set $\mN(N,c)$ is disjoint from
$\underline \mK$ for each $c\in \Gamma_1\cap \B_{\sigma}(\underline c)$ and  $N\in \mR(r, \epsilon, \delta_0)$.
The compact set   $\mK=\xi^{-1}(\underline \mK)$ ($\xi$ is the double covering) is then disjoint from
$\mA_{N\circ \Xi}(\xi^*c)$.
Moreover, for these $N$ and $c$, the set $\pi^{-1}(\mK)$ intersects each orbit of 
$\tilde \mN_{N\circ \Xi} (\xi^*c)-\tilde \mA_{N\circ \Xi}(\xi^*c)$.

Since the compact interval $\Gamma_1$ is the union of finitely compact segments, each contained in a ball of the form $B_{\sigma}(\underline c)$, it suffices to prove Theorem~\ref{thm:totally-disc} for each segment. Therefore, we can assume without loss of generality that $\Gamma_1$ is actually contained in one of these balls.
Then, there exists a compact set $\mK$ such that 
\begin{itemize}
	\item  For each $c\in \Gamma_1$ and $N\in \mR(r, \epsilon, \delta_0)$,
	 $\mK$ is disjoint from $\mA_{N\circ \Xi}(\xi^*c)$ and $\pi^{-1}(\mK)$  intersects each orbit of 
$\tilde \mN_{N\circ \Xi} (\xi^*c)-\tilde \mA_{N\circ \Xi}(\xi^*c)$.
\end{itemize}

We make this additional assumption for the sequel of the section.

\begin{lem}
For each $(N,c)\in \mR_1\times \Gamma_1$, the set 
$\tilde{\mN}_{N\circ \Xi}(\xi^*c)- \Xi^{-1}\tilde{\mN}_N(c) $
is totally disconnected if and only if the set 
 $${\mN}_{N \circ \Xi}(\xi^* c) \cap  \mK
 =
 (\mH_{12}(N,c)\cup \mH_{21}(N,c))\cap \mK
 $$ 
 is totally disconnected.
\end{lem}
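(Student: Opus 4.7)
The plan is to reduce the equivalence via Mather's projected graph theorem, and then to analyze connected components of the projected heteroclinic set using the Menger--Urysohn countable sum theorem. By Theorem~\ref{intro-local}, the projection $\pi$ restricts to a bi-Lipschitz homeomorphism from $\tilde\mN_{N\circ\Xi}(\xi^*c)$ onto $\mN_{N\circ\Xi}(\xi^*c)$, and in particular sends $\tmH := \tmH_{12}\cup\tmH_{21}$ bi-Lipschitz-homeomorphically onto $\mH := \mH_{12}\cup\mH_{21}$. Since total disconnectedness is a topological invariant, the lemma reduces to the equivalence: $\mH$ is totally disconnected iff $\mH\cap\mK$ is. The forward implication is immediate, since a subspace of a totally disconnected set is totally disconnected.

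For the nontrivial direction, suppose $\mH\cap\mK$ is totally disconnected. Conjugating the time-one Hamiltonian map $\phi$ through $\pi$ gives a bi-Lipschitz self-homeomorphism $\hat\phi$ of $\mN$ preserving $\mH$, and the hypothesis on $\mK$ stated just before the lemma reads: every $\hat\phi$-orbit in $\mH$ meets $\mK$. Fix a connected component $C\subseteq\mH$ and set
$$
C_n := C\cap\hat\phi^{-n}(\mK) = \hat\phi^{-n}\bigl(\hat\phi^n(C)\cap\mK\bigr), \qquad n\in\Zm.
$$
Each $\hat\phi^n(C)\cap\mK$ is a subset of the totally disconnected set $\mH\cap\mK$, hence totally disconnected; transporting by the homeomorphism $\hat\phi^{-n}$ makes $C_n$ totally disconnected. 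Moreover $C_n$ is closed in $C$ because $\hat\phi^{-n}(\mK)$ is closed in $\Tm^n$, and $C=\bigcup_{n\in\Zm}C_n$ by the orbit-meeting hypothesis.

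The remaining step is dimension-theoretic. Since $\mH\cap\mK$ is a locally closed, hence Polish, subspace of $\Tm^n$, on it total disconnectedness coincides with zero-dimensionality; this property transfers through the homeomorphism $\hat\phi^{-n}$ to each $C_n$. The Menger--Urysohn countable sum theorem, applied to the separable metric space $C$ written as a countable union of closed zero-dimensional subsets $C_n$, then yields $\dim C\leq 0$. Combined with the connectedness of $C$, this forces $|C|\leq 1$, so every connected component of $\mH$ is a singleton and $\mH$ is totally disconnected. I expect the main delicate point to be exactly this dimension-theoretic passage from the ``section'' $\mH\cap\mK$ back to $\mH$ via the countable sum theorem, for which one must check carefully that the relevant subspaces are separable metric and that the covering $C=\bigcup_n C_n$ is by closed zero-dimensional pieces; a secondary subtlety is to translate the ``orbit meets $\pi^{-1}(\mK)$'' hypothesis unambiguously into ``every $\hat\phi$-orbit in $\mH$ meets $\mK$''.
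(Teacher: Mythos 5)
Your core strategy --- covering a component by the $\Zm$-indexed family of closed, zero-dimensional slices coming from $\mK$ and applying the Menger--Urysohn countable sum theorem --- is exactly the right one, and is in fact what the paper does. However, the initial reduction to the projected set $\mH\subset\Tm^n$ rests on a claim that is false. You cite Theorem~\ref{intro-local} to assert that $\pi$ restricts to a bi-Lipschitz homeomorphism from $\tilde\mN_{N\circ\Xi}(\xi^*c)$ onto its projection. That theorem only says that the coordinate $\theta^f$ is a bi-Lipschitz homeomorphism in restriction to each individual pseudograph $\tilde\mI(u,c)$ of the base Hamiltonian $N$; it does not apply to the full Ma\~n\'e set of the lift. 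For $c\in\Gamma_*(N)$ the set $\tilde\mN_{N\circ\Xi}(\xi^*c)$ is the union of the \emph{two} pseudographs carrying $\tmH_{12}$ and $\tmH_{21}$. Each of these is a Lipschitz graph over its own projection, but their projections to $\Tm^n$ can overlap, so $\pi$ is in general two-to-one on heteroclinic points, the conjugated map $\hat\phi$ is not defined, and the whole ``downstairs'' framework collapses. The same misidentification infects your easy direction: $\mN_{N\circ\Xi}(\xi^*c)\cap\mK$ is not a subspace of the phase-space set $\tmH_{12}\cup\tmH_{21}$, so ``subspace of a totally disconnected set'' cannot be invoked directly.

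The repair is to run your argument upstairs, before projecting, which is precisely what the paper does. Since $\mK$ is disjoint from $\mA_{N\circ\Xi}(\xi^*c)$ and every orbit in $\tilde\mN_{N\circ\Xi}(\xi^*c)-\tilde\mA_{N\circ\Xi}(\xi^*c)$ meets $\pi^{-1}(\mK)$, one has
$$
\tilde\mN_{N\circ\Xi}(\xi^*c) - \tilde\mA_{N\circ\Xi}(\xi^*c) = \bigcup_{k\in\Zm}\phi^k\bigl(\tilde\mN_{N\circ\Xi}(\xi^*c)\cap\pi^{-1}(\mK)\bigr),
$$
and $\tilde\mN_{N\circ\Xi}(\xi^*c)\cap\pi^{-1}(\mK) = \bigl(\tmH_{12}\cap\pi^{-1}(\mK)\bigr)\sqcup\bigl(\tmH_{21}\cap\pi^{-1}(\mK)\bigr)$, a disjoint union of two compacta each carried homeomorphically by $\pi$ onto a compact subset of $\mN_{N\circ\Xi}(\xi^*c)\cap\mK$. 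Hence this compact set has topological dimension zero if and only if $\mN_{N\circ\Xi}(\xi^*c)\cap\mK$ does. Each term $\phi^k(\cdots)$ in the union is then compact (so closed) and zero-dimensional, and the countable sum theorem gives that the union is zero-dimensional, hence totally disconnected; one need not first pass to connected components. For the converse, write $\mN_{N\circ\Xi}(\xi^*c)\cap\mK$ as the union of the two compact $\pi$-images of $\tmH_{12}\cap\pi^{-1}(\mK)$ and $\tmH_{21}\cap\pi^{-1}(\mK)$, each zero-dimensional as a subspace of the (now assumed totally disconnected, hence zero-dimensional) set $\tilde\mN_{N\circ\Xi}(\xi^*c)-\tilde\mA_{N\circ\Xi}(\xi^*c)$, and apply the finite sum theorem for dimension.
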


\begin{proof}\label{lem-pro}
The set  ${\mN}_{N \circ \Xi}(\xi^* c) \cap  \mK$ is a compact metric space, so it is totally disconnected if and only if it has  topological dimension zero, see \cite{HW}.
Assuming that this property holds, 
The set
 $\tilde \mN_{N\circ \Xi}(\xi^*c)\cap \pi^{-1}(K)$ is the disjoint union of two  homeomorphic 
 copies of  ${\mN}_{N \circ \Xi}(\xi^* c) \cap  \mK$, hence it is compact and of zero topological  dimension.
 As a consequence, each of the sets
 $\phi^k({\mN}_{N \circ \Xi}(\xi^* c) \cap  \mK), k\in \Zm$ is compact and of zero topological dimension, where $\phi^k$ is the time $k$ Hamiltonian flow of $N$.
 The countable union 
 $$\tilde \mN_{N\circ \Xi} (\xi^*c)-\tilde \mA_{N\circ \Xi}(\xi^*c)=
 \bigcup_{k\in \Zm}\phi^k({\mN}_{N \circ \Xi}(\xi^* c) \cap  \mK)
 $$
 is then also of zero dimension.
 As a consequence the projection $\mN_{N\circ \Xi} (\xi^*c)- \mA_{N\circ \Xi}(\xi^*c)$
 is of zero topological dimension, hence it is totally disconnected.
\end{proof}

We want to prove that a dense $G_{\delta}$ of Hamiltonians $N\in \mR_1$ have the property 
that ${\mN}_{N \circ \Xi}(\xi^* c) \cap  \mK$ is totally disconnected for each $c\in \Gamma_*(N)$.
The $G_{\delta}$ part follows from the next Lemma.

\begin{lem}\label{lem:G-delta}
Let {$J\subset \Gamma_1$ and $K \subset \T^{n}$ be  compact subsets}, 
then the set of $R \in \mR$ such that all $c \in (J \cap \Gamma_*(N))$ satisfies 
\begin{equation}
	\label{eq:Q}
	Q(N, c, K): = {\mN}_{N \circ \Xi}(\xi^* c) \cap  K 
\end{equation}
is totally disconnected is a $G_\delta$ set. 
\end{lem}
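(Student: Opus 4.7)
The plan is to express the set appearing in the statement as a countable intersection of open subsets of $\mR$. The key tool is the following elementary characterization: a compact subset $X$ of a metric space $M$ is totally disconnected if and only if, for every $\eta>0$, there exist finitely many open subsets $V_1,\dots,V_k$ of $M$ with pairwise disjoint closures, each of diameter less than $\eta$, such that $X\subset V_1\cup\cdots\cup V_k$ (the $V_i\cap X$ are then automatically clopen in $X$). For each $n\in\Nm$, let $\mO_n\subset\mR$ be the set of $R$ such that for every $c\in J\cap\Gamma_*(N)$ the compact set $Q(N,c,K)$ admits such a cover with $\eta=1/n$. The set appearing in the lemma is then exactly $\bigcap_{n\ge 1}\mO_n$, so it suffices to prove that each $\mO_n$ is open.

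The openness will follow from two semi-continuity properties. The first is upper semi-continuity of the multifunction $(R,c)\mapsto Q(N,c,K)$ in the Hausdorff topology, a standard property of Ma\~n\'e sets that is preserved by the lift through $\Xi$, by projection to $\Tm^n$, and by intersection with the closed set $K$. The second is the closedness of the set $\{(R,c):c\in\Gamma_*(N)\}$ in $\mR\times\Gamma_1$: if $(R_k,c_k)\to(R_0,c_0)$ with $c_k\in\Gamma_*(N_k)$, upper semi-continuity gives that every Hausdorff accumulation point of $\mN_{N_k}(c_k)$ is contained in $\mN_{N_0}(c_0)$; since $\theta^f(\mN_{N_k}(c_k))=\T$ for each $k$, continuity of $\theta^f$ combined with compactness yields $\theta^f(\mN_{N_0}(c_0))=\T$, so $c_0\in\Gamma_*(N_0)$. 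Consequently, the multifunction $R\mapsto J\cap\Gamma_*(N)$ is itself upper semi-continuous in the Hausdorff sense.

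Now fix $R_0\in\mO_n$ and set $J'_0:=J\cap\Gamma_*(N_0)$, which is compact. For each $c_0\in J'_0$, select a cover $\{V_i(c_0)\}_{i=1}^{k(c_0)}$ of $Q(N_0,c_0,K)$ with the required properties at scale $1/n$; by upper semi-continuity of $Q$, this same cover contains $Q(N,c,K)$ for all $(R,c)$ in some open neighborhood of $(R_0,c_0)$ in $\mR\times\Gamma_1$. Compactness of $J'_0$ produces finitely many such neighborhoods and, correspondingly, a neighborhood $\mW_0$ of $R_0$ together with an open neighborhood $U$ of $J'_0$ in $\Gamma_1$ such that $Q(N,c,K)$ admits a valid cover for every $R\in\mW_0$ and every $c\in U$. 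Using the second semi-continuity property, we may shrink $\mW_0$ so that $R\in\mW_0$ forces $J\cap\Gamma_*(N)\subset U$; then $\mW_0\subset\mO_n$.

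The main subtlety lies in the fact that $\Gamma_*(N)$ itself varies with $R$, so one cannot simply fix the cohomology class $c$ while perturbing: $c\in\Gamma_*(N_0)$ need not imply $c\in\Gamma_*(N)$ for nearby $N$, and conversely nearby $N$ could a priori introduce new elements of $\Gamma_*(N)$ far from $\Gamma_*(N_0)$. This is exactly what the upper semi-continuity of $R\mapsto\Gamma_*(N)$ circumvents, and its proof depends crucially on surjectivity of $\theta^f$ being preserved in the Hausdorff limit.
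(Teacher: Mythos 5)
Your proof is correct and takes essentially the same route as the paper: both express the desired set as a countable intersection of scales $\mO_n$ using a finite-cover characterization of total disconnectedness, and establish openness of each $\mO_n$ by combining upper semi-continuity of the Ma\~n\'e set with upper semi-continuity of $R\mapsto\Gamma_*(N)$ and a compactness extraction over $J\cap\Gamma_*(N_0)$. The only cosmetic difference is that you phrase the finite covers via open sets with pairwise disjoint closures, while the paper uses ``$1/k$-disconnected'' (finite disjoint compact covers); these are equivalent for compact metric spaces.
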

\begin{proof}
	Consider $N$ satisfying the conditions of the lemma, then for each $c \in (J \cap \Gamma_*(N))$,  $Q(N, c, K)$ is  compact and totally disconnected, and hence has zero topological dimension.

	Let's call a compact subset $1/k$ disconnected if it admits a finite disjoint covering by compact subsets of diameter at most $1/k$. 
	If $N$ satisfies the conditions of the Lemma, then 
	${\mN}_{N \circ \Xi}(\xi^* c) \cap  K $ is $1/k$ disconnected for each $k\in \NN$ and each $c \in (J \cap \Gamma_*(N))$. Since the Ma\~ne set is upper semi-continuous in the Hamiltonian (in the $C^2$ topology), so is ${\mN}_{N \circ \Xi}(\xi^* c) \cap  K$ and  we have, for each fixed $k$ :

	There exists an open set $\Gamma'$ containing $\Gamma_*(N)\cap J$ and  a neighborhood $\mU$ of $N$ in $C^2$ such that  the set 
	${\mN}_{N' \circ \Xi}(\xi^* c') \cap  K$
	 is $1/k$ disconnected
for all $c' \in \Gamma'$ and $N' \in \mU$.

	We now use the observation that $\Gamma_*(N)$ is upper semi-continuous in $N$, hence so is $J\cap \Gamma^*(N)$ since $J$ {is compact}.
	We deduce  the existence of
	 a smaller neighborhood $\mU' \subset \mU$ of $N$, such that $J\cap \Gamma_*(N') \subset \Gamma'$ for each $N' \in \mU'$. We have proved: the property that ${\mN}_{N \circ \Xi}(\xi^* c) \cap  K $ is $1/k$ disconnected for each $c \in \Gamma_*(N)\cap J$ is $C^2$ open (and hence $C^r$ open).
	 The Lemma follows by taking the intersection on $k$.
\end{proof}

We now adress the density part.
Let us consider the product space $C^r(\T^n \times \R^n \times \T) \times \R^n$ with the standard norms on both spaces. Define the following subset 
\[
	\mQ = \{(N, c): \, N \in \mR_1, \, c \in \Gamma_*(N)\} \subset \mR \times \Gamma_1 \subset C^r(\T^n \times \R^n \times \T) \times \R^n. 
\]

The following proposition allows us to perturb the function $b^\pm_{N,c}$ locally simultaneously 
for an open set of $c$. The proof is given in section~\ref{subsec:pert}. 
\begin{prop}\label{prop:pert-barrier}
{Let $(N_0, c_0) \in \mQ$ and $K \subset \T^n$ be a compact set disjoint from
$\mA_{N_0\circ \Xi}(\xi^*c_0)$.
Then  there exists $\sigma>0$ such that for all $N \in \mR_1 \cap B_\sigma(N_0)$,  
$\theta_0 \in K \cap \mH_{12}(N_0, c_0)$, and $\varphi \in C_c^r(B_{\sigma}(\theta_0))$ with 
$\|\varphi\|_{C^r}< \sigma$, there exists a Hamiltonian $N_\varphi$ such that:} 
\begin{enumerate}
	\item For all $c \in B_\sigma(c_0)$, the Aubry set $\tmA_{N_\varphi\circ \Xi}(\xi^*c)$ coincides with $\tmA_{N\circ \Xi}(\xi^*c)$, with the same static classes. In particular, $B_\sigma(c_0) \cap \Gamma_*(N) = B_\sigma(c_0) \cap \Gamma_*(N_\varphi)$. 
	\item For all $c \in B_\sigma(c_0)\cap \Gamma_*(N)$, there exists a constant $e\in \Rm$ such that 
	\begin{equation}
		\label{eq:b-plus-pert}
		b_{N_\varphi, c}^+ (\theta) = b_{N,c}^+(\theta) + \varphi(\theta)+e, \quad \theta \in B_{\sigma}(\theta_0).
	\end{equation}	
\end{enumerate}
The same holds for $\theta_0 \in K \cap \mH_{21}(N_0, c_0)$, with $b^+$ replaced 
with $b^-$ in \eqref{eq:b-plus-pert}. 
{Moreover, for each $N\in \mR_1\cap B_{\sigma}(N_0)$, $\|N_{\varphi}-N\|_{C^r}\lto 0$ 
when $\|\varphi\|_{C^r} \lto 0$.}
\end{prop}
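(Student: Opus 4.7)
The plan is to construct $N_\varphi$ as an \emph{oriented} perturbation of $N$ localized in a flow-box around a heteroclinic orbit through $\theta_0$, chosen so that it avoids $\mA_{N\circ \Xi}(\xi^*c)$ (keeping the Aubry set and its static classes intact) while contributing $\varphi(\theta)$ to exactly one of the three Peierls barriers defining $b^+_{N,c}(\theta)$, with the other contributions collapsing into a $\theta$-independent constant $e$. Write $\tau$ for the nontrivial deck transformation of $\xi$, so that $\tau\tmS_1=\tmS_2$ and $\tau\tmH_{12}=\tmH_{21}$.

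\textbf{Localization and construction of $N_\varphi$.} Using joint upper semi-continuity of the projected Aubry set and the static decomposition in $(N,c)\in \mQ$, shrink $\sigma$ so that for all $(N,c)\in B_\sigma(N_0,c_0)\cap \mQ$: the ball $B_{4\sigma}(\theta_0)$ is disjoint from $\mA_{N\circ \Xi}(\xi^*c)$ and from $\tau B_{4\sigma}(\theta_0)$, and $\xi$ is a diffeomorphism on $B_{4\sigma}(\theta_0)$. Pick $(\theta_0,p_0,t_*)$ on an orbit of $N\circ \Xi$ through $\theta_0$ and a Poincar\'e section $\Sigma\subset B_{2\sigma}(\theta_0)\times \Rm^n\times \{t_*\}$ transverse to the flow. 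In flow-box coordinates $(\theta_\Sigma,s)$ adapted to this orbit piece, with $s$ the along-flow parameter and $s=0$ on $\Sigma$, define the oriented perturbation $\tilde\varphi(\theta,p,t):=\varphi(\theta_\Sigma(\theta,p,t))\,\chi(s(\theta,p,t))$, where $\chi$ is a smooth nonnegative bump supported in $\{s>0\}$ with $\int\chi(s)\,ds=1$. Push $\tilde\varphi$ down through $\xi$ to a $C^r$ function $\psi$ on the base phase space, supported in $\xi(B_{2\sigma}(\theta_0))\times \Rm^n\times \Tm$, and set $N_\varphi:=N+\psi$. Then $\|N_\varphi-N\|_{C^r}=\|\psi\|_{C^r}\to 0$ as $\|\varphi\|_{C^r}\to 0$.

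\textbf{Persistence of the Aubry data.} The lift $\psi\circ \Xi$ is supported in the disjoint union of flow-box neighborhoods near $\theta_0$ and $\tau\theta_0$, both disjoint from $\mA_{N\circ \Xi}(\xi^*c)$ for $c\in B_\sigma(c_0)$. Every Mather-minimizing measure at $\xi^*c$ integrates $\psi\circ \Xi$ to zero, so it remains minimizing for $N_\varphi\circ \Xi$ at the same cohomology, preserving $\alpha$. The normal hyperbolicity of the underlying cylinder $\mC$ (Theorem~\ref{intro-nhic-mult}) persists under the small localized perturbation, preventing new static orbits from entering the support of $\psi$. Hence $\tmA_{N_\varphi\circ \Xi}(\xi^*c)=\tmA_{N\circ \Xi}(\xi^*c)$ with the same static decomposition $\tmS_1\sqcup \tmS_2$, proving assertion~(1).

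\textbf{Barrier identity and main obstacle.} The key property of the oriented bump $\chi$ is that, along an orbit piece inside the flow-box, the action change $\int\tilde\varphi\,dt$ equals $\varphi(\theta_\Sigma)$ if the orbit crosses fully through in the $s>0$ direction, and equals $0$ if the orbit ends on $\Sigma$ at $s=0$. Applied to the three Peierls barriers in $b^+_{N,c}=h(\zeta_2,\theta)+h(\theta,\zeta_1)-h(\zeta_2,\zeta_1)$: the minimizer of $h(\zeta_2,\theta)$ arrives at $\theta\in B_\sigma(\theta_0)$ from $s<0$ and ends at $s=0$, picking up no $\theta_0$-contribution; the minimizer of $h(\theta,\zeta_1)$ departs from $\theta$ at $s=0$ into $s>0$ along $\mH_{12}$, picking up exactly $\varphi(\theta)$; the minimizer of $h(\zeta_2,\zeta_1)$ follows $\mH_{21}$ and never enters the $\theta_0$-flow-box at all. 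All three minimizers pass through the $\tau\theta_0$-flow-box on their $\mH_{21}$ legs and pick up $c$-dependent, $\theta$-independent quantities whose alternating sum is the constant $e(c)$. This yields $b^+_{N_\varphi,c}(\theta)=b^+_{N,c}(\theta)+\varphi(\theta)+e(c)$ on $B_\sigma(\theta_0)$. The main obstacle is making this bookkeeping \emph{exact} and uniform in $c\in B_\sigma(c_0)\cap \Gamma_*(N)$: one must show that minimizers have precisely the predicted number of crossings of each flow-box (no spurious traversals arising from the non-compactness of the Peierls construction), and that the $\tau\theta_0$-crossing contributions to the three Peierls barriers really do assemble into a $\theta$-independent constant. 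This requires a quantitative analysis of the Poincar\'e first-return map along the uniquely ergodic circles $\tmS_1,\tmS_2$, whose timing varies continuously but nontrivially with $c$, and the persistence of the heteroclinic foliation under the perturbation $N_\varphi$.
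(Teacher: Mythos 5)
Your proposal captures the right conceptual idea — a perturbation localized along a heteroclinic transit and "oriented" so that it contributes to only one of the barrier legs — but it diverges from the paper in two technically essential ways, and you yourself flag the resulting gap without closing it.

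The first issue is the object being perturbed. You add a phase-space function $\psi(\theta,p,t)$ to the Hamiltonian, where $\psi = \varphi(\theta_\Sigma)\chi(s)$ depends on $p$ through the flow-box coordinates of the \emph{unperturbed} flow. Because $\psi$ is $p$-dependent, the perturbed Lagrangian is not $L_N-\psi$, and the action of a test curve changes nonlinearly; moreover the flow-box coordinates cease to be adapted once the flow is changed. The "oriented bump picks up $\int\psi\,dt=\varphi(\theta_\Sigma)$ on a full crossing" heuristic is a first-order statement about $N$-orbits, but \eqref{eq:b-plus-pert} must hold \emph{exactly} for minimizers of the perturbed system. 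The paper sidesteps all of this by perturbing the time-1 \emph{generating function} $G_N(x,x')\mapsto G_N(x,x')+\rho(x)\varphi(x')$ (see \eqref{eq:G-varphi}) and then recovering a Hamiltonian with exactly this generating function via Lemma~\ref{lem:susp}. Since the discretized Peierls barrier is precisely a sum of generating-function terms \eqref{eq:N-varphi-action}, the perturbation to the action functional is algebraically explicit — no linearization, no flow-box timing analysis.

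The second and deeper issue is your treatment of repeat crossings and of the deck copy at $\tau\theta_0$. You write that the contributions from crossings of the $\tau\theta_0$-flow-box "assemble into a $\theta$-independent constant," and that one must rule out "spurious traversals" — but you give no argument for either, and this is exactly what the paper supplies in Lemma~\ref{lem:local-back}. That lemma, via a compactness argument on calibrated orbits and the chain of estimates \eqref{eq:theta0-act}, shows that one can shrink $U_1\subset U_2$ and $V$ so that the backward orbit of any $(\theta,p)\in\overline{\mE_1(N,c)}$ with $\theta\in V$ \emph{never re-enters} $\mB_{N\circ\Xi}(U_2,V)$ or its deck copy $\mB_{N\circ\Xi}(U_2+\tfrac12 e_1,V+\tfrac12 e_1)$, and symmetrically for forward orbits from $\overline{\cmE_2(N,c)}$. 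This no-return property replaces your hoped-for cancellation by a stronger and provable statement: only a single term in the telescoping sum sees $\rho(\cdot)\varphi(\cdot)$ at all, so the barrier changes by exactly one evaluation of $\varphi$ plus a constant. Without an analogue of Lemma~\ref{lem:local-back}, your bookkeeping is not established — and this is precisely the obstacle you name in your final paragraph. To repair the proof you would need both a device equivalent to the generating-function realization of the perturbation, and a no-return lemma controlling transits of minimizers through both $\theta_0$- and $\tau\theta_0$-blocks uniformly over $c\in B_\sigma(c_0)\cap\Gamma_*(N)$.
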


We will use Proposition~\ref{prop:pert-barrier} to perturb all barrier functions near a given $c_0$ simultaneously. Because we are perturbing an uncountable family of functions, we need an additional information on how the functions $b^\pm_{N,c}$ depends on $c$. The proof is given in Section \ref{subse:haus}. 

\begin{prop}\label{prop:hd}
For each $N\in \mR_1$, 
the maps $c\lmto b^+_{N,c}, b^-_{N,c}$ are $1/2$-Hölder from $\Gamma_*(N)$ to $C^0(\T^n,\Rm)$.
\end{prop}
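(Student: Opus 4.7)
The plan is to reduce the claim to a uniform $1/2$-Hölder estimate on the Peierls barrier $c\mapsto h_c(\theta_1,\theta_2)$, then to establish this estimate via an action comparison argument on quasi-minimizers, optimizing the time length as a function of $|c-c'|$.

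First, I would exploit the decomposition of $b^\pm_{N,c}(\theta)$ as an algebraic combination of three Peierls barrier values with reference points $\zeta_1(c)\in\mS_1(c)$ and $\zeta_2(c)\in\mS_2(c)$ lying on two invariant circles inside the NHIC $\mC_0$ (Theorem~\ref{intro-local}). These invariant circles depend Lipschitz continuously on $c$ (they are graphs over $\T$ via $\theta^f$ with uniformly bounded slope), so $\zeta_i(c)$ may be chosen Lipschitz in $c$, and the desired estimate reduces to proving
\[
|h_{c'}(\theta_1,\theta_2)-h_c(\theta_1,\theta_2)|\le C\,|c-c'|^{1/2}
\]
uniformly in $c,c'\in\Gamma_*(N)$ and $\theta_1,\theta_2\in\T^n$.

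Next I would perform the action comparison. For $c\in\Gamma_*(N)$ and $T\in\Nm$, a near-minimizing curve $\gamma_T^c$ of the $c$-action from $\theta_1$ to $\theta_2$ of time $T$ shadows the Aubry invariant circle of rotation $\rho(c)=(\rho^f(c),0)$, giving the lift estimate $\gamma_T^c(T)-\gamma_T^c(0)=T\rho(c)+O(1)$. Inserting $\gamma_T^c$ as a competitor for the $c'$-action yields
\[
A^T_{c'}(\gamma_T^c)\le A^T_c(\gamma_T^c)+T\bigl[\alpha(c')-\alpha(c)-(c'-c)\cdot\rho(c)\bigr]+O(|c-c'|).
\]
By convexity of $\alpha$ and the identity $d\alpha(c)=\rho(c)$ along $\Gamma$, the bracketed quantity is non-negative, and by Lipschitz continuity of $\alpha$ and boundedness of $\rho$ on $\Gamma_1$ it is also bounded by $K|c-c'|$ for some $K=K(N)$.

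Finally, combined with a uniform quantitative rate
\[
A_c^T(\theta_1,\theta_2)\le h_c(\theta_1,\theta_2)+C_0/T
\]
holding along $T\to\infty$ in $\Nm$ (with $C_0=C_0(N)$), one obtains
\[
h_{c'}(\theta_1,\theta_2)\le h_c(\theta_1,\theta_2)+C_0/T+TK|c-c'|+O(|c-c'|),
\]
and minimizing over $T$ at $T\sim|c-c'|^{-1/2}$ produces the desired bound $C|c-c'|^{1/2}$; symmetry in $c,c'$ gives the reverse inequality. The principal obstacle is establishing the uniform $O(1/T)$ rate of approximation: it rests on the normal hyperbolicity of $\mC_0$ (Theorem~\ref{intro-nhic-mult}), which forces orbits to contract to the Aubry invariant circle exponentially. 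After an initial transient of length $O(\log T)$, an orbit from $\theta_1$ can be guided along the circle and coupled to $\theta_2$ with $O(1/T)$ excess action, using that the irrational rotation on the circle ensures controlled recurrence. Uniformity in $c$ over $\Gamma_*(N)$ follows because the cylinder and its hyperbolic constants do not depend on $c$ in Theorem~\ref{intro-nhic-mult}.
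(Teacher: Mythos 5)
Your proposal takes a genuinely different, variational route (action comparison plus optimization of the connecting time $T$) from the paper's geometric argument, but it rests on two inputs that you do not establish, and one of them is asserted incorrectly. First, you claim the reference points $\zeta_i(c)\in\mS_i(c)$ can be chosen \emph{Lipschitz} in $c$ ``because the invariant circles are graphs over $\T$ via $\theta^f$ with uniformly bounded slope.'' That observation controls the Lipschitz constant of each circle \emph{for fixed} $c$; it says nothing about how the circle moves as $c$ varies. In fact the dependence is in general only $1/2$-H\"older, and this is precisely the content of the paper's Lemma~\ref{lem:cyl-holder}, which is proved by a symplectic-area argument: the area swept between $\eta_c$ and $\eta_{c'}$ on the cylinder equals $\pi_{p^f}(c)-\pi_{p^f}(c')$, and the uniform Lipschitz bound on the curves converts a small area into a $\sqrt{|c-c'|}$ bound on the $C^0$ distance. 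This lemma cannot be bypassed in your scheme (the final optimization still produces a term $|\zeta_1(c)-\zeta_1(c')|$), so you are implicitly using a nontrivial fact that would need its own proof.

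Second, the uniform estimate $A^T_c(\theta_1,\theta_2)\le h_c(\theta_1,\theta_2)+C_0/T$ is the heart of your argument, and your sketch of it is not adequate. You appeal to ``controlled recurrence'' of the irrational rotation to reach the angular position where the calibrated orbit $\gamma$ departs from the Aubry circle, but the time needed for an orbit of a circle rotation to come within $\delta$ of a \emph{prescribed target} is not uniformly $O(1/\delta)$ over all irrational rotation numbers: for Liouville-type $\rho^f(c)$ the gaps in the orbit after $N$ iterates can be much larger than $1/N$, and the resulting constant $C_0$ would not be uniform in $c$. A correct construction must instead exploit the freedom to change the base point within the static class (since $h_c(\zeta_1,\cdot)$ is independent of the choice of $\zeta_1\in\mS_1$), replacing $\zeta_1$ by the point of $\mS_1(c)$ nearest $\gamma(-T)$ so that a one-step connector suffices; this uses the exponential contraction to the circle supplied by normal hyperbolicity (Lemma~\ref{lem:back-min}) and yields an exponential, not merely $O(1/T)$, rate. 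The paper avoids the rate-of-convergence question altogether: after the area lemma, it transfers the $1/2$-H\"older modulus from the circles to the weak KAM solutions $h(\zeta_i,\cdot)$ on a neighborhood of $\mS_i$ by using the $C^1$ regularity of the strong unstable foliation (a bunching condition provided by Proposition~\ref{realNHI}), and then extends to all of $\T^n$ by the finite-time decomposition of Lemma~\ref{lem:local-weak-kam}. Both routes require nontrivial dynamical input; but as written your proof has two unproved lemmas, one of which is misstated, and the ``controlled recurrence'' justification for the other would fail as stated.
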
 

This regularity implies that the set $\{b^{\pm}_{N,c}, c\in \Gamma^*(N)\}$ is compact and has 
Hausdorff dimension at most 2 in $C^0(\T^n,\Rm)$. The following Lemma will allow to take 
advantage of this fact:

\begin{lem}\label{lem:disc}
Let $\mF\subset C^0([-1,1]^n,\Rm)$ be a compact set of finite Hausdorff dimension.
The following property is satisfied on a residue set of functions $\varphi\in C^r(\Rm^n,\Rm)$ (with the uniform $C^r$ norm):

For each $f\in \mF$, the set of minima of the function $f+\varphi$ on $[-1,1]^n$ is totally disconnected.

As a consequence, for each open neighborhood $\Omega$ of $[-1,1]^n$ in $\Rm^n$, there exists arbitrarily $C^r$-small 
compactly supported functions $\varphi : \Omega\lto \Rm$ satisfying this property.
\end{lem}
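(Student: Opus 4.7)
The plan is to apply the Baire category theorem. For each $\delta>0$, let
\[
\mU_\delta := \bigl\{\varphi\in C^r(\Rm^n,\Rm) : \text{every connected component of } \argmin_{[-1,1]^n}(f+\varphi) \text{ has diameter } <\delta \text{ for all } f\in \mF\bigr\}.
\]
Then $\bigcap_{k\geq 1}\mU_{1/k}$ is the desired residue set, since membership forces every component of each argmin to be a singleton. The ``consequence'' follows by multiplying such a generic $\varphi$ by a cutoff equal to $1$ on a neighborhood of $[-1,1]^n$ and supported in $\Omega$, which leaves the argmin on $[-1,1]^n$ unchanged. Openness of $\mU_\delta$ comes from upper semi-continuity of the argmin in $(f,\varphi)$ together with compactness of $\mF$: $\argmin(f+\varphi)$ can be covered by finitely many open sets with pairwise disjoint closures of diameter $<\delta$, and this covering persists under small $C^0\times C^r$ perturbations.

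For density, fix $\varphi_0\in C^r$ and $\eta>0$. Choose an integer $d$ with $d > 2^n+\dim_H(\mF)$, partition $[-1,1]^n$ into $M$ closed cubes $Q_1,\dots,Q_M$ of side $s<\delta/(d\sqrt{n})$ with disjoint interiors, and for each $j$ select finitely many bumps $\chi_j^{(1)},\dots,\chi_j^{(K)}\in C_c^\infty(Q_j^\circ)$ whose plateaus $\{\chi_j^{(\ell)}=1\}$ cover a large compact subset $K_j\Subset Q_j^\circ$. Across distinct cubes the supports are automatically disjoint. Parametrize the perturbation family $\varphi_t := \varphi_0 + \sum_{j,\ell} t_j^{(\ell)}\chi_j^{(\ell)}$ by $t\in \Rm^N$ with $N=MK$. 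By disjointness of supports across cubes, $m_j(f,t):=\min_{Q_j}(f+\varphi_t)$ depends on $t$ only through the block $(t_j^{(\ell)})_\ell$. Let $J(f,t):=\{j : m_j(f,t)=\min_{[-1,1]^n}(f+\varphi_t)\}$; then $\argmin(f+\varphi_t)\subset \bigcup_{j\in J(f,t)}Q_j$, and each connected component lies in a cluster of adjacent cubes in $J(f,t)$. If $|J(f,t)|\leq d$, every such cluster has at most $d$ cubes and diameter $\leq ds\sqrt{n}<\delta$, so $\varphi_t\in \mU_\delta$; it therefore suffices to produce $t$ with $\|t\|_\infty<\eta'$ (small enough that $\|\varphi_t-\varphi_0\|_{C^r}<\eta$) avoiding the bad event ``$|J(f,t)|\geq d+1$ for some $f\in\mF$.''

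For each $(d+1)$-tuple $\mathbf{i}=(i_0,\dots,i_d)$ of distinct indices the associated bad set is the $1$-Lipschitz projection $\pi_t(\tilde B_{\mathbf{i}})\subset \Rm^N$ of
\[
\tilde B_{\mathbf{i}}:=\bigl\{(f,t)\in \mF\times \Rm^N : m_{i_0}(f,t)=\cdots=m_{i_d}(f,t)\bigr\}.
\]
At most $2^n$ of the cubes $Q_{i_k}$ can share a common vertex of the partition, so for any $f$, at least $d+1-2^n$ of the equalities $m_{i_0}=m_{i_k}$ involve a cube $Q_{i_k}$ whose restricted minimum genuinely moves with its own parameters (see obstacle below); these equalities are transverse in $t$ and cut effective codimension $d+1-2^n$ in the $t$-directions. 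Hence $\dim_H \tilde B_{\mathbf{i}}\leq \dim_H(\mF)+N-(d+1-2^n)<N$ by the choice of $d$, and Lipschitz projection preserves this bound, so $\pi_t(\tilde B_{\mathbf{i}})$ has zero $N$-dimensional Lebesgue measure. A finite union over tuples remains null, and any $t$ outside it gives the desired $\varphi_t\in \mU_\delta$.

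The main obstacle is the transversality claim. The function $t_j^{(\ell)}\mapsto m_j(f,t_j^{(\ell)})$ is concave and piecewise linear: it decreases with slope $1$ as $t_j^{(\ell)}$ decreases when the restricted minimum of $f+\varphi_t$ on $Q_j$ is attained on the plateau $\{\chi_j^{(\ell)}=1\}$, and is locally constant when the minimum is attained where $\chi_j^{(\ell)}$ vanishes. Using several bumps per cube with plateaus covering $K_j$ guarantees that whenever the minimum of $f+\varphi_0$ on $Q_j$ lies in $K_j$, at least one partial derivative $\partial m_j/\partial t_j^{(\ell^*)}$ equals $1$; moreover, for $i_k,i_{k'}$ in different cubes these active partials live in disjoint parameter blocks, yielding linear independence of the corresponding equation gradients. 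The residual degeneracy, which occurs only when the minimum lies exactly on $\partial Q_j$ and is thus shared by up to $2^n$ adjacent cubes, is absorbed by the extra $2^n$ factor built into the choice of $d$, ensuring that sufficiently many cubes in any $(d+1)$-tuple remain non-degenerate to realize the codimension count and complete the density argument.
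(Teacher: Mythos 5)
Your approach is genuinely different from the paper's. The paper reduces the problem to dimension one: it forms the compact, finite-Hausdorff-dimensional family $\tilde\mF = \{c-f\}$, observes that ``$f+\varphi$ is constant on an interval $J$'' is equivalent to $\varphi\in\tilde\mF_J$, and avoids this bad set by intersecting $\tilde\mF_J$ with a finite-dimensional affine space $\varphi_0+H$ whose dimension exceeds $\dim_H\tilde\mF$ (and whose elements are supported in $J$); for $n>1$ it then pushes down to 1D through the min-projections $f\lmto f_i(x_i)=\min_{\pi_i(x)=x_i}f(x)$ and uses $\pi_i(\argmin(f+\varphi))\subset\argmin(f+\varphi)_i$. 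You instead stay in $\Rm^n$, tile $[-1,1]^n$ by small cubes, introduce a finite-dimensional bump family, and try to rule out $d+1$ cubes having equal restricted minima by a transversality/dimension count. The philosophy (a finite-dimensional affine perturbation family beating $\dim_H\mF$) is the same, but the paper's bad event is affine in $\varphi$, which makes the dimension argument a one-liner; yours introduces a genuinely nonlinear, non-smooth bad event.

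The gaps lie in the density step. First, the count ``at least $d+1-2^n$ of the equalities involve a cube whose restricted minimum genuinely moves'' is not justified: degeneracy occurs whenever the restricted minimum falls outside the union of supports of the bumps in that cube, not only on $\partial Q_j$, and even granting that the supports cover $Q_j^\circ$, several distinct boundary points of $[-1,1]^n$ can simultaneously realize the global minimum, so the number of degenerate cubes in a $(d+1)$-tuple has no a priori bound by $2^n$. Second, even in the non-degenerate case the functions $m_j$ are merely concave, piecewise-linear, with partial slopes $\chi_j^{(\ell)}(p^*)$ that tend to $0$ as the minimizer $p^*$ approaches $\partial Q_j$; without a uniform lower bound on these slopes the level set $\{m_{i_0}=m_{i_k}\}$ is not a Lipschitz graph and ``effective codimension'' is not a precise notion. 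Third, and most seriously, the step ``$\dim_H\tilde B_{\mathbf{i}}\le\dim_H(\mF)+N-(d+1-2^n)$'' is a Fubini-type inequality that fails in general: a Lipschitz map with low-dimensional fibers over a low-dimensional base can still have image of larger dimension (e.g.\ the graph of Brownian motion has dimension $3/2$ over a base of dimension $1$ with point fibers). To make your count rigorous you would need the bad set to be (a finite union of) Lipschitz graphs over $\mF\times\Rm^{N-d'}$, which again requires the uniform transversality you do not have. The paper sidesteps all of this because the bad set $\tilde\mF_J$ lives directly in the function space with known Hausdorff dimension, so no coarea argument is needed.
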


\begin{proof}
We first consider the case $n=1$
The set $\tilde \mF=\{c-f, f\in \mF, c\in \Rm\}$ is compact and of finite Hausdorff dimension (one more than the dimension of $\mF$).
For each compact subinterval $J\subset [-1,1]$, the set $\tilde \mF_J\subset C(J,\Rm)$ is also compact and finite dimensional,
since the restriction map is Lipschitz.
If $J$ is non trivial, the complement
$$\Phi(J):=C^r(\Rm,\Rm) -(\tilde \mF_J\cap C^r(\Rm,\Rm)) 
$$ 
is open and dense in $C^r(\Rm,\Rm)$.
To prove density, we consider a subspace $H\subset C^r(\Rm,\Rm)$ of finite dimension larger that the Hausdorff dimension 
of $\tilde \mF$.
We moreover assume that all functions of $H$ are compactly supported inside the interior of $J$.
 Given $\varphi \in C^r(\Rm,\Rm)$, we consider the affine space $\varphi+H$.
 Considering the $C^0([-1,1],\Rm)$ distance, the Hausdorff dimension of $\tilde \mF_J\cap (\varphi+H)$ is not greater than the Hausdorff dimension of $\tilde \mF$,
hence it is less than the dimension of $H$. This implies that the complement  $(\varphi+H)-\tilde \mF$ is dense in $\varphi+H$
endowed with the $C^0$ distance. Since the $C^0$ and $C^r$ norms are equivalent on the finite dimensional space $\varphi+H$,
we conclude 
  that $\varphi$ belongs to the closure of $\Phi(J)$ in $C^r(\Rm, \Rm)$.

Let $J_k$ be a sequence of compact subintervals of $[-1,1]$ such that each open interval contains one of the $J_k$.
Then if $\varphi \in \cap_k \Phi(J_k)$ (this intersection is a dense $G_{\delta}$),
 each of the functions $f+\varphi, f\in \mF$ has the property that it is not constant on any open interval, 
hence its set of minima in $[-1,1]$ is totally disconnected. 

Let us now turn to the general case.
We denote by $\pi_i:[-1,1]^n\lto[-1,1]$ the projections on the factors.
We associate to each function $f\in C^0([-1,1]^n,\Rm)$ the functions 
$$f_i:[-1,1]\ni x_i \lmto f_i(x_i)=\min_{\pi_i(x)=x_i}f(x) .
$$
For each $k$ and $i$, the following property holds on an open and dense subset of functions $\varphi\in C^r(\Rm^n,\Rm)$:
None of the functions  $(f+\varphi)_i, f\in \mF$ is constant on $J_k$.

 To prove density, we consider a function $\varphi \in C^r(\Rm^n,\Rm)$.
The map $f\lmto f_i$ is Lipschitz hence the set
$\mF_i(\varphi)=\{(f+\varphi)_i,f\in \mF\}\subset C^0([-1,1], \Rm)$ is compact and has finite Hausdorff dimension.
We can aplpy the result for $n=1$ to this family and obtain that for generic  $\varphi_1\in C^r(\Rm,\Rm)$, none of the functions 
$$(f+\varphi)_i+\varphi_i=(f+\varphi+\varphi_i)_i
$$
for $f\in \mF$ is  constant on the interval $J_k$.

By taking the intersection on $n$ and $k$,
we obtain that, for generic $\varphi\in C^r(\Rm^n,\Rm)$,
each of the functions $(f+\varphi)_i$ has a totally disconnected set of minima in $[-1,1]$.

Since $\pi_i(\argmin (f+\varphi))\subset \argmin (f+\varphi)_i$, this implies that $\argmin (f+\varphi)$ 
is totally disconnected.
\end{proof}

\begin{proof}[Proof of Theorem~\ref{thm:totally-disc}]
	
	Let $\mR_2\subset \mR_1$ be the set of Hamiltonians $N$ which have the property that 
	${\mN}_{N \circ \Xi}(\xi^* c) \cap  \mK $ is totally disconnected for each $c\in \Gamma_*(N)$.
	
	By Lemma \ref{lem-pro}, it is enough to prove that $\mR_2$ is a dense $G_{\delta}$. By Lemma 
	\ref{lem:G-delta}, $\mR_2$ is a $G_{\delta}$, we have to prove density.
	
Let us fix $N_0\in \mR_1$.
 For each $\theta_0\in{\mN}_{N \circ \Xi}(\xi^* c) \cap  \mK $, we consider  $\sigma>0$
 small enough so that 
Proposition \ref{prop:pert-barrier} applies.
We define  the cube 
$$D_{\sigma}(\theta_0) = \{\theta : \max_i|\theta^i - \theta_0^i| \le \sigma/{(2\sqrt{n})}\} \subset B_{\sigma}(\theta_0).
$$
In view of Proposition \ref{prop:hd}, we can apply Lemma \ref{lem:disc} to the family of functions 
$b^{\pm}_{N,c}, c\in \Gamma_1\cap \Gamma_*(N)$ on the cube $D_{\sigma}(\theta_0)$ for each $N\in \mR_1$.
We find arbitrarily small functions $\varphi$ compactly supported in $B_\sigma(\theta_0)$ and such that each of  the functions 
$b^{\pm}_{N,c}+\varphi, c\in \Gamma^*(N)\cap \Gamma_1$
have a totally disconnected set of minima in $D_{\sigma}(\theta_0)$. 
If $N\in \mR_1\cap B_{\sigma}(N_0)$, we can apply 
 Proposition \ref{prop:pert-barrier} to get Hamiltonians $N_{\varphi}$ approximating $N$.
We obtain:
\begin{itemize}
\item
The set of Hamiltonians $N$ such that $ {\mN}_{N \circ \Xi}(\xi^* c) \cap D_{\sigma}(\theta_0)$ is totally disconnected for each $c\in \Gamma_*(N)$
is dense in $\mR_1\cap B_{\sigma}(N_0)$. By Lemma \ref{lem:G-delta}, it is a $G_{\delta}$.
\end{itemize}

	Since $\mK$ is compact, there is a finite cover $\mK \subset \bigcup_{i=1}^k D_{\sigma_i}(\theta_i)$, 
	such that the above can be applied on each $D_{\sigma_i}(\theta_i)$
 some  constant $\sigma_i>0$. For $\sigma_0 = \min \sigma_i>0$, we obtain:
	\begin{itemize}
		\item For a residue set of $N \in B_{\sigma_0}(N_0)$, the set    $ {\mN}_{N \circ \Xi}(\xi^* c) \cap D_{\sigma_i}(\theta_i)$  is totally disconnected for all $i=1, \ldots, k$ and $c \in \Gamma_*(N)$.
	\end{itemize}
	Taking the intersection over $i$,  we obtain :  
\begin{itemize}
		\item For a residue set of $N \in B_{\sigma_0}(N_0)$, the set     $ {\mN}_{N \circ \Xi}(\xi^* c) \cap \mK$ is totally disconnected for all $c \in \Gamma_*(N)$.
	\end{itemize}
	In particular, $N_0$ is in the closure of $\mR_2$.
\end{proof}

\subsection{Perturbing the Peierls' barrier functions}
\label{subsec:pert}

Let $L$ be the Lagrangian for $N = H_0 + \epsilon Z + \epsilon R$. We define the generating function $\R^n \times \R^n \to \R$  by 
\[
	G_N(x, x') = \min\int_0^1 L(\gamma, \dot{\gamma}, t) dt, \quad \gamma(0) = x, \quad \gamma(1) = x'.
\]
Note that $G_N(x + k, x' +k) = G_L(x, x')$ for all $x, x' \in \R^n$ and $k \in \Z^n$. 
If $\epsilon$ is sufficiently small, there is a one-to-one correspondence between the time-1 map of the Euler-Lagrange flow of $L$, and the generating function $G$.  We will also consider the generating function of the Hamiltonian $N\circ \Xi$ (pull back of the double covering), which satisfies 
\begin{equation}
	\label{eq:lift-G}
	G_{N \circ \Xi}(x, x')  = G_N(\xi x, \xi x'),  
\end{equation}
where we have lifted $\xi$ to a map $\R^n \to \R^n$. It is important to keep in mind that $G_{N\circ \Xi}$ has an additional symmetry $G_{N\circ \Xi}(x, x') = G_{N\circ \Xi}(x + \frac12 e_1, x' + \frac12 e_1)$ where $e_1 = (1, 0, \cdots, 0)$, corresponding to the deck transformation of $\xi$. We also denote 
\[
	A_{N, c}^M(\theta_1, \theta_2) = 
	\min \int_0^M L(\gamma, \dot{\gamma}, t) - c \cdot \dot{\gamma} + \alpha_N(c)\, dt, \quad \gamma(0) = \theta_1, \, \gamma(M) =\theta_2 \in \T^n,
\]
and note that $A_{N,c}^M$ and therefore $h_{N, c}$ is completely determined by $G_N$. We will perturb the barrier functions by perturbing $G_N$. 

Let $U, V \subset \R^n$  be open sets which projects injectively to $\T^n$, namely $U \cap (U + k) = \emptyset$ for all $k \in \Z^d$. We define a \emph{perturbation block} to be the set 
\[
	\mB_N(U, V) : = \phi_N(U \times \R^n) \cap (V \times \R^n)  \subset \R^n \times \R^n, 
\]
in other words, the set of $(\theta, p)$ such that $\theta \in V$ and $\pi_\theta \Phi_N^{-1}(\theta, p) \in U$, where $\phi_N$ is the time-$1$-map of the Hamiltonian $N$. We can also consider $\mB_N$ as a subset of $\T^n \times \R^n$ since $V$ projects injectively to $\T^n$. 

Given $U_1 \subset U_2 \subset \R^n$ and $V \subset \R^n$ as before, for $\varphi \in C_c^r(V)$, we define a perturbation of the generating function (depending on $\varphi$, $U_1, U_2, V$) as follows: 
\begin{equation}
	\label{eq:G-varphi}
	G_\varphi(x, x') = G_N(x, x') + \rho(x)\varphi(x'),
\end{equation}
and extends it by periodicity $G_\varphi(x + k, x' + k) = G_\varphi(x, x')$ for all $k \in \Z^n$. Here $\rho: \R^n \to \R^+ \cup \{0\}$ is a standard mollifier function such that 
\[
	\rho|_{U_1} = 1, \quad \rho|_{(U_2)^c} = 0. 
\]
\begin{lem}\label{lem:susp}
When $\|\varphi\|_{C^r}$ is small enough, there exists a Tonelli Hamiltonian $N_\varphi$ whose generating function is equal to $G_\varphi$. Moreover, $\|N_\varphi - N\|_{C^r} \to 0$ as $\|\varphi\|_{C^r} \to 0$. 
\end{lem}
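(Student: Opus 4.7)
The plan is to construct $N_\varphi$ as a small, time-periodic perturbation of $N$ whose time-$1$ Hamiltonian flow realizes the exact symplectic diffeomorphism implicitly defined by $G_\varphi$. The argument splits into producing this target map, interpolating it to $\phi_N$ by a symplectic isotopy, and lifting that isotopy to a Hamiltonian modification of $N$.

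First, I verify that $G_\varphi$ still generates a symplectic diffeomorphism. Since $L$ is Tonelli, $G_N$ satisfies the twist condition $\det \partial^2_{xx'} G_N \neq 0$ (for $\epsilon$ small enough so that short-time minimizers remain interior), a condition stable under $C^2$-small perturbations. Hence for $\|\varphi\|_{C^r}$ small the implicit function theorem yields an exact symplectic diffeomorphism $\phi_\varphi$ of $\T^n \times \R^n$ from
$$ p = -\partial_x G_\varphi(x,x'), \qquad p' = \partial_{x'} G_\varphi(x,x'), $$
which is $C^{r-1}$-close to $\phi_N$ and coincides with $\phi_N$ outside $\mB_N(U_2, V)$, because $\rho$ and $\varphi$ are supported in $U_2$ and $V$ respectively, which both project injectively to $\T^n$.

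Next I interpolate and lift. The family $G_s = G_N + s\rho(x)\varphi(x')$, $s \in [0,1]$, generates a symplectic isotopy $\phi_s$ through maps equal to $\phi_N$ outside $\mB_N(U_2, V)$; the based isotopy $\Psi_s = \phi_s \circ \phi_N^{-1}$ is generated by a time-dependent Hamiltonian $K_s(\theta, p)$ compactly supported near $\phi_N(\mB_N(U_2, V))$, with $\|K_s\|_{C^{r-1}} = O(\|\varphi\|_{C^r})$. I then construct $N_\varphi$ by solving for a time-periodic perturbation $W(\theta, p, t)$ such that $N + W$ has time-$1$ map equal to $\phi_\varphi$. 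The linearization in $W$ of the time-$1$ flow is surjective onto compactly supported Hamiltonian isotopies of $\phi_N$, so by the implicit function theorem such a $W$ exists with $\|W\|_{C^r} = O(\|\varphi\|_{C^r})$. Setting $N_\varphi = N + W$ gives the desired Hamiltonian; the Tonelli property (convexity and superlinearity in $p$) is preserved because $W$ is $C^2$-small.

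The main obstacle is this last step: producing a small, \emph{time-periodic} perturbation $W$ realizing the prescribed modification of the time-$1$ map while remaining Tonelli. Equivalently, one must show that every sufficiently small, compactly supported symplectic isotopy of $\phi_N$ is obtained as the time-$1$ perturbation of $N$ by a periodic $C^r$ Hamiltonian, a classical suspension-type statement. A direct alternative is to concatenate the flows of $N$ and $K_s$ on $[0, 1/2]$ and $[1/2, 1]$ (after reparametrizing in $s$), smoothing at $t = 1/2$ and matching values at $t = 0$ and $t = 1$ for periodicity, then reinterpret the concatenated Hamiltonian as a small $C^r$ modification of $N$ using $\|K_s\|_{C^{r-1}} = O(\|\varphi\|_{C^r})$; the bookkeeping to simultaneously preserve periodicity, $C^r$-smallness, and convexity is the delicate point.
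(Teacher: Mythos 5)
Your overall strategy is conceptually aligned with the paper: you want to realize $G_\varphi$ as the generating function of the time-$1$ map of a perturbed time-periodic Hamiltonian, and you correctly observe that the twist condition is $C^2$-stable so $G_\varphi$ still defines an exact symplectomorphism. However, the two routes you propose for the final step each contain a gap that is not addressed, and that gap is precisely the substantive content of the lemma.

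The implicit-function-theorem route asserts that the linearization of $W \mapsto \phi^1_{N+W}$ is ``surjective onto compactly supported Hamiltonian isotopies,'' and hence a $C^r$-small $W$ exists. This is not a step in a proof; it is a restatement of the lemma. Moreover, as stated the IFT does not directly apply here: if $W$ is $C^r$ in $(\theta,p,t)$ then $\phi^1_{N+W}$ is only $C^{r-1}$ in $(\theta,p)$, so the map $W \mapsto \phi^1_{N+W}$ loses one derivative, and a Banach-space IFT at a fixed $C^r$ regularity would need a right inverse that gains a derivative. No such structure is exhibited. Your concatenation alternative has a different problem: if you compress the $N$-flow to run on a proper subinterval so as to leave room for the $K_s$-flow, the reparametrized Hamiltonian becomes a constant multiple $>1$ of $N$ on that subinterval, which is \emph{not} a small perturbation, so the ``reinterpret as a small $C^r$ modification of $N$'' step fails. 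You flag ``the bookkeeping to simultaneously preserve periodicity, $C^r$-smallness, and convexity is the delicate point,'' but that bookkeeping is the whole lemma.

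The paper sidesteps both obstructions by interpolating at the level of the generating functions of the \emph{intermediate-time} maps: it sets $G'_t = G_t + s(t)\,g$ where $G_t$ generates the time-$t$ map $\phi_t$ of $N$ and $s(t)$ is a bump in time that is $0$ near $t=0$ and $1$ near $t=1$. Because $g$ is $C^r$-small, each $G'_t$ generates a symplectomorphism $\psi_t$ that is uniformly $C^{r-1}$-close to $\phi_t$ for \emph{all} $t\in[0,1]$, so there is no reparametrization of $N$ and no compression. Smoothness in $t$ and the bump $s(t)$ make $\tfrac{d}{dt}\psi_t$ a periodic vector field, and the derivative count (IFT from generating function to map loses one derivative, integrating the vector field to a Hamiltonian gains it back) shows the resulting Hamiltonian $N'$ is $C^r$-close to $N$, with no net loss of regularity. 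That $\{\psi_t\}$ is a genuine Hamiltonian isotopy is then a citation to McDuff--Salamon, and Tonelli follows from $C^2$-closeness. Working through intermediate-time generating functions rather than attempting an IFT or a time-concatenation is the essential idea that your proposal is missing.
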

\begin{proof}
Let $g(x,x') = \rho(x)\varphi(x')$, extended by periodicity, then $\|g\|_{C^r} \le C \|\varphi\|_{C^r}$ for some $C>0$ depending on $\rho$. Let $G_t(x,x')$ be the generating function of the time-$t$ map of the Hamiltonian $N$, we consider the following functions 
\[
	G_t'(x, x') = G_t(x, x') + s(t)g(x, x'),
\]
where $s:[0,1] \to [0,1]$ is a $C^\infty$ mollifier function with $s(t) = 0$ on $[0, \frac13]$ and $s(t) = 1$ on $[\frac23, 1]$. When $\|g\|_{C^2}$ is small enough, the functions $G_t'$ uniquely determines exact symplectic maps $\psi_t: \T^n \times \R^n \to \T^n \times \R^n$. 

It's easy to see that there exists an exact symplectic isotopy between  $\psi_t$ and $\phi_t$, then there is an exact symplectic isotopy between $(\phi_t)^{-1}\psi_t$ and $id$. In view of Proposition 9.19 and Corollary 9.20 of \cite{MDS}, we get $\{\psi_t\}_{0 \le t \le 1}$ is a Hamiltonian isotopy. Moreover, since $\frac{d}{dt}\psi_t$ is periodic in $t$, it must be generated by a time periodic Hamiltonian $N'(\theta, p, t)$. The maps are $C^{r-1}$ in $(\theta, p)$ and $C^\infty$ in $t$, the vector fields are $C^{r-1}$ and the Hamiltonians are $C^r$. 

Moreover, it's easy to see that $\psi_t (\phi_t)^{-1}$ converges in $C^{r-1}$  to identity uniformly over $t$ as $\|g\|_{C^r} \to 0$. Since $\psi_t (\phi_t)^{-1}$ has the Hamiltonian function $-N_t \circ \phi_t + N_t' \circ \phi_t$ (see \cite{MDS} Proposition 10.2) we conclude  that $\|N_t - N_t'\|_{C^r} \to 0$ as $\|g\|_{C^r} \to 0$. 
\end{proof}

The following lemma prepares us for the perturbation. For an orbit contained in the psudograph $\overline{\mE_1(N, c)}$, there exists a perturbation block that the orbit of $(\theta, p)$ never returns to in backward time. Moreover, the orbit also does not return to the ``copy'' of the perturbation block under the deck transformation of $\Xi$. This is important because we would like to perturb the generating function $G_{N\circ \Xi}$ by perturbing only $N$. 
\begin{lem}\label{lem:local-back}
Consider $(N_0, c_0) \in \mQ$, and $(\theta_0, p_0) \in \tmH_{12}(N_0, c_0)$. Then there exists $\sigma >0$, and  open sets $V \ni \theta_0$ and $U_1 \subset U_2 \subset \R^n$, such that 
\begin{itemize}
	\item The covering map $\xi: \T^n \to \T^n$ is injective on $\overline{U_2}, \overline{V}$. 	
	\item $\overline{U_2} \cup (\overline{U_2} + \frac12 e_1)$, $\overline{V} \cup (\overline{V} + \frac12 e_1)$ are disjoint from $\mA_{N_0 \circ \Xi}(\xi^*c)$. 
\end{itemize}
The following hold for each $(N,c) \in \mQ \cap B_\sigma(N_0, c_0)$.

\begin{enumerate}
	\item For $\theta \in V$,  let $(\theta, p)$ be contained in the closure of the psudograph $\overline{\mE_1(N,c)}$. 
	\begin{enumerate}
		\item  $(\theta, p) \in \mB_{N \circ \Xi}(U_1, V)$. 
		\item The backward orbit $\phi^{-k}_{N\circ \Xi}(\theta, p)$ is asymptotic to $\tmS_1(N,c)$. 
		\item For $k \ge 1$,  $\phi_{N\circ\Xi}^{-k}(\theta, p)$ is not contained in $\mB_{N\circ \Xi}(U_2, V)$ or $\mB_{N\circ \Xi}(U_2 + \frac12 e_1, V + \frac12 e_1)$. 
	
	\end{enumerate}
	\item  For $\theta \in V$, let $(\theta, p)$ be contained in the closure of the psudograph $\overline{\cmE_2(N,c)}$. 
	\begin{enumerate}
		\item The forward orbit $\phi^k_N(\theta,p)$ is asymptotic to $\tmS_2$. 		
		\item For $k \ge 1$, $\phi_N^k(\Xi(\theta, p))$ is not contained in  $\mB_{N\circ \Xi}(U_2, V)$ or $\mB_{N\circ \Xi}(U_2 + \frac12 e_1, V + \frac12 e_1)$
	\end{enumerate}
\end{enumerate}
Moreover, an analogous statement holds for $\mH_{21}$, where the roles of $\mE_1$, $\cmE_2$ are replaced by $\mE_2$ and $\cmE_1$. 
\end{lem}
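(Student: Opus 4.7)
The plan is to build $V$, $U_1$, $U_2$ as small nested neighbourhoods of $\theta_0$ and of $\pi_\theta\phi^{-1}_{N_0\circ\Xi}(\theta_0,p_0)$ respectively, then to read off the asserted properties from the asymptotic behaviour of orbits in the closed pseudographs together with a compactness and deck-symmetry argument. Throughout I use the deck involution
$$\tau:\T^n\to\T^n,\qquad \tau(\theta)=\theta+\tfrac12 e_1,$$
which generates $\xi$, preserves $\mA_{N\circ\Xi}(\xi^*c)$, and swaps the static classes $\tmS_1(N,c)$ and $\tmS_2(N,c)$. Since $(\theta_0,p_0)\in \tmH_{12}(N_0,c_0)$ is a genuine heteroclinic, $\theta_0\notin \mA_{N_0\circ\Xi}(\xi^*c_0)$, and by $\tau$-invariance the points $\tau\theta_0$, $\pi_\theta\phi^{-1}_{N_0\circ\Xi}(\theta_0,p_0)$, and its $\tau$-image are off the Aubry set as well. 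I pick small open balls $V\ni\theta_0$ and $U_2\ni \pi_\theta\phi^{-1}_{N_0\circ\Xi}(\theta_0,p_0)$ on which $\xi$ is injective and such that $\overline{V}\cup\tau\overline{V}$ and $\overline{U_2}\cup\tau\overline{U_2}$ are disjoint from $\mA_{N_0\circ\Xi}(\xi^*c_0)$, and I take $U_1\Subset U_2$. A final shrinking of $V$ is imposed below.

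Property~(b) of item~1 is classical Fathi calibration: orbits in $\overline{\mE_1(N,c)}$ are projections of time-$1$ orbits calibrated by the elementary forward weak KAM solution anchored at $\tmS_1(N,c)$, hence are $\alpha$-asymptotic to $\tmS_1(N,c)$. Property~(a) follows from continuity of $\phi^{-1}_{N\circ\Xi}$ and upper semi-continuity of $\overline{\mE_1(N,c)}$ in $(N,c)$: after reducing $V$ and $\sigma$, any $(\theta,p)\in \overline{\mE_1(N,c)}$ with $\theta\in V$ satisfies $\pi_\theta\phi^{-1}_{N\circ\Xi}(\theta,p)\in U_1$. Property~(c) is the technical core. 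Because the backward orbit converges to $\tmS_1$, which is disjoint from $\overline{V}\cup\tau\overline{V}$, only finitely many backward iterates of $(\theta_0,p_0)$ can a priori enter $V\cup\tau V$. The heteroclinic character of $(\theta_0,p_0)$ already excludes backward visits to $\theta_0$. A deck-symmetry argument excludes visits to $\tau\theta_0$: if some $\phi^{-k}_{N_0\circ\Xi}(\theta_0,p_0)$ projected to $\tau\theta_0$, then applying $\tau$ and iterating would produce an orbit alternating between the two fibres of $\xi$ that stays bounded away from $\tmS_1$, contradicting convergence to $\tmS_1$. The backward orbit of $(\theta_0,p_0)$ together with $\tmS_1$ is therefore a compact set avoiding $\{\theta_0,\tau\theta_0\}$, so the non-return property for $(\theta_0,p_0)$ holds once $V$ is sufficiently small; upper semi-continuity of $\overline{\mE_1(N,c)}$ and continuity of $\phi$ extend it uniformly to all nearby orbits in $\overline{\mE_1(N,c)}$ for $(N,c)\in \mQ\cap B_\sigma(N_0,c_0)$.

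Item~2 is the time-reversed version, with $\overline{\cmE_2(N,c)}$ replacing $\overline{\mE_1(N,c)}$: its orbits are $\omega$-asymptotic to $\tmS_2(N,c)$, and the same compactness-plus-deck-symmetry reasoning rules out forward returns to $\mB_{N\circ\Xi}(U_2,V)$ and to its $\tau$-image. The analogue for $\mH_{21}$ is handled by swapping the roles of $\tmS_1$ and $\tmS_2$ (equivalently by composing with $\tau$), which exchanges $\mE_1\leftrightarrow \mE_2$ and $\cmE_1\leftrightarrow \cmE_2$.

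The main obstacle is property~(c): one has to bundle three independent ingredients---absence of periodicity or deck-periodicity of the reference heteroclinic orbit, convergence to $\tmS_1$, and uniformity over the \emph{closed} pseudograph as well as over $(N,c)\in\mQ$---into a single uniform non-return statement. The uniformity in $(N,c)$ uses essentially the irrationality of $\rho^f(c)$ built into the definition of $\mQ$, which forces $\tmA_N(c)=\tmN_N(c)$ to contain a unique static class downstairs and precisely two upstairs, ensuring the upper semi-continuity of $\overline{\mE_1(N,c)}$ and of the $\tmS_i(N,c)$ needed to transfer the non-return property from the single reference orbit $(\theta_0,p_0)$ to a full neighbourhood.
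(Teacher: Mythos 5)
Your overall plan — choose nested neighborhoods around $\theta_0$ and its preimage, exploit convergence to $\tmS_1$ together with the wandering nature of the heteroclinic orbit, then propagate uniformly by continuity — matches the paper in spirit, but three steps you treat as routine actually hide the content of the lemma, and one of your substitute arguments is wrong.

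First, property 1(b) is not ``classical Fathi calibration.'' An orbit in $\overline{\mE_1(N,c)}$ is calibrated by the barrier $h(\zeta_1,\cdot)$, and Fathi's theory only gives that its $\alpha$-limit lies in the Aubry set $\tmS_1\cup\tmS_2$; it could a priori converge to $\tmS_2$ rather than $\tmS_1$. Ruling out convergence to $\tmS_2$ for $\theta$ near $\theta_0$ is exactly what the paper's claim (c2) proves, by contradiction: if the backward orbit accumulated on $\mS_2$, the barrier would split as $h(\zeta_1,\theta)=h(\zeta_1,\zeta_2)+h(\zeta_2,\theta)$; combined with the heteroclinic identity $h(\zeta_1,\theta_0)+h(\theta_0,\zeta_2)=h(\zeta_1,\zeta_2)$ one gets $h(\zeta_2,\theta_0)+h(\theta_0,\zeta_2)=0$, i.e.\ $\theta_0\in\mS_2$, a contradiction. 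This argument also needs Proposition~\ref{prop:cont} for the passage to the limit in $(N,c)$, so it is not optional.

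Second, your deck-symmetry argument for excluding returns to the $\tau$-translated perturbation block is not correct. You claim that if $\phi^{-k}_{N_0\circ\Xi}(\theta_0,p_0)$ projects to $\tau\theta_0$ then ``applying $\tau$ and iterating'' produces an orbit alternating between the fibres of $\xi$; but the orbit of $(\theta_0,p_0)$ and the orbit of $\tau(\theta_0,p_0)$ are two distinct orbits (one in $\tmH_{12}$, the other in $\tmH_{21}$), and hitting a point in the base with $\theta$-coordinate $\tau\theta_0$ at some backward time does not give either orbit any $\tau$-periodicity. The paper's mechanism is simpler and different: push the non-return question down to the base torus using $\Xi\circ\phi_{N\circ\Xi}=\phi_N\circ\Xi$. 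Downstairs, $\Xi(V)$ and $\Xi(\tau V)$ are \emph{the same} set, so non-return of $\phi_N^{-k}(\Xi(\theta_0,p_0))$ to $\Xi(B_{\sigma_1})$ for $1\le k\le M$ — a plain wandering statement downstairs — lifts automatically to non-return of the lifted orbit to both $B_{\sigma_1}$ and $\tau B_{\sigma_1}$ upstairs. Your argument leaves a genuine gap here.

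Third, the uniform bound $M$ on the ``transient'' part of the backward orbit (claim (c3) in the paper) is not a consequence of upper semi-continuity of $\overline{\mE_1(N,c)}$ alone. Upper semi-continuity controls limit points of the sets but gives no uniform rate of approach to $\tmS_1$. The paper's proof of (c3) is again a compactness-plus-barrier argument: if uniformity failed, one would extract a sequence $(\theta_k,p_k)$ in $\mE_1(N_k,c_k)$ whose backward orbits stay $\iota$-far from $\tmS_1$ for longer and longer times, and the limiting calibrated excursion would force a barrier decomposition through an intermediate point $m_0$ leading to the same contradiction as in (c2). Without this argument the ``finite range then infinite range'' dichotomy you sketch does not give a single $\sigma$ working for all $(N,c)\in\mQ\cap B_\sigma(N_0,c_0)$.

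In short: your outline identifies the right geometric picture, but the two barrier-theoretic claims (that the $\alpha$-limit is $\tmS_1$, and that the transient time $M$ is uniform) each require a nontrivial contradiction argument using the splitting of Peierls barriers, and the $\tau$-image non-return should be handled by projecting to the base, not by the alternating-orbit argument you propose.
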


\begin{proof}
	First we claim: for any $\iota>0$, there is $\sigma>0$ such that: if $\|\theta - \theta_0\|< \sigma$, $(N, c) \in B_\sigma(N_0, c_0) \cap \mQ$, then $(\theta, p) \in \tmE_1(N,c)$ implies: 
	\begin{enumerate}[(c1)]
		\item $\|p - p_0\|< \iota$. 
		\item The backward orbit $\phi^{-k}_{N\circ \Xi}(\theta, p)$ is asymptotic to $\tmS_1(N,c)$.
		\item There exists $M >0$ such that $k > M$ implies $\dist(\phi^{-k}_{N\circ \Xi}(\theta, p), \tmS_1(N, c))< \iota$. 
	\end{enumerate}
	We note that $\theta_0 \in \mH_{12}(N_0, c_0)$ implies the weak KAM solution $h(\zeta_1, \cdot)$ is differentiable at $\theta_0$, and therefore $p_0$ is the unique super-differential. Item (c1) then follows from semi-continuity of super-differentials, see Proposition~\ref{prop:cont}. 

	Since $\theta_0 \in \mH_{12}(N_0, c_0)$, we have  for $h = h_{N_0 \circ \Xi, \, \xi^*c}$
	\begin{equation}
		\label{eq:theta0-act}
		h(\zeta_1, \theta_0) + h(\theta_0, \zeta_2) = \min_{\theta}\left( h(\zeta_1, \cdot) + h(\cdot, \zeta_2) \right) = h(\zeta_1, \zeta_2).
	\end{equation}
	Assume by contradiction that for $(N_k, c_k) \to (N_0, c_0)$ in $\mQ$,  and $(\theta_k, p_k) \in \mE_1(N_k, c_k)$ with $\theta_k \to \theta_0$, the backward orbit of $(\theta_k, p_k)$ accumulates to $\mS_2(N_k, c_k)$. This implies 
	\[
		h_{N_k \circ \Xi, \, \xi^* c_k}(\zeta^k_1, \theta_k) = h_{N_k \circ \Xi, \, \xi^* c_k}(\zeta^k_1, \zeta^k_2)  + h_{N_k \circ \Xi, \, \xi^* c_k}(\zeta^k_2, \theta_k), \quad \zeta^k_1 \in \mS_1, \zeta^k_2 \in \mS_2. 
	\]
	Taking limit as $k \to \infty$ (by Proposition~\ref{prop:cont}), we obtain 
	\[
		h_{N_0 \circ \Xi, \, \xi^* c_0}(\zeta_1, \theta_0) = h_{N_0 \circ \Xi, \, \xi^* c_0}(\zeta_1, \zeta_2)  + h_{N_0 \circ \Xi, \, \xi^* c_0}(\zeta_2, \theta_0), \quad \zeta_1 \in \mS_1, \zeta_2 \in \mS_2. 
	\]
	Combine with \eqref{eq:theta0-act} we get (omitting the subscript of $h$)
	\[
		h(\zeta_1, \zeta_2) = h(\zeta_1, \theta_0) + h(\theta_0, \zeta_2) =h(\zeta_1, \zeta_2)  + h(\zeta_2, \theta_0) + h(\theta_0, \zeta_2),
	\]
	or $h(\zeta_2, \theta_0) + h(\theta_0, \zeta_2) =0$  this is a contradiction with $\theta_0 \notin \mS_2$. 

	To prove (c3) we again argue by contradiction. Let $N_k, c_k, \theta_k, p_k$ be as before, we assume that there exists $M_k \to \infty$  such that $\dist(\phi^{-M_k}_{N\circ \Xi}(\theta_k, p_k), \tmS_1(N, c))\ge  \epsilon$. Denote $m_k = \pi \phi^{-M_k}_{N\circ \Xi}(\theta_k, p_k)$, using the fact that backward orbit of $(\theta_k, p_k)$ is calibrated, we have 
	\[
		h_{N_k \circ \Xi, \, \xi^* c_k}(\zeta_1, \theta_k) = h_{N_k \circ \Xi, \, \xi^* c_k}(\zeta_1, m_k) + A^{M_k}_{N_k \circ \Xi, \, \xi^* c_k}(m_k, \theta_k).
	\]
	Up to taking a subsequence, assume $m_k \to m_0$, take limit as $k \to \infty$, we obtain 
	\[
		h(\zeta_1, \theta_0) \ge h(\zeta_1, m_0) + h(m_0, \theta_0) = h(\zeta_1, m_0) + \min_{i =1,2}\left( h(m_0, \zeta_i) + h(\zeta_i, \theta_0) \right),
	\]
	where $h$ are evaluated at $N_0 \circ \Xi, \, \xi^* c_0$. 
	Since $h(\zeta_1, m_0) + h(m_0, \zeta_1) >0$, the above minimum is not reached at $\zeta_1$. Therefor $h(\zeta_1, \theta_0) \ge h(\zeta_1, m_0) + h(m_0, \zeta_2) + h(\zeta_2, \theta_0) \ge h(\zeta_1,\zeta_2) + h(\zeta_2, \theta_0)$, but we showed (in the proof of (c2)) this is also impossible. 

	We now define the sets $U, V$. 
	Since $\phi_{N_0\circ \Xi}^{-k}(\theta_0, p_0)$ is asymptotic to $\mS_1(N_0, c_0)$, project via $\Xi$ implies $\phi_{N_0}^{-k}(\Xi(\theta_0, p_0))$ is asymptotic to $\Xi (\mS_1) = \mA_{N_0}(c_0)$. There exists $\iota_1 >0$ such that 
	\[
		\phi^{-k}_N (\Xi(\theta_0, p_0)) \cap \Xi(B_\epsilon(\theta_0, p_0)) = \emptyset,
	\]
	and $\xi(B_{\iota_1}(\theta_0)) \cap \mA_{N} = \emptyset$ for all $N \in B_{\iota_1}(N_0) \cap \mR_1$. 

	Apply claim (c1)-(c3) to $\iota = \iota_1/2$, and obtain the parameters $\sigma, M$. Since the orbit of $(\theta_0, p_0)$ is wondering, there exists $0 <\sigma_1 < \sigma$ such that $(\theta,p)\in B_{\sigma_1}(\theta_0, p_0)$, $N \in B_{\sigma_1}(N_0)$ implies 
	\[
		\phi_N^{-k}\left( \Xi(B_{\sigma_1}(\theta_0, p_0)) \right) \cap \Xi(B_{\sigma_1}(\theta_0, p_0)) = \emptyset, \quad 
			1 \le k \le M.    
	\]
	apply the relation $\Xi \circ \phi_{N\circ \Xi}= \phi_N \circ \Xi$ we get 
	\begin{equation}
		\label{eq:disjoint}
		\phi_{N\circ \Xi}^{-k}\left( \Xi(B_{\sigma_1}(\theta_0, p_0)) \right) \cap \Xi^{-1} \Xi(B_{\sigma_1}(\theta_0, p_0)) = \emptyset, \quad 1 \le k \le M. 
	\end{equation}

	For a later determined $\sigma_2 < \sigma_1$, choose  $\sigma_3<\sigma_2$ using claim (c1) again to ensure any $(\theta, p) \in \mE_1(N,c)$ with $\| \theta - \theta_0\| < \sigma_3$ implies $\|p - p_0\|<\sigma_2$. Define $V = B_{\sigma_3}(\theta_0)$, 
	\begin{equation}
		\label{eq:U1}
		U_1 = \bigcup_{N \in B_{\sigma_3}(N_0)} \pi \phi^{-1}_{N \circ \Xi}(B_{\sigma_3}(\theta_0) \times B_{\sigma_2}(p_0)),
	\end{equation}
	$U_2 = B_{\sigma_2}(U_1)$. Since $U_1 \to \pi \phi_{N_0\circ \Xi}^{-1}(\theta_0, p_0)$, as $\sigma_2, \sigma_3 \to 0$, we can choose $\sigma_2, \sigma_3$ small enough such that 
	\[
		\mB_{N \circ \Xi}(\overline{U_2}, \overline{V}) \subset B_{\sigma_1}(\theta_0, p_0), \quad 
		\forall N \in B_{\sigma_3}(N_0).    	
	\]
	We now verify that for $\theta \in V$ and $(\theta, p) \in \overline{\mE_1(N,c)}$, $\phi^{-1}_{N\circ \Xi}(\theta, p) \in U_1$ due to \eqref{eq:U1}. Moreover, since 
	\[
	 \mB_{N\circ \Xi}(\overline{U_2}, \overline{V}) \, \cup \,  \mB_{N\circ \Xi}(\overline{U_2} + \frac12 e_1, \overline{V} + \frac12 e_1) \subset \Xi^{-1} \Xi B_{\sigma_1}(\theta_0, p_0),
	\]
	\eqref{eq:disjoint} implies 1(c) for $1 \le k \le M$. On the other hand, (c3) ensures the same for $k > M$ as well. 

	The proof of 2(a)(b) and the moreover part is analogous and we omit it. 
\end{proof}

\begin{proof}[Proof of Proposition~\ref{prop:pert-barrier}]

	Given $\theta_0 \in K \cap \mH_{12}(N_0, c_0)$, let $(\theta_0, p_0)$ be the corresponding point in $\tmH_{12}(N_0, c_0)$. Choose $\sigma >0$,  $U_1, U_2, V$ as in Lemma~\ref{lem:local-back}. For  $\varphi \in C_c^r(\xi V)$, consider perturbation $N_\varphi$ via \eqref{eq:G-varphi} using the neighborhoods $\xi U_1, \xi U_2, \xi V$. Note that for $W = U_i, V$, we have  $\xi^{-1} \xi W = W \cup (W + \frac12 e_1)$ and we will use this notation throughout the proof. 
{First, notice that according to Lemma~\ref{lem:susp}, $\|N_\varphi - N\|_{C^r} \to 0$ as $\|\varphi\|_{C^r}\to 0$. }

	\emph{Item 1}. 
	We first show that the perturbation $N_\varphi$ does not affect Aubry set and static classes. Lemma~\ref{lem:local-back} asserts $\xi^{-1}\xi\overline{U_2}, \xi^{-1}\xi\overline{V}$ are disjoint from $\mA_{N_0 \circ \Xi}(\xi^* c_0)$. For $(N, c) \in B_\sigma(N_0, c_0) $ and $\sigma$ small enough, using semi-continuity, $\xi^{-1}\xi\overline{U_2}, \xi^{-1}\xi\overline{V}$are disjoint from $\mA_{N\circ \Xi}(\xi^* c)$ and $\mA_{N_\varphi\circ \Xi}(\xi^* c)$. Then \eqref{eq:G-varphi} and \eqref{eq:lift-G} implies the $L_{N\circ \Xi}$ action and $L_{N_\varphi\circ \Xi}$ action coincide on orbits of $\tmA_{N \circ \Xi}(\xi^*c)$ and $\tmA_{N_\varphi \circ \Xi}(\xi^*c)$. As a result $\tmA_{N \circ \Xi}(\xi^*c)$ and $\tmA_{N_\varphi \circ \Xi}(\xi^*c)$ must coincide with the same static classes. 

	\emph{Item 2}. We proceed to prove \eqref{eq:b-plus-pert}. Let $(\theta, p) \in \overline{\mE_1(N,c)}$, then $\gamma(t):=\pi_\theta \circ \phi^t(\theta, p)$ is a calibrated orbit (on $(-\infty, 0]$) for the weak KAM solution $h_{N_\varphi \circ \Xi, \, \xi^*c}(\zeta_1, \cdot)$, with $\zeta_1 \in \mS_1$. Write $\gamma_t = \gamma(t)$.  Since $\gamma(t)$ is backward asymptotic to $\mS_1$, there is $i_k \to \infty$ such that 
	\begin{equation}
		\label{eq:N-varphi-action}
		\begin{aligned}
			&h_{N_\varphi \circ \Xi, \, \xi^*c}(\zeta_1, \theta) = \lim_{k \to \infty} A^{i_k}_{N_\varphi \circ \Xi, \, \xi^*c}(\gamma_{-i_k}, \gamma_0) \\
			&= \lim_{k \to \infty} \sum_{j = - i_k}^{-1} \left( G_{N_\varphi \circ \Xi}(\gamma_j, \gamma_{j+1}) - \xi^*c \cdot (\gamma_{j+1}- \gamma_j) + \alpha_{N_\varphi\circ \Xi}(\xi^*c)  \right),
		\end{aligned}
	\end{equation}
	where in the last line $\gamma$ is lifted to $\R^n$. 
	In view of 1(c) and \eqref{eq:G-varphi}, for any $j \le  -2$, we have 
	\[
			G_{N_\varphi \circ \Xi}(\gamma_j, \gamma_{j+1}) = G_{N_\varphi}(\xi \gamma_j, \xi \gamma_{j+1}) = G_N(\xi\gamma_j, \xi\gamma_{j+1})  
		= G_{N\circ \Xi}(\gamma_j, \gamma_{j+1}). 
	\]
	By the same reasoning, we have 
	\[
		G_{N_\varphi \circ \Xi}(\gamma_{-1}, \gamma_0) = G_{N \circ \Xi}(\gamma_{-1}, \gamma_0) + \rho(\gamma_{-1})\varphi(\gamma_0) 
		= G_{N\circ \Xi}(\gamma_{-1}, \gamma_0).		
	\]
	Using \eqref{eq:N-varphi-action}, we get 
	\[
		h_{N_\varphi \circ \Xi, \, \xi^*c}(\zeta_1, \theta) = \lim_{k \to \infty} A^{i_k}_{N \circ \Xi, \, \xi^*c}(\gamma_{-i_k}, \gamma_0) \le h_{N\circ \Xi, \, \xi^*c}(\zeta_1, \theta). 
	\]
	Observe that the previous arguments holds when $N_\varphi$ and $N$ are switched, the last displayed formula becomes an equality. By the same reasoning, using Lemma~\ref{lem:local-back}, 2(a),(b), we obtain 
	\[
		h_{N_\varphi\circ \Xi,\, \xi^*c}(\theta, \zeta_2) = h_{N\circ \Xi, \, \xi^*c}(\theta, \zeta_2), \quad \zeta_2 \in \mS_2.  
	\]
	These \eqref{eq:b-plus-pert} follows. The proof for $b^-$ is identical with two static classes switched. 
\end{proof}

\subsection{Hölder continuity of the barrier functions}
\label{subse:haus}

We prove Proposition \ref{prop:hd} by relating the barriers  to the stable and unstable manifolds of the Aubry sets. 

Recall that the system $N$ admit a weakly invariant cylinder $\mC$ which contains the Aubry set $\tmA_{N}(c)$ for $c \in \Gamma_1$. Using the covering map $\Xi$, 
we obtain $\Xi^{-1} \mC = \mC_1 \cup \mC_2$ 
and denote $\tmS_i(N, c) = \mC_i\cap \Xi^{-1}(\tilde \mA(c))$, $i =1,2$ for all
  $c \in \Gamma_*(N)$.

Recall that $\Gamma_*(N)$ is the set of $c\in \Gamma_1$ such that $\mA_N(c)$ is an invariant curve contained in $\mC$. Let $c^\pm$ be the $c\in \Gamma_*(N)$ with the smallest and largest $p^f$ component. Then the component of $\mC$ bounded by $\mA_N(c^\pm)$ is an invariant set for $\phi_N$, we denote it $\Lambda_*$. Let $\Lambda_1, \Lambda_2$ be the lifts under $\Xi$, then  $\Lambda_i \subset \mC_i$ are normally hyperbolic
invariant manifolds  for $\phi_{N \circ \Xi}$. 

They admit $C^2$ center stable and center unstable manifolds $W^{cs/cu}$, which are locally graphs above $(\theta,p^f)$.
These manifolds are foliated by the strong stable and unstable manifolds $W^{s,u}(z)$ of the points of $\Lambda_i$, 
see Appendix \ref{sec:abstract-nhic}.
The leaves $W^{s,u}(z)$ of this foliation are $C^2$, they are locally graphs above $\theta^s$. The foliation itself is $C^1$.

Consider $c \in \Gamma_*(N)$, then for $i = 1,2$,  $\tmS_i(N,c)$ is a Lipshitz invariant curve. Define the sets 
\[
	W^{u/s}_i(N,c) = \bigcup_{z \in \tmS_i(N,c)} W^{u/s}(z). 
\]
 Since $\tmS_i(N,c)$ are Lipshitz graphs over $\theta^f$,
 and since $W^{u,s}$ are a $C^1$ foliation whose leaves are graphs over $\theta^s$,
  $W^{u/s}_i(N,c)$ are  Lipshitz graphs over $\theta$  in a neighborhood of $\tmS_i$. We will show that they coincides with the pseudographs $\mE_i(N,c)$ in a neighborhood of $\mS_i(N,c)$. 

\begin{lem}\label{lem:back-min}
	For $i, j = 1,2$, if $(\theta, p) \in \overline{\mE_i(N, c)}$ is backward asymptotic to $\mS_j(N,c)$, then there exists $M>0$ such that 
	 $\phi^{-k}_{N\circ \Xi} \in W^u_j(N,c)$ for each $k>M$.
\end{lem}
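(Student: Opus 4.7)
The plan is to combine the normally hyperbolic structure on $\Lambda_j$ with a shadowing argument that transfers calibration from the orbit of $(\theta,p)$ to that of its ``shadow'' $z_0 \in \Lambda_j$, and then to identify $z_0$ using the structural description of $\tilde\mN_{N\circ\Xi}(\xi^*c)$.

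\emph{Step 1 (NHIM structure and shadow).} Since the $\alpha$-limit set of $(\theta,p)$ is contained in the compact set $\tilde\mS_j\subset\Lambda_j$, there exists $M_0$ such that $\phi^{-k}_{N\circ\Xi}(\theta,p)$ stays in an arbitrarily small tubular neighborhood $U$ of $\Lambda_j$ for all $k\ge M_0$. The theory recalled in Appendix~\ref{sec:abstract-nhic} (applied with the hyperbolicity rates furnished by Theorem~\ref{nhic-mult}) identifies the set of such points with $W^{cu}_{\mathrm{loc}}(\Lambda_j)$ and provides a $C^1$ foliation of this manifold by strong unstable leaves $\{W^u(z)\}_{z\in\Lambda_j}$. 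Hence $\phi^{-M_0}(\theta,p)\in W^u(z_0)$ for a unique $z_0\in\Lambda_j$, and $d(\phi^{-k}(\theta,p),\phi^{-k}(z_0))$ decays exponentially in $k$ together with the corresponding velocities.

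\emph{Step 2 (Calibration passes to the shadow).} Because $(\theta,p)\in\overline{\mE_i(N,c)}$, its backward orbit is calibrated by the forward weak KAM solution $u:=h(\zeta_i,\cdot)$ of $N\circ\Xi$ at cohomology $\xi^*c$. I would show that the backward orbit of $z_0$ is also calibrated by $u$. Fix $S<T\le-M_0$ and compare the calibration identity for the $(\theta,p)$-orbit with the generic weak KAM inequality for the $z_0$-orbit. The difference $u(\theta(S))-u(\zeta(S))$ tends to $0$ as $S\to-\infty$, by uniform continuity of $u$ (semi-concavity, cf.\ Proposition~\ref{scc}) and exponential shadowing; likewise, the difference of the Lagrangian actions over $[S,T]$ converges as $S\to-\infty$, using the $C^2$ bound on $L$ together with the exponential decay of $\|\theta(t)-\zeta(t)\|+\|\dot\theta(t)-\dot\zeta(t)\|$ as $t\to-\infty$. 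Taking the $S\to-\infty$ limit forces the weak KAM inequality for $\zeta$ to be an equality on every such interval, so $z_0$ lies in the Ma\~n\'e set $\tilde\mN_{N\circ\Xi}(\xi^*c)$.

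\emph{Step 3 (Identification of $z_0$ and conclusion).} By the decomposition~\eqref{eq:disjoint-union}, $\tilde\mN_{N\circ\Xi}(\xi^*c) = \tilde\mS_1\sqcup\tilde\mS_2\sqcup\tilde\mH_{12}\sqcup\tilde\mH_{21}$. The heteroclinic orbits in $\tilde\mH_{12}\cup\tilde\mH_{21}$ connect $\tilde\mS_1\subset\Lambda_1$ to $\tilde\mS_2\subset\Lambda_2$ and therefore leave any single cylinder, whereas $\Lambda_1$ and $\Lambda_2$ are disjoint as distinct preimages under the $2$-fold cover $\Xi$. Since $z_0\in\Lambda_j$, it cannot be heteroclinic, so $z_0\in\tilde\mS_j$. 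Using the invariance of $\tilde\mS_j$ under $\phi$ and the equivariance $\phi^{-k}(W^u(z_0))=W^u(\phi^{-k}(z_0))$, we obtain $\phi^{-k}(\theta,p)\in W^u(\phi^{-(k-M_0)}(z_0))\subset W^u_j(N,c)$ for every $k\ge M_0$, proving the lemma with $M:=M_0$.

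The main obstacle is Step 2: turning the exponential shadowing into a rigorous calibration transfer on the unbounded interval $(-\infty,-M_0]$. The subtleties are controlling the action integrals (using the exponential decay from Theorem~\ref{nhic-mult}) and handling the fact that $\Lambda_j$ is only weakly invariant; the assumption that the $\alpha$-limit sits in the compact set $\tilde\mS_j$, pinned in the interior of $\Lambda_j$, keeps the relevant orbits uniformly away from the cylinder's boundary and permits uniform estimates.
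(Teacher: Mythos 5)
Your Step 1 agrees with the paper's (both apply the normally hyperbolic theory from Appendix~\ref{sec:abstract-nhic} to produce a base point $z_0\in\Lambda_j$ with $(\theta,p)\in W^u(z_0)$ and $\phi^{-k}(z_0)\to\mS_j$), but Steps 2 and 3 take a genuinely different and, unfortunately, flawed route. The paper does \emph{not} pass through ``$z_0\in\tilde\mN$''; it argues by contradiction that $z_0\notin\mS_j$ is geometrically impossible, comparing the \emph{slow} (central) rate at which $\phi^{-k}(z_0)$ can approach $\mS_j$ inside $\Lambda_j$ with the \emph{fast} shadowing $\|\phi^{-k}(\theta,p)-\phi^{-k}(z_0)\|\le C\lambda^k$, and then exploiting two quantitative graph constraints: the back-flowed pseudograph $\phi^{-k}(\overline{\mE_i})$ is a $6D\sqrt{\epsilon}$-Lipschitz graph over $\theta$ (Proposition~\ref{prop:strong-lip}), while the cylinder $\mC$ is a steep graph over $(\theta^f,p^f)$ in the $p^f$ direction (Theorem~\ref{nhic-mult}). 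The slow rate forces a $p^f$-discrepancy of order $\sqrt{\epsilon}\,\rho^k$ between $\phi^{-k}(z_0)$ and $\mS_j$, which the two graph bounds cannot accommodate once $\lambda^k\ll\rho^k$.

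The genuine gap is in your Step 2: exponential shadowing does not transfer calibration. The weak KAM inequality $u(\zeta(T))-u(\zeta(S))\le A_\zeta(S,T)$ is one-sided, and the $S\to-\infty$ limit does not turn it into an equality. Concretely, writing $E(T):=\lim_{S\to-\infty}\bigl(A_\zeta(S,T)-A_\gamma(S,T)\bigr)$, the exponential decay of $\|\gamma(t)-\zeta(t)\|+\|\dot\gamma(t)-\dot\zeta(t)\|$ only yields absolute convergence of this integral to a \emph{finite, generally nonzero} constant $E(T)$, not to $0$. Combining with the calibration of $\gamma$ and $u(\gamma(S)),u(\zeta(S))\to u|_{\mS_j}=:u_j$, all you obtain is $u(\zeta(T))-u_j\le u(\gamma(T))-u_j+E(T)=A_\zeta(-\infty,T)$ --- precisely the inequality you already had, now taken to the limit. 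The reverse inequality $u(\zeta(T))-u_j\ge A_\zeta(-\infty,T)$, which is what calibration requires, has no source here; nothing in the weak KAM framework bounds $u(\zeta(T))$ from below in the needed way. So the conclusion ``$z_0\in\tilde\mN_{N\circ\Xi}(\xi^*c)$'' is not established. A secondary worry, moot given the above, is in Step 3: the assertion that $z_0\in\Lambda_j$ cannot be a heteroclinic point of $\tilde\mH_{ji}$ itself requires knowing that $\tilde\mH_{ji}$ meets $\Lambda_j$ only along $\tilde\mS_j$, which is again a statement about whether the backward approach to $\mS_j$ is along the strong unstable foliation rather than the central direction --- essentially the dichotomy the lemma is meant to settle. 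The quantitative argument in the paper resolves both points simultaneously; your soft variational route would need an additional input that the shadowing alone does not supply.
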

Suppose an orbit is backward asymptotic to $\mS_1(N,c)$, then it is asymptotic to the normally hyperbolic set $\Lambda_1$. 
This orbit is contained in the strong manifold of a point $z' \in \Lambda_1$
which is asymptotic to $\mS_1(N,c)$, but which in principle may not belong to $\mS_1(N,c)$.
 To prove that $z'\in \mS_1(N,c)$,
  we need an argument similar to Theorem 1.4.

We need the following version of Proposition 4.3.
\begin{prop}\label{prop:strong-lip}
Suppose $k \ge 1/\sqrt{\epsilon}$, then for each semi-concave function $u_0$, the function $u_k  = T_c^k u_0$ is $6D \sqrt{\epsilon}-$semi-concave and $6D\sqrt{n\epsilon}-$Lipschitz. Similar statement holds for $\check{T}^k_c u$. As a result, for any weak KAM solution $u$ and $k \ge 1/\sqrt{\epsilon}$, the set 
\[
	\phi_N^{-k}(\overline{\mG_{c,u}})
\]
is a $6D\sqrt{\epsilon}-$Lipschitz graph over the $\theta$ component. 
\end{prop}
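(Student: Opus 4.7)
The plan is to adapt the proof of Proposition~\ref{scc} from the weak-KAM setting to an arbitrary semi-concave initial datum $u_0$. Given $\theta\in\T^n$ and a minimizer $\gamma^*:[0,k]\to\T^n$ realising $u_k(\theta)$ in the Lax--Oleinik formula (with $\gamma^*(k)=\theta$), the first observation is that on any tail $[k-T,k]$ with $1/\sqrt\epsilon\le T\le k$, the restriction of $\gamma^*$ is itself a length-$T$ action minimizer between its two endpoints, independently of $u_0$. Its action is therefore bounded above by the minimal-action estimate of Lemma~\ref{oscillation} (applied on $\Omega=\T^{n-1}$, giving $\le 10\sqrt{2nD\epsilon}$), which is the only input about the oscillation of $u$ used in the proof of Lemma~\ref{init-vert}; the excursion argument of that lemma now carries over verbatim and yields $\|p(t)-c\|\le C\sqrt\epsilon$ on $[k-T,k]$.

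I then perturb the minimizer only on the tail: set $\gamma_x(t)=\gamma^*(t)$ on $[0,k-T]$ and $\gamma_x(t)=\gamma^*(t)+(t-(k-T))x/T$ on $[k-T,k]$, so that $\gamma_x(0)=\gamma^*(0)$ (killing the unknown $u_0(\gamma^*(0))$ contribution) while $\gamma_x(k)=\theta+x$. Expanding $L$ with the $C^2$ bounds of Lemma~\ref{lem-L} and integrating by parts along $\gamma^*$ via Euler--Lagrange, exactly as in~(\ref{action-cp}) following Proposition~\ref{scc}, gives
\[
u_k(\theta+x)-u_k(\theta)\le(p(k)-c)\cdot x+\bigl(\tfrac{\epsilon T}{2}+D\epsilon+\tfrac{D}{2T}\bigr)\|x\|^2.
\]
Choosing $T$ close to $\sqrt{D/\epsilon}$ when $k\ge\sqrt{D/\epsilon}$ and $T=k$ otherwise bounds the quadratic coefficient by $O(D\sqrt\epsilon)$; combined with $\|p(k)-c\|\le C\sqrt\epsilon$, this produces the claimed $6D\sqrt\epsilon$-semi-concavity. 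Applying the same estimate in the reverse direction at $\theta+x$ and using the bounded diameter of $\T^n$ converts it into the Lipschitz bound $6D\sqrt{n\epsilon}$.

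For the corollary, let $u$ be a weak KAM solution, so $u=T_c^k u$ and the above applies, giving $u$ the improved semi-concavity constant $6D\sqrt\epsilon$; by the dual Lax--Oleinik argument, the conjugate $\check u$ is $6D\sqrt\epsilon$-semi-convex. For any $(\theta,p)\in\overline{\mG_{c,u}}$, set $(\theta_0,p_0):=\phi_N^{-k}(\theta,p)$; by continuity of the flow and upper semi-continuity of super-differentials, backward calibration survives closure, so the orbit through $(\theta_0,p_0)$ is calibrated by $u$. Hence $(\theta_0,p_0)\in\tilde\mI(u,c)$, $u$ is differentiable at $\theta_0$, and $p_0=c+du(\theta_0)$. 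Fathi's regularity result, Lemma~\ref{freg}, applied to the pair $(u,-\check u)$ on the set where $u=\check u$ -- exactly as in the proof of Theorem~\ref{thm-lip} -- then upgrades this pointwise graph structure to a $6D\sqrt\epsilon$-Lipschitz graph over its projection to $\theta$.

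The delicate point is the first step: localizing $\|p(k)-c\|$ at the right endpoint of $\gamma^*$ without any global oscillation bound on $u_0$. The hypothesis $k\ge 1/\sqrt\epsilon$ is exactly what allows Lemma~\ref{oscillation} on a tail of length at least $1/\sqrt\epsilon$ to replace the oscillation-of-$u$ input in Lemma~\ref{init-vert}; without it the proof breaks.
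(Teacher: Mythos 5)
The first half of your argument—adapting the proof of Proposition~\ref{scc} by perturbing only the tail $[k-T,k]$ of the minimiser, bounding the tail action by the straight-line test curve of Lemma~\ref{oscillation}, and then running the excursion argument of Lemma~\ref{init-vert} with the action bound replacing $\mathrm{osc}\,u$—is correct and is exactly what the paper's one-line hint (``replace $u(\theta)$ by $u_k$ and $u(\Theta(0))$ by $u_0(\Theta(0))$, choose $T\in[1/2\sqrt\epsilon,1/\sqrt\epsilon]$'') is asking you to carry out.

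The corollary, however, has a genuine gap. You claim that for $(\theta,p)\in\overline{\mG_{c,u}}$ the point $(\theta_0,p_0)=\phi_N^{-k}(\theta,p)$ lies in $\tilde\mI(u,c)$. This is false, and it cannot be repaired: $\tilde\mI(u,c)$ is by definition the set of \emph{full} bi-infinite calibrated orbits and is therefore $\phi_N$-invariant, so if $\phi_N^{-k}(\overline{\mG_{c,u}})\subset\tilde\mI(u,c)$ one would get $\overline{\mG_{c,u}}=\phi_N^k\phi_N^{-k}(\overline{\mG_{c,u}})\subset\tilde\mI(u,c)$, which is absurd since the graph is in general much larger than the calibrated set; the true inclusion runs the other way, $\tilde\mI(u,c)\subset\phi_N^{-k}(\overline{\mG_{c,u}})$. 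The orbit through $(\theta_0,p_0)$ is calibrated only on $(-\infty,k]$—you know nothing about $t>k$—so it does not land in $\tilde\mI(u,c)$, and accordingly $\theta_0$ need not lie in $\{u=\check u\}$, so Lemma~\ref{freg} applied to $(u,-\check u)$ says nothing about it. The paper's route is to observe instead that
\[
\phi_N^{-k}(\overline{\mG_{c,u}})\subset\mG_{c,u}\ \tilde\wedge\ \check\mG_{c,\check T_c^k u},
\]
i.e.\ that the projection of $\phi_N^{-k}(\overline{\mG_{c,u}})$ lies in $\argmin(u-\check T_c^k u)$: for $(\theta,p)\in\mG_{c,u}$ one has $u(\theta_0)=u(\theta)-A^k_c(\theta_0,\theta)$ by backward calibration, while domination gives $u-\check T_c^k u\ge 0$ everywhere, so $u-\check T_c^k u$ vanishes at $\theta_0$ and is minimal there. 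One then applies Lemma~\ref{freg} to the pair $(u,\,-\check T_c^k u)$, both semi-concave with constant $O(D\sqrt\epsilon)$ by the first part of the proposition applied to $\check T_c$—\emph{not} to the pair $(u,-\check u)$, whose minimum set is the far smaller $\mI(u,c)$. The point of introducing the finite iterate $\check T_c^k u$ in place of the conjugate $\check u$ is precisely to capture the larger set $\phi_N^{-k}(\overline{\mG_{c,u}})$, which interpolates between $\tilde\mI(u,c)$ (at $k=\infty$) and $\overline{\mG_{c,u}}$ (at $k=0$). As written, your argument only establishes the Lipschitz graph property on the invariant set $\tilde\mI(u,c)$, which is already Theorem~\ref{thm-lip} and not the new content.
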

\begin{proof}
	We observe that the proof of Proposition 4.3 applies as long as we replace $u(\theta)$ by $u_k$ and $u(\Theta(0))$  by $u_0(\Theta(0))$. The assumption $k \ge \frac{1}{\sqrt{\epsilon}}$ ensures we can choose $T \in [1/2\sqrt{\epsilon}, 1/\sqrt{\epsilon}]$ in that proof. 

	For the second part, observe that 
	\[
		\phi_N^{-k}(\overline{\mG_{c,u}}) \subset  \mG_{c,u} \tilde\wedge \check{\mG}_{c, \check{T}_c^k u}	
	\]
	and the proof is similar to Theorem 4.1. 
\end{proof}

For the rest of this section, $\phi$ denotes $\phi_{N\circ \Xi}$.

\begin{proof}[Proof of Lemma~\ref{lem:back-min}]
	We only prove for the case $i=j=1$ as the others are similar.	Since $z := (\theta, p)$ is backward asymptotic to $\mS_1(N,c) \subset \Lambda_1$, then 
there exists $z_1 \in \Lambda_1$ such that $(\theta, p) \in W^u(z_1)$. Necessarily $\phi^{-k}(z_1)$ converges to $\mS_1(N,c)$. We will show $z_1 \in \mS_1(N,c)$.

	Arguing by contradiction, suppose $z_1 \notin \mS_1(N,c)$, then using the fact that $T\mC_1$ is the central direction, $\dist(\phi^{-k}z_1, \mS_1(N,c))$ converges at a  maximal rate of $\rho^n$. 

	Denote $z_1^k = \phi^{-k}(z_1) $, $\mS_1(N,c)$ projects onto $\theta^f$ component, for any $k \in \NN$, there is $z_2^k  \in \mS_1(N,c)$ such that $\theta^f(z_1^k) = \theta^f(z_2^k)$. According to Theorem~\ref{nhic-mult}, there exists $D_1 >1$ such that $\mC$ is an $D_1/\sqrt{\epsilon}$ graph over $(\theta^f, p^f)$, which implies
	\begin{equation}
		\label{eq:slow-conv}
		\|p^f(z_1^k) - p^f(z_2^k)\| \ge \sqrt{\epsilon}/D_1 \|z_1^k-  z_2^k\| \ge  D_2^{-1}\sqrt{\epsilon}\rho^k
	\end{equation}
	for some $D_2 >1$. Let $z^k = \phi^{-k}(z)$, we have $\|z^k - z_1^k\| < C \lambda^k$. Suppose $k$ is large enough such that $C\lambda^k < \frac12 D_2^{-1} \sqrt{\epsilon} \rho^k$, then 
	\begin{equation}
		\label{eq:pf-lower}
		\|p^f(z^k) - p^f(z_2^k)\| \ge \|p^f(z_1^k) - p^f(z_2^k)\| - \|p^f(z^k) - p^f(z_1^k)\| \ge \frac12 \|p^f(z_1^k) - p^f(z_2^k)\|.
	\end{equation}

	Assume $k \ge 1/\sqrt{\epsilon}$. We now use Proposition~\ref{prop:strong-lip} to get for some $D_3>1$, 
	\begin{equation}
		\label{eq:graph-1}
		\begin{aligned}
			&   \|p(z^k) - p(z_2^k)\| \le D_3 \sqrt{\epsilon} \left( \|\theta^s(z^k) - \theta^s(z_3^k)\| + \|\theta^f(z^k) - \theta^f(z_3^k)\| \right) \\
			& \le D_3 \sqrt{\epsilon} \left( \|\theta^s(z^k) - \theta^s(z_3^k)\| + \|\theta^f(z^k) - \theta^f(z_1^k)\| \right)  \\
			& \le D_2 \sqrt{\epsilon} \left( \|\theta^s(z^k) - \theta^s(z_2^k)\| \right) + D_2 D_3 \sqrt{\epsilon} \lambda^k,
		\end{aligned}
	\end{equation}
	keep in mind that $\theta^f(z_1^k) = \theta^f(z_2^k)$. Since $z^k_1, z_2^k \in \mC_1$, using Theorem 1.4, we get for small $\epsilon$, 
	\[
		\begin{aligned}
			& 	\|\theta^s(z^k) - \theta^s(z_2^k)\| \le \|\theta^s(z_1^k) - \theta^s(z_2^k)\| + C\lambda^k \\
			& \le \frac{1 + \sqrt{\delta/\epsilon}}{\kappa}\left( \|\theta^f(z_1^k) - \theta^f(z_2^k)\| + \|p^f(z_1^k) - p^f(z_2^k)\| \right) + C\lambda^k \\
			& \le 4 \kappa^{-1}\delta^{\frac12} \epsilon^{-\frac12}\|p^f(z^k) - p^f(z_2^k)\| + C \lambda^k
		\end{aligned}
	\]
	Combine with \eqref{eq:graph-1}, we get 
	\[
		\|p(z^k) - p(z_2^k)\| \le 4C \kappa^{-1}\delta^{\frac12} \|p(z^k) - p(z_2^k)\| + 2 D_2D_3 \sqrt{\epsilon} \lambda^k. 
	\]
	When $\kappa^{-1}\delta^{\frac12} < \frac12$ we get  $\|p(z^k) - p(z_2^k)\| \le 4 D_2 D_3 \sqrt{\epsilon}\lambda^k$, but this contradicts with \eqref{eq:slow-conv} and \eqref{eq:pf-lower}. 
\end{proof}

\begin{lem}\label{lem:local-weak-kam}
For $(N, c_0) \in \mQ$, there is $\sigma_1, \sigma_2, M>0$ such that for all $c \in B_{\sigma_1}(c_0) \cap \Gamma_*(N)$, we have for $i = 1,2$,
\begin{enumerate}
	\item 
	\[
		\overline{\mE_i(N,c)} \cap \pi^{-1}(B_{\sigma_2}(\mS_i(N,c_0))) \subset W_i^u(N,c). 
	\]
	This also implies $\mE_i(N,c) = \overline{\mE_i(N,c)}$ and is $C^1$ over $B_{\sigma_2}(\mS_i(N,c_0))$. 
	\item For each $(\theta, p)\in \overline{\mE_i(N,c)}$, 
 there exists  $k \le M$ such that  
$$\phi^{-k}(\theta, p) \in 
B_{\sigma_2}(\mS_1(N,c)\cup \mS_2(N,c)).
$$ 
\end{enumerate}
\end{lem}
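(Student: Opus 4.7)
\textbf{Plan for Lemma~\ref{lem:local-weak-kam}.}

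First I will fix the geometric setup. Applying Theorem~\ref{intro-nhic-mult} to $N\circ\Xi$ gives weakly invariant cylinders $\mC_1,\mC_2$, each equipped with a $C^1$ strong unstable foliation whose leaves are $C^2$ graphs over $\theta^s$ (Appendix~\ref{sec:abstract-nhic}). For $c\in\Gamma_*(N)$ the static class $\tmS_i(N,c)$ is a Lipschitz invariant circle inside $\mC_i$; since $\tmS_i(N,c)$ is a graph over $\theta^f$ and the strong unstable leaves are graphs over $\theta^s$, the local unstable set $W^u_i(N,c)$ is a $C^1$ graph over $\theta$ in a uniform neighborhood of $\mS_i(N,c_0)$. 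Choose $\sigma_2>0$ small enough that the balls $B_{2\sigma_2}(\mS_1(N,c_0))$ and $B_{2\sigma_2}(\mS_2(N,c_0))$ are disjoint and that $W^u_i(N,c)$ is such a $C^1$ graph over $B_{2\sigma_2}(\mS_i(N,c_0))$ for all $c$ close enough to $c_0$. Then choose $\sigma_1>0$ so that upper semi-continuity of the Aubry set yields $\mS_i(N,c)\subset B_{\sigma_2/4}(\mS_i(N,c_0))$ for every $c\in B_{\sigma_1}(c_0)\cap\Gamma_*(N)$.

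For Part~(1), take $(\theta,p)\in\overline{\mE_i(N,c)}$ with $\theta\in B_{\sigma_2}(\mS_i(N,c_0))$. The closed pseudograph is the graph of superdifferentials of the elementary forward weak KAM solution $u=h(\zeta_i,\cdot)$ with $\zeta_i\in\mS_i(N,c)$. Every point of this graph has a calibrated backward orbit, obtained as a uniform limit on compact intervals of calibrated curves through nearby differentiability points; such an orbit must be backward asymptotic to the static class containing $\zeta_i$, namely $\mS_i(N,c)$. Lemma~\ref{lem:back-min} (applied with $i=j$) then provides $k_1\ge 0$ with $\phi^{-k_1}(\theta,p)\in W^u_i(N,c)$. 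Forward invariance of $W^u_i(N,c)$ under $\phi$ gives $(\theta,p)=\phi^{k_1}(\phi^{-k_1}(\theta,p))\in W^u_i(N,c)$, proving the claimed inclusion. Because $W^u_i(N,c)$ is a $C^1$ graph over $B_{2\sigma_2}(\mS_i(N,c_0))$, at each $\theta$ in that ball there is a unique super-differential of $u$; hence $u$ is $C^1$ there, so $\mE_i(N,c)=\overline{\mE_i(N,c)}$ and the pseudograph is a $C^1$ graph.

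For Part~(2), I argue by contradiction via compactness. Assume no uniform $M$ exists; then there exist $c_k\in B_{\sigma_1/2}(c_0)\cap\Gamma_*(N)$, indices $i_k\in\{1,2\}$ and points $(\theta_k,p_k)\in\overline{\mE_{i_k}(N,c_k)}$ with $\phi^{-j}(\theta_k,p_k)\notin B_{\sigma_2}(\mS_1(N,c_k)\cup\mS_2(N,c_k))$ for every $0\le j\le k$. Passing to subsequences we may assume $i_k=i$ is constant, $c_k\to c_*\in B_{\sigma_1}(c_0)\cap\Gamma_*(N)$ (the last set is compact), and, using that the closed pseudographs are uniformly bounded by Mather's Lipschitz estimate, $(\theta_k,p_k)\to(\theta_\infty,p_\infty)$. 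Upper semi-continuity of the closed pseudograph (Proposition~\ref{prop:cont}) gives $(\theta_\infty,p_\infty)\in\overline{\mE_i(N,c_*)}$, whose backward orbit is asymptotic to $\mS_i(N,c_*)$. Pick $k_0$ with $\phi^{-k_0}(\theta_\infty,p_\infty)\in B_{\sigma_2/4}(\mS_i(N,c_*))$; continuity of $\phi^{-k_0}$ and upper semi-continuity of $c\mapsto\mS_i(N,c)$ at $c_*$ then yield $\phi^{-k_0}(\theta_k,p_k)\in B_{\sigma_2}(\mS_i(N,c_k))$ for all sufficiently large $k$, contradicting the defining property as soon as $k\ge k_0$.

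The main obstacle is making the geometric inputs uniform on a whole neighborhood $B_{\sigma_1}(c_0)$: specifically, verifying that $W^u_i(N,c)$ is a $C^1$ graph over a common neighborhood of $\mS_i(N,c_0)$ as $c$ varies, and that the super-differentials of elementary weak KAM solutions vary upper semi-continuously in $c$. Both follow from persistence of the strong unstable foliation of a normally hyperbolic cylinder (giving continuous dependence on $c$ of the leaves) together with the standard continuity of weak KAM solutions with respect to cohomology codified in Proposition~\ref{prop:cont}, once $\sigma_1,\sigma_2$ are chosen small enough that everything takes place inside the cylinder $\mC_i$.
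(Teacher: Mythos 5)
Your plan follows the paper's strategy for this lemma --- land in $W^u_i(N,c)$ in backward time via Lemma~\ref{lem:back-min}, then use a compactness argument for Part~2 --- but two steps in your Part~1 are asserted where the paper actually argues.

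First, you claim that the backward calibrated orbit of a point $(\theta,p)\in\overline{\mE_i(N,c)}$ with $\theta\in B_{\sigma_2}(\mS_i(N,c_0))$ is automatically asymptotic to $\mS_i(N,c)$. A priori it could instead be asymptotic to $\mS_j(N,c)$, $j\neq i$, since the Aubry set has two static classes. The paper rules this out by invoking \cite{Be1}, Lemma~4.4, which produces a radius $\sigma_3>0$ so that $\theta\in B_{\sigma_3}(\mS_i(N,c_0))$ forces asymptotics to $\mS_i$. The mechanism is that if the orbit were asymptotic to $\mS_j$, then $h(\zeta_i,\theta)=h(\zeta_i,\zeta_j)+h(\zeta_j,\theta)$, and letting $\theta$ tend to $\mS_i$ would force $h(\zeta_i,\zeta_j)=0$, contradicting that the two static classes are distinct. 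Your choice of $\sigma_2$ has to be tied to this radius; at present it is chosen only for disjointness and graph-regularity reasons and does not secure this implication.

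Second, your forward-invariance step, $(\theta,p)=\phi^{k_1}\bigl(\phi^{-k_1}(\theta,p)\bigr)\in W^u_i(N,c)$, is only safe if the full orbit segment from time $-k_1$ to $0$ stays inside the neighborhood of $\mS_i$ where the strong-unstable leaves are $C^1$ graphs: these leaves are local objects and $\phi$ expands along them, so iterating forward can leave the controlled domain. The paper handles this by first uniformizing $k$ --- the same kind of compactness argument you run for Part~2 yields a uniform $M$, and in fact $\phi^{-k}(\theta,p)\in W^u_i(N,c)$ for all $k\geq M$ --- and then shrinking the radius once more (its $\sigma_6$) so that the iterates $\phi^{-j}(\theta,p)$, $0\leq j\leq M$, all remain in the controlled neighborhood. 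Without this uniformization and shrinking your forward-invariance step has a gap. Your Part~2 argument and the $C^1$-graph conclusion at the end of Part~1 agree with the paper.
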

\begin{proof}
We prove  item 1. for $i=1$, the proof  for $i=2$ is identical. We first prove the statement for $c = c_0$ then extend to a neighborhood by continuity. First of all, we refer to \cite{Be1} Lemma 4.4, to get the existence of  $\sigma_3 >0$ such that every $(\theta,p) \in \mE_1(N,c)$ with $\theta \in B_{\sigma_3}(\mS_1(N, c_0))$ is backward asymptotic to $\mS_1$. By Lemma~\ref{lem:back-min},
  there exists $k$ such that $\phi^{-k}(\theta,p) \in W^u_1(N,c)$. 
We now show that $k$ can be chosen uniformly for all $\theta \in \overline{B_{\sigma_3/2}(\mS_1(N,c_0))}$. Arguing by contradiction, if there is $k_i \to \infty$ and  $\phi^{-j}(\theta_i, p_i) \notin W^u_1(N,c)$ for all $0 \le j \le k_i$, after taking a convergent subsequence, we get $(\theta_i, p_i) \to (\theta_*, p_*) \in \overline{\mE_1(N,c)}$ whose backward orbit does not intersect $W^u_1(N,c)$. This is a contradiction. Using a similar compactness argument over $c$, we obtain: 

There exists $\sigma_4, \sigma_5>0$ and $M>0$, such that for all $c \in B_{\sigma_4}(c_0)\cap \Gamma_*(N)$ and  $(\theta, p) \in B_{\sigma_5}(\mS(N, c_0))$, we have $\phi^{-k}(\theta, p) \in W^u_1(N,c)$ for all $k \ge M$. 

Finally, we choose $\sigma_6$ small enough so that $B_{\sigma_6}(\mS_1(N, c_0)) \subset \phi^{-M}(B_{\sigma_5}(\mS_1(n,c_0)))$. Since $\mS_1(N,c)$ is semi-continuous in $c$, this property extends to a small neighborhood of $c \in \Gamma_*(N)$. 

We now prove item 2, for $i=1$.  
 Assume there exists $\sigma_7>0$, $k_i \to \infty$,  $(\theta_i, p_i)\in\overline{ \mE_1(N,c_i)}$ with $c_i \to c_0$,  such that $\phi^{-j}(\theta_i, p_i)\notin B_{\sigma_7}(\mS_1 \cup \mS_2)$ for all $0 \le j \le k_i$. Taking limit up to a subsequence, we obtain an orbit 
$(\theta_*, p_*) \in \overline{\mE_1(N, c_0)}$
 \emph{not} backward asymptotic to $\mS_1 \cup \mS_2$, a contradiction. 
\end{proof}

For each $c\in \Gamma_*(N)$, the set $\tilde \mS_1(N,c)$ is a graph over $\theta^f$, hence there exists a map $\eta_c:\Tm\lto \Tm^n\times \Rm^n$ such that 
$\mS_1(N,C)$ is the image of $\eta_c$ and $\pi_{\theta^f}\circ \eta_c(s)=s$.

\begin{lem}\label{lem:cyl-holder}
 There exists $C_1>0$ such that 
	\[
	\sup_s\|\eta_c(s) - \eta_{c'}(s)\| \le C_1 \|c - c'\|^{\frac12}
	\]
	for each $c$ and $c'$ in $\Gamma_*(N)$.
\end{lem}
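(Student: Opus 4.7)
The plan is to establish the $1/2$-Hölder estimate in three steps: first reduce to controlling the $p^f$-component, then prove $1/2$-Hölder continuity of the rotation numbers using convexity, and finally translate rotation number control into position control via the Birkhoff/twist-map structure on the cylinder.

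\textit{Step 1 (reduction to $p^f$).} By Theorem~\ref{intro-nhic-mult}, both $\tilde{\mS}_1(N,c)$ and $\tilde{\mS}_1(N,c')$ lie on the normally hyperbolic cylinder $\mC_1$, parametrized as a graph $(\theta^s, p^s) = (\Theta^s, P^s)(\theta^f, p^f, t)$ with uniformly bounded Lipschitz constants. Since by definition $\pi_{\theta^f}\eta_c(s) = s = \pi_{\theta^f}\eta_{c'}(s)$, we get
$$
\|\eta_c(s) - \eta_{c'}(s)\| \le C\,\bigl(|p^f(\eta_c(s)) - p^f(\eta_{c'}(s))| + \text{time-common terms}\bigr),
$$
so it suffices to bound the $p^f$-difference uniformly in $s$.

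\textit{Step 2 (Hölder rotation numbers).} Since $c,c'\in\Gamma_*(N)$, each curve is uniquely ergodic with unique Mather measure $\mu_c, \mu_{c'}$, and rotation vector $\rho(c)=(\rho^f(c),0)$. Set $\mu_m = (\mu_c+\mu_{c'})/2$, which is a closed measure with rotation $(\rho(c)+\rho(c'))/2$, and $c_m=(c+c')/2$. Using the Legendre identities $\int L\,d\mu_c = c\cdot\rho(c)-\alpha(c)$ (and similarly for $c'$) together with the inequality $\int L\,d\mu_m \ge c_m\cdot\tfrac{\rho(c)+\rho(c')}{2} - \alpha(c_m)$, one obtains
$$
\tfrac{\alpha(c)+\alpha(c')}{2} - \alpha(c_m) \ge \tfrac{1}{4}(c-c')\cdot(\rho(c)-\rho(c')).
$$
To upgrade this to a quadratic gap in $\rho$, I exploit $\partial^2_{vv}L \ge I/D$: because both $\mu_c, \mu_{c'}$ are supported on Lipschitz graphs over $\theta^f$ lying on $\mC_1$ within distance $O(|c-c'|^{1/2})$ from each other, one can transport $\mu_{c'}$ along $p^f$ onto the base of $\mu_c$, obtaining a measure $\tilde\mu_{c'}$ whose position support equals that of $\mu_c$ and whose rotation vector matches $\rho(c')$ up to controlled error. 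Applying the strict convexity of $L$ in $v$ to $\mu_c$ and $\tilde\mu_{c'}$ at matching positions yields
$$
\tfrac{\alpha(c)+\alpha(c')}{2} - \alpha(c_m) \ge C^{-1}\,|\rho^f(c)-\rho^f(c')|^{2}.
$$
Combined with Lipschitz continuity of $\alpha$ on $\Gamma_1$, this gives $|\rho^f(c)-\rho^f(c')| \le C\,|c-c'|^{1/2}$.

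\textit{Step 3 (position from rotation).} The cylinder $\mC_1$ carries an essentially Hamiltonian flow in the reduced $(\theta^f, p^f, t)$-coordinates (up to weak-invariance errors controlled by Theorem~\ref{intro-nhic-mult}), and $\tilde{\mS}_1(N,c), \tilde{\mS}_1(N,c')$ are two uniformly Lipschitz invariant graphs over $\theta^f$. By Birkhoff's area-rotation relation for monotone twist maps, the $p^f$-spread between two such invariant graphs of irrational rotation numbers $\rho^f(c), \rho^f(c')$ is controlled by $|\rho^f(c)-\rho^f(c')|$. Combining this with Step 2 yields $\sup_s|p^f(\eta_c(s))-p^f(\eta_{c'}(s))| \le C|c-c'|^{1/2}$, which by Step 1 completes the proof.

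The main obstacle is the quadratic gain in Step 2: $\mu_c$ and $\mu_{c'}$ are supported on different (though close) Lipschitz graphs, so a naive application of Jensen's inequality only gives convexity of $\alpha^*$, not strict convexity with a quantitative rate. The transport-along-$p^f$ trick — moving $\mu_{c'}$ vertically onto the same base graph as $\mu_c$ so that strict convexity of $L(\theta,v,t)$ in $v$ can be applied at matching $\theta$'s — is the crux, and requires careful bookkeeping so that the error introduced by this transport is absorbed into the $|c-c'|$ term rather than competing with the quadratic estimate one is trying to establish.
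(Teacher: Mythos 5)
Your approach is genuinely different from the paper's, but Step~2 contains both a sign error and an unfilled gap that, taken together, break the argument.

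\emph{Comparison with the paper.} The paper does not go through rotation numbers at all. It uses directly that on the invariant curve $\eta_c$ (image of $\mS_1(N,c)$), the weak KAM solution $u_c$ is differentiable, so $p = c + du_c$ along $\eta_c$, hence
\[
\int_{\eta_c} p\,d\theta \;=\; \int_{\eta_c} c\,d\theta \;+\; \int_{\eta_c} du_c \;=\; c^f,
\]
since $\eta_c$ winds once around $\theta^f$ and $du_c$ integrates to zero. Thus the symplectic area of the annulus on $\mC_1$ bounded by $\eta_c$ and $\eta_{c'}$ equals $c^f - (c')^f$, which is $O(\|c - c'\|)$. The estimates of Theorem~\ref{intro-nhic-mult} show the restriction of $d\theta\wedge dp$ to $\mC_1$ is comparable to $d\theta^f\wedge dp^f$, and for two Lipschitz, non-crossing graphs one has the elementary bound $\int(\gamma_1 - \gamma_2)\,ds \ge \tfrac{1}{4C}\sup|\gamma_1 - \gamma_2|^2$. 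Combining gives the $\tfrac12$-Hölder estimate in one stroke, with no detour through $\rho^f(c)$. Your Step~3 invokes essentially the same area relation to convert rotation-number control into position control, so your route is not only longer but front-loads all the difficulty into Step~2.

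\emph{The gap in Step~2.} The averaging argument with the midpoint measure $\mu_m$ gives the inequality with the \emph{opposite} sign to the one you wrote: since $\alpha(c_m)\ge c_m\cdot\rho(\mu_m)-\int L\,d\mu_m$, expanding yields
\[
\tfrac{\alpha(c)+\alpha(c')}{2} - \alpha(c_m) \;\le\; \tfrac14\,(c-c')\cdot\bigl(\rho(c)-\rho(c')\bigr),
\]
an \emph{upper} bound on the convexity defect, not a lower bound. Your subsequent claim that strict convexity of $L$ in $v$ plus a ``transport along $p^f$'' upgrades this to $\tfrac{\alpha(c)+\alpha(c')}{2} - \alpha(c_m)\ge C^{-1}|\rho^f(c)-\rho^f(c')|^2$ is precisely what needs proof, and the sketch does not supply it: the two minimizing measures live on different Lipschitz graphs and, being uniquely ergodic with distinct rotation numbers, have disjoint $(\theta,t)$-supports, so there is no canonical matching that makes a Jensen-type inequality in the fiber variable applicable. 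You correctly flag this as the crux at the end of your proposal, but a flagged gap is still a gap. In fact, the cleaner way to obtain Hölder continuity of $\rho^f$ on $\Gamma_*(N)$ is to \emph{first} prove the position estimate by the area argument (as the paper does) and then read off the rotation-number estimate from the twist property of the cylinder dynamics — i.e.\ the logical order of your Steps~2 and~3 should be reversed, at which point Step~2 becomes a consequence rather than a prerequisite.
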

\begin{proof}
We denote by $D_i$ different positive constants that may depend on $\epsilon$ and $\delta$.
Since $\mC_1$ is a Lipschitz graph over $(\theta^f,p^f)$, 
	\begin{equation}
		\label{eq:eta-pf}
		\sup_s\|\eta_c(s) - \eta_{c'}(s)\| \le D_1\sup_s\|\pi_{p^f} \eta_c(s) - \pi_{p^f}\eta_{c'}(s)\|. 
	\end{equation} 
Each Weak KAM solution $u_c$ is differentiable on $\mS_1(N,c)$, and we have $\pi_{p}\circ \eta_c=c+du_c(\pi_{\theta}\circ \eta_c)$.
We have
$$
\int _{\eta} pd\theta= \int _{\eta} cd\theta+\int_\eta du_c (\pi_{\theta}\circ \eta_c) d\theta=\pi_{p^f}(c),
$$
hence the symplectic area  $A(\eta_c, \eta_{c'})$
 of the domain of $\mC_1$ delimited by the curves $\eta_c$ and  $\eta_{c'}$ is 
	$$
	A(\eta_c, \eta_{c'}) = \left( \int_{\eta}- \int_{\eta_{c'}} \right) p d \theta=\pi_{p ^f}(c)-\pi_{p ^f}(c').
	$$
	Recall that the cylinder $\cC_1$ is given by a graph $(\theta^s, p^s) = (\Theta^s, P^s)(\theta^f, p^f)$.
	The estimates (\ref{blow-up}) imply that, if $v, v'$ are two vectors tangent to $\mC_1$, then 
	$ |(d\Theta^s\wedge dP^s)(v,v')|\leq C\sqrt \delta |d\theta^f\wedge dp^f(v,v')|$,
	hence, if $\delta$ is small enough,
	$$
	|(d\Theta\wedge dP)(v,v')|\geq \frac{1}{2} |(d\theta^f \wedge dp^f)(v,v')|.
	$$
 Note that given two  $C$ Lipshitz functions $\gamma_1, \gamma_2: \T \to  \R$ with $\gamma_1(s) > \gamma_2(s)$, 
	\[
		\int (\gamma_1 - \gamma_2)ds \ge \frac{1}{4C} \sup\|\gamma_1(s) - \gamma_2(s)\|^2. 
	\]

	Let $\Omega$ denote the region on $\mC_1$ between $\eta_c$ and $\eta_{c'}$. For $c, c' \in \Gamma_*$, there is $D_3, D_4 >1$ such that  
	\begin{equation}
		\label{eq:area-est}
		\begin{aligned}
			& D_3\|c - c'\| \ge \|\pi_{p^f}(c) - \pi_{p^f}(c')\| = |A(\eta_c, \eta_{c'})| \ge \frac12  \left|\int_\Omega d\theta^f \wedge dp^f \right|\\
			&= \frac12 \left|\int (\pi_{p^f} \circ \eta_c (s) - \pi_{p^f}\circ \eta_{c'}(s)) dt \right|\ge \frac{1}{D_4} \sup\|\pi_{p^f}\circ \eta_c(s) - \pi_{p^f} \circ \eta_{c'}(s)\|^2.   
		\end{aligned}
	\end{equation}
	Combine with \eqref{eq:eta-pf} we get our conclusion. 
\end{proof}

\begin{lem}

	In the context of Lemm~\ref{lem:local-weak-kam}, consider for $c, c' \in B_{\sigma_1}(c_0) \cap \Gamma_*(N)$, and $\zeta_1 \in \mS_1(N, c)$ and $\zeta_1' \in \mS_1(N, c')$, denote 
\[
	u_c(\cdot) = h_{\xi^*c}(\zeta_1, \cdot) = h_{N\circ \Xi, \xi^*c}(\zeta_1, \cdot), \quad
	u_{c'}(\cdot) = h_{\xi^*c'}(\zeta_2, \cdot) = h_{N\circ \Xi, \xi^*c'}(\zeta_1', \cdot) .
\]
 Then for $\theta \in B_{\sigma_2}(\mS_1(N, c_0))$ :
	\begin{enumerate}
	 \item 	$|  \nabla u_c(\theta) - \nabla u_{c'}(\theta) | \le C_2 \|c - c'\|^{\frac12}$;
	\item $|u_c(\theta) - u_{c'}(\theta)- C_3| \le C_2 \|c - c'\|^{\frac12}$. 
	\end{enumerate}
Moreover, the same holds with $\mS_1$ replaced with $\mS_2$. 
\end{lem}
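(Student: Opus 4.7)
The plan is to reduce the Hölder regularity of $u_c$ to the Hölder regularity of the invariant curves $\tmS_1(N,c)$ provided by Lemma~\ref{lem:cyl-holder}, and to propagate this regularity to the unstable manifolds $W^u_1(N,c)$ via the $C^1$ strong unstable foliation of $\Lambda_1$ stated at the beginning of this subsection. By Lemma~\ref{lem:local-weak-kam}, for every $c \in B_{\sigma_1}(c_0) \cap \Gamma_*(N)$ the closure of the pseudograph $\overline{\mE_1(N,c)}$ coincides with $W^u_1(N,c)$ above $B_{\sigma_2}(\mS_1(N,c_0))$ and is a uniformly Lipschitz graph; I write this graph as $\theta \lmto (\theta, P_c(\theta))$. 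Then $u_c$ is $C^1$ on the relevant region and $\nabla u_c(\theta) = P_c(\theta) - \xi^*c$, so item 1 reduces to the pointwise estimate
\[
\|P_c(\theta) - P_{c'}(\theta)\| \le C\|c-c'\|^{1/2}, \qquad \theta \in B_{\sigma_2}(\mS_1(N,c_0)).
\]

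To prove this estimate I exploit the structure recalled at the start of the subsection: the center-unstable manifold $W^{cu}(\Lambda_1)$ is a $C^2$ submanifold foliated by the $C^2$ strong unstable leaves $W^u(z)$, the foliation being $C^1$ in the basepoint $z \in \Lambda_1$. The foliation therefore induces a $C^1$ holonomy projection $\Pi : W^{cu} \to \Lambda_1$, and by definition
\[
W^u_1(N,c) = \Pi^{-1}\bigl(\tmS_1(N,c)\bigr) \cap W^{cu}.
\]
Fixing $\theta$, the fiber $W^{cu}\cap\{\pi_\theta = \theta\}$ meets $\Pi^{-1}(\tmS_1(N,c))$ transversely in the single point $(\theta,P_c(\theta))$, so the implicit function theorem combined with the $C^1$ regularity of $\Pi$ transfers the $\|c-c'\|^{1/2}$ Hausdorff variation of $\tmS_1(N,c)$ supplied by Lemma~\ref{lem:cyl-holder} to a $\|c-c'\|^{1/2}$ variation of $P_c(\theta)$, with a constant uniform in $\theta \in B_{\sigma_2}(\mS_1(N,c_0))$. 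Item 1 follows because $\nabla u_c - \nabla u_{c'} = (P_c - P_{c'}) - \xi^*(c-c')$.

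For item 2, I fix a reference point $\theta_* \in B_{\sigma_2}(\mS_1(N,c_0))$ and integrate along any smooth path in this connected neighborhood: the uniform bound from item 1 and the fundamental theorem of calculus give
\[
\bigl(u_c(\theta) - u_{c'}(\theta)\bigr) - \bigl(u_c(\theta_*) - u_{c'}(\theta_*)\bigr) = O\bigl(\|c-c'\|^{1/2}\bigr),
\]
and item 2 holds with $C_3 := u_c(\theta_*) - u_{c'}(\theta_*)$. The argument for $\tmS_2$ is symmetric, replacing the unstable foliation with the stable foliation of $\Lambda_2$ and the forward pseudograph $\overline{\mE_2(N,c)}$ with the backward one $\overline{\cmE_2(N,c)}$.

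The main technical point is the passage from the Hölder estimate on $\tmS_1(N,c)$ to the Hölder estimate on $W^u_1(N,c)$ via the $C^1$ foliation; although conceptually straightforward, obtaining a uniform constant requires some care because the hyperbolic estimates of Theorem~\ref{nhic-mult} degenerate as $\epsilon \to 0$. Thus one should verify that the $C^1$ regularity of the strong unstable foliation, and in particular the Lipschitz dependence of the leaves on their basepoint, holds with a constant independent of $\epsilon$ within the parameter range under consideration; this may cost a further shrinking of $\delta_0$ but does not require any new ingredient beyond the hyperbolicity theory used in Section~\ref{sec:NHIC}.
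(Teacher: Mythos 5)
Your proof is correct and essentially reproduces the paper's argument: both proofs establish item~1 by combining the identification of $\overline{\mE_1(N,c)}$ with $W^u_1(N,c)$ near $\mS_1$ from Lemma~\ref{lem:local-weak-kam}, the $C^1$ regularity of the strong unstable foliation, and the $\tfrac12$-H\"older dependence of $\tilde\mS_1(N,c)$ on $c$ from Lemma~\ref{lem:cyl-holder}, and then derive item~2 by integrating along a path. The paper's version carries out the holonomy estimate with the explicit chain $y\mapsto z\mapsto z'\mapsto y'\mapsto w$ (picking $z'$ over the same $\theta^f$ as $z$, applying the foliated holonomy, then correcting along the Lipschitz graph $W^u_1(N,c')$), whereas you encapsulate the same two Lipschitz bounds in the transversality of the vertical fibre $\{\pi_\theta = \theta\}$ to $W^u_1(N,c)=\Pi^{-1}(\tilde\mS_1(N,c))$ inside $W^{cu}$; this is a cosmetic repackaging, not a different route. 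One small remark: your closing concern about $\epsilon$-uniformity of the foliation constants does not arise here, since $\epsilon$ and $\delta_0$ are fixed throughout Theorem~\ref{thm:totally-disc} and the downstream use of $C_2$ in Proposition~\ref{prop:hd} only requires finiteness, not uniformity in $\epsilon$.
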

\begin{proof}
For $\theta \in B_{\sigma_2}(\mS_1(N, c_0))$, let $y = (\theta, \nabla u_c(\theta))$, and let $z \in \mS_1(N, c)$ be such that $y \in W^s(z)$. We then define $z' \in \mS_1(N, c')$ be the unique such point with $\theta^f(z') = \theta^f(z)$. Finally, define $y' \in W^u(z')$ such that $\theta^s(y') = \theta^s(y)$, which is possible since $W^u(z')$ is locally a graph over $\theta^s$. 

We note that within the center unstable manifold $W^u(\Lambda)$, the NHIC $\Lambda$ on one hand, and $\theta^s = \theta^s(y)$ on the other hand serves as two transversals to the strong unstable foliation $\{W^u(\cdot)\}$. Since the foliation is $C^1$, there exists $D_1>0$ such that 
\[
	\|y - y'\| \le D_1 \|z - z'\| \le C_1 D_1 \|c - c'\|^{\frac12},
\]
where $C_1$ is from Lemma~\ref{lem:cyl-holder}. Denote $w = (\theta, \nabla u_{c'}(\theta))$, and noting $y' \in W^u_1(N,c') = \{(x, \nabla u_{c'}(x))\}$ which is locally a $C^1$ graph, we get for $D_2 > 0$
\[
	\|w - y'\| \le D_2 \|\pi_\theta(w) - \pi_\theta(y')\| = D_2 \|\pi_\theta(y) - \pi_\theta(y')\| \le  D_2\|y - y'\|,
\]
therefore 
\[
	\|\nabla u_c(\theta) - \nabla u_{c'}(\theta)\| \le \|w - y\| \le \|w -y'\| + \|y - y'\| \le D_3 \|y - y'\| \le D_4 \|c - c'\|^{\frac12}. 
\]
Item 1 follows. For item 2, we consider $\theta, \theta_0 \in B_{\sigma_2}(\mS_1(N, c_0))$, then integrating item 1 leads to 
\begin{equation}
  \label{eq:u-comp}
	|u_c(\theta) - u_{c'}(\theta) - \left( u_c(\theta_0) - u_{c'}(\theta_0) \right)| \le D_5 \|c - c'\|^{\frac12}.   
\end{equation}
Item 2 follows by taking $C_3 = u_c(\theta_0) - u_{c'}(\theta_0)$. 
\end{proof}

\begin{proof}[Proof of Proposition~\ref{prop:hd}]
	Fix $(N,c_0) \in \mQ$, we consider $c \in B_{\sigma_2}(c_0) \cap \Gamma_*(N)$ in the context of Lemma~\ref{lem:local-weak-kam}. From item 2 of that lemma,  for every $\theta \in \T^n$, there exists a calibrated orbit $\gamma: (-\infty, 0] \to \T^n$ with $\gamma(0) = \theta$,  such that $\gamma(t) \in B_{\sigma_2}(\mS_1(N,c) \cup \mS_2(N, c))$ whenever $t < -M$. Then (omitting the subscript $N\circ \Xi$)
	\[
		h_{\xi^*c}(\zeta_1, \theta) = \min_{i = 1,2} \min_{k \le M}
		\min_{\theta' \in B_{\sigma_2}}
		\left\{h_{\xi^*c}(\zeta_1, \zeta_i) +  h_{\xi^*c}(\zeta_i, \theta') + A^k_{\xi^*c}(\theta',\theta) \right\}.
	\]
	Since $h_{\xi^*c}(\zeta_i, \theta')$ are uniformly $\frac12$ Holder in $c$ for  $\theta' \in B_{\sigma_2}(\mS_i(N,c))$ and $c \in B_{\sigma_2}(c_0) \cap \Gamma_*(N)$, each $A^k_{\xi^*c}$ are uniformly Lipshitz in $c$, the family $h_{\xi^*c}(\zeta_1, \theta)$ is $\frac12$ Holder in $c$. 
\end{proof}

\appendix

\section{Normally hyperbolic manifold}
\label{sec:abstract-nhic}

Let $F:\Rm^n\lto \Rm^n$
be a $C^1$ vector field. We give sufficient conditions
for the existence of a Normally hyperbolic invariant graph of $F$.
We split the space $\Rm^n$ as $\Rm^{n_u}\times\Rm^{n_s}\times \Rm^{n_c}$,
and denote by $x=(u,s,c)$ the points of $\Rm^n$. We denote by
$(F_u,F_s,F_c)$ the components of $F$:
$$
F(x)=(F_u(x),F_s(x),F_c(x)).
$$
We study the flow of $F$ in the domain
$$\Omega=B^u\times B^s\times \Omega^c
$$
where $B^u$ and $B^s$ are  the open Euclidean balls of radius $r_u$  
and $r_s$ in $\Rm^{n_u}$ and $\Rm^{n_s}$, and $\Omega^c$ is a convex 
open subset of $\Rm^{n_c}$. We denote by
$$
L(x)=dF(x)=\begin{bmatrix}
L_{uu}(x)&L_{us}(x)&L_{uc}(x)\\L_{su}(x)&L_{ss}(x)&L_{sc}(x)\\L_{cu}(x) &L_{cs}(x)&
L_{cc}(x)
\end{bmatrix}
$$
the linearized vector field at point $x$. We  assume that $\|L(x)\|$ is bounded
on $\Omega$, which implies that each trajectory of $F$ is defined until it leaves $\Omega$.
We denote by $W^c$ the union of full orbits contained in $\Omega$.
In other words, this is the set of initial conditions $\underline x\in \Omega$
such that there exists a solution $x(t):\Rm\lto \Omega$
of the equation $\dot x =F(x)$ satisfying $x(0)=\underline x$.
We denote by $W^{sc}$ the set of points whose positive  orbit remains inside
$\Omega$.
In other words, this is the set of initial conditions $\underline x\in \Omega$
such that there exists a solution $x(t):[0,\infty)\lto \Omega$
of the equation $\dot x =F(x)$ satisfying $x(0)=\underline x$.
Finally, we denote by $W^{uc}$
the set of points whose negative  orbit remains inside
$\Omega$.
In other words, this is the set of initial conditions $\underline x\in \Omega$
such that there exists a solution $x(t):(\infty,0]\lto \Omega$
of the equation $\dot x =F(x)$ satisfying $x(0)=\underline x$.
These sets have specific features under the following assumptions:

\begin{hyp}[Isolating block]\label{block}
We have:
\begin{itemize}
\item
$F_c=0$ on $B^u\times B^s\times \partial\Omega^c$.
\item $F_u(u,s,c)\cdot u> 0$ on  $\partial B^u \times \bar B^s \times \bar \Omega^c$.
\item $F_s(u,s,c)\cdot s< 0$ on  $ \bar B^u \times \partial B^s \times \bar \Omega^c$.
\end{itemize}
\end{hyp}
\begin{hyp}\label{cone}
There exist  positive constants $\alpha$ and  $m$  such that:
\begin{itemize}
\item
$L_{uu}(x)\geq \alpha I , \quad L_{ss}(x)\leq -\alpha I
$ for each $x\in \Omega$
in the sense of quadratic forms.
\item$
\|L_{us}(x)\|+\|L_{uc}(x)\|+\|L_{su}(x)\|+\|L_{sc}(x)\|+
\|L_{cu}(x)\|+\|L_{cs}(x)\|+\|L_{cc}(x)\|\leq m
$ for each $x\in \Omega$.
\end{itemize}
\end{hyp}

\begin{thm}\label{abstractNHI}
Assume that  Hypotheses \ref{block} and \ref{cone} hold, and that
$$
0\leq K:= \frac{m}{\alpha-2m}\leq \frac{1}{\sqrt{2}}.
$$
Then the set $W^{sc}$ is  the graph of a $C^1$ function
$$w^{sc}:B^s \times \Omega^c \lto B^u,
$$
 the set $W^{uc}$ is  the graph of a $C^1$ function
$$w^{uc}:B^u \times \Omega^c \lto B^s,
$$
and  the set $W^c$ is the graph of a $C^1$ function
$$w^c=(w^c_u,w^c_s):\Omega^c\lto B^u\times B^s.
$$
Moreover, we have the estimates
$$\|dw^{sc}\|\leq K,\quad \|dw^{uc}\|\leq K,
\quad \|dw^c\|\leq 2K.
$$
\end{thm}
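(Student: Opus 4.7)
The plan is to construct $W^{sc}$, $W^{uc}$ and $W^c$ from two ingredients: a cone field invariance argument encoding the hyperbolicity of Hypothesis~\ref{cone}, and a Wazewski type trapping argument exploiting the isolating block structure of Hypothesis~\ref{block}. Throughout, I denote by $\phi^t$ the local flow of $F$.

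I first introduce the forward invariant ``unstable--center'' cone field
\[
C^{uc}_K(x) = \{v = (v_u, v_s, v_c) \in \Rm^n : \|v_s\| \le K\|(v_u, v_c)\|\}
\]
and its backward invariant counterpart $C^{sc}_K$ obtained by interchanging the roles of $u$ and $s$. To prove cone invariance under the variational equation $\dot v = L(\phi^t x)\,v$, I differentiate the quadratic form $\|v_s\|^2 - K^2\|(v_u, v_c)\|^2$ on the cone boundary, using $L_{uu} \ge \alpha I$, $L_{ss} \le -\alpha I$, and the collective bound $m$ on the off--diagonal blocks. A careful Cauchy--Schwarz bookkeeping shows that the precise choice $K = m/(\alpha - 2m)$ together with the threshold $K \le 1/\sqrt 2$ makes this derivative nonpositive, yielding forward invariance of $C^{uc}_K$ under $D\phi^t$; reversing time yields backward invariance of $C^{sc}_K$.

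To construct $W^{sc}$, I fix $(s_0, c_0) \in B^s \times \Omega^c$ and claim that the set of $u_0 \in B^u$ whose forward orbit stays in $\Omega$ consists of a single point $w^{sc}(s_0, c_0)$. Non--emptiness is a Wazewski argument: by Hypothesis~\ref{block}, orbits in $\Omega$ can exit only through the unstable face $\partial B^u \times \bar B^s \times \bar \Omega^c$ (because $F_s \cdot s < 0$ pushes inward on the stable face and $F_c = 0$ on the center face), and exit is transverse, so the exit time map is continuous on its open domain; if every orbit from the slice $\{(u, s_0, c_0) : u \in \bar B^u\}$ exited, composing the exit map with projection onto $\partial B^u$ would yield a continuous retraction of $\bar B^u$ onto $\partial B^u$, contradicting Brouwer. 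Uniqueness and the Lipschitz estimate $\|dw^{sc}\| \le K$ follow from forward invariance of $C^{uc}_K$ combined with convexity of $\Omega$: for two forward trapped orbits $x(t), y(t)$ with $y(0) - x(0)$ in the interior of $C^{uc}_K$, the difference $\Delta(t) = \phi^t y_0 - \phi^t x_0$ solves an averaged variational equation $\dot \Delta = L^{\mathrm{avg}}(t)\,\Delta$ with $L^{\mathrm{avg}}(t) = \int_0^1 L(\phi^t x_0 + \sigma \Delta(t))\, d\sigma$, and $L^{\mathrm{avg}}$ satisfies Hypothesis~\ref{cone} pointwise; hence $\Delta(t) \in C^{uc}_K$ for all $t$ and its $u$-component grows exponentially, incompatible with both orbits remaining in $\bar B^u$. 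Thus the $u$-difference of two points of $W^{sc}$ cannot exceed $K$ times their $(s,c)$-difference.

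The construction of $W^{uc}$ is symmetric, using the time reversed flow and the cone $C^{sc}_K$, yielding $w^{uc} : B^u \times \Omega^c \to B^s$ with $\|dw^{uc}\| \le K$. The central graph $W^c = W^{sc} \cap W^{uc}$ is then given by the implicit system $u = w^{sc}(s, c)$, $s = w^{uc}(u, c)$, which has a unique solution $w^c(c) = (w^c_u(c), w^c_s(c))$ because the composition $(u,s) \mapsto (w^{sc}(s,c), w^{uc}(u,c))$ is a $K^2$-Lipschitz contraction of $\bar B^u \times \bar B^s$ (since $K^2 \le 1/2$). The bound $\|dw^c\| \le 2K$ comes from the fact that tangent vectors to $W^c$ lie in $C^{uc}_K \cap C^{sc}_K$: combining the two cone constraints yields $\|v_u\|^2 + \|v_s\|^2 \le 2K^2\|v_c\|^2/(1-K^2) \le 4K^2\|v_c\|^2$ for $K \le 1/\sqrt 2$, whence $\|dw^c\| \le 2K$. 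The hardest step will be the first one: arranging the Cauchy--Schwarz bookkeeping in the cone invariance calculation so that the off--diagonal blocks, controlled only by the \emph{sum} of their norms, are absorbed solely by the hyperbolic contributions from $L_{uu}$ and $L_{ss}$; once that is settled, the $C^1$ regularity of the three graphs follows from the standard argument that an invariant Lipschitz graph whose tangent bundle lies in a uniformly contracting family of cones is automatically $C^1$, obtained via a graph transform on continuous sections of the relevant Grassmannian bundle.
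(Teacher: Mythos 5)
Your overall architecture — a Wazewski/Brouwer non-retraction argument for non-emptiness, cone-field invariance for uniqueness and the Lipschitz bound, a contraction mapping to extract $W^c$, and the algebraic step giving $2K$ from two $K$-Lipschitz graphs — is the classical direct route, and several pieces coincide exactly with the paper: the retraction argument reproduces the paper's Lemma giving $\pi^{sc}(W^{sc})=B^s\times\Omega^c$, the fixed-point construction of $W^c$ and the inequality $(1-K^2)(\|v_u\|^2+\|v_s\|^2)\le 2K^2\|v_c\|^2$ leading to $2K$ are the paper's verbatim. The genuine methodological difference is in the middle step: the paper does \emph{not} run a cone-field argument there; after the Wazewski lemma it reduces existence, $C^1$ regularity and the $K$-Lipschitz bound for $W^{sc}$ and $W^{uc}$ to a citation of Yang's Theorem 1.1, applied after the rescaling $a=u/K$, $z=(s,c)$, which is precisely what produces the constant $K=m/(\alpha-2m)$ and the threshold $K\le1/\sqrt2$. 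Your attempt to do this step from scratch is legitimate in principle, but as sketched it has a real gap.

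The problem is the choice of cone. You define $C^{uc}_K=\{\|v_s\|\le K\|(v_u,v_c)\|\}$ and assert that for $\Delta(0)$ in its interior, ``its $u$-component grows exponentially, incompatible with both orbits remaining in $\bar B^u$.'' This is false: $C^{uc}_K$ contains the whole centre axis $\{v_u=0,\ v_s=0\}$, on which $\dot v_u=L_{us}v_s+L_{uc}v_c$ gives no growth mechanism for $\|v_u\|$ — the centre dynamics are only $O(m)$, not hyperbolic. What the $K$-Lipschitz bound $\|\Delta_u\|\le K\|(\Delta_s,\Delta_c)\|$ for $W^{sc}$ actually requires is a statement about the \emph{different} region $\{\|v_u\|>K\|(v_s,v_c)\|\}$: one must show that this set (together with $0$) is forward invariant under the averaged variational flow and that on it $\|v_u\|$ grows exponentially, so that two forward-trapped orbits cannot have a difference there. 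That is the place where the precise numerical threshold $m+m/K+Km<\alpha$, i.e.\ $K=m/(\alpha-2m)$ and $K<1$, actually enters, and it is exactly what Yang's theorem packages (in rescaled coordinates it becomes a $1$-Lipschitz graph statement). Your Cauchy–Schwarz calculation is ``promised'' but never carried out on the correct cone, and with $C^{uc}_K$ it cannot give the desired conclusion because of the centre directions. Finally, the $C^1$ regularity is asserted via ``the standard graph-transform on the Grassmannian'' without detail; the paper gets this for free from Yang's theorem, and in a self-contained version this would need its own argument.
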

\proof
This results could be reduced to several already existing ones,
see \cite{Fe, HPS,McG,Ch}
or proved directly by well-known methods. We shall use Theorem 1.1 in \cite{Ya}
which is the closest to our needs because it is expressed  in terms
of vector fields.
We first derive some conclusions from the isolating block conditions.
We denote by $\pi^{sc}$ the projection $(u,s,c)\lmto (s,c)$, and so on.

\begin{lem}\label{h3}
If Hypothesis \ref{block} holds, then
$$
\pi^{sc}(W^{sc})=B^s\times \Omega^c.
\quad
\text{and} \quad
\pi^{uc}(W^{uc})=B^u\times \Omega^c 
$$
Moreover, the closures of $W^{sc}$ and $W^{uc}$
satisfy
$$
\bar W^{sc}\subset B^u\times \bar B^s\times \bar \Omega^c,
\quad
\bar W^{uc}\subset \bar B^u\times  B^s\times \bar \Omega^c.
$$
\end{lem}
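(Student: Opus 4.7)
The plan is to prove the two parts of Lemma~\ref{h3} by exploiting the three pieces of Hypothesis~\ref{block}, which organize the boundary behaviour of the flow: the $u$-boundary is repelling, the $s$-boundary is attracting, and the $c$-boundary is tangential. I will only treat $W^{sc}$; the statement for $W^{uc}$ follows from the same argument applied to $-F$, after interchanging the roles of $u$ and $s$.

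For the closure inclusion $\bar W^{sc}\subset B^u\times\bar B^s\times\bar\Omega^c$, the idea is that the strict inequality $F_u\cdot u>0$ on the compact set $\partial B^u\times\bar B^s\times\bar\Omega^c$ gives, by continuity, a uniform lower bound $F_u\cdot u\geq 2\eta>0$ on a full neighbourhood of this set. Hence for any initial point sufficiently close to $\partial B^u$ (with $(s,c)\in\bar B^s\times\bar\Omega^c$), the derivative of $\|u(t)\|^2/2$ along the orbit is at least $\eta$ while the orbit stays in this neighbourhood, so $\|u(t)\|$ exceeds $r_u$ in bounded time and the orbit leaves $\Omega$. Consequently no such point lies in $W^{sc}$, and the conclusion passes to the closure.

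The substantive part is the surjectivity $\pi^{sc}(W^{sc})=B^s\times\Omega^c$. Fix $(s_0,c_0)\in B^s\times\Omega^c$ and consider the forward orbits starting from $(u,s_0,c_0)$ for $u\in\bar B^u$. The key observation is that any such orbit which leaves $\Omega$ in forward time must exit through $\partial B^u$. An exit through $\bar B^u\times\partial B^s\times\bar\Omega^c$ is ruled out exactly as in the previous paragraph, using $F_s\cdot s<0$. An exit through $B^u\times B^s\times\partial\Omega^c$ is ruled out by a uniqueness argument: since $F_c\equiv 0$ there, the reduced ODE $\dot c=F_c(u(t),s(t),c)$, viewed with $(u(t),s(t))$ as external inputs remaining in $B^u\times B^s$, admits the constant $c(t)\equiv c_*$ as a solution whenever $c_*\in\partial\Omega^c$; by Cauchy--Lipschitz uniqueness, any orbit of $F$ that first touches $\partial\Omega^c$ at a time when $(u,s)$ is still in $B^u\times B^s$ must have been on $\partial\Omega^c$ throughout, contradicting its origin in $\Omega^c$.

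Once exits are confirmed to occur through $\partial B^u$, the conclusion follows from the classical Wa\.{z}ewski/Brouwer retract argument. The exit time $\tau(u)$ depends continuously on $u$ because $F_u\cdot u>0$ makes the exit transverse, and the map $\phi$ sending $u\in\bar B^u$ to the $u$-component of the exit point is continuous and equals the identity on $\partial B^u$. If $\tau(u)<\infty$ for every $u\in\bar B^u$, then $\phi$ would be a continuous retraction of $\bar B^u$ onto $\partial B^u$, contradicting the no-retraction theorem; hence some $u_0\in B^u$ yields an orbit trapped in $\Omega$, giving the required point of $W^{sc}$ above $(s_0,c_0)$. I expect the only mildly delicate step to be the $c$-boundary invariance, which needs both the $F_c\equiv 0$ hypothesis and the local Lipschitz uniqueness of solutions; the remaining ingredients are purely topological.
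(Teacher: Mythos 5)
Your proposal is correct and follows essentially the same route as the paper: show that forward exits from $\Omega$ can only occur through $\partial B^u$, observe that the first-exit time is continuous because the exit is transverse, and then apply the Brouwer no-retraction theorem to $\bar B^u$; the closure inclusion is handled the same way, by noting that a uniform lower bound on $F_u\cdot u$ near $\partial B^u$ forces nearby orbits out in bounded time. The only place you supply more detail than the paper is the justification that the first exit cannot occur through $\bar B^u\times\bar B^s\times\partial\Omega^c$ — the paper simply asserts this follows from Hypothesis~\ref{block}, while your freezing of $(u(t),s(t))$ to reduce to a non-autonomous scalar ODE in $c$ and then invoking Cauchy--Lipschitz uniqueness against the stationary solution $c\equiv c_*$ is a correct and clean way to fill that gap.
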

\proof
Let us define $T^+(x)\in [0, \infty]$ as the first positive time
where the orbit of $x$ hits the boundary $\partial \Omega$.
Let us denote by $\varphi(t,x)$ the flow of $F$.
If $T^+(x)<\infty$ (which is equivalent to $x\not \in W^{sc}$),  we have
$\varphi(T^+(x),x)\in \partial B^u\times B^s\times \Omega^c$,
as follows from Hypothesis \ref{block}.
Then, it is easy to check that the function $T^+$ is continuous, and even $C^1$,
at $x$.

We  prove the first equality of the Lemma  by contradiction,
and assume that there exists a point $(s,c)\in B^s\times \Omega^c$
such that $W^{sc}$ does not intersect the disc $B^u\times \{s\} \times \{c\}$.
Then, the first exit map
$$B^u\ni u\lmto \pi^u\circ \varphi(T^+(u,s,c),(u,s,c))\in \partial B^u,
$$
extends by continuity to a continuous retraction
from $\bar B^u$ to its boundary $\partial B^u$.
 Such a retraction does not exist.
The proof of the other equality is similar.

Finally, we have
$$
\bar W^{sc} \subset \bar B^u\times \bar B^s \times \bar \Omega ^c
=\big( B^u\times \bar B^s \times \bar \Omega ^c\big)\bigcup
\big( \partial B^u\times \bar B^s \times \bar \Omega ^c\big).
$$
Hypothesis \ref{block} implies that each point of
$\partial B^u\times \bar B^s \times \bar \Omega ^c$ has a neighborhood
formed of points which leave $\Omega$ after a small time.
As a consequence, the set $\partial B^u\times \bar B^s \times \bar \Omega ^c$
can't intersect $\bar W^{uc}$, and we have proved that
$
\bar W^{sc} \subset B^u \times \bar B^s \times \bar \Omega ^c.
$
The other inclusion can be proved in a similar way.
\qed

In order to prove the statement of the Theorem  concerning $W^{sc}$, we apply Theorem 1.1 of \cite{Ya}.
More precisely, using the notation of that paper, we set
$$
a=u/K, \quad
z=(s,c),\quad f(a,z)=F_u(Ka,z)/K,\quad g(a,z)=(F_s(Ka,z),F_c(Ka,z)).
$$
We have the estimates
$$
\partial_af=L_{uu}\geq \alpha, \quad
\partial_z g=
\begin{bmatrix}
L_{ss}&L_{sc}\\L_{cs}&L_{cc}
\end{bmatrix}\leq m
$$
in the sense of quadratic forms.
Moreover, we have the estimates
$$
\|\partial_z f\|\leq \frac{m}{K},\quad \|\partial_ag\|\leq Km.
$$
Since
$$
m+m/K+Km<2m+m/K=\alpha
$$
we conclude that Hypothesis 2 of \cite{Ya} is satisfied.
Hypothesis 1 of \cite{Ya} is verified by the domain $\Omega$,
and Hypothesis 3 is precisely the conclusion of Lemma \ref{h3}.
As a consequence, we can apply Theorem 1.1 of \cite{Ya},
and conclude that the set $W^{sc}$ is the graph of a $C^1$ and $1$-Lipschitz
map above $B^s\times \Omega^c$
in $(a,z)$ coordinates, and therefore the graph of a $K$-Lipschitz
$C^1$ map $w^{sc}:B^s\times \Omega^c\lto B^u$ in  $(u,s,c)$ coordinates.

In order to prove the statement concerning $W^{uc}$,
we apply Theorem 1.1 of \cite{Ya} with
$$
a=s/K, \quad
z=(u,c),\qquad \qquad $$
$$
f(a,z)=-F_s(Ka,z)/K,\quad g(a,z)=-(F_u(Ka,z),F_c(Ka,z)).
$$
It is easy to check as above that all hypotheses are satisfied.

Let us now study the set $W^c=W^{sc}\cap W^{uc}$.
First, let us prove that $W^c$ is a $C^1$ graph above $\Omega^c$.
We know that $W^{sc}$ is the graph of a $K$-Lipshitz $C^1$ function
$w^{sc}(s,c)$ and that  $W^{uc}$
is the graph of a $K$-Lipshitz $C^1$ function
$w^{uc}(u,c)$. The point $(u,s,c)$ belongs to $W^c$
if and only if
$$
u=w^{sc}(s,c) \quad \text{and}\quad s=w^{uc}(u,c),
$$
or in other words if and only if $(u,s)$ is a fixed point
of the $K$-Lipschitz $C^1$ map
$$
(u,s)\lmto (w^{sc}(s,c),w^{uc}(u,c)).
$$
For each $c$, this contracting  map has a unique  fixed point in
$\bar B^u\times \bar B^s$, which corresponds to a point
of $\bar W^{sc}\cap \bar W^{uc}$. It follows from  Lemma \ref{h3}
that this point is contained in $B^u\times B^s$.
Then,  it depends in a $C^1$ way of the
parameter $c$. We have proved that $W^c$ is the graph
of a $C^1$ function $w^c$.
In order to estimate the Lipschitz constant of this graph,
we consider two points $(u_i,s_i,c_i), i=0,1$ in $W^c$.
We have
$$
\|u_1-u_0\|^2\leq K^2 (\|s_1-s_0\|^2+\|c_1-c_0\|^2)
$$
and
$$
\|s_1-s_0\|^2\leq K^2 (\|u_1-u_0\|^2+\|c_1-c_0\|^2).
$$
Taking the sum gives
$$
(1-K^2)(\|u_1-u_0\|^2+\|s_1-s_0\|^2)\leq 2 K^2\|c_1-c_0\|^2
$$
and
$$
\|(u_1,s_1)-(u_0,s_0)\|\leq \sqrt{\frac{2K^2}{1-K^2}}\|c_1-c_0\|
\leq 2K\|c_1-c_0\|,
$$
since $K\leq 1/\sqrt{2}$.
We conclude that $w^c$ is $2K$-Lipschitz.
\qed

It is useful to go a bit further in the study of the invariant manifold $W^c = \{(w_u^c(c), w^s_c(c), c)\}$.
This manifold is a partially hyperbolic invariant set, hence by the usual theory, to each point 
$x\in W^c$ is attached a strong stable manifold $W^s(x)$ and a strong unstable manifold $W^u(x)$,
which are $C^1$ (and even $C^r$ if $F$ is $C^r$).
The manifolds $W^u(x), x\in W^c$ partition $W^{uc}$, although this partition is not usually a $C^1$ foliation.
For each $x\in W^{uc}$, we denote by $E^u(x)$ the strong unstable space, which is
the tangent space at $x$ of the only unstable manifold $W^u(x_0)$ which contains $x_0$.
We define the exponents
\begin{align*}
e_u&:= - \sup_{x\in W^c, v\in E^u(x)} \limsup _{t\lto \infty} \log (\|v(-t)\|)/t, \\
&=-\sup_{x\in W^{uc}, v\in E^u(x)} \limsup _{t\lto \infty} \log (\|v(-t)\|)/t, \\
e_c^+&:= \sup_{x\in W_c, v\in T_xW_c} \limsup _{t\lto \infty} \log (\|v(t)\|)/t, \\
e_c^-&:= \inf_{x\in W_c, v\in T_xW_c} \liminf _{t\lto \infty} \log (\|v(t)\|)/t,
\end{align*}
where $v(t)$ is the solutions of the linearized equation $\dot v(t)=dF_{x(t)}\cdot v(t)$ with initial condition $v(0)=v$,
and $x(t)$ is the solution of $\dot x(t)=F\circ x(t)$ starting from $x(0)=x$.

\begin{lem}
$$
-m-2mK\leq e_c^-\leq e_c^+\leq m+2mK.
$$
\end{lem}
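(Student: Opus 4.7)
\medskip
\noindent
\textit{Proof plan.} The plan is to exploit that $W^c$ is a Lipschitz graph over $\Omega^c$ with graph constant $2K$ (Theorem \ref{abstractNHI}), which forces the $(u,s)$-components of any tangent vector to be controlled by the $c$-component. The estimate on the central exponents then reduces to a straightforward Gronwall argument on $\|v_c(t)\|$ alone, using the off-diagonal bound from Hypothesis \ref{cone}.

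First I would fix $x\in W^c$ and $v=(v_u,v_s,v_c)\in T_xW^c$, and observe that $(v_u,v_s)=dw^c(c)\cdot v_c$, so the bound $\|dw^c\|\leq 2K$ yields $\|v_u\|^2+\|v_s\|^2\leq 4K^2\|v_c\|^2$, and in particular $\|v_u\|\leq 2K\|v_c\|$, $\|v_s\|\leq 2K\|v_c\|$. Consequently
\[
\|v_c\|\;\leq\;\|v\|\;\leq\;\sqrt{1+4K^2}\,\|v_c\|,
\]
so that for any orbit $x(t)$ in $W^c$ the exponential behavior of $\|v(t)\|$ and $\|v_c(t)\|$ coincide, and it suffices to control $\|v_c(t)\|$.

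Next I would differentiate: along the linearized flow,
\[
\dot v_c \;=\; L_{cu}(x(t))\,v_u + L_{cs}(x(t))\,v_s + L_{cc}(x(t))\,v_c.
\]
Using the componentwise bounds $\|v_u\|,\|v_s\|\leq 2K\|v_c\|$ together with $\|L_{cu}\|+\|L_{cs}\|+\|L_{cc}\|\leq m$ from Hypothesis \ref{cone}, one obtains
\[
\|\dot v_c\| \;\leq\; 2K\bigl(\|L_{cu}\|+\|L_{cs}\|\bigr)\|v_c\|+\|L_{cc}\|\|v_c\|\;\leq\;(m+2mK)\|v_c\|.
\]
Therefore
\[
\Bigl|\tfrac{d}{dt}\|v_c\|^2\Bigr|\;=\;2|\langle v_c,\dot v_c\rangle|\;\leq\;2(m+2mK)\|v_c\|^2,
\]
and Gronwall's inequality yields $\|v_c(0)\|e^{-(m+2mK)|t|}\leq\|v_c(t)\|\leq\|v_c(0)\|e^{(m+2mK)|t|}$ for all $t$ such that the orbit remains in $\Omega$, in particular for all $t\in\mathbb{R}$ when $x\in W^c$.

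Taking logarithms, dividing by $t$, and letting $t\to+\infty$, combined with the comparison $\|v\|\sim\|v_c\|$, gives the two-sided bound
\[
-m-2mK\;\leq\;\liminf_{t\to\infty}\tfrac{\log\|v(t)\|}{t}\;\leq\;\limsup_{t\to\infty}\tfrac{\log\|v(t)\|}{t}\;\leq\;m+2mK
\]
for every $x\in W^c$ and every $v\in T_xW^c$, whence $e_c^-\geq -m-2mK$ and $e_c^+\leq m+2mK$. No real obstacle arises here: the argument is a direct consequence of the $2K$-Lipschitz graph estimate from Theorem \ref{abstractNHI} together with the cone hypothesis, and there is nothing subtle to check beyond the elementary differential inequality.
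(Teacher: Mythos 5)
Your proof is correct and follows the same approach as the paper: use the $2K$-Lipschitz graph bound on $W^c$ to control $(v_u,v_s)$ by $v_c$, then derive a Gronwall estimate on $\|v_c\|^2$ using Hypothesis~\ref{cone}. You add a small amount of useful detail that the paper leaves implicit — namely the comparison $\|v_c\|\leq\|v\|\leq\sqrt{1+4K^2}\,\|v_c\|$ needed to pass from the exponential rate of $\|v_c(t)\|$ to that of $\|v(t)\|$ appearing in the definitions of $e_c^\pm$.
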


\proof
We consider an orbit $x(t)\in W^c$, and a variational orbit $v(t)=(u'(t),s'(t),c'(t))$ tangent to $W^c$.
Observe that  $\|(u',s')\| \le 2K \|c'\|$ for each $t$, which implies:
\[
\left|	\frac{d}{dt}\|c'\|^2 \right|=2|\langle c', L_{cu} u' + L_{cs} s' + L_{cc} c'\rangle| \le 2(m + 2K m)\|c'\|^2.
\]
\qed

The next Lemma implies that the manifolds $W^{s,c}(x)$ are the graphs of $C^1$ and $K$-Lipschitz maps $w^s_x:B^s\lto B^u\times \Omega^c , 
w^u_x:B^u\lto B^s\times \Omega^c
$.

\begin{lem}
If $x(t):]T^-,T^+[\lto \Omega$ is an orbit of $F$, then the linearized equation 
$\dot v(t)=dF_{x(t)}\cdot v(t)$ preserves the cone
 $C^u = \{\|(s', c')\| \le K \|u'\|\}$ in forwad time, and  the cone $C^s = \{\|(u', c')\| \le K \|s'\|\}$ in backward time.
 
 We have $E^u(x)\subset C^u$ for each $x\in W^{uc}$, $E^s(x)\subset C^s$ for each $x\in W^{sc}$.

Finally we have the estimate
$$
e_u\geq \alpha-2mK>\alpha/2.
$$
\end{lem}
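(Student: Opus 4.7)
For the forward invariance of $C^u$, I would track
$\phi(t) := \|(s'(t),c'(t))\|^2 - K^2\|u'(t)\|^2$
along the variational equation $\dot v = L(x(t))\,v$ and show that $\phi'(t) < 0$ whenever $\phi(t) = 0$ and $u'(t) \neq 0$. Hypothesis~\ref{cone} gives
$\langle u',\dot u'\rangle \ge \alpha \|u'\|^2 - (\|L_{us}\| + \|L_{uc}\|)\|u'\|\|(s',c')\|$,
and, after expanding $\langle s',\dot s'\rangle + \langle c',\dot c'\rangle$ and dropping the helpful (but unneeded) contribution $-\alpha\|s'\|^2$ coming from $L_{ss} \le -\alpha I$, every coefficient in the resulting upper bound for $\dot B^2 := \tfrac{d}{dt}\|(s',c')\|^2$ is one of the block norms in the sum bounded by $m$. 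Plugging the boundary relation $\|(s',c')\| = K\|u'\|$ into $\phi' = \dot B^2 - K^2\dot A^2$ and using $K^2 \le 1$, the inequality $\phi' < 0$ reduces to the single algebraic inequality $K\alpha > m$, which is an immediate consequence of the defining equality $K(\alpha - 2m) = m$. Backward invariance of $C^s$ follows by symmetry under the time reversal $F \mapsto -F$, which exchanges the roles of $(u,s)$ and of $L_{uu}, L_{ss}$.

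For the inclusion $E^u(x) \subset C^u$ at a point $x \in W^{uc}$, I would invoke the cone-theoretic characterization
\[
E^u(x) = \bigcap_{t \geq 0} D\varphi^t_{x(-t)}(C^u),
\]
the intersection being over all $t \ge 0$ (which is all of $[0,\infty)$ since $x \in W^{uc}$). Each set in the intersection is contained in $C^u$ by the forward invariance just established, so the same is true of the intersection, giving $E^u(x) \subset C^u$. That this nested intersection really has dimension $n_u$ and coincides with the classical strong unstable subspace is the standard output of the graph-transform / invariant-cone machinery and can be imported directly from the references (\cite{HPS},\cite{Fe},\cite{Ya}) already invoked in Theorem~\ref{abstractNHI}. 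The inclusion $E^s(x) \subset C^s$ at $x \in W^{sc}$ is dual.

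Finally, for the growth estimate: if $v \in E^u(x)$, then applying the previous step along the whole backward orbit gives $v(-t) \in C^u$ for every $t \ge 0$, so $\|(s'(-t),c'(-t))\| \le K\|u'(-t)\|$ throughout. Combined with $\|L_{us}\|+\|L_{uc}\| \le m$, Hypothesis~\ref{cone} yields
\[
\tfrac{d}{dt}\|u'(t)\|^2 \geq 2\alpha\|u'\|^2 - 2mK\|u'\|^2 \geq 2(\alpha - 2mK)\|u'\|^2,
\]
and hence $\|u'(-t)\| \leq \|u'(0)\|\, e^{-(\alpha-2mK)t}$. Since $\|u'\| \leq \|v\| \leq \sqrt{1+K^2}\,\|u'\|$ inside $C^u$, this propagates to $\|v(-t)\| \leq \sqrt{1+K^2}\,\|v(0)\|\,e^{-(\alpha-2mK)t}$, whence $e_u \geq \alpha - 2mK$. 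The strict inequality $\alpha - 2mK > \alpha/2$ follows from $K \leq 1/\sqrt 2$, which forces $\alpha \geq (2+\sqrt 2)m$ and hence $2mK \leq \sqrt 2\,m < \alpha/2$. The only delicate ingredient in the whole plan is the cone-intersection characterization of $E^u$ in the flow setting; this is classical but must be stated carefully, and it is already implicit in the normally hyperbolic theory invoked for Theorem~\ref{abstractNHI}.
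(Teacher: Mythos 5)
You follow the paper's Lyapunov/cone argument essentially step by step, and the explicit cone–intersection characterization of $E^u$ is a useful clarification (the paper leaves that and the growth estimate almost entirely implicit). But the pivotal cone-invariance step contains a computational error. Plugging the boundary relation $\|(s',c')\|=K\|u'\|$ together with the bounds $\dot A^2 \geq 2(\alpha-mK)\|u'\|^2$ and $\dot B^2 \leq 2mK(1+K)\|u'\|^2$ into $\phi' = \dot B^2 - K^2\dot A^2$ gives
\[
\phi' \ \leq\ 2K\|u'\|^2\bigl[\, m + mK + mK^2 - K\alpha \,\bigr],
\]
so the exact sufficient condition for $\phi'\le 0$ is $K\alpha \geq m(1+K+K^2)$ — which, multiplied back out, is precisely the paper's $m + m/K + mK \leq \alpha$. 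This is \emph{not} the inequality $K\alpha > m$. Since $K(\alpha-2m)=m$ gives the identity $K\alpha = m(1+2K)$, the required inequality $m(1+2K)\geq m(1+K+K^2)$ reduces to $K\leq 1$, which holds under $K\leq 1/\sqrt2$. Your claimed reduction to $K\alpha>m$ is much too weak — that always holds (it reads $m(1+2K)>m$) and does not control the extra $mK+mK^2$ contribution. Worse, invoking $K^2\le 1$ to replace $mK^2$ by $m$ actively backfires: it turns the sufficient condition into $K\alpha \geq m(2+K)$, i.e.\ $1+2K\geq 2+K$, i.e.\ $K\geq 1$ — the reverse of what you have. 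The correct path is to keep all three terms $m(1+K+K^2)$ and use the identity $K\alpha=m(1+2K)$ together with $K\le 1$; no weakening via $K^2\le 1$ is wanted here. The remainder of your argument (the nested-intersection description of $E^u$, and the backward exponential estimate giving $e_u\geq\alpha-2mK>\alpha/2$ from $K\le 1/\sqrt2$) is sound once this step is repaired.
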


\begin{proof}
Let $v(t)=(u'(t),s'(t),c'(t))$ be a solution of the linearized equation along $x(t)$.
Then
$$
 \frac{d}{dt} \|u'\|^2=\langle
 u', L_{uu} u'+L_{us}s'+L_{uc}c'  
 \rangle\geq \alpha \|u'\|^2- m\|(s',c')\|\|u'\|\geq (\alpha-mK)\|u'\|^2
 $$
 (this estimate will also provide the desired growth rate in the unstable direction) and 
\begin{align*}
 \frac{d}{dt} \|(s',c')\|^2 &=
\langle
s', L_{su}u'+L_{ss}s'+ L_{sc} c'\rangle + 
\langle
c', L_{cu}u'+L_{cs}s'+ L_{cc} c'
\rangle\\
&
\leq m \|(s',c')\|(\|u'\|+\|(s',c')\|)\leq mK(1+K)\|u'\|^2.
\end{align*}
This implies implies 
that 
$$
\frac{d}{dt} \big( K^2 \|u'\|^2-\|(s',c')\|^2\big)\geq K^2(\alpha-mK-m-m/K)\|u'\|^2\geq 0,
$$
recalling that $m+m/K+mK<\alpha$.
The estimates concerning $C^s$ are similar.
\end{proof}

In general, the maps $w^s_x$ and $w^u_x$ are not better than (H\"older)-continuous in $x$, but we can obtain a better regularity under
stronger hypotheses:

\begin{thm}
In the context of Theorem \ref{abstractNHI}, let us assume  the additional assumptions that 
 $F$ is $C^2$ and $K<1/8$ (or equivalently, $m<\alpha/6$).
 Then each of the manifolds $W^c, W^{uc}, W^{sc}$ is $C^2$,
 and the manifolds $W^u(x), x\in W^c$ form a $C^1$ foliation of $W^{uc}$ (similarly for $W^s$ in $W^{us}$).
 The foliations are $C^1$ in the strongest possible sense, namely the map $x\lmto E^u(x)$ is $C^1$ on $E^{cu}$,
 which imply that the foliation admits $C^1$ charts, and that the local holonomies ar $C^1$.
\end{thm}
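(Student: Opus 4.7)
The plan is to derive the regularity statements from the classical theory of $r$-normally hyperbolic invariant manifolds and their associated invariant foliations, as developed by Fenichel~\cite{Fe} and Hirsch--Pugh--Shub~\cite{HPS}. All the needed Lyapunov rates are provided by the two lemmas immediately preceding the statement: setting
\[
\mu_c := m(1+2K), \qquad \mu_u := \alpha - 2mK,
\]
one has $\max(|e_c^+|,|e_c^-|)\leq \mu_c$, $e_u\geq \mu_u$, and symmetrically $-e_s\geq \mu_u$. The hypothesis $K<1/8$ is equivalent to $\alpha>10m$, which forces $\mu_u/\mu_c>7$ and therefore leaves a comfortable margin in every rate inequality that appears below.

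For the $C^2$ smoothness of $W^c, W^{sc}, W^{uc}$, I would rerun the graph-transform argument of Theorem~\ref{abstractNHI} in the $C^2$ category. The graph transform is a contraction on $C^2$ Lipschitz sections provided the $2$-normally hyperbolic gap $2\mu_c<\mu_u$ holds, i.e.
\[
2m(1+2K)<\alpha-2mK \quad \Longleftrightarrow \quad 2K(1+3K)<1,
\]
an inequality that is verified whenever $K\leq 1/8$. Applying the argument in forward and backward time separately yields the $C^2$ smoothness of $W^{sc}$ and $W^{uc}$, and their $C^2$-transverse intersection is $W^c$, which is therefore also $C^2$.

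Next, I will construct the strong unstable foliation $\{W^u(x):x\in W^c\}$ and establish its $C^1$ regularity. Because $F$ is $C^2$ and $W^{uc}$ is now known to be $C^2$, the Hadamard--Perron theorem applied to the strong unstable cone $C^u$ identified in the preceding lemma produces $C^2$ leaves $W^u(x)$ that partition $W^{uc}$; at this stage the bundle $E^u$ is only known to be continuous in the base point. To upgrade it to $C^1$ I would appeal to the $1$-bunching theorem (Hirsch--Pugh--Shub, or Pugh--Shub--Wilkinson in its sharper form), whose condition reduces here to
\[
3\mu_c<\mu_u \quad \Longleftrightarrow \quad 3m(1+2K)<\alpha-2mK,
\]
equivalently $\alpha>3m+8mK$. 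For $K<1/8$ this is simply $\alpha>4m$, amply implied by $\alpha>10m$. Once $x\mapsto E^u(x)$ is $C^1$ along $W^{uc}$, the existence of $C^1$ foliated charts and of $C^1$ holonomies between $C^1$ transversals follows by a standard straightening argument. The stable foliation is handled symmetrically.

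The only real technical point is the usual bookkeeping: the pointwise cone and Lyapunov estimates of the preceding lemmas must be converted into a genuine uniform exponential dichotomy along the (only semi-infinite) orbits contained in $W^{uc}$ and $W^{sc}$. One handles the unstable foliation by working in backward time on $W^{uc}$, where orbits exist on $(-\infty,0]$, and symmetrically for the stable foliation. The invariance of $E^u$ along these backward orbits is already guaranteed by the cone-field argument in the previous lemma, so the verified rate inequalities allow one to invoke the HPS machinery directly and to deliver both the $C^2$ smoothing of the manifolds and the $C^1$ regularity of the foliations in a single package.
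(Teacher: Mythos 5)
Your proposal is correct and follows essentially the same route as the paper: compute the Lyapunov rate bounds from the two preceding lemmas, then invoke $2$-normal hyperbolicity for the $C^2$ smoothness of the manifolds and the center-bunching condition for $C^1$ regularity of the strong foliations, citing \cite{Fe,HPS} and \cite{Fe2,PSW,DLS} respectively. (Incidentally, your computation $K<1/8\iff\alpha>10m$ is correct, which quietly corrects the paper's parenthetical ``equivalently $m<\alpha/6$'', and your stated bunching inequality $3\mu_c<\mu_u$ is somewhat stronger than the needed $e_u>e_c^+-e_c^-$, but both discrepancies are harmless since the verified rates have comfortable margin.)
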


\proof
An easy computations shows that $m+2mK<\alpha/4$, hence we obtain
$$
e_u>\alpha/2, \quad e_c^+< \alpha/4, \quad e_c^->-\alpha/4.
$$
This implies that $e_u>2e_c^+$, hence $W^c$ is $2$-normally hyperbolic,
hence it is $C^2$, as well as $W^{uc}$ and $W^{sc}$, see \cite{Fe, HPS}.

Moreover, we have the bunching condition $e_u>e_c^+-e_c^-$, which implies the $C^1$ regularity of the unstable foliation,
see \cite{Fe2, PSW, DLS}.
\qed

We need the following easy  addendum:
\begin{prop}\label{translation}
Assume  in addition that there exists a translation  $g$
of $\Rm^{n_c}$
such that
$$
g(\Omega^c)=\Omega^c\quad \text{and} \quad
F\circ (id\otimes id\otimes g)=F.
$$
 Then we have
$$
w^{sc}\circ(id\otimes g)=w^{sc},\quad
w^{uc}\circ(id\otimes g)=w^{uc}, \quad
w^c\circ g=w^c.
$$
\end{prop}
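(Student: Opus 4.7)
The plan is to exploit the fact that the three sets $W^c$, $W^{sc}$, $W^{uc}$ are characterized purely by the dynamics of $F$ inside the domain $\Omega$, together with the observation that the translation $\tilde g := id_u \otimes id_s \otimes g$ of $\Rm^n$ is a symmetry of the whole data (the domain and the vector field), so it must permute these dynamically defined sets to themselves. The graph property established in Theorem~\ref{abstractNHI} then converts set-level invariance into the desired functional identities.

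First I would check that $\tilde g$ preserves $\Omega$: since $\tilde g$ acts as the identity on the $(u,s)$-factors and $g(\Omega^c)=\Omega^c$ by hypothesis, $\tilde g(\Omega)=\Omega$. Next, because $g$ is a translation of $\Rm^{n_c}$, its differential is the identity, so $d\tilde g = id$ on $\Rm^n$. Combined with $F\circ \tilde g = F$, this yields that the flow $\varphi^t$ of $F$ commutes with $\tilde g$: if $x(t)$ solves $\dot x = F(x)$ then $\tfrac{d}{dt}\tilde g(x(t)) = d\tilde g\cdot F(x(t)) = F(x(t)) = F(\tilde g(x(t)))$, so $\tilde g\circ \varphi^t = \varphi^t\circ \tilde g$ wherever both sides are defined inside $\Omega$.

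Then I would observe that membership in $W^{sc}$, $W^{uc}$, $W^c$ is expressed entirely in terms of the forward, backward, or full orbit remaining in $\Omega$. Since $\tilde g$ preserves $\Omega$ and conjugates the flow to itself, we get $\tilde g(W^{sc})=W^{sc}$, $\tilde g(W^{uc})=W^{uc}$, and $\tilde g(W^c)=W^c$. Finally, unpack these set equalities using the graph representations from Theorem~\ref{abstractNHI}: for any $(s,c)\in B^s\times \Omega^c$ the point $(w^{sc}(s,c),s,c)$ lies in $W^{sc}$, so $\tilde g(w^{sc}(s,c),s,c)=(w^{sc}(s,c),s,g(c))$ also lies in $W^{sc}$, and since $W^{sc}$ is a graph over $B^s\times \Omega^c$ this forces $w^{sc}(s,g(c))=w^{sc}(s,c)$. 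The identical argument with the roles of $u$ and $s$ swapped gives $w^{uc}\circ (id\otimes g)=w^{uc}$, and the analogous extraction from $W^c$ gives $w^c\circ g = w^c$.

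There is essentially no obstacle here: the content is entirely in the uniqueness of the graph representation supplied by Theorem~\ref{abstractNHI}, and the symmetry argument is formal. The only small point to verify carefully is that $\tilde g$ really commutes with the flow (which requires using that $g$ is a translation, not a more general diffeomorphism, so that $d\tilde g=id$ and $F\circ \tilde g=F$ directly implies $\tilde g_* F = F$).
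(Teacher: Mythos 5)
Your argument is correct and is essentially the paper's own proof: the paper simply notes that $g(W^{sc})=W^{sc}$, $g(W^{uc})=W^{uc}$, $g(W^c)=W^c$ follow immediately from the definitions, and the functional identities then come from the graph uniqueness in Theorem~\ref{abstractNHI}. You have merely filled in the routine details (translation $\Rightarrow d\tilde g = id$, hence $F\circ\tilde g = F$ implies $\tilde g$ conjugates the flow to itself).
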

\proof
It follows immediately from the definition of the sets
$W^{sc}$, $W^{uc}$ and $W^c$ that $g(W^{sc})=W^{sc}$,
$g(W^{uc})=W^{uc}$ and $g(W^c)=W^c$.
\qed

In applications the first condition of Hypothesis \ref{block}
is usually not satisfied, except in the case where
$\Omega^c=\Rm^{n_c}$.
In view of the applications we have in mind,
it is useful to split the central variables into two groups
and consider 
$$
\Omega^c=\Rm^{n^1_c}\times \Omega^{c_2},
$$
where $\Omega^{c_2}$ is a convex open set in $\Rm^{n^2_c}$,
$n^1_c+n^2_c=n_c$.
Given a positive parameter ${\sigma}$, let $\Omega^{c_2}_{\sigma}$ be the set of points
$c_2\in \Rm^{n^2_c}$ such that $d(c,\Omega^{c_2})<{\sigma}$.
This is a convex open subset of $\Rm^{n^2_c}$ containing $\Omega^{c_2}$.
We denote by $\Omega^c_{\sigma}$ the product $\Rm^{n^1_c}\times \Omega^{c_2}_{\sigma}$
and by
 $\Omega_{\sigma}$ the product $B^u\times B^s\times \Omega^c_{\sigma}$.
With the notation $F_c=(F_{c_1},F_{c_2})$,
and  denoting by $W^{sc}(F,\Omega), W^{uc}(F,\Omega), W^c(F,\Omega)$ the set of positive half orbits
(resp. negative half orbits, full orbits) of $F$ contained in $\Omega$, we have:

\begin{prop}\label{realNHI}
Let 
$
F: \Rm^{n_u}\times \Rm^{n_s}\times \Omega^c_{\sigma}
\lto
\Rm^{n_u}\times \Rm^{n_s}\times \Rm^{n_c}
$
be a $C^2$ vector field.
Assume that there exists $\lambda,m,\sigma>0$ such that
\begin{itemize}
\item $F_u(u,s,c)\cdot u> 0$ on  $\partial B^u \times \bar B^s \times \bar \Omega^c_{\sigma}$.
\item $F_s(u,s,c)\cdot s< 0$ on  $ \bar B^u \times \partial B^s \times \bar \Omega^c_{\sigma}$.
\item
$L_{uu}(x)\geq \alpha I , \quad L_{ss}(x)\leq -\alpha I$ for each $x\in \Omega_{\sigma}$
in the sense of quadratic forms.
\item
$
\|L_{us}(x)\|+\|L_{uc}(x)\|+\|L_{ss}(x)\|+\|L_{sc}(x)\|+
\|L_{cu}(x)\|+\|L_{cs}(x)\|+\|L_{cc}(x)\| \leq m
$ for each $x\in \Omega_{\sigma}$.
\item
$
\|L_{us}(x)\|+\|L_{uc}(x)\|+\|L_{ss}(x)\|+\|L_{sc}(x)\|+
\|L_{cu}(x)\|+\|L_{cs}(x)\|+\|L_{cc}(x)\|+
2\|F_{c_2}(x)\|/{\sigma}\leq m
$ for each $x\in \Omega_{\sigma}-\Omega$.
\end{itemize}
Assume furthermore that
$$
K:= \frac{m}{\alpha-2m}\leq \frac{1}{8},
$$
then there exist $C^2$ maps 
$$ w^{sc}:B^s \times \Omega^c_{\sigma} \lto B^u,
\quad  w^{uc}:B^u \times \Omega^c_{\sigma} \lto B^s,
\quad
 w^c:\Omega^c_{\sigma}\lto B^u\times B^s
$$
satisfying the estimates
$$\|d w^{sc}\|\leq K,\quad \|d w^{uc}\|\leq K,
\quad \|d w^c\|\leq 2K,
$$
the graphs of which 
respectively contain $W^{sc}(F,\Omega), W^{uc}(F,\Omega), W^{c}(F,\Omega)$.
Moreover, the graphs of the restrictions of $w^{sc},w^{uc}$ and $w^c$ to, respectively,
$B^s\times \Omega^c$, $B^u\times \Omega^c$ and $\Omega^c$, are tangent to the flow.

There exists an invariant  $C^1$ foliation of the graph of $w^{uc}$ whose leaves are graphs of $K$-Lipschitz maps above $B^u$.
The set $W^{uc}(F,\Omega)$ is a union of leaves : it has the structure of an invariant $C^1$ lamination.
Two points $x,x'$ belong to the same leaf of this lamination if and only if $d(x(t),x'(t))e^{t\alpha/4}$ is bounded on $\Rm^-$.

If  in addition  there exists a group $G$ of translations  
of $\Rm^{n_{c_1}}$
such that
$
F\circ (id\otimes id\otimes g\otimes id)=F
$
for each $g\in G$, 
 then the maps $w^*$ can be chosen such that
\begin{equation}\label{eq-w}
w^{sc}\circ(id\otimes g\otimes id)=w^{sc},\quad
w^{uc}\circ(id\otimes g\otimes id)=w^{uc}, \quad
w^c\circ (g\otimes id)=w^c
\end{equation}
for each $g\in G$. The lamination is also translation invariant.
\end{prop}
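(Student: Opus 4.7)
The plan is to deduce Proposition~\ref{realNHI} from Theorem~\ref{abstractNHI} by modifying $F$ on the collar $\Omega_\sigma\setminus\Omega$ so that the isolating block condition on $\partial\Omega^c_\sigma$ holds, while leaving $F$ unchanged on $\Omega$. Concretely, I would pick a $C^2$ cutoff $\chi:\Omega^{c_2}_\sigma\to[0,1]$ with $\chi\equiv 1$ on $\Omega^{c_2}$, $\chi$ compactly supported in $\Omega^{c_2}_\sigma$, and $\|d\chi\|\leq 2/\sigma$ (e.g. by mollifying the indicator of an intermediate convex set with a mollifier of width $\sigma/2$), and define
\[
\tilde F(u,s,c_1,c_2)=\bigl(F_u,\,F_s,\,F_{c_1},\,\chi(c_2)\,F_{c_2}\bigr).
\]
Since $\chi$ depends only on $c_2$ and $G$ acts by translation on $c_1$, $\tilde F$ inherits the $G$-invariance of $F$.

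Next I would verify the hypotheses of Theorem~\ref{abstractNHI} for $\tilde F$ on the convex domain $\Omega_\sigma$. The cone conditions on $F_u,F_s$ are untouched, and $\tilde F_c=0$ on $\partial\Omega^c_\sigma$ because the outward boundary is in the $c_2$ direction and $\chi$ vanishes there. The linearization $d\tilde F$ agrees with $dF$ on $\Omega$; on $\Omega_\sigma\setminus\Omega$ its $c_2$-row picks up the extra contribution $F_{c_2}\otimes d\chi$, whose norm is at most $(2/\sigma)\|F_{c_2}\|$, and the last bulleted hypothesis of Proposition~\ref{realNHI} was stated precisely so that the sum of all off-diagonal norms of $d\tilde F$ plus this correction remains $\leq m$. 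The bounds $L_{uu}\geq\alpha I$ and $L_{ss}\leq-\alpha I$ are unchanged, and $K=m/(\alpha-2m)\leq 1/8<1/\sqrt 2$ by assumption, so Theorem~\ref{abstractNHI} together with its $C^2$ refinement applies.

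This produces $C^2$ graphs $w^{sc},w^{uc},w^c$ on $B^s\times\Omega^c_\sigma$, $B^u\times\Omega^c_\sigma$, $\Omega^c_\sigma$ with the stated Lipschitz estimates, a $C^1$ strong-unstable foliation of the graph of $w^{uc}$, and, via Proposition~\ref{translation} applied to $\tilde F$, the invariance \eqref{eq-w}. Because $\tilde F=F$ on $\Omega$, any $F$-orbit remaining in $\Omega$ is also a $\tilde F$-orbit, so $W^{sc}(F,\Omega),W^{uc}(F,\Omega),W^{c}(F,\Omega)$ lie in the respective graphs, and for the same reason the restrictions of these graphs to $B^s\times\Omega^c$, $B^u\times\Omega^c$, $\Omega^c$ are tangent to $F$ (not only to $\tilde F$). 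The lamination statement for $W^{uc}(F,\Omega)$ follows from the shadowing characterization of strong-unstable leaves recorded in the appendix: two orbits on the same leaf stay within $O(e^{-t\alpha/4})$ in backward time, so membership of $W^{uc}(F,\Omega)$ is a leafwise property. The most delicate points will be the bookkeeping for $d\tilde F$ on the collar (one must really get a single overall bound $m$ rather than a multiple of it), and the fact that $\Omega^c_\sigma=\Rm^{n_{c_1}}\times\Omega^{c_2}_\sigma$ is unbounded in $c_1$; for the latter, the retraction argument of Lemma~\ref{h3} still goes through thanks to the $G$-invariance (which effectively compactifies $c_1$) and the fact that the graph of $w^{sc}$ is independent of the $c_1$ translations by \eqref{eq-w}.
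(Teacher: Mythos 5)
Your proposal is correct and follows essentially the same route as the paper: one introduces a cutoff $\rho(c_2)$ (your $\chi$) with $\rho\equiv 1$ on $\Omega^{c_2}$, compactly supported in $\Omega^{c_2}_\sigma$, $\|d\rho\|\leq 2/\sigma$, replaces $F_{c_2}$ by $\rho(c_2)F_{c_2}$, checks that the last bulleted hypothesis absorbs the extra term $d\rho\otimes F_{c_2}$ in the $\tilde L_{cc}$ block, applies Theorem~\ref{abstractNHI} together with its $C^2$ refinement to the modified field on $\Omega_\sigma$, and transfers the conclusions back to $F$ on $\Omega$ since $\tilde F=F$ there; the $G$-invariance follows from Proposition~\ref{translation} (equivalently, uniqueness) exactly as you say. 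The minor worry you raise about unboundedness in $c_1$ is not actually used in the paper's proof of Lemma~\ref{h3}, whose retraction argument is purely local in $c$, so no extra input is needed there.
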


In contrast to the earlier results of this section, the map $w^{sc}$
is not uniquely defined, and neither is its restriction to $B^s\times \Omega^c$. 
Moreover, the intersection with $\Omega$ of the graph of $w^{sc}$
is not necessarily positively invariant. It can contain strictly the set 
$W^{sc}(F,\Omega)$. Similar remarks apply to $w^{uc}$ and $w^c$.

\proof
We take a function $\rho:\Omega^{c_2}_{\sigma}\lto [0,1]$ such that :
\begin{itemize}
\item $\rho=0$ near the boundary of $\Omega^{c_2}_{\sigma}$,
\item $\rho=1$ on $\Omega^{c_2}$,
\item $\|d\rho\|\leq 2/{\sigma}$ uniformly.
\end{itemize}
We claim that the vectorfield 
$$
\tilde F(u,s,c):=(F_u(u,s,c_1,c_2),F_s(u,s,c_1,c_2),
F_{c_1}(u,s,c_1,c_2),
\rho(c_2)F_{c_2}(u,s,c_1,c_2))
$$
satisfies all the hypotheses of Theorem \ref{abstractNHI} on $\Omega_{\sigma}$.
Note also that $\tilde F=F$ on $\Omega$.
Denoting by $\tilde L_{**}$ the variational matrix associated to $\tilde F$,
we see that
$$
\tilde L_{cu}(u,s,c)=\rho(c_2) L_{cu}(u,s,c),\quad
 \tilde L_{cs}(u,s,c)=\rho(c_2) L_{cs}(u,s,c),
$$
$$
\tilde L_{c_1c_1}(u,s,c)=\rho(c_2) L_{c_1c_1}(u,s,c),\quad
 \tilde L_{c_1c_2}(u,s,c)=\rho(c_2) L_{c_1c_2}(u,s,c),
$$
and
$$
\tilde L_{c_2c_2}(u,s,c)=\rho(c_2) L_{c_2c_2}(u,s,c)+d\rho(c_2)\otimes F_{c_2}(u,s,c).
$$
As a consequence, we have
\begin{align*}
&\|\tilde L_{us}(x)\|+\|\tilde L_{uc}(x)\|+\|\tilde L_{ss}(x)\|+\|\tilde L_{sc}(x)\|+
\|\tilde L_{cu}(x)\|+\|\tilde L_{cs}(x)\|+\|\tilde L_{cc}(x)\|
\\
=&\rho(c_2) \big(\|L_{us}(x)\|+\|L_{uc}(x)\|+\|L_{ss}(x)\|+\|L_{sc}(x)\|+
\|L_{cu}(x)\|+\|L_{cs}(x)\|+\|L_{cc}(x)\|\big)\\
+&\|F_{c_2}(x)\|\|d\rho(c_2)\|
\leq m.
\end{align*}
The claim is proved.
We define $w^{sc},w^{uc}, w^c$ as the maps given by Theorem \ref{abstractNHI}
applied to $\tilde F$ on $\Omega_{\sigma}$.
Since $\tilde F=F$ on $\Omega$, we have $W^*(F,\Omega)\subset W^*(\tilde F, \Omega_{\sigma})$ for
$*=sc, uc$ or $c$.
These maps may depend on the choice of the function $\rho$ but, once the function $\rho$ is chosen, they are uniquely defined.
In the case where a group of translation $G$ exists as in the statement,
then we have 
$
\tilde F\circ (id\otimes id\otimes g\otimes id)=\tilde F
$
for each $g\in G$. The uniqueness then implies
(\ref{eq-w}).
By definition, $W^*(\tilde F, \Omega_{\sigma})$ is the graph of $w^*$, the statement follows from this observation.
\qed

\section{Disconnectedness of Heteroclinics}
\label{sec:disc}

We consider a Tonelli Hamiltonian $H$,
a cohomology $c$, and the associated Aubry and  Mañé sets
$\tilde \mA$ and $\tilde \mN$.
We assume that the Aubry set is the union of two static 
classes $\tilde \mS_i, i=1,2$.
The Mañé set can then be written as the disjoint union
$$
\tilde \mN=\tilde \mS_1\cup \tilde \mS_2 \cup \tilde \mH_{12}\cup \tilde \mH_{21},
$$
where $\tilde \mH_{12}$ is a set of heteroclinic orbits 
from $\tilde \mS_1$ to $\tilde \mS_2$, and 
$\tilde \mH_{21}$ is a set of heteroclinic orbits 
from $\tilde \mS_2$ to $\tilde \mS_1$.
Morever, the sets 
$$
\tilde \mI_{12}:= \tilde \mS_1\cup \tilde \mS_2 \cup \tilde \mH_{12}
\quad
\text{and}
\quad 
\tilde \mI_{21}:= \tilde \mS_1\cup \tilde \mS_2 \cup \tilde \mH_{21}
$$
are invariant compact Lipschitz graphs. 
In the notations of \cite{Be1}, we have 
$\tilde \mI_{12}=\tilde \mI(E_{\mS_1})=E_{\mS_1}\wedge \breve E_{\mS_2}$,
$\tilde \mI_{21}=\tilde \mI(E_{\mS_2})=E_{\mS_2}\wedge \breve E_{\mS_1}$.

In \cite{Be1}, Section 9, it is proved that the cohomology  $c$ is in the interior 
of its forcing class provided each of the sets $\tilde \mH_{12}$
and $\tilde \mH_{21}$ is neat in the following sense:

The set $\tilde \mH_{12}$ is neat if there exists a compact subset 
$\tilde \mK_{12}$ which contains one and only one point in each orbits of 
$\varphi_{|\tilde \mH_{12}}$ and which is acyclic, which means that there exist
an open  neighborhood $U$ of $\mK_{12}$ in $TM$ such that the inclusion 
of $U$ into $TM$ generates the null map in homology.

In Section \ref{sec:proof} of the present paper,
we apply this result under the assumption that the sets 
$\tilde \mH_{12}$ and $\tilde \mH_{21}$ are totally disconnected.
We can do so in view of the following:

\begin{prop}
The set $\tilde \mH_{12}$ (or $\tilde \mH_{21}$ ) is neat if it is totally disconnected. 
\end{prop}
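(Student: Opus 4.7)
\emph{Proof plan.} The strategy is to realize $\tilde \mK_{12}$ as an intermediate level set of a Conley Lyapunov function for the flow restricted to $\tilde \mI_{12}$, and then to exploit the total disconnectedness of $\tilde \mH_{12}$ to dress this section with a neighborhood in $TM$ that is a finite disjoint union of open balls.

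First I would construct the section. The compact invariant set $\tilde \mI_{12}=\tilde \mS_1\cup \tilde \mS_2\cup \tilde \mH_{12}$ is flowed by a gradient-like dynamics: its chain recurrent set is $\tilde \mS_1\cup \tilde \mS_2$, while every orbit in $\tilde \mH_{12}$ is $\alpha$-asymptotic to $\tilde \mS_1$ and $\omega$-asymptotic to $\tilde \mS_2$ by definition of a heteroclinic connection. Conley's fundamental theorem then supplies a continuous Lyapunov function $V\colon \tilde \mI_{12}\lto \Rm$ with $V\equiv v_1$ on $\tilde \mS_1$, $V\equiv v_2$ on $\tilde \mS_2$, say $v_1>v_2$, and strictly decreasing along every orbit in $\tilde \mH_{12}$. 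Fix any $v\in ]v_2,v_1[$ and set $\tilde \mK_{12}:=V^{-1}(v)$. This is closed in the compact space $\tilde \mI_{12}$, hence compact; it is disjoint from $\tilde \mS_1\cup \tilde \mS_2$, hence contained in $\tilde \mH_{12}$; and strict monotonicity combined with the intermediate value theorem forces it to meet each orbit of $\tilde \mH_{12}$ in exactly one point.

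Next I would verify the acyclicity. As a closed subset of the totally disconnected space $\tilde \mH_{12}$, the section $\tilde \mK_{12}$ is itself compact and totally disconnected, hence zero-dimensional. By a classical fact from dimension theory (see e.g.\ Hurewicz-Wallman), a compact zero-dimensional subset of a manifold admits a neighborhood basis whose members are finite disjoint unions of open balls. Choose such $U=B_1\sqcup\cdots\sqcup B_N$, each $B_i$ being an open ball in a coordinate chart of $TM$, hence contractible. Then $H_k(U)=\bigoplus_i H_k(B_i)=0$ for $k\geq 1$, so in positive degrees the inclusion $U\hookrightarrow TM$ induces the zero map; and in degree zero the induced map factors through the connected space $TM$, so the induced map on reduced homology is also zero. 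This is the acyclicity required. The identical argument applied to $\tilde \mH_{21}$ concludes the proof.

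The only nontrivial ingredient is producing a genuine compact section at all: $\tilde \mH_{12}$ is open in $\tilde \mI_{12}$ but not closed, so no arbitrary compact subset will meet each orbit exactly once. The Conley Lyapunov function resolves this in one stroke, after which the remaining step is a purely topological consequence of zero-dimensionality.
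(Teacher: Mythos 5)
Your acyclicity step is fine and essentially matches the paper's Lemma: a compact zero-dimensional subset of a manifold has a neighborhood that is a finite disjoint union of acyclic open sets, so its inclusion into $TM$ is null in homology. The genuine gap is in the construction of the section $\tilde \mK_{12}$. The dynamics in question is not a flow but the time-one map $\varphi$: the definition of ``neat'' asks for a compact set meeting each \emph{discrete} orbit $\{\varphi^k(x)\}_{k \in \Zm}$ exactly once. A Conley Lyapunov function $V$ for this discrete dynamics does exist, with $V(\varphi(x)) < V(x)$ on $\tilde \mH_{12}$, but the intermediate value argument is not available: the values $V(\varphi^k(x))$ form a strictly decreasing \emph{sequence} of reals running from $v_1$ down to $v_2$, and this sequence can skip over any preassigned level $v$, in which case $V^{-1}(v)$ misses that orbit altogether. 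Replacing $V^{-1}(v)$ by a slab $V^{-1}([v,v+\eta))$ does not repair this, since there is no uniform lower bound on the decrements $V(x)-V(\varphi(x))$ (they tend to zero as $x$ approaches $\tilde \mS_1$ or $\tilde \mS_2$). Passing to the suspension restores continuity of time, but then the level set lives in $TM \times \Tm$, not in $TM$, and does not meet each discrete orbit once.

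The paper avoids this by invoking total disconnectedness twice, not only for acyclicity but already to build the section. Zero-dimensionality of $\tilde \mH_{12}$ yields a compact \emph{and relatively open} subset $\tilde \mQ \subset \tilde \mH_{12}$ meeting every orbit: one covers a compact band between the static classes by finitely many clopen sets of $\tilde \mH_{12}$ disjoint from $\tilde \mS_1 \cup \tilde \mS_2$. Removing the forward iterates $\varphi^k(\tilde \mQ)$, $k \geq 1$, from $\tilde \mQ$ then produces a clopen fundamental domain containing exactly the first entry point of each discrete orbit; it is compact because each $\varphi^k(\tilde \mQ)$ is clopen and only finitely many of them meet $\tilde \mQ$. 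This fundamental-domain step, which uses the clopen structure rather than a Lyapunov level set, is what replaces your intermediate-value argument in the discrete setting, and it is the part missing from your proposal.
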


\proof

We first recall that a compact metric space is totally disconnected if  and only
if it has dimension zero, which means that each of its points has a basis of neighborhood 
made of open and closed sets, see \cite{HW}, section II.4.

By removing small open neighborhoods of $\tilde \mS_1$ and $\tilde \mS_2$ in $\tilde \mI_{12}$,
 we form
a compact subset of $\tilde \mH_{12}$ which contains at least one point in each orbit.
This compact subset is totally disconnected (it is a subset of $\tilde \mH_{12}$)
hence each of its points is contained in an open and closed set which is disjoint from
both $\tilde \mS_1$ and $\tilde \mS_2$.
We cover our compact by finitely many of these neighborhood.
Their union  is
a compact and open subset $\tilde \mQ$ of $\tilde \mH_{12}$ which contains at least one point in each orbit.
The set $\tilde \mK_{12}:= \tilde \mQ-\varphi(\tilde \mQ)$ is then compact and open, and it contains exactly one
point of each $\varphi$-orbit.
It is totally disconnected, and therefore acyclic, in view of the following Lemma.
\qed

\begin{lem}
Let $M$ be a manifold and let $K\subset M$ be a totally disconnected 
compact subset of $M$. Then $K$ is acyclic.
\end{lem}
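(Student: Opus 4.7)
The plan is to use the well-known fact from dimension theory that for a compact metric space, total disconnectedness is equivalent to having covering dimension zero (see \cite{HW}, Theorem II.4), and to combine this with the local Euclidean structure of $M$ to produce an open neighborhood of $K$ which is a finite disjoint union of open balls.

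First, using the zero-dimensionality of $K$, for every $\varepsilon>0$ there is a finite partition $K = K_1 \sqcup \cdots \sqcup K_n$ into pairwise disjoint clopen (in $K$) subsets of diameter less than $\varepsilon$. Since $M$ is a manifold, each point of $K$ admits a basis of open neighborhoods diffeomorphic to open balls in $\R^d$, $d = \dim M$. By compactness of $K$ and by taking $\varepsilon$ sufficiently small, I can arrange that each $K_i$ is contained in some coordinate ball $B_i \subset M$.

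Second, since the $K_i$ are pairwise disjoint compact subsets of $M$, they have positive pairwise distances. Shrinking the $B_i$ if necessary, I may assume that the $B_i$ are themselves pairwise disjoint, each $B_i$ still diffeomorphic to an open ball and containing $K_i$. The set $U := B_1 \cup \cdots \cup B_n$ is then an open neighborhood of $K$ whose connected components $B_i$ are all contractible. Consequently $H_k(U) = \bigoplus_i H_k(B_i) = 0$ for $k \geq 1$, so the inclusion $U \hookrightarrow M$ induces the zero map in positive-degree homology; and in degree zero the induced map on reduced homology is zero as well (all $B_i$ lie in a single component of $M$, since we may further localize inside a connected component of $M$ containing $K$).

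There is no real obstacle: the only nontrivial input is the classical equivalence between total disconnectedness and zero covering dimension for compact metric spaces, after which the manifold structure of $M$ makes it straightforward to thicken a finite clopen partition of $K$ to a disjoint union of contractible open sets.
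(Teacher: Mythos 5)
Your proof follows the same strategy as the paper's: use zero\hyp{}dimensionality to extract a finite clopen cover of $K$ with small pieces, thicken to pairwise disjoint open sets sitting inside coordinate discs, and conclude from the direct-sum decomposition of the homology of a disjoint union. The gap is in the step ``Shrinking the $B_i$ if necessary, I may assume that the $B_i$ are themselves pairwise disjoint, each $B_i$ still diffeomorphic to an open ball and containing $K_i$.'' This cannot in general be arranged: a clopen piece $K_i$ may sit ``around'' another piece $K_j$ so that no set diffeomorphic to a ball contains $K_i$ while avoiding $K_j$ (e.g.\ $K_i=\{0,\varepsilon\}$, $K_j=\{\varepsilon/2\}$ in $\R$ — every interval containing $K_i$ also contains $K_j$, no matter how small $\varepsilon$ is). The positive pairwise distance $d(K_i,K_j)$ is not controlled from below in terms of $\mathrm{diam}\,K_i$, so neither choosing $\varepsilon$ smaller nor ``shrinking'' the coordinate ball resolves the issue.

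The fix is to notice, as the paper does, that the sets you end up with need not be contractible — ``acyclic'' in the sense used in the preceding proposition only asks for an open neighborhood $U\supset K$ whose inclusion into $M$ is \emph{null in reduced homology}, not one with $H_k(U)=0$. Replace $B_i$ by the metric neighborhood $V_i=\{x\in M:\,d(x,K_i)<\rho_i\}$ with $\rho_i<\tfrac13\,d(K_i,K\setminus K_i)$ and $\rho_i$ small enough that $V_i$ stays inside the original coordinate ball $B_i$. The $V_i$ are then pairwise disjoint open sets covering $K$, and each inclusion $V_i\hookrightarrow M$ factors through the contractible $B_i$, so $H_k(V_i)\to H_k(M)$ vanishes in every positive degree (and in reduced degree $0$ if $M$ is connected, as it is in the application to $TM$). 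Since $H_*\bigl(\bigcup_i V_i\bigr)=\bigoplus_i H_*(V_i)$ for a disjoint union of open sets, the inclusion of $U=\bigcup_i V_i$ into $M$ is null in reduced homology even though the individual $V_i$ may be neither contractible nor connected; this is exactly the conclusion required.
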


\proof
The subset $K$ has dimension $0$, see \cite{HW}.
As a consequence, each point of $K$ is contained in an open, closed, and acyclic neighborhood
(small open sets are contained in discs hence are acyclic).
We cover $K$ by finitely many of these subsets $U_1,\ldots,U_k$ and set  $V_1=U_1$, $V_2=U_2-V_1$, $V_i=U_i-V_{i-1}$. We obtain $k$ open 
acyclic subsets $V_i$ which are pairwise disjoint and cover $K$.
This implies that $K$ is acyclic.
\qed

\section{Continuity property of the Peierls' barrier function}

We consider here a general Tonelli Lagrangian $L$.
We recall, see \cite{Be2}, section 4, that the difference of two weak KAM solutions is constant on each static class.

\begin{prop}\label{prop:cont}
Let $L_k \to L$ be a sequence of Tonelli Lagrangians $\T^n \times \R^n \times \T$ converging in the $C^2$ compact open topology, and $c_k \to c \in \R^n \simeq H^1(\T^n, \R)$. Assume that $\mA_L(c)$ has finitely many static classes. Let $\zeta_k \in \mA_{L_k}(c_k)$ be such that $\zeta_k \to \zeta_0 \in \mA_L(c)$, then for any $\theta \in \T^n$, 
\[
	\lim_{k\to \infty}h_{L_k, c_k}(\zeta_k, \theta) = h_c(\zeta_0, \theta). 
\]
\end{prop}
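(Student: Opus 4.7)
The plan is to prove upper and lower semi-continuity of $h_{L_k,c_k}(\zeta_k,\theta)$ separately. Two standard preliminaries are: continuity of the Mather $\alpha$-function, giving $\alpha_{L_k}(c_k) \to \alpha_L(c)$; and uniform Tonelli bounds, which yield equi-Lipschitz Peierls barriers and uniformly bounded velocities for minimizers, so that Arzel\`a--Ascoli applies to sequences of curves and of weak KAM solutions. I write $A^T_{L,c}(\gamma) = \int_0^T (L - c\cdot \dot\gamma + \alpha_L(c))\,dt$, and use the fact that for $a \in \mA_{L,c}$, $h_{L,c}(a,\cdot)$ is a weak KAM solution, whence $h_{L,c}(a,b) \le A^T_{L,c}(\gamma)$ for any curve $\gamma$ from $a$ to $b$ and any $T\in\mathbb{N}$.

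For the upper bound, fix $\varepsilon>0$, pick $T\in\mathbb{N}$ and a curve $\gamma:[0,T]\to\mathbb{T}^n$ from $\zeta_0$ to $\theta$ with $A^T_{L,c}(\gamma)\le h_{L,c}(\zeta_0,\theta)+\varepsilon$, then modify it to $\tilde\gamma_k(t) = \gamma(t) + \max(0,1-t)(\zeta_k-\zeta_0)$ (lifted to $\mathbb{R}^n$), which starts at $\zeta_k$ and agrees with $\gamma$ on $[1,T]$. Since $L_k\to L$ in $C^2$, $c_k\to c$, $\zeta_k\to\zeta_0$, one has $A^T_{L_k,c_k}(\tilde\gamma_k)\to A^T_{L,c}(\gamma)$, and because $\zeta_k\in\mA_{L_k,c_k}$ the weak KAM inequality bounds $h_{L_k,c_k}(\zeta_k,\theta) \le A^T_{L_k,c_k}(\tilde\gamma_k)$. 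Letting $\varepsilon\to 0$ gives $\limsup h_{L_k,c_k}(\zeta_k,\theta)\le h_{L,c}(\zeta_0,\theta)$.

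For the lower bound, suppose by contradiction that for a subsequence $h_{L_k,c_k}(\zeta_k,\theta)\le h_{L,c}(\zeta_0,\theta)-\eta$, and pick near-minimizing curves $\gamma_k:[0,T_k]\to\mathbb{T}^n$ from $\zeta_k$ to $\theta$ with $A^{T_k}_{L_k,c_k}(\gamma_k)\le h_{L,c}(\zeta_0,\theta)-\eta/2$. If $\{T_k\}$ is bounded, extract a limit $\gamma_*:[0,T_*]\to\mathbb{T}^n$ from $\zeta_0$ to $\theta$ whose $L,c$-action is at most $h_{L,c}(\zeta_0,\theta)-\eta/2$; this contradicts $A^{T_*}_{L,c}(\gamma_*)\ge h_{L,c}(\zeta_0,\theta)$, which holds since $\zeta_0\in\mA_{L,c}$. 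If $T_k\to\infty$, the finite-static-class hypothesis becomes crucial: write $\mA_{L,c}=\mS_1\sqcup\cdots\sqcup\mS_m$, choose $\delta>0$ so the $B_\delta(\mS_j)$ are pairwise disjoint, and apply the uniform Ma\~n\'e action lower bound (curves remaining outside $B_\delta(\mA_{L,c})$ for time $\ge T_0$ contribute action $\ge\nu(\delta)T_0$, with uniformity under $C^2$-perturbation) to bound $T_k - t_k^* = O(1)$, where $t_k^*$ is the last time $\gamma_k$ meets $B_\delta(\mA_{L,c})$. After extraction, $\gamma_k(t_k^*)\to\eta\in\mS_j$ for some $j$; the tail piece of $\gamma_k$ converges to a curve from $\eta$ to $\theta$ of $L,c$-action $\ge h_{L,c}(\eta,\theta)$ (as $\eta\in\mA_{L,c}$), and the head piece has action $\ge h_{L_k,c_k}(\zeta_k,\gamma_k(t_k^*))$ (as $\zeta_k\in\mA_{L_k,c_k}$). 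The triangle inequality $h_{L,c}(\zeta_0,\eta)+h_{L,c}(\eta,\theta)\ge h_{L,c}(\zeta_0,\theta)$, combined with an intermediate lower semi-continuity $\liminf h_{L_k,c_k}(\zeta_k,\gamma_k(t_k^*))\ge h_{L,c}(\zeta_0,\eta)$, then produces the contradiction.

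The main obstacle is this intermediate statement, which is a special case of the full result in which the second argument lies in (or converges to) $\mA_{L,c}$. This is precisely where the finiteness hypothesis is indispensable: with finitely many static classes, the Peierls barrier on $\mA_{L,c}$ takes only finitely many values $h_{L,c}(\mS_i,\mS_j)$; upper semi-continuity of Aubry sets allows each static class of $\mA_{L_k,c_k}$ to be identified with some $\mS_j$ for large $k$, and continuity of the finite family of pseudo-distances can be obtained by the same kind of shifted-curve comparison used in the upper semi-continuity step (connecting matching points across the two systems by short bridges and comparing integrated actions). Plugging this intermediate continuity on $\mA$ back into the decomposition above closes the argument.
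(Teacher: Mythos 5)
Your upper-semicontinuity step is correct and coincides with the paper's (the paper phrases it through continuity of the finite-time actions $A^M_{L,c}$ rather than an explicit shifted-curve modification, but the content is the same). The lower bound, however, has a genuine gap in the unbounded-$T_k$ case. You split $\gamma_k$ at the \emph{last} time $t_k^*$ it meets $B_\delta(\mathcal{A}_L(c))$, bound the tail by $h_{L,c}(\eta,\theta)$, and then compress the entire head into the single quantity $h_{L_k,c_k}(\zeta_k,\gamma_k(t_k^*))$, for which you need the lower bound $\liminf_k h_{L_k,c_k}(\zeta_k,\gamma_k(t_k^*))\ge h_{L,c}(\zeta_0,\eta)$. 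This ``intermediate statement'' is not actually easier than what you set out to prove: the point $\gamma_k(t_k^*)$ is merely $\delta$-close to $\mathcal{A}_L(c)$, not in $\mathcal{A}_{L_k}(c_k)$, so the ``finite family of pseudo-distances $h_{L,c}(\mathcal{S}_i,\mathcal{S}_j)$'' does not apply directly to it. Worse, the head segment $\gamma_k|_{[0,t_k^*]}$ may still have length $\to\infty$ and wander through several static-class neighborhoods, so you have simply pushed the hard part into a term you then assert. Moreover the tool you propose for that term --- the shifted-curve comparison --- only produces an \emph{upper} bound on $h_{L_k,c_k}$; it cannot yield the needed lower semicontinuity.

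The paper closes exactly this gap by avoiding the compression of the head into a single barrier term. It decomposes the whole curve into at most $r$ intervals using nested ``last-visit'' times $j_1<\dots<j_r$, one per static class visited: $i_s$ is the first entry to $\bigcup_q U_q$ after $j_{s-1}$ and $j_s$ is the \emph{last} visit to the same $U_{q_s}$, so the indices $q_1,\dots,q_r$ are all distinct and the process terminates after at most $r$ steps. On each within-class interval $[-j_s,-i_s]$, the near-calibration identity~\eqref{eq:eps-calibrate} replaces the action by the difference $h_{L_k,c_k}(\zeta_k,\gamma_k(-i_s))-h_{L_k,c_k}(\zeta_k,\gamma_k(-j_s))$; after passing to the limit $u=\lim h_{L_k,c_k}(\zeta_k,\cdot)$, these differences become $h_{L,c}(\zeta_0,\cdot)$-differences because $u-h_{L,c}(\zeta_0,\cdot)$ is constant on each static class, and the transition pieces (of $O(1)$ length, by the same compactness argument you use) contribute $A_{L,c}^{i_s-j_{s-1}}(\eta_s,\eta_{s-1}')\ge h_{L,c}(\eta_s,\eta_{s-1}')$, whose telescoping with the within-class differences gives $h_{L,c}(\zeta_0,\theta)$ up to $O(r\delta)$. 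To salvage your proof you would need to perform essentially this nested decomposition on the head segment rather than invoking an unproved special case.
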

\begin{proof}
	First, since each $A^M_{L,c}(\theta_1, \theta_2)$ is continuous in $L$ and $c$, we obtain 
	\[
		\lim_{k \to \infty}\Phi_{L_k,c_k}(\theta_1, \theta_2) \le \lim_{k \to \infty} \left( A^M_{L_k, c_k}(\theta_1, \theta_2)\right)  = A^M_{L,c}(\theta_1, \theta_2)
	\]
	taking infimum over $N$, we get $\lim_{k \to \infty}\Phi_{L_k,c_k}(\theta_1, \theta_2) \le \Phi_{L,c}(\theta_1, \theta_2)$. Since $h_{L,c}(\theta_1, \theta_2) = \Phi_{L,c}(\theta_1, \theta_2)$ if either $\theta_1$ or $\theta_2$ is in $\mA_L(c)$, we obtain 
	\[
		\lim_{k\to \infty}h_{c_k}(\zeta_k, \theta) \le h_c(\zeta_0, \theta). 
	\]

	Given $\epsilon_k \to 0$, let $\gamma_k:[-Q_k, 0] \to \infty$ be a sequence of extremal curves such that $\gamma_k(-Q_k) = \zeta_k$, $\gamma_k(0) = \theta$, and 
	\[
		A^{Q_k}_{L_k,c_k}(\zeta_k, \theta)\le h_{L_k, c_k}(\zeta_k, \theta) + \epsilon_k.
	\]
	We note that on each interval $[i, j] \subset [-Q_k, 0]$, we have 
	\begin{equation}
		\label{eq:eps-calibrate}
		\begin{aligned}
			&  A^{j-i}_{L_k, c_k}(\gamma_k(i), \gamma_k(j))  \\
			& = A^{Q_k}(\gamma_k(-Q_k), \gamma_k(0)) - A^{i+Q_k}(\gamma_k(-Q_k), \gamma_k(i)) - A^{- j}(\gamma_k(j), \gamma_k(0)) \\
			& \le h(\zeta_k, \theta) - \epsilon_k - \left( h(\zeta_k, \gamma_k(i)) - h(\zeta_k,\gamma_k(-Q_k)) \right) -  \left( h(\zeta_k, \gamma_k(0)) - h(\zeta_k, \gamma_k(j)) \right) \\
			& \le h_{L_k, c_k}(\zeta_k, \gamma_k(j)) - h_{L_k, c_k}(\zeta_k, \gamma_k(i)) + \epsilon_k,
		\end{aligned}
	\end{equation}
	since $h(\zeta_k, \gamma_k(-Q_k)) = h(\zeta_k, \zeta_k)=0$ and $\gamma_k(0) = \theta$. Note we omit the subscript $L_k, c_k$ in the intermediate calculations.

	Let $i_k, i_k'$ be two consecutive visit of $\gamma_k(i)$ to $U = B_\delta(\mA_L(c))$, we first show that $i_k' - i_k$ must be bounded as $k \to \infty$.  Assume otherwise, then the curves $\gamma_k(t+i_k+1)|[0, i_k' - i_k-2]$ converges in uniformly over compact sets to $\gamma_*:[0, \infty) \to \T^n$. Assume the weak KAM solutions $h_{L_k, c_k}(\zeta_k, \cdot)$ converges uniformly to a weak KAM solution $u$ of $L,c$, taking limit in  \eqref{eq:eps-calibrate} implies $\gamma_*$ must be calibrated by $u$. Therefore $\gamma_*$ must accumulates to $\mA_L(c)$ which is a contradiction. 

	Let $\mS_1, \cdots, \mS_r$ be the static classes of $\mA(L)$.  Denote $U_q = B_\delta(\mS_q)$ and assume $\delta$ is small enough so that $U_q$ are all disjoint. Let us note each $\gamma_k$ determines sequences $q_s \in \{1, \cdots, r\}$, $s = 1, \cdots, r$, and $0 = i_0 \le j_0 \le \cdots \le i_r \le j_r \le Q_k$ as follows.
	\begin{itemize}
		\item Set $i_0 = j_0 = 0$. 
		\item Let $i_1$ be the first visit of $\gamma(-i)$ to $\bigcup_q U_q$ and $U_{q_1}$ is the set that $\gamma(-i_1)$ visits. Let $j_1$ be the last visit to $U_{q_1}$, namely $j_1 = \max\{i:\, \gamma(-i) \in U_{q_1)}\}$. 
		\item  The process stops if $j_{s-1} = - Q_k$, we set then set $i_s = j_s = \cdots = i_r =j_r = Q_k,$, and $q_s= \cdots= q_r = q_{s-1}$.

		Otherwise, let $i_s$ be the first visits to $\bigcup_q U_q$ for $i > j_{s-1}$, and $U_{q_s}$ the set it visits. Define $j_s$ be the last visit to $U_{q_s}$ and continue. 
	\end{itemize}
	Then 
	\begin{equation}
	  \label{eq:h-lower}
	  		\begin{aligned}
			& h_{L_k, c_k}(\zeta_k, \theta) + \epsilon_k \ge A_{L_k, c_k}^{Q_k}(\gamma_k(-Q_k), \gamma_k(0)) \\
			& = \sum_{s=1}^r A^{i_s - j_{s-1}}(\gamma_k(-i_s), \gamma_k(-j_{s-1})) + \sum_{s=1}^r A^{j_s - i_s}(\gamma_k(-j_s), \gamma_k(-i_s)) \\
			& \ge \sum_{s=1}^r A^{i_s - j_{s-1}}(\gamma_k(-i_s), \gamma_k(-j_{s-1})) 
			+ \sum_{s=1}^r  \left( h(\zeta_k, \gamma_k(-i_s)) - h(\zeta_k, \gamma_k(-j_s)) \right) - r\epsilon_k , 
		\end{aligned}
	\end{equation}
	where the subscript $L_k, c_k$ was omitted in the last two lines. 
	By restricting to a subsequence, we may assume that for all $\gamma_k$, the ordering $q_1, \cdots q_r$ are identical. Our previous observation implies for $s = 1, \cdots, r$, $i_s - j_{s-1}$ are bounded as $k \to \infty$. By restricting to another subsequence, we may assume $i_s - j_{s-1}$ is constant for all $k$, and $\gamma_k(-i_s) \to \theta_s$, $\gamma_k(-j_s) \to \theta_s'$ as $k \to \infty$. Note that for $s = 1, \cdots, r$, $\theta_s, \theta_s' \in \overline{B_\delta(\mS_{q_s})}$, therefore, there exists $\eta_s, \eta_s' \in \mS_{q_s}$ such that $\|\theta_s - \eta_s\|, \|\theta_s' - \eta_s'\| \le \delta$. Let us also note, by definition $\theta_0 = \theta_0' = \theta$, $\theta_r = \theta_r' = \zeta_0$.  Define $\eta_0 = \eta_0' = \theta$ and $\eta_r = \eta_r' = \zeta_0$. 
	Up to taking a subsequence, assume the weak KAM solutions $h_{L_k, c_k}(\zeta_k, \cdot) \to u(\cdot)$ uniformly. Take limit as $k \to \infty$ in \eqref{eq:h-lower}, we obtain
	\[
		\begin{aligned}
			& \lim_{k\to \infty} h_{L_k, c_k}(\zeta_k, \theta) \ge \sum_{s=1}^r \left( A_{L,c}^{i_s - j_{s-1}}(\theta_s, \theta_{s-1}') +  u(\theta_s) - u(\theta_s')  \right) \\
			& \ge \sum_{s=1}^r \left( A_{L,c}^{i_s - j_{s-1}}(\eta_s, \eta_{s-1}') + u(\eta_s) - u(\eta_s') - 4C\delta \right) \\
			& = \sum_{s=1}^r \left( A_{L,c}^{i_s - j_{s-1}}(\eta_s, \eta_{s-1}') + h_{L,c}(\zeta_0, \eta_s) - h_{L,c}(\zeta_0, \eta_s') - 4C\delta \right) \\
			& \ge \sum_{s=1}^r \left( h_{L,c}(\zeta_0, \eta_{s-1}') - h_{L,c}(\zeta_0, \eta_s') - 4C\delta  \right) \\
			& = h_{L,c}(\zeta_0, \eta_0') - h_{L,c}(\zeta_0, \eta_s') - 4rC\delta = h_{L,c}(\zeta_0, \theta) - 4rC\delta. 
		\end{aligned}
	\]
	Since $\delta$ is arbitrary, we obtain $\lim_{k\to \infty} h_{L_k, c_k}(\zeta_k, \theta) \ge h_{L,c}(\zeta_0, \theta)$. 
\end{proof}

{\bf Acknowledgement } V.K. has been partially support of the NSF grant DMS-1402164. K.Z. is supported by the NSERC Discovery grant, reference number 436169-2013.

\end{document}